\def\Xint#1{\mathchoice {\XXint\displaystyle\textstyle{#1}} {\XXint\textstyle\scriptstyle{#1}} {\XXint\scriptstyle\scriptscriptstyle{#1}} {\XXint\scriptscriptstyle\scriptscriptstyle{#1}}
	\!\int} \def\XXint#1#2#3{{\setbox0=\hbox{$#1{#2#3}{\int}$ } \vcenter{\hbox{$#2#3$ }}\kern-.6\wd0}}  \def\dashint{\Xint-}
\newtheorem{thm}{Theorem}[section]
\newtheorem{defi}{Definition}[section]
\newtheorem{prop}[thm]{Proposition}
\newtheorem{lem}[thm]{Lemma}
\newtheorem{cor}[thm]{Corollary}
\newtheorem{rem}[thm]{Remark}
\newtheorem{claim}[thm]{\it \hspace{1.4em} Claim}
\newenvironment{altproof}[1]
{\noindent%\addvspace{0.3cm}
	{\bf Proof of {#1}}.}
{\nopagebreak\mbox{}\hfill $\Box$\par\addvspace{0.5cm}}
\newcommand{\oH}{\overline{
		\mathrm{H}}^1(\Sigma)}
\newcommand\lan{\langle}
\newcommand\ran{\rangle}
\newcommand{\rg}{\mathrm{g}}
\newcommand{\rb}{\mathrm{b}}
\def\N{\mathbb{N}}
\def\Q{\mathbb{Q}}
\def\R{\mathbb{R}}
\def\i{\mathrm{i}}
\def\dim{\mathrm{dim}}
\def\Dc{\mathscr{D}}
\newcommand{\bW}{{\mathbf W}}
\newcommand{\bphi}{\boldsymbol{\phi}}
\newcommand{\bh}{{\mathbf h}}
\newcommand{\bu}{{\mathbf u}}
\newcommand{\rL}{{\mathrm L}}
\newcommand{\rH}{{\mathrm H}}
\newcommand{\cB}{{\mathcal B}}
\newcommand{\cC}{{\mathcal C}}
\newcommand{\cD}{{\mathcal D}}
\newcommand{\cF}{{\mathcal F}}
\newcommand{\cH}{{\mathcal H}}
\newcommand{\cL}{{\mathcal L}}
\newcommand{\cN}{{\mathcal N}}
\newcommand{\cR}{{\mathcal R}}
\newcommand{\cS}{{\mathcal S}}
\newcommand{\cT}{{\mathcal T}}
\newcommand{\cV}{{\mathcal V}}
\newcommand{\fJ}{{\mathfrak J}}
\newcommand{\De}{\Delta}
\newcommand{\Si}{\Sigma}
\newcommand{\intsigma}{{\mathring\Si}}
\newcommand{\beq[1]}{\begin{equation}\label{eq:#1}}
	\newcommand{\eeq}{\end{equation}}
\numberwithin{equation}{section}
\DeclareMathOperator{\dist}{dist}
\begin{document}
	\title{Partial Blow-up Phenomena in the $SU(3)$ Toda System  on Riemann Surfaces}
	\author{Zhengni Hu \and Mohameden Ahmedou \and Thomas Bartsch }
	%\date{}
	\maketitle
	%\begin{center}
	%  {\sl }
	%\end{center}
	
	%\begin{abstract}
	
	%\end{abstract}
	
	%{\bf MSC 2010:} Primary: ; Secondary: 
	
	%{\bf Key words:} 
	
	%%%
	\noindent{\bf Abstract:} 
	%\begin{center} \bigskip
	This work studies the partial blow-up phenomena for the $SU(3)$ Toda system on compact Riemann surfaces with smooth boundary. We consider the following coupled Liouville system with Neumann boundary conditions: 
	\begin{equation*}
		\left\{\begin{array}{ll}
			-\Delta_g u_1  = 2\rho_1\left( \frac{V_1 e^{u_1}}{\int_{\Sigma} V_1 e^{u_1} \, dv_g} - \frac 1 {|\Sigma|_g}\right) - \rho_2\left( \frac{V_2 e^{u_2}}{\int_{\Sigma} V_2 e^{u_2} \, dv_g} - \frac{1}{|\Sigma|_g}\right) & \text{in} \, \intsigma\\
			-\Delta_g u_2 = 2\rho_2\left( \frac{V_2 e^{u_2}}{\int_{\Sigma} V_2 e^{u_2} \, dv_g} - \frac{1}{|\Sigma|_g}\right) - \rho_1\left( \frac{V_1 e^{u_1}}{\int_{\Sigma} V_1 e^{u_1} \, dv_g} - \frac{1}{|\Sigma|_g}\right) & \text{in} \, \intsigma\\
			\partial_{\nu_g} u_1 = \partial_{\nu_g} u_2 = 0 & \text{on} \, \partial \Sigma
		\end{array}\right. ,
	\end{equation*}
	where $(\Sigma, g)$ is a compact Riemann surface with the interior $\intsigma$ and smooth boundary $\partial\Sigma$, 
	$\rho_i$ is a non-negative parameter and $V_i$ is a smooth positive function for $i=1,2$.

We construct a family of blow-up solutions via the Lyapunov-Schmidt reduction and variational methods, wherein one component remains uniformly bounded from above, while the other exhibits partial blow-ups at a prescribed number of points, both in the interior and on the boundary.  This construction is based on the existence of a non-degeneracy solution of a so-called shadow system. Moreover,   we establish the existence of partial blow-up solutions in three cases: (i) for any $\rho_2>0$ sufficiently small; (ii) 
	for generic $V_1, V_2$ and any $\rho_2\in (0,2\pi)$; (iii) for generic $V_1, V_2$,  the Euler characteristic $\chi(\Sigma)<1$ and any $\rho_2\in (2\pi,+\infty)\setminus 2\pi \N_+$.
	\\
	\noindent{\bf Key Words:} Toda system, Partial blow-up solutions, Finite-dimensional reduction\\
	\noindent {\bf 2020 AMS Subject Classification:} 35J57, 58J05   
	%\end{center}
	\newpage
	\tableofcontents
	\section{Introduction}
	Let $(\Sigma,g)$ be a compact  Riemann surface with the interior $\intsigma$  and smooth boundary $\partial \Sigma$. 
	This paper studies the following $SU(3)$ Toda system:
	\begin{equation}\label{eq:toda}
		\left\{	\begin{array}{ll}
			-\Delta_g u_1  = 2\rho_1\left( \frac{V_1 e^{u_1}}{\int_{\Sigma} V_1 e^{u_1} \, dv_g} - \frac 1 {|\Sigma|_g}\right) - \rho_2\left( \frac{V_2 e^{u_2}}{\int_{\Sigma} V_2 e^{u_2} \, dv_g} - \frac{1}{|\Sigma|_g}\right) & \text{in} \, \intsigma\\
			-\Delta_g u_2 = 2\rho_2\left( \frac{V_2 e^{u_2}}{\int_{\Sigma} V_2 e^{u_2} \, dv_g} - \frac{1}{|\Sigma|_g}\right) - \rho_1\left( \frac{V_1 e^{u_1}}{\int_{\Sigma} V_1 e^{u_1} \, dv_g} - \frac{1}{|\Sigma|_g}\right) & \text{in} \, \intsigma\\
			\partial_{\nu_g} u_1 = \partial_{\nu_g} u_2 = 0 & \text{on} \, \partial \Sigma
		\end{array}\right. ,
	\end{equation}
	where $\Delta_g$ is the Laplace-Beltrami operator, $dv_g$ is the area element of $(\Sigma,g)$,  $\nu_g$ is the unite outwards normal of $\partial \Sigma$,   $\rho_i$ is a non-negative parameter and $V_i:\Sigma\rightarrow \R$ is a smooth positive function for any $i=1,2$. For simplicity, we normalize the area of $\Sigma$, i.e., $|\Sigma|_g =1$. 
	
	Toda system arises in many fields both in geometry and physics. In geometry, it is related to holomorphic curves of $\Sigma$ in $\mathbb{C} \mathbb{P}^N$, flat $SU(N+1)$ connection, complete integrability and
	harmonic sequences (see~\cite{guest1997harmonic,bolton1988conformal,doliwa1997holomorphic,chern1987harmonic,bolton1997geometrical}, for instance). Particularly, for $\Sigma=S^2$, the solution space of the $SU(3)$ Toda system is identical to the space of holomorphic curves of $S^2$ into $\mathbb{C} \mathbb{P}^2$ (see~\cite{lin2012classification}).  In physics, the system is one of the limiting equations of non-abelian Chern–Simons gauge field theory (see \cite{dunne1991self,dunne1995self,nolasco1999double,nolasco2000vortex,yang1999relativistic,yang2001solitons} and references therein).
	
	Toda system has been widely studied both on a bounded domain in $\R^2$ and a closed surface. For the existence results we refer to \cite{jost_lin_wang2006,battaglia_jevnikar_ruiz2015, Malchiodi2007SomeER,Malchiodi2010NewIM,jevnikar_kallel_Malchiodi2015} for non-critical parameters  and \cite{li2005solutions, zhu2011solutions} for critical parameters.
	
	Analog to mean field equations, when parameter $\rho=(\rho_1,\rho_2)$ approaches critical values set $\Lambda=\left(2\pi \N_+\times[0,\infty)\right)\cup\left( [0,\infty)\times 2\pi\N_+\right)$ where $\N_+$ is the set of positive integers, the blow-up phenomenon may occur. We will focus on the construction of blow-up solutions in different blow-up scenarios. 
	We define the blow-up set for $u^n_i$ as 
	\[ \mathcal{S}_i:=\left\{x\in \Sigma: \exists x_n\rightarrow x \text{ such that } u^n_i(x_n)\rightarrow +\infty \right\}, \]
	$\cS=\cS_1\cup\cS_2$ and the local limit masses as 
	\[ \sigma_i(x)= \lim_{{r\rightarrow 0}} \lim_{{n\rightarrow +\infty}}\int_{U_r(x)} \frac{ \rho^n_i  V_i e^{u^n_i}}{\int_{\Sigma}  V_i e^{u^n_i} dv_g}dv_g\quad \text{ for } i=1,2. \]
	For $\Sigma$ compact Riemann surfaces with smooth boundary, one can prove that 
	if $\cS\neq\emptyset$, for any $x\in\cS$, $(\sigma_1(x),\sigma_2(x) )$ can only take the following five possible values (refer to \cite{jost_lin_wang2006} for a bounded domain in $\R^2$) 
	\[\left(\frac 1 2 \varrho(x),0\right), \left(0, \frac 1 2 \varrho(x)\right), \left(\frac 1 2 \varrho(x), \varrho(x)\right), \left(\varrho(x), \frac 1 2 \varrho(x)\right) \text{ and } (\varrho(x),\varrho(x)), \]
	where 	$\varrho(x)= 8\pi$ if $x\in\intsigma$ and $4\pi$ if $x\in \partial\Sigma$. 
	Based on the values of $(\sigma_1(x), \sigma_2(x))$ at blow-up point $x$, we have the following three scenarios: 
	\begin{itemize}
		\item[i).] Partial blow-up: $\left(\frac 1 2 \varrho(x),0\right), \left(0, \frac 1 2 \varrho(x)\right)$. In this case, one component is bounded from above and the other blows up;
		\item [ii).] Asymmetric blow-up:$ \left(\frac 1 2 \varrho(x), \varrho(x)\right), \left(\varrho(x), \frac 1 2 \varrho(x)\right)$. In this case, both components blow up at the same point $x$ but with different rates;
		\item [iii).] Full blow-up: $(\varrho(x),\varrho(x))$. In this case, both components blow up at the same point $x$ but with the same rates. 
	\end{itemize}
	
	The study of blow-up solutions for the Toda system remains a relatively underdeveloped area within the literature, with a handful of results addressing Dirichlet boundary conditions in bounded domains.
	
	For partial blow-up scenarios, D'Aprile, Pistoia, and Ruiz employ finite dimensional reduction and variational methods to successfully construct partial blow-up solutions, either within simply connected domains or when one of the parameters is sufficiently small in \cite{DAprile_Pistoia_Ruiz2015}. Independently, \cite{Lee2018degree} constructed one-point blow-up solutions exhibiting partial blow-up for $\min\left\{\rho_1,\rho_2\right\}<8\pi$ on closed Riemann surfaces by the method of degree counting. 
	Concerning asymmetric blow-up, Ao and Wang in \cite{ao2014new} introduced a family of blow-up solutions for the Toda system with Dirichlet boundary conditions on a unit ball centered at the origin, which exhibits a single blow-up point at the center. D'Aprile, Pistoia, and Ruiz extended this result to planar domains that possess a so-called ``$k$-symmetric" property for $k>2$, as $\rho_1\rightarrow 8\pi$, while keeping $\rho_2$ fixed within the interval $(4\pi, 8\pi)$ in  \cite{D'aprile_asymmetric_2016}. Additionally, Musso, Pistoia, and Wei constructed a family of blow-up solutions for the $SU(N+1)$ Toda system, applying a similar approach for $N\geq 2$ in \cite{musso_new_2016}.
	
	In the scenario of full blow-ups, the special case where $u_1=u_2$ and $\rho_1=\rho_2$ reduces the system to a mean field equation. For this equation, blow-up solutions have been constructed in both bounded domains and closed Riemann surfaces (see\cite{Esposito2005, Bartolucci2020, Esposito2014singular}, for instance). The general case, however, remains an open problem. Lin, Wei, and Zhao have examined blow-up solutions in the full blow-up scenario, providing five necessary conditions for the existence of such solutions, indicating the increased difficulty of constructing full blow-up solutions in \cite{lin2012sharp}. 
	
	To our knowledge, there are no published results on blow-up solutions for the Toda system with Neumann boundary conditions on surfaces with boundary. Our study aims to undertake an endeavor to construct blow-up solutions,  particularly focusing on scenarios of partial blow-up, on compact Riemann surfaces with boundary.

	The system \eqref{eq:toda} is variational and has a corresponding Euler-Lagrange functional 
	\[ J_{\rho}(u)= \int_{\Sigma} Q(u,u)\, dv_g + \sum_{i=1}^2 \rho_i\left( \int_{\Sigma} u_i\,  dv_g - \log \int_{\Sigma} V_i e^{u_i} \, dv_g\right), \]
	where $u=(u_1,u_2), \rho=(\rho_1,\rho_2), $ the quadratic forms $$ Q(v,w)=\frac 1 3 \lan \nabla v_1, \nabla w_1\ran_g +\frac 1 3  \lan \nabla v_2, \nabla w_2\ran_g+\frac 1 6  \lan \nabla v_2, \nabla w_1\ran_g+\frac 1 6\lan \nabla v_1, \nabla w_2\ran_g. $$ 
	The weak solutions of \eqref{eq:toda} correspond to the critical points of the functional $J_{\rho}$ on $H^1(\Sigma)\times H^1(\Sigma)$. 
	Let $$\oH:=\left\{h\in H^1(\Sigma): \int_{\Sigma} h\, dv_g=0\right\} \text{ and }
	\cH=\oH\times \oH. $$
	We equip the space $\oH$ with the norm $\|h\|=\left( \int_{\Sigma} |\nabla h|_g^2 \, dv_g\right)^{\frac 1 2}$ for any $h\in\oH$ and the space $\cH$ with the norm $\|u\|=\|u_1\|+\|u_2\|$ for any $u=(u_1,u_2)\in \cH.$
	Nevertheless, we focus on solutions in $\cH$, a subspace of $H^1(\Sigma) \times H^1(\Sigma)$ where $ \int_{\Sigma} u_i \, dv_g = 0 $ for  $i = 1, 2$. Both the problem~\eqref{eq:toda} and the energy functional $J_{\rho}$ exhibit invariance under the addition of a constant; thus, our assumption does not impose a significant restriction. 
	
	Given integers $ m \geq k \geq 0 $, consider a sequence $ \rho^n := (\rho^n_1, \rho^n_2) $ where $ \rho^n_2 = \rho_2$ fixed and $ \rho^n_1 $ approaches $ 2\pi(m+k) $. We aim to construct a sequence of solutions $ u^n = (u^n_1, u^n_2) $ for which $ u^n_1 $ exhibits partial blow-ups while $ u^n_2 $ remains bounded from above. To facilitate this, we introduce the following configuration set:
	\begin{eqnarray*}
		\Xi_{k,m}:=\intsigma^k\times(\partial\Sigma)^{m-k}\setminus \De,
	\end{eqnarray*}
	where $	\De:=\left\{\xi=(\xi_1,\cdots,\xi_m): \xi_i=\xi_j\text{ for some } i=j \right\}. $
	For any $\varepsilon>0$, we define 
	\begin{eqnarray*}
		&	\Xi_{k,m}^{\varepsilon}:= \left\{\xi\in\Xi_{k,m}: d_g(\xi_i,\partial\Sigma)\geq \varepsilon \text{ for } i=1,\cdots,k;  d_g(\xi_i,\xi_j)\geq \varepsilon \text{ for } i\neq j \right\}.&
	\end{eqnarray*}
	We define the following function:
	\begin{equation}
		\label{eq:def_reduced_function}
		\Lambda_{k,m}: \Xi_{k,m}\rightarrow\R, \xi\mapsto \frac 1 2 I_{\xi}(z(\cdot,\xi)) -\frac 1 4 \cF_{k,m}(\xi),
	\end{equation}
	where $\cF_{k,m}$ is a $m$-vortices Kirchhoff-Routh type function (see~\cite{lin1941motion,Bartsch2017TheMP,Ahmedou-Bartsch-Fiernkranz:2023}), i.e. 
	\begin{eqnarray}
		\label{eq:F_km}\cF_{k,m}(\xi)= \sum_{j=1}^m \varrho^2(\xi_j)R^g(\xi_j)+ \sum_{j'\neq j} \varrho(\xi_j)\varrho(\xi_{j'})G^g(\xi_j,\xi_{j'})+ \sum_{j=1}^m 2 \varrho(\xi_j)\log V_1(\xi_j),
	\end{eqnarray}
	where $ G^g $ is the Green's function and $ R^g $ is the Robin's function (for details, refer to Section~\ref{sec:pre}). 
	To address the second component, we introduce solutions of the following singular mean-field equations: 
	\begin{equation}\label{eq:singular_mf}
		\left\{
		\begin{aligned}
			-\Delta_g z(x) &= 2\rho_2 \left( \frac{ \tilde{V}_2(x,\xi)e^{z(x)}}{\int_{\Sigma} \tilde{V}_2(\cdot,\xi)e^{z}\, dv_g }- 1 \right) & x \in \Sigma \\
			\partial_{\nu_g}	z(x) &= 0 & x \in \partial\Sigma
		\end{aligned}
		\right.,
	\end{equation}
	where $\xi \in\Xi_{k,m}$ and
	\begin{equation*}
		\tilde{V}_2(x, \xi) = V_2(x) e^{- \sum_{i=1}^{m}\frac 1 2\varrho(\xi_i) G^g(x,\xi_{i})},
	\end{equation*}
	where $G^g$ is the Green's function (for details, refer to Section~\ref{sec:pre}). 
	For $2\rho_2< 4\pi$, the energy function
	\[ I_{\xi}(z)=\frac 1 2 \int_{\Sigma} |\nabla z|^2_g \, dv_g -2\rho_2 \log \left( \int_{\Sigma} \tilde{V}_2(\cdot,\xi)e^{z} \, dv_g\right)\]
	is coercive by the Moser-Trudinger inequality. Standard variational analysis indicates that~\eqref{eq:singular_mf} admits a solution in $\oH.$ However, the non-degeneracy of the solutions is not guaranteed. To tackle this issue, additional conditions are needed.
	
	As in~\cite{lee_degree_2018}, for fixed $\rho_2\notin 2\pi \N$ and some $\alpha\in  (0,1)$, we consider the following  shadow system in the space
	$ C^{2,\alpha}(\Sigma)\times \Xi_{k,m}$ with $\int_{\Sigma} w\, dv_g =0$:
	\begin{equation}\label{eq:shadow}
		\left\{
		\begin{array}{ll}
			-\Delta_g w =2\rho_2 \left( \frac{V_2 e^{ w - \sum_{j=1}^m  \frac {\varrho(\xi_j)}2 G^g(x,\xi_j)}} {\int_{\Sigma} V_2 e^{ w - \sum_{j=1}^m  \frac {\varrho(\xi_j)}2 G^g(x,\xi_j)}\, dv_g  } - 1\right)& \quad \text{ in } \intsigma\\
			\partial_{\nu_g} w=0 & \quad\text{ on } \partial\Sigma\\
			\partial_{x_i} f_0(x)|_{x=\xi} = 0& \quad\text{ in } \Xi_{k,m}
		\end{array}
		\right.,
	\end{equation}
	where	\[
	\begin{array}{ll}
		f_0(x_1, x_2, \ldots, x_m)=& \cF_{k,m}(x)
		-\sum_{j=1}^m\varrho(x_j) w(x_j). 
	\end{array}
	\]
	
	\begin{itemize}
		\item [(C1)]\label{item:G}{\it There exists a non-degenerate solution $(w,\xi)\in C^{2,\alpha}(\Sigma)\times \Xi_{k,m}$ with $\int_{\Sigma} w\, dv_g=0$ of the shadow system~\eqref{eq:shadow} for positive potential function $(V_1, V_2)\in  C^{2,\alpha}(\Sigma, \R_+)\times C^{2,\alpha}(\Sigma,\R_+) $. }
	\end{itemize}
	The hypothesis states the existence of non-degenerate solutions for the shadow system. 
	\begin{thm}
		~\label{thm:main}
		Give integers $m\geq k\geq 0$ and $\rho_2\notin 2\pi \N$.  If the hypothesis \hyperref[item:G]{(C1)} is satisfied for  $(w,\xi)$, then
		there exist an open neighborhood of $\xi$ denoted by $\Dc (\subset \Xi_{k,m})$, a sequence $\xi^n = (\xi^n_1, \ldots, \xi^n_m) \in \Dc$ converging to  $\xi:=(\xi_1,\cdots,\xi_m)$, and a family of solutions $\bu^n = (u^n_1, u^n_2)$ of the Toda system~\eqref{eq:toda} corresponding to the parameter $\rho^n = (\rho^n_1, \rho_2) \rightarrow (2\pi(k+m), \rho_2)$. As $n \rightarrow +\infty$, we have
		\begin{equation}
			\label{eq:expansion_u1}
			2\rho
			^n_1\frac{ V_1e^{u^{n}_1}} { \int_{\Sigma} V_1 e^{u^{n}_1} dv_g} \rightarrow \sum_{j=1}^m \varrho(\xi_j)\delta_{\xi_j}, 
		\end{equation}
		which is convergent in sense of measure on $\Sigma$,
		and 
		\begin{eqnarray}
			\label{eq:expansion_u2} u^n_2&=&z(\cdot,\xi^n)-\frac 1 2\sum_{j=1}^m  \varrho(\xi_j) G^g(\cdot,\xi^n_j)+o(1),
		\end{eqnarray}
		which is convergent in $\oH$. 
	\end{thm}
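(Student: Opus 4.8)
The plan is to realize the blow-up solutions of~\eqref{eq:toda} as critical points of the Euler--Lagrange functional $J_{\rho^n}$ via a Lyapunov--Schmidt finite-dimensional reduction anchored at an explicitly constructed family of approximate solutions. First I would build the ansatz: for a configuration $\eta=(\eta_1,\dots,\eta_m)\in\Xi_{k,m}$ close to $\xi$ and a concentration parameter $\delta=\delta(n)\to 0$, set the first component to be a sum of $m$ projected standard bubbles $PU_{\delta,\eta_j}$ (the Liouville bubbles $U_{\delta,\eta_j}(x)=\log\frac{8\delta^2}{(\delta^2+|x-\eta_j|^2)^2}$ transplanted via normal coordinates and corrected to have zero mean and Neumann data), with the relation between $\delta$ and $\rho^n_1-2\pi(k+m)$ chosen so that the nonlinear term has the right mass $\varrho(\eta_j)$ at each point; and set the second component to $W_{\eta}:=w(\cdot,\eta)-\tfrac12\sum_j\varrho(\eta_j)G^g(\cdot,\eta_j)$ where $w(\cdot,\eta)$ solves the first equation of the shadow system~\eqref{eq:shadow} for the frozen configuration $\eta$. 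One must check that $(\sum_j PU_{\delta,\eta_j},W_\eta)$ solves~\eqref{eq:toda} up to an error that is small in the natural dual norm, uniformly for $\eta\in\Xi^{\varepsilon}_{k,m}$; the cross-coupling term $-\rho^n_1(\dots)$ in the second equation is precisely what produces the Green's-function correction in $W_\eta$, so the error there is controlled by the regularity of $w$ coming from (C1).

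Second, I would set up the linearized operator $L_n$ at the ansatz, acting on $\cH$, and prove its invertibility modulo the approximate kernel. The kernel of the model Liouville linearization at each bubble is spanned by the functions $\partial_\delta U$, $\partial_{(\eta_j)_1}U$, $\partial_{(\eta_j)_2}U$ (with the tangential directions only, when $\eta_j\in\partial\Sigma$, accounting for the dimension count $\dim\Xi_{k,m}=2k+(m-k)=k+m$ plus the scaling parameter), while the second component's linearization is the linearization of the singular mean-field equation~\eqref{eq:singular_mf}, which is invertible on $\oH$ because $2\rho_2<4\pi$ gives coercivity — here, crucially, we do \emph{not} need non-degeneracy of $z(\cdot,\xi)$ itself, only of the coupled shadow system, and the decoupling at leading order (the first component concentrates to a sum of Dirac masses and drops out of the second equation in the limit) is what makes the two linearizations essentially block-triangular. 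Standard arguments (Fredholm alternative, a priori estimates in weighted or $L^p$ norms, as in \cite{DAprile_Pistoia_Ruiz2015}) then give a uniformly bounded inverse on the complement of the approximate kernel, and the contraction mapping theorem yields a unique small $\phi_n=\phi_n(\delta,\eta)\perp$ approximate kernel such that $(\sum_j PU_{\delta,\eta_j}+\phi_{n,1},W_\eta+\phi_{n,2})$ solves~\eqref{eq:toda} up to a Lagrange-multiplier term lying in the span of the approximate kernel functions.

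Third comes the reduced finite-dimensional problem. The true solution is obtained once the $(k+m+1)$ Lagrange multipliers vanish, equivalently once the reduced functional $\Phi_n(\delta,\eta):=J_{\rho^n}\big(\text{ansatz}+\phi_n\big)$ is stationary in $(\delta,\eta)$. The expansion of $J_{\rho^n}$ along the ansatz should yield, after absorbing the $n$-dependence into the definition of $\delta$ and discarding lower-order terms,
\begin{equation*}
	\Phi_n(\delta,\eta) = c_n + \big(2\pi(k+m)-\rho^n_1\big)\Psi(\delta) + \Lambda_{k,m}(\eta) + o(1),
\end{equation*}
uniformly on $\Xi^{\varepsilon}_{k,m}$ and for $\delta$ in a suitable range, where $\Lambda_{k,m}$ is exactly the function~\eqref{eq:def_reduced_function} built from the Kirchhoff--Routh functional $\cF_{k,m}$ and the shadow energy $I_\xi(z(\cdot,\xi))$; the $\delta$-variable decouples and is fixed by the parameter relation, while the $\eta$-variable must be a critical point of $\Lambda_{k,m}$. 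Hypothesis (C1) says precisely that $\xi$ is a non-degenerate critical point of $\Lambda_{k,m}$ — indeed, unwinding~\eqref{eq:shadow}, the equation $\partial_{x_i}f_0(x)|_{x=\xi}=0$ is the criticality condition and the coupled non-degeneracy is the invertibility of its Jacobian together with that of the mean-field part — so by a degree/implicit-function argument the perturbed reduced functional $\Phi_n$ has, for $n$ large, a critical point $\eta=\xi^n$ in a fixed neighborhood $\Dc$ of $\xi$ with $\xi^n\to\xi$. Feeding $(\delta(n),\xi^n)$ back into the ansatz gives the solution family $\bu^n$; the concentration statement~\eqref{eq:expansion_u1} follows from the mass-normalization built into $\delta$, and~\eqref{eq:expansion_u2} follows since $\phi_{n,2}\to 0$ in $\oH$ and $W_{\xi^n}\to z(\cdot,\xi)-\tfrac12\sum_j\varrho(\xi_j)G^g(\cdot,\xi_j)$ by continuity of $\eta\mapsto w(\cdot,\eta)$.

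The main obstacle, I expect, is the boundary analysis: making the projected bubbles respect the Neumann condition and carrying the reduction uniformly up to $\partial\Sigma$, where the local mass drops from $8\pi$ to $4\pi$, the relevant half-space Liouville problem replaces the whole-plane one, and the reflected Green's function / boundary Robin function enter $\cF_{k,m}$. Controlling the interaction errors between interior bubbles and boundary bubbles, and ensuring the approximate-kernel dimension count is correct when some $\eta_j$ sit on $\partial\Sigma$ (tangential translations only), requires careful use of Fermi coordinates and even reflections, and is the technically heaviest part; the decoupling of the two components, by contrast, is robust because the first component's contribution to the second equation is $O(1)$ in $\oH$ and converges to the Green's-function source already accounted for in $\tilde V_2$.
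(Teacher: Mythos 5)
Your overall architecture (bubble ansatz, Lyapunov--Schmidt, reduced energy converging to $\Lambda_{k,m}$) is the same as the paper's, but two of your key steps would fail as stated.

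First, the linear theory. You claim the linearization in the second component is invertible ``because $2\rho_2<4\pi$ gives coercivity'' and that non-degeneracy of $z(\cdot,\xi)$ itself is not needed. Theorem~\ref{thm:main} only assumes $\rho_2\notin 2\pi\N$ (Corollary~\ref{cor:2}(b) explicitly uses $\rho_2>2\pi$), so coercivity of $I_\xi$ is not available in general; and even when it is, coercivity gives existence of a minimizer, not triviality of the kernel of the linearized problem \eqref{eq:linear_u_2} at that solution. In the paper's proof of the invertibility estimate (Lemma~\ref{lem:invertible}, Step~1), the weak limit $\phi_2^*$ of the second component solves exactly \eqref{eq:linear_u_2}, and one concludes $\phi_2^*=0$ only by invoking hypothesis (C), which the paper derives from (C1). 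Since the finite-dimensional kernel $K_\xi$ used in the reduction contains only first-component modes $PZ^i_j$, any nontrivial kernel of the mean-field linearization would sit outside your approximate kernel and destroy the uniform a priori bound $\|\bphi\|\le C|\log\lambda|\,\|\bh\|_p$; your ``block-triangular'' argument does not close this, and you never explain how the coupled non-degeneracy of the shadow system would substitute for it at this stage.

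Second, the ansatz for the first component. You take $u_1\approx\sum_j PU_{\delta,\eta_j}$ with no correction, but the first equation of \eqref{eq:toda_equi} contains the cross term $-\rho_2\big(V_2e^{u_2}/\int_\Sigma V_2e^{u_2}\,dv_g-1\big)$, which with your $W_\eta$ in the second slot is of order one (in any $L^p$) and is not matched by $-\Delta_g\sum_j PU_j$. The paper's approximation $W_{1,\lambda}=\sum_j PU_j-\tfrac12 z(\cdot,\xi)$ is designed precisely so that $-\tfrac12\Delta_g z$ cancels this term (and the constant $-\tfrac12 z(\xi_j,\xi)$ then enters the choice of $d_j$ in \eqref{eq:def_d_ij}); without it the error $\cR_{\xi,\lambda}$ does not tend to zero and the contraction argument and the reduced-energy expansion both break down. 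A more minor divergence: you keep the dilation parameter $\delta$ (and a dilation kernel mode) as a reduction variable and try to prescribe $\rho_1^n$ directly, whereas the paper fixes $\delta_j=d_j\lambda^{1/2}$ with $d_j$ explicit, runs the reduction only in $\xi$, and defines $\rho_1^n=\lambda_n\int_\Sigma V_1e^{u_1^n}dv_g$ a posteriori; your claimed decoupled expansion $\Phi_n=c_n+(2\pi(k+m)-\rho_1^n)\Psi(\delta)+\Lambda_{k,m}(\eta)+o(1)$ is unsubstantiated and is delicate in the normalized formulation, where adding constants to $u_1$ is a symmetry.
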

	For sufficiently small values of the parameter $\rho_2\in (0,2\pi)$, we can deduce the uniqueness and non-degeneracy~\eqref{eq:singular_mf} by the implicit theorem (refer to Section~\ref{sec:pre}). On the other hand, $\cF_{k,m}(\xi)\rightarrow +\infty$ as $\xi\rightarrow \partial \Xi_{k,m}$, which implies that the global minimum point of  $\Lambda_{k,m}$ exits and lies in the interior of $\Xi_{k,m}$. So, for $\rho_2\in (0,2\pi)$, the hypothesis~\hyperref[item:G]{(C1)} holds true. It leads to the following corollary:
	\begin{cor}\label{cor:1}
		Given integers $m\geq k\geq 0$.  There exists an open subset $\Dc\subset\overline{\Dc}\subset \Xi_{k,m}$ and  $\rho_0\in (0,2\pi)$ sufficiently small such that for any fixed $\rho_2\in (0, \rho_0)$,   a family of solutions $\bu^n = (u^n_1, u^n_2)$ of the Toda system~\eqref{eq:toda} corresponding to the parameter $\rho^n = (\rho^n_1, \rho_2) \rightarrow (2\pi(k+m), \rho_2)$ can be constructed and  they blow up at $k$ points in the interior and $(m-k)$ points on the boundary.
		
		Moreover, there exists a sequence  $\xi^n\in \Xi_{k,m}$ converges to some $\xi\in \overline{\Dc}$ such that  \eqref{eq:expansion_u1} is valid for 
		$\rho^n_1$ and $u^n_1$, 
		\eqref{eq:expansion_u2} is valid for $\rho_2$ and $u^n_2$, and 
		\[ \inf_{\Xi_{k,m}} \Lambda_{k,m}= \Lambda_{k,m}(\xi). \]
	\end{cor}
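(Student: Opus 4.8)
\medskip
\noindent\emph{Proof of Corollary~\ref{cor:1} --- strategy.}
Fix $\rho_0>0$ small, to be specified below. The proof rests on three points: \textup{(I)} for $\rho_2\in(0,\rho_0)$ the singular mean-field equation \eqref{eq:singular_mf} has, for every $\xi\in\Xi_{k,m}$, a unique solution $z(\cdot,\xi)$, which depends smoothly on $(\rho_2,\xi)$, is the global minimiser of the coercive $I_\xi$, and is non-degenerate; \textup{(II)} the reduced function $\Lambda_{k,m}$ attains its infimum over $\Xi_{k,m}$ at an interior point $\xi^{*}$; \textup{(III)} the Lyapunov--Schmidt reduction that underlies Theorem~\ref{thm:main}, run over a neighbourhood $\overline{\Dc}\subset\Xi_{k,m}$ of $\xi^{*}$ and in its variational form, produces, as $\rho^{n}_1\to 2\pi(k+m)$, the asserted family $\bu^{n}$ with $\xi^{n}\to\xi\in\overline{\Dc}$ and $\Lambda_{k,m}(\xi)=\inf_{\Xi_{k,m}}\Lambda_{k,m}$. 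Points \textup{(I)}--\textup{(II)} are what makes hypothesis \hyperref[item:G]{(C1)} hold at $\xi^{*}$ up to the non-degeneracy of the $\xi$-block, which \textup{(III)} circumvents by working directly with the reduced energy in the regime $\rho_2$ small.

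\emph{Point \textup{(I)}.} At $\rho_2=0$, \eqref{eq:singular_mf} reads $-\Delta_g z=0$ in $\intsigma$, $\partial_{\nu_g}z=0$ on $\partial\Sigma$, whose only solution with $\int_\Sigma z\,dv_g=0$ is $z\equiv 0$; the linearisation there is $-\Delta_g$, an isomorphism from $\oH$ onto its dual (equivalently, from the mean-zero Neumann $C^{2,\alpha}$-space onto mean-zero $C^{0,\alpha}$). The residual map $F(\rho_2,\xi,z)$ obtained by transposing all terms of \eqref{eq:singular_mf} to one side is $C^{1}$ in all variables: because of the precise exponents encoded in $\varrho(\cdot)$, the vortex weights $e^{-\frac12\varrho(\xi_i)G^g(\cdot,\xi_i)}$ behave like $d_g(\cdot,\xi_i)^{2}$ near $\xi_i$, so $\tilde V_2(\cdot,\xi)$ stays bounded in $C^{0,\alpha}(\Sigma)$ uniformly in $\xi$ and depends $C^{1}$ on $\xi$, locally uniformly on $\Xi_{k,m}$. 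The implicit function theorem (see Section~\ref{sec:pre}) then gives $\rho_0\in(0,2\pi)$ so that for $\rho_2\in(0,\rho_0)$ and every $\xi\in\Xi_{k,m}$ there is a unique small solution $z(\cdot,\xi)$, smooth in $(\rho_2,\xi)$, with the linearisation at $z(\cdot,\xi)$ still an isomorphism. Since for $\rho_2$ small every solution of \eqref{eq:singular_mf} has arbitrarily small $C^{2,\alpha}$-norm (elliptic estimates and bootstrap), this is the unique solution; as $2\rho_2<4\pi$ makes $I_\xi$ coercive, $z(\cdot,\xi)$ is the minimiser of $I_\xi$, and it is non-degenerate.

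\emph{Point \textup{(II)}.} By \textup{(I)} and the smoothness of $\cF_{k,m}$ on $\Xi_{k,m}$, $\Lambda_{k,m}\in C^{1}(\Xi_{k,m})$. The energy term $\tfrac12 I_\xi(z(\cdot,\xi))=\tfrac12\inf_z I_\xi$ is bounded uniformly on $\Xi_{k,m}$: from above by $\tfrac12 I_\xi(0)=-\rho_2\log\int_\Sigma\tilde V_2(\cdot,\xi)\,dv_g$, and $\int_\Sigma\tilde V_2(\cdot,\xi)\,dv_g$ stays bounded away from $0$ and $\infty$ because $\tilde V_2(\cdot,\xi)\le C$ uniformly while $\tilde V_2(\cdot,\xi)\ge c>0$ outside arbitrarily small balls about $\xi_1,\dots,\xi_m$; and from below by the Moser--Trudinger inequality, uniformly for $\rho_2<\rho_0$ since $\|\tilde V_2(\cdot,\xi)\|_{L^\infty}$ is uniformly bounded. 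On the other hand $\cF_{k,m}(\xi)$ becomes singular as $\xi\to\partial\Xi_{k,m}$, through the logarithmic divergence of $G^g(\xi_i,\xi_j)$ when two of the $\xi_i$ coalesce and of the Robin function $R^g(\xi_i)$ when one of $\xi_1,\dots,\xi_k$ reaches $\partial\Sigma$; as recorded in the discussion preceding the corollary, the upshot is $\Lambda_{k,m}(\xi)\to+\infty$ as $\xi\to\partial\Xi_{k,m}$. Hence the sublevel sets $\{\Lambda_{k,m}\le c\}$ are compact subsets of $\Xi_{k,m}$ for $c$ large, $\Lambda_{k,m}$ is bounded below, and its infimum is attained at an interior point $\xi^{*}\in\Xi_{k,m}=\intsigma^k\times(\partial\Sigma)^{m-k}\setminus\De$, which records $k$ interior and $m-k$ boundary points. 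Pick $c>\Lambda_{k,m}(\xi^{*})$ so small that the connected component $\Dc$ of $\{\xi\in\Xi_{k,m}:\Lambda_{k,m}(\xi)<c\}$ containing $\xi^{*}$ has $\overline{\Dc}\subset\Xi_{k,m}$ compact and $\Lambda_{k,m}\equiv c$ on $\partial\Dc$.

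\emph{Point \textup{(III)} and the main difficulty.} Running the Lyapunov--Schmidt scheme of Theorem~\ref{thm:main} over the configuration set $\overline{\Dc}$, with $\rho^{n}=(\rho^{n}_1,\rho_2)$, $\rho^{n}_1\to 2\pi(k+m)$, and the second component modelled on $z(\cdot,\xi)-\tfrac12\sum_{j}\varrho(\xi_j)G^g(\cdot,\xi_j)$: one solves the equations transverse to the family of centres (the $z$-direction being controlled by the non-degeneracy from \textup{(I)}), obtaining a reduced functional $\widetilde J_n$ on $\overline{\Dc}$ with $\widetilde J_n=a_n+\Lambda_{k,m}+o(1)$ in $C^{1}(\overline{\Dc})$. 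Since $\Lambda_{k,m}$ has its minimum over $\overline{\Dc}$ at the interior point $\xi^{*}$ and equals $c>\Lambda_{k,m}(\xi^{*})$ on $\partial\Dc$, for $n$ large $\widetilde J_n$ also attains its minimum over $\overline{\Dc}$ at an interior point $\xi^{n}$, so $\nabla\widetilde J_n(\xi^{n})=0$; by construction of the reduction this yields a genuine solution $\bu^{n}=(u^{n}_1,u^{n}_2)$ of \eqref{eq:toda}. Along a subsequence $\xi^{n}\to\xi\in\overline{\Dc}$ with $\Lambda_{k,m}(\xi)=\inf_{\overline{\Dc}}\Lambda_{k,m}=\inf_{\Xi_{k,m}}\Lambda_{k,m}$ (the last equality since $\xi^{*}\in\Dc$), the profiles \eqref{eq:expansion_u1}--\eqref{eq:expansion_u2} follow from the form of the ansatz, and $u^{n}_1$ blows up exactly at the $k$ interior and $m-k$ boundary points recorded by $\xi$. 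I expect the main obstacle to be the quantitative core of \textup{(III)}: the error estimates of the reduction and the expansion $\widetilde J_n=a_n+\Lambda_{k,m}+o(1)$ must be established \emph{uniformly} over $\xi\in\overline{\Dc}$ and as $\rho^{n}_1\to 2\pi(k+m)$ --- the analytic heart already needed for Theorem~\ref{thm:main}. What makes the corollary valid for \emph{every} positive $(V_1,V_2)$, not merely generic ones, is that only a strict interior minimum of $\Lambda_{k,m}$ over $\overline{\Dc}$ is used, never non-degeneracy or isolatedness of $\xi^{*}$.
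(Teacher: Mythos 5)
Your proposal follows essentially the same route as the paper's own proof: small-$\rho_2$ uniqueness and non-degeneracy of $z(\cdot,\xi)$ via the implicit function theorem (Lemma~\ref{lem:hypo_H_small_rho2}), a uniform bound on $I_{\xi}(z(\cdot,\xi))$ (Moser--Trudinger below, testing with $z=0$ and Jensen above) combined with the divergence of $\cF_{k,m}$ at $\partial\Xi_{k,m}$ to confine the extremal set of $\Lambda_{k,m}$ to a compact interior region $\Dc$, and then the finite-dimensional reduction underlying Theorem~\ref{thm:main} applied to this stable minimum set rather than to a non-degenerate critical point. The only notable difference is cosmetic: the paper fixes $\Dc$ through $\cF_{k,m}$ and the $\rho_2$-uniform constant $C_0$ \emph{before} choosing $\rho_0$, so that $\Dc$ is independent of $\rho_2$ as the statement requires, whereas your sublevel-set $\Dc$ depends on $\rho_2$; your own uniform bound on $I_{\xi}$ repairs this immediately.
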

	
	Applying a transversality theorem, the shadow system~\eqref{eq:shadow} is non-degenerate for generic positive function $(V_1, V_2)\in  C^{2,\alpha}(\Sigma, \R_+)\times C^{2,\alpha}(\Sigma,\R_+)$ (refer to Theorem~\ref{thm:residual}). We select the positive function
	$(V_1, V_2)$ such that ~\eqref{eq:shadow} is  non-degenerate for any solutions $(w,\xi)$. Using the method of continuity, we deduce the existence of a solution  $(w,\xi)$ of the shadow system~\eqref{eq:shadow}, leading to the formulation of our second corollary:
	\begin{cor}\label{cor:2}
		Suppose $m \geq k \geq 0$ are integers and $\alpha\in (0,1)$. For  $(V_1, V_2)$ in a dense subset of $C^{2,\alpha}(\Sigma, \R_+) \times C^{2,\alpha}(\Sigma, \R_+)$ if one of following conditions holds:\\ 
		\begin{itemize}
			\item 	[a)]  $\rho_2\in (0,2\pi)$,
			\item 
			[b)] $ \chi(\Sigma)= 2-2\rg+ \rb<1$ and $\rho_2\in (2\pi, +\infty)\setminus 2\pi \N_+$, where $\rg$ is the genus of the surface $\Sigma$ and $\rb$ is the number of connected components of the boundaries $\partial\Sigma$,
		\end{itemize}
	then the conclusions of Theorem~\ref{thm:main} 
		hold.
	\end{cor}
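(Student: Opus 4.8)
\textbf{Proof proposal for Corollary~\ref{cor:2}.}

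The plan is to establish hypothesis \hyperref[item:G]{(C1)} in each of the two regimes, and then invoke Theorem~\ref{thm:main} directly. The structural idea is the same in both cases: combine the genericity statement (Theorem~\ref{thm:residual}), which gives a dense — indeed residual — set of potentials $(V_1,V_2)$ for which \emph{every} solution of the shadow system \eqref{eq:shadow} is non-degenerate, with an existence mechanism that produces at least one solution of \eqref{eq:shadow} for such potentials. Once a solution $(w,\xi)$ exists and is automatically non-degenerate by the choice of $(V_1,V_2)$, (C1) holds and Theorem~\ref{thm:main} applies verbatim.

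For case~a), $\rho_2\in(0,2\pi)$, existence of a shadow solution is essentially contained in the discussion preceding Corollary~\ref{cor:1}: for $2\rho_2<4\pi$ the functional $I_\xi$ is coercive, so \eqref{eq:singular_mf} has a solution $z(\cdot,\xi)$ depending smoothly on $\xi$ (for $\rho_2$ small, uniquely, by the implicit function theorem; in general one must at least extract a branch that is $C^1$ in $\xi$), and then $\Lambda_{k,m}(\xi)=\tfrac12 I_\xi(z(\cdot,\xi))-\tfrac14\cF_{k,m}(\xi)$ is a well-defined reduced function on $\Xi_{k,m}$. Since $\cF_{k,m}(\xi)\to+\infty$ as $\xi\to\partial\Xi_{k,m}$ (the Robin and Green terms blow up as two points collide or a point hits the boundary in the wrong stratum), $\Lambda_{k,m}$ attains its infimum at an interior critical point $\xi_\ast$; unwinding the definitions, $(w_\ast,\xi_\ast)$ with $w_\ast=z(\cdot,\xi_\ast)+\tfrac12\sum_j\varrho(\xi_{\ast,j})G^g(\cdot,\xi_{\ast,j})$ minus its mean solves \eqref{eq:shadow}, because the first two equations are exactly the Euler--Lagrange equation of \eqref{eq:singular_mf} rewritten in the $w$ variable, and the third is the criticality condition $\partial_{x_i}f_0|_{x=\xi_\ast}=0$, which coincides with $\nabla\Lambda_{k,m}(\xi_\ast)=0$. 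Choosing $(V_1,V_2)$ in the residual set then forces non-degeneracy, giving (C1).

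For case~b), when $\rho_2\in(2\pi,+\infty)\setminus2\pi\N_+$ the functional $I_\xi$ is no longer coercive and the variational argument above fails, so existence of a shadow solution must be obtained by a \emph{continuation} (method of continuity) argument in the parameter $\rho_2$. Fix $(V_1,V_2)$ in the residual set of Theorem~\ref{thm:residual}. Consider the set $\cR\subset(0,+\infty)\setminus2\pi\N_+$ of values $\rho_2$ for which \eqref{eq:shadow} has a solution; by case~a) it contains $(0,2\pi)$, so it is non-empty. One shows $\cR$ is open: at a non-degenerate solution the shadow system is a $C^1$ Fredholm map of index zero with invertible linearization, so the implicit function theorem continues the solution to nearby $\rho_2$. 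For closedness within each component $(2\pi\ell,2\pi(\ell+1))$ one needs an \emph{a priori} bound ruling out blow-up of the shadow solutions as $\rho_2$ stays in a compact subinterval away from $2\pi\N_+$; this is where the topological hypothesis $\chi(\Sigma)<1$ enters, via a Leray--Schauder degree computation for \eqref{eq:shadow} (cf.\ the degree-counting framework of \cite{lee_degree_2018}): when $\chi(\Sigma)<1$ the degree is non-zero for all $\rho_2\in(2\pi\ell,2\pi(\ell+1))$, which both prevents the solution set from escaping to infinity and guarantees a solution at every such $\rho_2$. Combining non-emptiness, openness, and the degree-theoretic non-vanishing along each admissible interval yields a solution $(w,\xi)$ of \eqref{eq:shadow} for every $\rho_2\in(2\pi,+\infty)\setminus2\pi\N_+$; non-degeneracy is free from the choice of $(V_1,V_2)$, so (C1) holds and Theorem~\ref{thm:main} gives the conclusion.

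The main obstacle is case~b): controlling the shadow solutions along the continuation. Unlike the scalar mean-field equation, here the nonlinearity carries the $\xi$-dependent weight $\tilde V_2(\cdot,\xi)$ \emph{and} the vortex points $\xi$ are themselves unknowns coupled through $f_0$, so one must rule out simultaneously the concentration of $w$ and the degeneration $\xi\to\partial\Xi_{k,m}$. The $\xi$-part is handled by the coercivity $\cF_{k,m}(\xi)\to+\infty$ as above (which keeps $\xi$ in a compact subset of $\Xi_{k,m}$), but the $w$-part requires a Brezis--Merle / Li--Shafrir type quantization for the singular equation with the precise mass values forced to lie in $4\pi\N$ at interior points and $2\pi\N$ on the boundary, and then the constraint $\rho_2\notin2\pi\N_+$ excludes concentration — this quantization-plus-degree package, rather than any single estimate, is the technical heart of the argument.
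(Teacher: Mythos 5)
Your overall skeleton---pick $(V_1,V_2)$ in the residual set given by Theorem~\ref{thm:residual} so that \emph{every} solution of the shadow system is non-degenerate, produce one solution, and feed hypothesis (C1) into Theorem~\ref{thm:main}---is exactly the paper's. The genuine gap is in how you produce a solution of \eqref{eq:shadow}, most seriously in case~b). There you run a continuation in $\rho_2$ and close it with the assertion that the Leray--Schauder degree of the coupled shadow system (Neumann conditions, surface with boundary, the vortex points $\xi$ themselves unknowns) is non-zero on each interval $(2\pi\ell,2\pi(\ell+1))$ when $\chi(\Sigma)<1$. No such degree formula is proved in the paper or available off the shelf; establishing it would be a substantial result in its own right, and it is precisely what the paper's argument avoids. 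Proposition~\ref{prop:ex_sols_shadow} instead deforms the \emph{coupling parameter} $t\in[0,1]$ in \eqref{eq:shadow_t}: at $t=1$ the system decouples, the PDE part is the singular mean field equation whose solvability for $\rho_2\in(2\pi,+\infty)\setminus 2\pi\N_+$ under $\chi(\Sigma)<1$ is imported from \cite[Theorem~1.2]{Hu2024} (and from Moser--Trudinger coercivity for $\rho_2<2\pi$), while the balance condition reduces to $\nabla\cF_{k,m}(\xi)=0$, solved by a minimizer of $\cF_{k,m}$; the method of continuity in $t$ then uses the a priori bounds of Proposition~\ref{prop:cpt_shadow} (the Brezis--Merle/Pohozaev quantization you correctly identify, with $\rho_2\notin2\pi\N$) for closedness and the implicit function theorem for openness, with genericity applied along rational values of $t$. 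Note also that your continuation in $\rho_2$ would require non-degeneracy along the whole $\rho_2$-path, i.e.\ an intersection of the residual sets of Theorem~\ref{thm:residual} over a countable dense set of $\rho_2$-values, which you never arrange (the paper does the analogous intersection over rational $t$).

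Case~a) as you write it also has a gap: to define $\Lambda_{k,m}$, differentiate it, and identify its critical points with shadow solutions you need the map $\xi\mapsto z(\cdot,\xi)$ to be (locally) well-defined and $C^1$ together with the identity of Lemma~\ref{lem:pa_I_xi}; for $\rho_2\in(0,2\pi)$ not small, solutions of \eqref{eq:singular_mf} may be non-unique and degenerate at some $\xi$, and the genericity of $(V_1,V_2)$ only gives non-degeneracy of solutions of the \emph{full} shadow system, not of \eqref{eq:singular_mf} for every $\xi$, so ``extracting a $C^1$ branch'' is not justified---this is exactly the selection problem the paper's $t$-deformation sidesteps, since at $t=1$ the balance condition involves only $\cF_{k,m}$. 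Two smaller slips: the $w$-component of a shadow solution is $z(\cdot,\xi_\ast)$ itself (because $V_2e^{w-\sum_j\frac{\varrho(\xi_j)}{2}G^g(\cdot,\xi_j)}=\tilde V_2(\cdot,\xi)e^{w}$), not $z+\tfrac12\sum_j\varrho\,G^g$ minus its mean; and since $\cF_{k,m}\to+\infty$ at $\partial\Xi_{k,m}$ forces $\Lambda_{k,m}\to-\infty$ there, it is an interior maximum (not the infimum) of $\Lambda_{k,m}$ that the boundary behavior provides.
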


	\section{Preliminary}
	Throughout this paper, we use the terms ``sequence" and ``subsequence" interchangeably.  The constant denoted by $C$ in our deduction may assume different values across various equations or even within different lines of equations. 
	We also denote $B_r(y)=\{y\in\R^2: |y|<r\}$.
	For any $v\in L^1(\Sigma)$ and $\Omega\subseteq \Sigma$, we denote $$\dashint _{\Omega} v:= \frac 1 {|\Omega|_g} \int_{\Omega} v\, dv_g\text{ and } \bar{v}:= \dashint_{\Sigma} v.  $$ 
	\subsection{The isothermal coordinates and Green's functions} \label{sec:pre} 
	
	To construct the approximation solutions of problem~\eqref{eq:toda}, we firstly introduce a family of isothermal coordinates (see \cite{chern1955, Esposito2014singular,yang2021125440}, for instance). For any $\xi\in\intsigma$,  there exists an isothermal coordinate system $\left(U(\xi), y_{\xi}\right)$ such that $y_{\xi}$  maps an open  neighborhood $U(\xi)$ around $\xi$  onto an open disk $B^{\xi}$ with radius $2r_{\xi}$ in which $g=\sum_{i=1}^2 e^{\hat{\varphi}_\xi(y_{\xi}(x))}  \mathrm{d} x^i \otimes \mathrm{d} x^i$. Without loss of generality, we may assume that $y_{\xi}(\xi)=(0,0)$ and 
	$\overline{U(\xi)}\subset \intsigma$. 
	For $\xi\in \partial\Sigma$ there exists an isothermal coordinate system $\left(U(\xi), y_{\xi}\right)$ around $\xi$ such that the image of $y_{\xi}$ is a half disk $B^{\xi}:=\{ y=(y_1,y_2)\in \R^2: |y|<2r_{\xi}, y_2\geq 0\}$ with a radius $2r_{\xi}$ and   $y_{\xi}\left(U(\xi)\cap \partial \Sigma\right)= \{ y=(y_1,y_2)\in \R^2: |y|<2r_{\xi}, y_2=0 \}$ with  $g=\sum_{i=1}^2 e^{\hat{\varphi}_\xi(y_{\xi}(x))}  \mathrm{d} x^i \otimes \mathrm{d} x^i$. W.l.o.g., we can assume that  $y_{\xi}(\xi)=(0,0)$. 
	Here $\hat{\varphi}_\xi:B^\xi\to\R$ is related to $K$, the Gaussian curvature of $\Sigma$, by the equation
	\begin{equation}
		\label{eq:Gauss}
		-\Delta\hat{\varphi}_\xi(y) = 2K\big(y^{-1}_\xi(y)\big) e^{\hat{\varphi}_\xi(y)} \quad\text{for all } y\in B^\xi. 
	\end{equation}
	For $\xi\in \Sigma$ and $0<r\le 2r_\xi$ we set
	\[
	B_r^\xi := B^\xi \cap \{ y\in\R^2: |y|< r\}\quad \text{and}\quad U_{r}(\xi):=y_\xi^{-1}(B_{r}^{\xi}).
	\]
	
	Both $y_\xi$ and $\hat{\varphi}_\xi$ are assumed to depend smoothly on $\xi$ as in~\cite{Esposito2014singular}. Additionally, $\hat{\varphi}_\xi$ satisfies $\hat{\varphi}_\xi(0)=0$ and $\nabla\hat{\varphi}_\xi(0)=0$. 
	Specifically, as in~\cite{yang2021125440} the Neumann boundary conditions preserved by the isothermal coordinates as for any $\xi\in \partial\Sigma$ and  $x\in y_{\xi}^{-1}\left( B^{\xi} \cap \partial \mathbb{R}^{2}_+\right)$, we have
	\begin{equation}\label{eq:out_normal_derivatives}
		\left(y_{\xi}\right)_*(\nu_g(x))=\left. -e^{ -\frac{\hat{\varphi}_{\xi}(y)}2} \frac {\partial} { \partial y_2 }\right|_{	y=y_{\xi}(x)}.
	\end{equation}
	
	Let $\chi$ be a radial cut-off function in  $C^{\infty}(\mathbb{R}, [0,1])$ such that 
	\begin{equation}
		\label{eq:cut_off} \chi(s)=\left\{\begin{array}{ll}
			1,& \text{ if } |s|\leq 1\\
			0, &\text{ if } |s|\geq 2
		\end{array}\right. .
	\end{equation} 
	And for fixed $r_0<\frac  1 4 r_{\xi}$ we denote that  $\chi_{\xi}(x)= \chi( |y_{\xi}(x)|/r_0)$ and $\varphi_{\xi}(x)= \hat{\varphi}_{\xi} ( y_{\xi}(x))$. 
	For any $ \xi \in \Sigma $, we define the Green's function for~\eqref{eq:toda} by the following equations:
	\begin{equation}~\label{eq:green}
		\left\{\begin{array}{ll}
			-\Delta_g G^g(x,\xi)  = \delta_{\xi} - \frac{1}{|\Sigma|_g} & x\in \intsigma \\
			\partial_{ \nu_g } G^g(x,\xi) = 0 & x\in \partial \Sigma \\
			\int_{\Sigma} G^g(x,\xi)\, dv_g(x) = 0
		\end{array}\right.,
	\end{equation}
	where $ \delta_{\xi} $ is the Dirac mass on $ \Sigma $ concentrated at $ \xi $.
	
	Let the function
	\begin{equation*}
		\Gamma^g_{\xi}(x)=\Gamma^g(x,\xi)= \left\{ \begin{array}{ll}	\frac{1}{2\pi} \chi_{\xi}(x)\log\frac{1}{|y_{\xi}(x)|}& \text{ if }\xi\in \intsigma \\	{\frac{1}{\pi}}\chi_{\xi}(x)\log\frac{1}{|y_{\xi}(x)|}	& \text{ if } \xi\in\partial\Sigma \end{array}\right..
	\end{equation*}  
	Decomposing the Green's function  
	$ G^g(x,\xi)= \Gamma^g_{\xi}(x)+ H^g_{\xi}(x),$ we have the function $H^g_{\xi}(x):= H^g(x,\xi)$ solves the following equations: 
	\begin{equation}\label{eqR}
		\left\{\begin{array}{ccll}
			-\Delta_g H^g_\xi
			&=&\frac{4}{\varrho(\xi)} (\Delta_g \chi_{\xi}) \log \frac 1 {|y_{\xi}|}+ \frac{8}{\varrho(\xi)} \lan\nabla\chi_{\xi}, \nabla\log \frac 1 {|y_{\xi}|}\ran_g 	-\frac 1 {|\Sigma|_g}, & \text{ in }\intsigma\\
			\partial_{ \nu_g} H^g_\xi&=&- \frac{4}{\varrho(\xi)}(\partial_{ \nu_g} \chi_{\xi}) \log\frac{1}{|y_{\xi}|}-\frac{4}{\varrho(\xi)}\chi_{\xi}\partial_{ \nu_g}  \log\frac{1}{|y_{\xi}|}, &\text{ on }\partial \Sigma\\
			\int_{\Sigma} H^g_\xi \,dv_g&=& - \frac{4}{\varrho(\xi)}\int_{\Sigma}  \chi_{\xi}\log\frac{1}{|y_{\xi}|} dv_g &
		\end{array}\right..\end{equation}
	By the regularity of the elliptic equations (see\cite{Nardi2014}), there is a unique smooth solution  $H^g(x,\xi)$, which solves \eqref{eqR} in $C^{2,\alpha}(\Sigma)$. 
	$H^g(x,\xi)$ is the regular part of  $G^g(x,\xi)$ and 
	$R^g(\xi):=H^g(\xi,\xi)$ is so-called Robin's function on $\Sigma$. It is easy to check that $H^g(\xi,\xi)$ is independent of the choice of the cut-off function $\chi$ and the local chart.
	\subsection{The stable critical points set}
	To state the main results, we have to define a ``stable" critical point set. We use the following definition of stability of critical points (see~\cite{del_pino_singular_2005,Esposito2005,Li1997OnAS}),
	\begin{defi}
		Let $D \subset \intsigma^k \times (\partial\Sigma)^{m-k}\setminus \Delta$, where $k\leq m\in \mathbb{N}_+$ and $\Delta:=\{\xi=(\xi_1,\xi_2,...,\xi_m)\in \Sigma^m: \xi_i=\xi_j \text{ for some } i\neq j\}$. 
		{ And $F: D \rightarrow \mathbb{R}$ be a $C^{1}$-function and  $K$ be  {a compact subset of critical points} of $F$, i.e 
			\[ K\subset \subset \{ x\in D: \nabla F(x)=0\}. \] 
			A critical set $K$ is stable if for any closed neighborhood $U$ of $K$ in $ \intsigma^k \times(\partial\Sigma)^{m-k}\setminus \Delta$, there exists $\varepsilon>0$
			such that, if $G: D \rightarrow \mathbb{R}$ is a $C^{1}$-function with $\|F-G\|_{C^1(U)}<\varepsilon$, then G has at least one critical point in $U$.}
	\end{defi}
	\begin{rem}
		A compact subset of critical points of $F$ is stable if one of the following conditions holds: 
		\begin{itemize}
			\item[a).] $K$ is a strict local maximum set of $F$, i.e. for any $x, y \in K$, $F(x)=F(y)$  and  for some open neighborhood $U$ of $K$, $F(x)>F(y)$ for any $x \in K$ and $y \in U \backslash K$;
			\item[b).] $K$ is a strict local minimum set of $F$;
			\item[c).] $K$ is an isolated critical point set with a nontrivial local degree. 
		\end{itemize}
	\end{rem}
	
	\subsection{The properties of a shadow system}
	Next, we study some important properties of the shadow system~\eqref{eq:shadow}. 
	
	Let $C^{\alpha}_0(\Sigma)$ and $C_0^{s,\alpha}(\Sigma)$ be the spaces of H\"{o}lder continuous functions of class $C^{\alpha}$ and $C^{s,\alpha}$ with zero average over $\Sigma$ for any $s=1,2, \alpha \in (0,1)$, respectively.

	The shadow system comprises a singular mean field equation coupled with a balance condition. An intuitive approach is to first analyze the non-degeneracy of the singular mean field equation and then address the balance condition, specifically where $\nabla f_0=0.$ We introduce the following hypothesis, which was originally introduced by \cite{DAprile_Pistoia_Ruiz2015} to deal with Dirichlet boundary conditions, and have modified it here to be compatible with Neumann boundary value conditions:
	\begin{itemize}
		\item[(C)]\label{item:H}{\it For some  $\xi^0\in \Xi_{k,m}$, there exists   a non-degenerate solution of the problem~\eqref{eq:singular_mf}, i.e. there exists  a  solution of~\eqref{eq:singular_mf}, denoted by $z(\cdot, \xi^0)$,  and  the following linear problem:
			\begin{equation}\label{eq:linear_u_2}
				\left\{
				\begin{aligned}
					-\Delta \psi(x) &= 2\rho_2 \left( \frac{\tilde{V}_2(x, \xi^0)e^{z(x,\xi^0)} \psi(x) \, }{\int_{\Sigma} \tilde{V}_2(\cdot, \xi^0)e^{z(\cdot,\xi^0)} \, dv_g} - \frac{\tilde{V}_2(x, \xi^0)e^{z(x,\xi^0)} \int_{\Si
						} \tilde{V}_2(\cdot, \xi)e^{z(\cdot,\xi^0)}\psi \, dv_g}{\left(\int_{\Sigma} \tilde{V}_2(\cdot, \xi^0)e^{z(\cdot,\xi^0)} \, dv_g\right)^2}  \right) & \text{in } \intsigma , \\
					\partial_{\nu_g}\psi &= 0 & \text{on } \partial\Sigma
				\end{aligned}
				\right.
			\end{equation}
			admits only the trivial solution $\psi=0$.}  
	\end{itemize}

	\begin{lem}\label{lem:C_1_z_x_xi}
		Assuming \hyperref[item:H]{(C)}  holds for $(\xi^0, z(\cdot,\xi^0))$, for any $\alpha\in (0,1)$, there exists an open neighborhood $\Dc$ around $\xi^0$ such that the map $\xi\mapsto z(\cdot,\xi)$ from $\Dc$ into $C^{2,\alpha}_0(\Sigma)$ is $C^1$-differentiable.
	\end{lem}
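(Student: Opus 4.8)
The plan is to view the assignment $\xi\mapsto z(\cdot,\xi)$ as the solution map of a nonlinear operator equation and invoke the implicit function theorem in Banach spaces at the point $\xi^0$, using hypothesis \hyperref[item:H]{(C)} precisely to guarantee invertibility of the linearization. Concretely, I would introduce the Banach spaces $X = C_0^{2,\alpha}(\Sigma)$ and $Y = C_0^{\alpha}(\Sigma)$ and define a map
\[
\Phi: \Xi_{k,m}\times X \longrightarrow Y,\qquad
\Phi(\xi, z) = z + (-\Delta_g)^{-1}\!\left[\,2\rho_2\!\left(\frac{\tilde V_2(\cdot,\xi)e^{z}}{\int_\Sigma \tilde V_2(\cdot,\xi)e^{z}\,dv_g} - 1\right)\right],
\]
where $(-\Delta_g)^{-1}: Y\to X$ is the solution operator for the Neumann problem with zero average (well defined and bounded by elliptic Schauder theory, since the right-hand side has zero average — note $\int_\Sigma$ of the bracket vanishes). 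A zero of $\Phi(\xi,\cdot)$ is exactly a solution $z(\cdot,\xi)\in \oH\cap C^{2,\alpha}(\Sigma)$ of \eqref{eq:singular_mf}, and by elliptic regularity bootstrap (the potential $\tilde V_2$ is $C^{2,\alpha}$ since $H^g\in C^{2,\alpha}$ away from the diagonal and $\xi_i$ stays away from $x$) the solution produced by the variational argument indeed lies in $X$; so $\Phi(\xi^0, z(\cdot,\xi^0))=0$.

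Next I would check the hypotheses of the implicit function theorem. Smoothness of $\Phi$ in both variables follows from smoothness of $\xi\mapsto \tilde V_2(\cdot,\xi)$ in $C^{2,\alpha}(\Sigma)$ — which in turn rests on the smooth dependence of the isothermal charts $y_\xi$, the cut-offs $\chi_\xi$, and hence $H^g(\cdot,\xi)$ on $\xi$ as recorded in Section~\ref{sec:pre}, together with the fact that for $\xi$ near $\xi^0$ the singular points stay uniformly separated from any fixed point of $\Sigma$ — composed with the smooth Nemytskii map $z\mapsto e^z$ on Hölder spaces and the bounded linear operator $(-\Delta_g)^{-1}$. The partial derivative $D_z\Phi(\xi^0, z(\cdot,\xi^0))[\psi] = \psi + (-\Delta_g)^{-1}[\,\cdots\psi\,]$ is, after applying $-\Delta_g$, exactly the statement that $\psi$ solves the linearized equation \eqref{eq:linear_u_2} with a forcing term; hence $D_z\Phi(\xi^0,z(\cdot,\xi^0))$ is injective precisely when \eqref{eq:linear_u_2} has only the trivial solution, which is hypothesis \hyperref[item:H]{(C)}. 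Surjectivity (equivalently, that this operator is an isomorphism of $X$) follows because $D_z\Phi = \mathrm{id}_X + K$ with $K = (-\Delta_g)^{-1}\circ(\text{multiplication/integral operator})$ compact on $X$ (the multiplication operator maps $X$ into $Y$ and $(-\Delta_g)^{-1}:Y\hookrightarrow X$ is compact by Arzelà–Ascoli/Schauder), so Fredholm alternative turns injectivity into invertibility.

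With these verified, the implicit function theorem yields an open neighborhood $\Dc$ of $\xi^0$ in $\Xi_{k,m}$ and a $C^1$ map $\Dc\to X=C_0^{2,\alpha}(\Sigma)$, $\xi\mapsto z(\cdot,\xi)$, with $\Phi(\xi,z(\cdot,\xi))\equiv 0$; by construction this $z(\cdot,\xi)$ solves \eqref{eq:singular_mf}, and shrinking $\Dc$ if necessary it is the unique such solution near $z(\cdot,\xi^0)$, which identifies it with the branch asserted in the lemma. The step I expect to demand the most care is the claim that $\xi\mapsto \tilde V_2(\cdot,\xi)$ is $C^1$ into $C^{2,\alpha}(\Sigma)$: one must track how the cut-off functions $\chi_{\xi_i}$ and the regular parts $H^g(\cdot,\xi_i)$ vary, and confirm that differentiating in $\xi$ does not lose Hölder regularity — this is where the smooth-dependence assumptions on the isothermal coordinates from Section~\ref{sec:pre} are essential. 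Everything else (zero-average bookkeeping, compactness of $K$, the Fredholm argument) is routine.
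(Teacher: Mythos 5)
Your argument is essentially the paper's proof: both formulate the solution map via the implicit function theorem, using the Schauder isomorphism $(-\Delta_g,\partial_{\nu_g}):C^{2,\alpha}_0(\Sigma)\to C^{\alpha}_0(\Sigma)\times\{0\}$ together with compactness of the linearized multiplication/integral operator to get a Fredholm operator of index zero, and hypothesis \hyperref[item:H]{(C)} to upgrade injectivity of the linearization to invertibility; the only cosmetic difference is that you precompose with $(-\Delta_g)^{-1}$ to write the problem as identity-plus-compact on $C^{2,\alpha}_0(\Sigma)$, whereas the paper keeps the pair form mapping into $C^{\alpha}_0(\Sigma)\times\{0\}$. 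Only note the sign: your map should read $\Phi(\xi,z)=z-(-\Delta_g)^{-1}\bigl[2\rho_2\bigl(\tfrac{\tilde V_2(\cdot,\xi)e^{z}}{\int_\Sigma \tilde V_2(\cdot,\xi)e^{z}\,dv_g}-1\bigr)\bigr]$ so that its zeros are exactly the solutions of \eqref{eq:singular_mf}, and for smoothness of $\Phi$ it suffices that $\xi\mapsto\tilde V_2(\cdot,\xi)$ be $C^1$ into $C^{\alpha}(\Sigma)$ rather than $C^{2,\alpha}(\Sigma)$.
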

	
	\begin{proof}
		We observe that for any $\alpha\in (0,1)$,
		\[\tilde{V}_2(x,\xi)= V_2(x)   e^{-\sum_{j=1}^m\frac  {\varrho(\xi_j)} 2 H^g(x,\xi_j)}\Pi_{j=1}^m e^{2\chi_j(x)\log |y_{\xi_j}(x)|}\in C^{\alpha}(\Sigma),\]
		uniformly for $\Dc\subset \subset\Xi_{k,m}.$
		Let 
		\[
		\Psi : C^{2,\alpha}_0(\Sigma) \times \Xi_{k,m}\rightarrow C_0^{\alpha}(\Sigma)\times \{0\},
		\]
		\[
		\Psi(z, \xi) =\begin{bmatrix}
			-\Delta_g  z(x) - 2\rho_2\left( \frac{\tilde{V}_2(x, \xi)e^{z}}{\int_{\Omega} \tilde{V}_2(\cdot, \xi)e^{z} \, dv_g}-1\right)\\
			\partial_{\nu_g} z
		\end{bmatrix}. 
		\]
		We observe that  $z$ is a solution of \eqref{eq:singular_mf} if and only if $\Psi(z, \xi) = 0$.
		By \hyperref[item:H]{(C)}, for fixed $\xi^0$ and let $z=z(\cdot,\xi^0)$ be the non-degenerate solution of~\eqref{eq:singular_mf}. Then we have $\Psi(z, \xi^0) = 0$ and for any $\psi\in C^{2,\alpha}_0(\Sigma)$
		\[
		D_z\Psi(z, \xi^0)(\psi) =\begin{bmatrix}
			-\Delta_g \psi - K(\psi)\\
			\partial_{\nu} \psi
		\end{bmatrix} ,
		\]
		where  $K : C^{2,\alpha}_0(\Sigma) \rightarrow C_0^{\alpha}(\Sigma)$, 
		\[
		\psi\mapsto 2\rho_2 \left( \frac{\tilde{V}_2(x, \xi^0)e^{z} \psi(x)}{\int_{\Sigma} \tilde{V}_2(\cdot, \xi^0)e^{z} \, dv_g} - \frac{\tilde{V}_2(x, \xi^0)e^{z} \int_{\Sigma} \tilde{V}_2(\cdot, \xi^0)e^{z} \psi \, dv_g }{(\int_{\Sigma} \tilde{V}_2(\cdot, \xi^0)e^{z} \, dv_g)^2} \right).
		\]
		By the Schauder estimates (see \cite{yang2021125440,Nardi2014}), the mapping $(-\Delta_g,\partial_{\nu_g}) : C^{2,\alpha}_0(\Sigma) \rightarrow C^{\alpha}_0(\Sigma)\times \{0\}$ is  an isomorphism. It is noted that the operator $K$ is compact, a characteristic that preserves the Fredholm index upon addition to $(-\Delta_g,\partial_{\nu_g})$. So, $(-\Delta_g - K, \partial_{\nu_g})$ is a Fredholm operator of index zero. Applying hypothesis \hyperref[item:H]{(C)}, the derivative $D_z\Psi(z, \xi^0)(\psi)$ has a trivial kernel, thereby affirming the non-degeneracy of $D_z\Psi(z, \xi^0)$. The implicit function theorem yields the existence of a radius $r > 0$ and a continuously differentiable mapping 
		\[
		z : \tilde{B}_r(\xi^0) = \{\xi \in \Xi_{k,m} : d_g(\xi, \xi^0) < r\} \rightarrow C^{2,\alpha}_0(\Sigma),
		\]
		\[
		\xi \mapsto z_{\xi},
		\]
		such that  $z_{\xi^0} = z(\cdot,\xi^0)$ and $\Psi(z_{\xi}, \xi) = 0$ for any $\xi\in\{\xi\in \Xi_{k,m}: d_g(\xi,\xi^0)<r \} $. We take $\Dc=\tilde{B}_r(\xi^0).$
	\end{proof}
	
	Let $\i(\xi)= 2$ if $\xi\in\intsigma$ and equal $1$ if $\xi\in \partial\Sigma.$
	\begin{lem}\label{lem:pa_I_xi}
		For $\xi \in\Xi_{k,m}$, $j=1,\ldots,m, i=1,\ldots,\i(\xi)$
		\begin{eqnarray}
			\label{eq:pa_I_id_pa_z} \partial_{(\xi_j)_i} I_{\xi}(z(\cdot,\xi))=\frac 1 2 \varrho(\xi_j) \partial_{x_i} z(x,\xi)|_{x=\xi_j}. 
		\end{eqnarray}
	\end{lem}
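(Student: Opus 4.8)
The strategy is to differentiate the energy $I_\xi(z(\cdot,\xi))$ with respect to $(\xi_j)_i$ using the chain rule, and to exploit the fact that $z(\cdot,\xi)$ is a critical point of $I_\xi$ so that the ``interior'' variation $\partial_z I_\xi(z)[\partial_{(\xi_j)_i} z]$ vanishes. Concretely, by Lemma~\ref{lem:C_1_z_x_xi} the map $\xi\mapsto z(\cdot,\xi)\in C^{2,\alpha}_0(\Sigma)$ is $C^1$ on a neighborhood $\Dc$ of $\xi$, so the composite $\xi\mapsto I_\xi(z(\cdot,\xi))$ is differentiable and
\[
\partial_{(\xi_j)_i} I_\xi(z(\cdot,\xi)) = \big(\partial_z I_\xi\big)(z(\cdot,\xi))\big[\partial_{(\xi_j)_i} z(\cdot,\xi)\big] + \big(\partial_{(\xi_j)_i} I_\xi\big)(z)\big|_{z=z(\cdot,\xi)},
\]
where the second term denotes the partial derivative in the explicit $\xi$-dependence of $I_\xi$ through $\tilde V_2(\cdot,\xi)$ only. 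Since $z(\cdot,\xi)$ solves \eqref{eq:singular_mf}, which is exactly the Euler--Lagrange equation $\partial_z I_\xi(z)=0$ (testing against functions in $\oH$; note $\partial_{(\xi_j)_i} z(\cdot,\xi)\in C^{2,\alpha}_0(\Sigma)\subset\oH$ has zero average, so it is an admissible test direction), the first term vanishes. Hence the whole derivative reduces to the explicit one.

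It remains to compute $\big(\partial_{(\xi_j)_i} I_\xi\big)(z)$ at fixed $z=z(\cdot,\xi)$. Only the logarithmic term $-2\rho_2\log\int_\Sigma \tilde V_2(\cdot,\xi)e^z\,dv_g$ depends on $\xi$, and $\tilde V_2(x,\xi)=V_2(x)e^{-\sum_{l=1}^m \frac12\varrho(\xi_l)G^g(x,\xi_l)}$, so for fixed $z$,
\[
\partial_{(\xi_j)_i}\log\int_\Sigma \tilde V_2(\cdot,\xi)e^z\,dv_g
= \frac{\int_\Sigma \tilde V_2(\cdot,\xi)e^z\,\big(-\tfrac12\varrho(\xi_j)\,\partial_{(\xi_j)_i}G^g(x,\xi_j)\big)\,dv_g}{\int_\Sigma \tilde V_2(\cdot,\xi)e^z\,dv_g}.
\]
Now I would reinterpret this integral using the PDE. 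Multiply \eqref{eq:singular_mf} by $\partial_{(\xi_j)_i}G^g(\cdot,\xi_j)$, integrate over $\Sigma$, and use Green's identity together with the Neumann conditions on both $z$ and $G^g$; since $-\Delta_g \partial_{(\xi_j)_i}G^g(\cdot,\xi_j) = \partial_{(\xi_j)_i}\delta_{\xi_j}$ (as a distribution, the $-1/|\Sigma|_g$ term dropping out under the derivative in $\xi_j$), this yields
\[
\frac{2\rho_2\int_\Sigma \tilde V_2 e^z\,\partial_{(\xi_j)_i}G^g(x,\xi_j)\,dv_g}{\int_\Sigma \tilde V_2 e^z\,dv_g}
= -\,2\rho_2\cdot 0 + \big\langle \partial_{(\xi_j)_i}\delta_{\xi_j}, z\big\rangle \;+\;(\text{average terms that vanish}),
\]
i.e. the weighted average of $\partial_{(\xi_j)_i}G^g$ against the measure $2\rho_2\tilde V_2 e^z/\int \tilde V_2 e^z$ is controlled by $-\partial_{x_i} z(x,\xi)|_{x=\xi_j}$, up to the mean-value corrections which cancel because $z$ has zero average and $\int_\Sigma G^g(\cdot,\xi_j)\,dv_g=0$. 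Assembling the pieces: the explicit derivative of $I_\xi$ equals $-2\rho_2\cdot\frac12\varrho(\xi_j)\cdot\big(-\frac{1}{2\rho_2}\partial_{x_i}z(x,\xi)|_{x=\xi_j}\big)=\frac12\varrho(\xi_j)\partial_{x_i}z(x,\xi)|_{x=\xi_j}$, which is \eqref{eq:pa_I_id_pa_z}.

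The main technical obstacle is justifying the Green's-identity manipulation rigorously: $\partial_{(\xi_j)_i}G^g(\cdot,\xi_j)$ has a $1/|y|$-type singularity at $\xi_j$, so integrating $\tilde V_2 e^z \,\partial_{(\xi_j)_i}G^g$ against the smooth density and pairing the distribution $\partial_{(\xi_j)_i}\delta_{\xi_j}$ with $z\in C^{2,\alpha}$ must be done by excising a small ball $U_r(\xi_j)$, integrating by parts on $\Sigma\setminus U_r(\xi_j)$, and letting $r\to 0$, tracking the boundary integral on $\partial U_r(\xi_j)$. Using the decomposition $G^g=\Gamma^g_\xi+H^g_\xi$ from Section~\ref{sec:pre}, the regular part $H^g_\xi$ contributes smoothly and the singular part $\Gamma^g_\xi$ produces, in the limit, exactly the derivative of $z$ evaluated at $\xi_j$ (this is the standard computation behind Kirchhoff--Routh/Robin-function identities, cf.~\cite{Esposito2014singular}). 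One should also be careful that when $\xi_j\in\partial\Sigma$ only $i=1$ is a valid tangential direction (hence the bound $i\le\i(\xi)$ in the statement), and in the half-disk isothermal chart \eqref{eq:out_normal_derivatives} the reflection symmetry of $\Gamma^g$ across $\partial\Sigma$ makes the boundary-excision argument go through with the factor $\varrho(\xi_j)=4\pi$ rather than $8\pi$, so the formula \eqref{eq:pa_I_id_pa_z} holds uniformly in both cases.
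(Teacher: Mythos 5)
Your proposal is correct and takes essentially the same route as the paper: the chain rule together with $I'_{\xi}(z(\cdot,\xi))=0$ removes the implicit term, the explicit $\xi_j$-derivative of the logarithmic term yields the weighted integral of $\partial_{(\xi_j)_i}G^g(\cdot,\xi_j)$ against $\tilde V_2(\cdot,\xi)e^{z(\cdot,\xi)}$, and the identity
$2\rho_2\int_{\Sigma}\tilde V_2(\cdot,\xi)e^{z(\cdot,\xi)}\,\partial_{(\xi_j)_i}G^g(\cdot,\xi_j)\,dv_g\big/\int_{\Sigma}\tilde V_2(\cdot,\xi)e^{z(\cdot,\xi)}\,dv_g=\partial_{x_i}z(x,\xi)|_{x=\xi_j}$ is exactly what the paper invokes as ``the representation's formula''. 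Your integration-by-parts/excision argument with $-\Delta_g\partial_{(\xi_j)_i}G^g(\cdot,\xi_j)=\partial_{(\xi_j)_i}\delta_{\xi_j}$ is just a direct verification of that same Green's-function duality, so there is no substantive difference.
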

	\begin{proof}
		Since $z(\cdot,\xi)$ solves the problem~\eqref{eq:singular_mf}, $I'_{\xi}(z(\cdot,\xi))=0$. 
		By the representation's formula, we deduce that 
		\begin{eqnarray*}
			&&	\partial_{(\xi_j)_i} I_{\xi}(z(\cdot,\xi))\\
			&=& I'_{\xi}(z(\cdot,\xi)) \partial_{(\xi_j)_i}z(\cdot,\xi) -2\rho_2 \int_{\Sigma} \frac{ \tilde{V}_2(\cdot,\xi)e^{z(\cdot,\xi)}}{\int_{\Sigma} \tilde{V}_2(\cdot,\xi)e^{z(\cdot,\xi)}\, dv_g}
			\left(- \frac{\varrho(\xi_j)}{2} \partial_{(\xi_j)_i} G^g(\cdot,\xi_j)\right)\,dv_g \\
			&=&\rho_2 \varrho(\xi_j) \int_{\Sigma} \frac{ \tilde{V}_2(\cdot,\xi)e^{z(\cdot,\xi)}}{\int_{\Sigma} \tilde{V}_2(\cdot,\xi)e^{z(\cdot,\xi)}\, dv_g}
			\partial_{(\xi_j)_i} G^g(\cdot,\xi_j)\,dv_g\\
			&=&\frac 1 2 \varrho(\xi_j) \partial_{(x_j)_i} z(x,\xi)|_{x=\xi_j}.
		\end{eqnarray*}
	\end{proof}
	It is clear that  \hyperref[item:G]{(C1)} implies  \hyperref[item:H]{(C)}. The hypothesis \hyperref[item:H]{(C)} describes the non-degeneracy of solutions of the singular mean field equations~\eqref{eq:singular_mf}. For $\rho_2\in (0,2\pi)$, it is generally observed that hypothesis \hyperref[item:H]{(C)} does not hold. Nonetheless, when $\rho_2>0$ is chosen to be sufficiently small, this hypothesis is satisfied for any $\Dc$ that is an open precompact subset of $\Xi_{k,m}$.
	\begin{lem}
		\label{lem:hypo_H_small_rho2}
		Let $\Dc \subset\overline{\Dc}\subset\Xi_{k,m}$ be an open subset. There exists $\rho_0\in (0,2\pi)$ sufficiently small such that for any $\rho_2\in (0,\rho_0)$,  there exists a unique solution $z(\cdot,\xi)$ of~\eqref{eq:singular_mf} satisfying  \hyperref[item:H]{(C)}. 
	\end{lem}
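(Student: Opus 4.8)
\textbf{Proof proposal for Lemma~\ref{lem:hypo_H_small_rho2}.}
The plan is to treat $\rho_2$ as a perturbation parameter and apply the implicit function theorem at $\rho_2 = 0$, uniformly over $\xi$ in the precompact set $\overline{\Dc}$. The point is that as $\rho_2\to 0^+$, the singular mean field equation~\eqref{eq:singular_mf} degenerates to $-\Delta_g z = 0$ with Neumann boundary conditions and zero average, whose only solution in $\oH$ is $z\equiv 0$, which is trivially non-degenerate. So I expect a clean quantitative perturbation argument; the only subtlety is making all constants uniform in $\xi$.

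First I would set up the functional-analytic framework exactly as in the proof of Lemma~\ref{lem:C_1_z_x_xi}: define, for $\rho_2$ now allowed to vary, the map
\[
\Psi : C^{2,\alpha}_0(\Sigma)\times \overline{\Dc}\times [0,2\pi)\to C^\alpha_0(\Sigma),\qquad
\Psi(z,\xi,\rho_2) = -\Delta_g z - 2\rho_2\left(\frac{\tilde V_2(\cdot,\xi)e^{z}}{\int_\Sigma \tilde V_2(\cdot,\xi)e^{z}\,dv_g} - 1\right),
\]
where the Neumann condition is absorbed into the choice of spaces (the isomorphism $(-\Delta_g,\partial_{\nu_g}):C^{2,\alpha}_0(\Sigma)\to C^\alpha_0(\Sigma)\times\{0\}$ from the earlier lemma lets me work with the scalar operator $-\Delta_g$ on the zero-average H\"older spaces). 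Then $\Psi(0,\xi,0) = 0$ for every $\xi$, and $D_z\Psi(0,\xi,0) = -\Delta_g$, which is an isomorphism $C^{2,\alpha}_0(\Sigma)\to C^\alpha_0(\Sigma)$ with inverse bounded independently of $\xi$. Second, I would observe that $\Psi$ is $C^1$ jointly in $(z,\xi,\rho_2)$ on a neighborhood of $\{(0,\xi,0):\xi\in\overline{\Dc}\}$, using the already-established smooth dependence of $\tilde V_2(\cdot,\xi)$ on $\xi$ in $C^\alpha(\Sigma)$ (uniform on $\overline{\Dc}$, as recorded in the proof of Lemma~\ref{lem:C_1_z_x_xi}) and the smoothness of the Liouville-type nonlinearity. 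A compactness argument over $\xi\in\overline{\Dc}$ then upgrades the pointwise (in $\xi$) implicit function theorem to a uniform one: there exist $\rho_0\in(0,2\pi)$ and $r>0$ such that for every $\xi\in\overline{\Dc}$ and every $\rho_2\in[0,\rho_0)$ there is a unique $z(\cdot,\xi)\in C^{2,\alpha}_0(\Sigma)$ with $\|z(\cdot,\xi)\|_{C^{2,\alpha}}<r$ solving $\Psi(z(\cdot,\xi),\xi,\rho_2)=0$, and moreover $D_z\Psi(z(\cdot,\xi),\xi,\rho_2)$ remains an isomorphism (it is a small perturbation of $-\Delta_g$, since $K$ has norm $O(\rho_2)$).

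Third, I need \emph{uniqueness} of the solution of~\eqref{eq:singular_mf} among \emph{all} of $\oH$, not merely within the small ball, to justify the phrasing ``there exists a unique solution $z(\cdot,\xi)$''. For this I would invoke coercivity: for $2\rho_2 < 4\pi$ the energy $I_\xi$ is coercive by Moser--Trudinger (as noted just before the shadow system was introduced), so a solution exists and any solution is a critical point. To pin down global uniqueness for $\rho_2$ small, I would use an \emph{a priori} bound: testing~\eqref{eq:singular_mf} against $z$ and using the Moser--Trudinger/Jensen inequalities shows $\|z\|\le C\rho_2$, hence $\|z\|_{C^{2,\alpha}}\le C\rho_2$ by elliptic regularity (the constant $C$ uniform over $\overline{\Dc}$ because $\tilde V_2$ is bounded above and below uniformly there). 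Choosing $\rho_0$ smaller if necessary forces every solution into the small ball where the implicit function theorem already gave uniqueness. Finally, since $D_z\Psi(z(\cdot,\xi),\xi,\rho_2)$ is an isomorphism, the linearized operator $-\Delta_g - K$ has trivial kernel, which is precisely hypothesis~\hyperref[item:H]{(C)} for this $\xi$; as this holds for every $\xi\in\overline{\Dc}$, we are done.

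The main obstacle is the uniformity in $\xi$ over the closure $\overline{\Dc}$: the potentials $\tilde V_2(\cdot,\xi)$ degenerate (blow up or vanish) as $\xi$ approaches $\partial\Xi_{k,m}$ because of the $G^g(x,\xi_i)$ terms, so one genuinely needs $\overline{\Dc}$ to be a compact subset of $\Xi_{k,m}$ to get uniform $C^\alpha$ bounds on $\tilde V_2$ and uniform lower bounds on $\int_\Sigma \tilde V_2(\cdot,\xi)e^{z}\,dv_g$; this is exactly the hypothesis $\overline{\Dc}\subset\Xi_{k,m}$ in the statement. With that in hand the compactness/covering step that merges the $\xi$-by-$\xi$ implicit function theorems into a single $\rho_0$ is routine.
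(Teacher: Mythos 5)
Your overall strategy --- perturbing off $\rho_2=0$, where the only zero-average Neumann solution of \eqref{eq:singular_mf} is $z\equiv 0$ and the linearization $(-\Delta_g,\partial_{\nu_g})$ is an isomorphism, then making the implicit function theorem uniform over the compact set $\overline{\Dc}$ --- is the same mechanism the paper uses; the paper simply runs it as a contradiction argument (two distinct solutions, or a nontrivial kernel element, along $\rho_2^n\to 0$, converging by compactness of the solution set to the trivial solution and contradicting the IFT at the limit point), whereas you run it directly and quantitatively. Your non-degeneracy step (the linearized operator is an $O(\rho_2)$ perturbation of $-\Delta_g$, hence still an isomorphism, which is exactly hypothesis \hyperref[item:H]{(C)}) is fine, and arguably cleaner than the paper's separate contradiction argument with $L^\infty$-normalized kernel elements.

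The genuine gap is in your a priori bound, which is where the real content lies. Testing \eqref{eq:singular_mf} with $z$ gives $\|z\|^2= 2\rho_2\bigl(\int_\Sigma \tilde V_2(\cdot,\xi) e^z z\,dv_g\bigr)\big/\bigl(\int_\Sigma \tilde V_2(\cdot,\xi) e^z\,dv_g\bigr)$, and the quotient is a weighted average of $z$: Moser--Trudinger (for the numerator) and Jensen (for the denominator) only control it by a quantity of size $Ce^{c\|z\|^2}\|z\|$, so what this argument actually yields is $\|z\|\le C\rho_2\, e^{c\|z\|^2}$. That is a dichotomy --- either $\|z\|=O(\rho_2)$ or $\|z\|\gtrsim \sqrt{|\log\rho_2|}$ --- and it does not exclude large solutions, so the claimed global smallness, and hence global uniqueness within the IFT ball, is not yet justified. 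To close it you need an estimate valid for \emph{all} solutions with $\rho_2$ small: for instance, note $\|\Delta_g z\|_{L^1(\Sigma)}\le 4\rho_2$, write $z=2\rho_2\int_\Sigma G^g(\cdot,y)f(y)\,dv_g(y)$ with $f\ge 0$, $\int_\Sigma f\,dv_g=1$, and use a Jensen/Brezis--Merle argument to get $\|e^{z}\|_{L^p(\Sigma)}\le C$ for $p<\pi/\rho_2$, then bootstrap with $L^p$ and Schauder estimates to $\|z\|_{C^{2,\alpha}(\Sigma)}\le C\rho_2$, uniformly for $\xi\in\overline{\Dc}$. (This is precisely the point the paper compresses into the assertion of ``compactness of the solution space'' before passing to the limit.) With that estimate supplied, the rest of your argument goes through.
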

	\begin{proof}
		To formulate our argument, we select 
		$\Dc$ as an arbitrary open precompact subset of $\Xi_{k,m}$. We will prove the lemma by contradiction.
		
		Let us suppose that there exists a sequence $\rho^n \rightarrow 0$, $\xi^n \in \Dc$, and $z_n^{1}, z_n^{2}\in C^{2,\alpha}_0(\Sigma)$, two distinct solutions of the following problem: 
		\begin{equation}\label{eq:singular_MF_rho}
			\left\{\begin{array}{ll}
				-\Delta_g z(x) = 2 \rho^n\left( \frac{\tilde{V}_2(x, \xi^n)e^{z_n}}{\int_{\Sigma}\tilde{V}_2(\cdot, \xi^n)e^{z_n} \, dv_g } -1\right),&\quad \text{ in }  \intsigma\\
				\partial_{\nu_g}z(x) =0, &\text{ on } \partial \Sigma
			\end{array}\right.. 
		\end{equation}
		Given the compactness of the solution space for~\eqref{eq:singular_mf}, up to a subsequence, for $i=0,1$ $\xi^n \rightarrow \xi^0 \in \overline{\Dc}$, $z_n^{i} \rightarrow z^{i}$ in the H\"{o}lder space $C_0^{2,\alpha}(\Sigma)$.
		As the parameter $\rho^n\rightarrow 0$ in the context of~\eqref{eq:singular_MF_rho}, it follows  $z^{i} =0 $ for $i = 0, 1$.
		We introduce a mapping
		\begin{align*}
			\Psi: C_{0}^{2,\alpha}(\Sigma) \times \Xi_{k,m}\times \mathbb{R} &\rightarrow C_0^{\alpha}(\Sigma)\times \{ 0\}, \\
			(z, \xi, \rho) &\mapsto \begin{bmatrix}
				-\Delta_g z - 2\rho\left(\frac{\tilde{V}_2(x, \xi^n)e^{z_n}}{\int_{\Sigma}\tilde{V}_2(\cdot, \xi^n)e^{z_n} \, dv_g } -1\right)\\
				\partial_{\nu_g} z
			\end{bmatrix} .
		\end{align*}
		We note that  $z$ is a solution of ~\eqref{eq:singular_MF_rho} if and only if $\Psi(z, \xi, \rho) = 0$ and $$D_z\Psi(z^i, \xi^0, 0)(\psi)=\begin{bmatrix}
			-\Delta_g\psi\\
			\partial_{\nu_g}\psi
		\end{bmatrix} .$$ Given that $(-\Delta_g,\partial_{\nu_g})$ is an isomorphism from  $C_{0}^{2,\alpha}(\Sigma)$ to $C_0^{\alpha}(\Sigma)\times\{ 0\}$, applying the Implicit Function Theorem, 
		there exists $r > 0$, $\delta > 0$, $U$ a neighborhood of $0$ in $C_{0}^{2,\alpha}(\Sigma)$ and a $C^1$-diffeomorphism:
		\begin{align*}
			z: \tilde{B}_r(\xi^0):=\{ \xi\in\Xi_{k,m}: d_g(\xi,\xi^0)<r\} \times (-\delta, \delta) &\rightarrow U, \quad
			(\xi, \rho) \mapsto z_{\xi,\rho},
		\end{align*}
		with  $\Psi(z_{\xi,\rho}, \xi, \rho) = 0$. 
		Additionally, $z=z_{\xi,\rho}$ is the unique solution in $U$ satisfying    $\Psi(z, \xi, \rho) = 0$. 
		There exists $N_0>0$ sufficiently large such that $ z^i_n \in U$ and $\xi^n\in\tilde{B}_{r}(\xi^0)$ for any $i=0,1 \text{ and }n\geq N_0.$
		Given the uniqueness of 
		$z_{\xi,\rho}$, we deduce that  $z_n^{i} = z_{\xi^n,\rho^n}$ for $i=1,2$, leading to a contradiction.
		
		We now will establish the non-degeneracy. Suppose $\rho^n \rightarrow 0$, $\xi^n \in \Dc$, $z_n$ a solution of \eqref{eq:singular_MF_rho} and $\psi_n$ a nontrivial solution of the following problem:
		\begin{equation}\label{eq:singular_MF_rho2}
			\left\{\begin{array}{ll}
				-\Delta_g  \psi = K_n(\psi), &\text{ in } \intsigma, \\
				\partial_{\nu_g} \psi= 0 \quad &\text{ on }  \partial \Sigma,
			\end{array}\right.,
		\end{equation}
		where $K_n: C^{2,\alpha}_0(\Sigma) \rightarrow C_0^{\alpha}(\Sigma)$, 
		\[
		K_n(\psi) =2\rho^n \left( \frac{\tilde{V}_2(x, \xi)e^{z} \psi(x)}{\int_{\Sigma} \tilde{V}_2(\cdot, \xi)e^{z} \, dv_g} - \frac{\tilde{V}_2(x, \xi)e^{z} \int_{\Sigma} \tilde{V}_2(\cdot, \xi)e^{z} \psi \, dv_g }{(\int_{\Sigma} \tilde{V}_2(\cdot, \xi)e^{z} \, dv_g)^2} \right). \] 
		Taking the limit of~\eqref{eq:singular_MF_rho2} as $\rho^n\rightarrow 0$, we observe that  $z_n\rightarrow 0$ in $C^{2,\alpha}(\Sigma).$ Without loss of generality, we assume that $\|\psi_n\|_{L^{\infty}(\Sigma)}=1$. This assumption leads to the conclusion that 
		$\|K_n(\psi_n)\|_ {L^{\infty}(\Sigma)}\rightarrow 0$. By the regularity theory, we obtain that $\psi_n=0$, which is a contradiction. Therefore, we concluded the Lemma. 		
	\end{proof}
	On the other hand, we can show that the shadow system~\eqref{eq:shadow} is non-degenerate for generic $(V_1, V_2)\in C^{2,\alpha}(\Sigma, \R_+)\times C^{2,\alpha}(\Sigma,\R_+)$ and for any fixed $\rho_2\geq 0$, by a well-known transversality theorem which can be found in~\cite[Theorem~5.4]{henry2005perturbation}.
	\begin{thm}[see~\cite{henry2005perturbation}]~\label{thm:trans}
		Let $M,\cV,N$ be Banach manifolds of class $\cC^r$ for some $r\in \N$, let $\cD\subset M\times \cV$ be open, let $\cF: \cD\to N$ be a $\cC^r$ map, and fix a point $z\in N$. Assume for each $(y,\psi)\in \cF^{-1}(z)$ that: 
		\begin{itemize}
			\item[(1)] $D_y\cF(y,\psi): T_y M\to T_{z} N$ is semi-Fredholm with index $<r$;
			\item[(2)] $D\cF(y,\psi) : T_y M \times T_\psi \cV \to T_{z} N$ is surjective;
			\item[(3)]  $\cF^{-1}(z)\to \cV$, $(y,\psi)\mapsto \psi$, is $\sigma$-proper. 
		\end{itemize}
		Then for 	$\cD_{\psi}=\{ y\in M : (y,\psi)\in \cD\}$, 
		\[
		\cV_{0}:=\{ \psi\in\cV: z \text{ is a critical value of } \cF(\cdot, \psi): \cD_{\psi}\rightarrow N\}
		\]
		is a meager subset of $\cV$.
	\end{thm}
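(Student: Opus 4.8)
The statement is the classical parametric Sard--Smale (transversality) theorem, and the plan is to reduce the parameter-dependent regularity question to a single regularity question for one projection map; I would only indicate the main steps, referring to \cite{henry2005perturbation} for the details. The first step uses hypothesis (2): for each $(y,\psi)\in\cZ:=\cF^{-1}(z)$ the differential $D\cF(y,\psi):T_yM\times T_\psi\cV\to T_zN$ is onto, and by (1) its kernel is closed and complemented --- indeed, by (1) either $\Ker D_y\cF(y,\psi)$ is finite-dimensional or $D_y\cF(y,\psi)$ is Fredholm, so $\Ker D_y\cF(y,\psi)$ admits a closed complement, and this together with the surjectivity (2) forces $\Ker D\cF(y,\psi)$ to split. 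Hence the submanifold theorem on Banach manifolds makes $\cZ$ a $\cC^r$ Banach submanifold of $\cD$ with $T_{(y,\psi)}\cZ=\Ker D\cF(y,\psi)$.

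The second and central step is a linear-algebra lemma comparing the restricted projection $\Pi:\cZ\to\cV$, $(y,\psi)\mapsto\psi$, with $A:=D_y\cF(y,\psi)$ and $B:=D_\psi\cF(y,\psi)$. Since $T_{(y,\psi)}\cZ=\{(\dot y,\dot\psi):A\dot y+B\dot\psi=0\}$, one reads off $\Ker D\Pi(y,\psi)\cong\Ker A$ and $\Range D\Pi(y,\psi)=B^{-1}(\Range A)$, which is closed and satisfies $T_\psi\cV/B^{-1}(\Range A)\cong T_zN/\Range A$ by the surjectivity (2). Thus $D\Pi(y,\psi)$ is semi-Fredholm with $\ind D\Pi(y,\psi)=\ind A<r$ by (1), and it is onto exactly when $D_y\cF(y,\psi)$ is onto. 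Consequently $\psi$ is a regular value of $\Pi$ if and only if $D_y\cF(y,\psi)$ is onto for every $y$ with $(y,\psi)\in\cZ$, i.e. if and only if $\psi\notin\cV_0$; equivalently, $\cV_0$ is exactly the set of critical values of $\Pi$.

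The third step is the Sard--Smale argument for $\Pi$, which is $\cC^r$ and semi-Fredholm of index $<r$. By hypothesis (3), write $\cZ=\bigcup_{n\in\N}\cZ_n$ with $\cZ_n$ closed and $\Pi|_{\cZ_n}$ proper. On $\cZ_n$ the critical set of $\Pi$ is closed, so its image $\Pi(\mathrm{Crit}(\Pi)\cap\cZ_n)$ is closed in $\cV$ by properness; and after passing to Fredholm charts, in which $\Pi$ is conjugate to a $\cC^r$ map whose nonlinear part is confined to a finite-dimensional space, the emptiness of the interior of this image follows from the classical Sard theorem --- this is precisely the step that uses $r>\ind D\Pi$. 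Hence each $\Pi(\mathrm{Crit}(\Pi)\cap\cZ_n)$ is nowhere dense, and since every $\psi\in\cV_0$ admits some $y$ with $(y,\psi)\in\mathrm{Crit}(\Pi)$ and $(y,\psi)\in\cZ_n$ for some $n$, we obtain $\cV_0\subset\bigcup_n\Pi(\mathrm{Crit}(\Pi)\cap\cZ_n)$, a countable union of nowhere-dense sets, hence meager in $\cV$.

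The main obstacle is concentrated in the first two steps: making $\cZ$ a genuine submanifold and transferring the semi-Fredholm property and index from $D_y\cF$ to $D\Pi$ require working with closed complemented subspaces of Banach spaces rather than orthogonal complements, and it is here that hypothesis (1) --- semi-Fredholmness with index strictly below the differentiability order $r$ --- is indispensable. The $\sigma$-properness (3) plays the secondary but essential role of upgrading the local ``thinness'' of the critical values to the global conclusion that $\cV_0$ is meager, without any separability assumption on $\cV$.
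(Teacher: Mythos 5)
The paper offers no proof of this statement at all: Theorem~\ref{thm:trans} is quoted verbatim from Henry's book (Theorem~5.4 there), so there is no internal argument to compare yours against. Your outline is the standard proof of the parametric Sard--Smale theorem and, as far as I recall Henry's treatment, it is also his strategy: realize $\cF^{-1}(z)$ as the relevant object, observe that the restricted projection $\Pi$ to the parameter space has $\Ker D\Pi\cong\Ker D_y\cF$ and $\Range D\Pi=(D_\psi\cF)^{-1}(\Range D_y\cF)$, so $\Pi$ inherits the semi-Fredholm property with the same index and its critical values are exactly $\cV_0$, and then combine $\sigma$-properness with Sard in Fredholm charts to write $\cV_0$ as a countable union of closed nowhere dense sets. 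The linear-algebra identifications in your second step are correct, and the third step is the standard argument. The one place you wave your hands is the splitting claim in Step~1: when the index is finite, $\Range D_y\cF$ has finite codimension, hence $(D_\psi\cF)^{-1}(\Range D_y\cF)$ is closed of finite codimension and therefore complemented, and $\Ker D\cF$ splits as you assert; but hypothesis (1) also allows index $=-\infty$, in which case $\Range D_y\cF$ need not be complemented, the splitting is not automatic, and moreover every point over such a $\psi$ is critical, so one must argue directly that the image of $\Pi$ is locally thin rather than invoke the submanifold-plus-regular-value dichotomy. A complete proof (as in Henry) has to treat that degenerate case, though it is irrelevant for the application in this paper, where $D_{w,\xi}\cT_t$ is Fredholm of index $0$ and $r=1$.
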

	For any $p>1,\alpha\in (0,1)$, define
	\[
	M:= C^{2,\alpha}_0(\Sigma) \times \Sigma^k\times (\partial\Sigma)^{m-k}, \quad \cV:=C^{2,\alpha}(\Sigma,\R_+) \times C^{2,\alpha}(\Sigma,\R_+), \quad N: =C_0^{\alpha}(\Sigma) \times\{ 0\}\times  \mathbb{R}^{m+k},
	\]
	and $\cD:=C^{2,\alpha}_0(\Sigma)  \times\Xi_{k,m} \times \cV$ is an open subset of $M$. 
	We note that 
	for any $\xi\in \intsigma^k\times (\partial\Sigma)^{m-k} $, the tangent space $T_{\xi}(\intsigma^k\times (\partial\Sigma)^{m-k})$ is isomorphic to $\R^{m+k}$, so we identify the elements in $T_{\xi}(\intsigma^k\times (\partial\Sigma)^{m-k})$ and $\R^{m+k}.$
	
	We define the following shadow system with parameter $t\in [0,1]$,
	\begin{equation}\label{eq:shadow_t}
		\left\{
		\begin{array}{ll}
			-\Delta_g w^t =2\rho_2 \left( \frac{V_2 e^{ w^t - \sum_{j=1}^m  \frac {\varrho(\xi_j)}2 G^g(x,\xi_j)}} {\int_{\Sigma} V_2 e^{ w^t - \sum_{j=1}^m  \frac {\varrho(\xi_j)}2 G^g(x,\xi_j)}\, dv_g  } - 1\right)& \quad \text{ in } \intsigma\\
			\partial_{\nu_g} w^t=0 & \quad\text{ on } \partial\Sigma\\
			\partial_{x_i} f_t(x)|_{x=\xi} = 0& \quad\text{ in } \Xi_{k,m}\tag{S$_t$}
		\end{array}
		\right.,
	\end{equation}
	where	\[
	\begin{array}{ll}
		f_t(x_1, x_2, \ldots, x_m)=& \cF_{k,m}(x)
		-\sum_{j=1}^m(1-t)\varrho(x_j) w(x_j). 
	\end{array}
	\]
	Consider the map for $t\in [0,1]$
	\begin{equation}
		\label{eq:T_t} 	\cT_t(w,\xi,V_1,V_2) = 
		\begin{bmatrix}
			\Delta_g w + 2\rho_2 \left( \frac{V_2 e^{w-\sum_{j=1}^m \frac {\varrho(\xi_j)} 2 G^g(\cdot,\xi_j)}} {\int_{\Sigma} V_2 e^{w-\sum_{j=1}^m \frac {\varrho(\xi_j)} 2 G^g(\cdot,\xi_j) } \, dv_g} - 1\right) 
			\\
			\partial_{\nu_g} w \\
			\nabla_{\xi_1} f_t(\xi) \\
			\vdots \\
			\nabla_{\xi_m} f_t(\xi)
		\end{bmatrix}.
	\end{equation}
	
	\begin{thm} \label{thm:residual}
		For $t\in[0,1]$, $\cT_t$ is $C^1$-differentiable. Moreover,
		\[
		\cV^t_{reg}:=\left\{ (V_1,V_2) \in \cV :   \text{any solution } (w, \xi) \text{ of } \cT_t(\cdot, V_1,V_2) = 0 \text{ is nondegenerate} \right\}
		\]
		is residual in $\cV$. 
	\end{thm}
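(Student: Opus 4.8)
The plan is to verify the three hypotheses of the transversality theorem (Theorem~\ref{thm:trans}) for the map $\cF = \cT_t$ with $M,\cV,N,\cD$ as defined immediately before the statement, and $z=0\in N$. The $C^1$-regularity of $\cT_t$ is routine: the nonlinear term is a smooth composition of the exponential with linear and integral operators, the Green's function $G^g(\cdot,\xi_j)$ and its $\xi$-derivatives depend smoothly on $\xi$ away from the diagonal (Section~\ref{sec:pre}), and $\cF_{k,m}$ is smooth on $\Xi_{k,m}$. Taking $r=2$ in Theorem~\ref{thm:trans} will be enough, since we will need index~$<2$.

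\textbf{Hypothesis (1): $D_w\cT_t(w,\xi,V_1,V_2)$ is semi-Fredholm with index $<r$.} Differentiating in $w$ at a zero $(w,\xi)$ gives, in the first two slots, the operator $(\psi\mapsto\Delta_g\psi + K(\psi),\ \partial_{\nu_g}\psi)$ exactly as in the proof of Lemma~\ref{lem:C_1_z_x_xi} (with $\tilde V_2(\cdot,\xi)$ in place of $V_2$), which is Fredholm of index zero since $(-\Delta_g,\partial_{\nu_g})$ is an isomorphism $C^{2,\alpha}_0\to C^\alpha_0\times\{0\}$ and $K$ is compact; in the remaining $m+k$ slots we get the bounded finite-rank map $\psi\mapsto\big(-(1-t)\sum_j\varrho(\xi_j)\,\partial_{x_i}\psi(\xi_j)\big)_{i}$ from $C^{2,\alpha}_0(\Sigma)$ to $\R^{m+k}$ (finite rank, hence compact). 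Adding a compact perturbation and mapping into the extra finite-dimensional factor preserves the semi-Fredholm property; the index is $0 - (m+k) < 2 = r$, so (1) holds. (One must note the source $M$ here is only the $C^{2,\alpha}_0(\Sigma)$ factor — the $\xi$-variables are moved into $\cV$ in the abstract setup, or rather $\xi$ plays the role of part of $y$; either convention works as long as the index bookkeeping is done consistently, and I would state it with $y=(w,\xi)$ so that $T_yM = C^{2,\alpha}_0(\Sigma)\times\R^{m+k}$ and the index is still $\le 0<r$.)

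\textbf{Hypothesis (2): $D\cT_t(w,\xi,V_1,V_2)$ is surjective onto $N = C^\alpha_0(\Sigma)\times\{0\}\times\R^{m+k}$.} This is the main obstacle and the place where the freedom in $(V_1,V_2)$ is exploited. I would argue in two stages. First, perturbing $V_2$ alone: the derivative of the first component in the direction $\dot V_2$ is $2\rho_2\big(\tfrac{e^{w-\sum\frac{\varrho}{2}G^g}}{\int(\cdots)}\dot V_2 - (\text{projection terms})\big)$, and since $e^{w-\sum\frac{\varrho}{2}G^g}>0$ and $\dot V_2$ ranges over all of $C^{2,\alpha}(\Sigma)$, one checks that the image of $(D_w, D_{V_2})$ already covers a closed subspace of $C^\alpha_0(\Sigma)$ of finite codimension, and combined with the index-zero Fredholm part from $D_w$ one gets that the first component is hit onto all of $C^\alpha_0(\Sigma)$ (the orthogonality/average constraints match up because both sides have zero average by construction). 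Second, to hit the last $\R^{m+k}$ factor one perturbs $V_1$: the function $f_t$ depends on $V_1$ only through the term $\sum_j 2\varrho(x_j)\log V_1(x_j)$ in $\cF_{k,m}$, so $\partial_{x_i}\big(D_{V_1}f_t\cdot\dot V_1\big)\big|_{x=\xi} = 2\varrho(\xi_j)\,\partial_{x_i}\!\big(\dot V_1/V_1\big)(\xi_j) + (\text{lower-order, already in the range})$; since the points $\xi_1,\dots,\xi_m$ are distinct and one may prescribe the gradient of $\dot V_1/V_1$ independently at each $\xi_j$ (bump functions), this map is onto $\R^{m+k}$. Because perturbing $V_1$ does not affect the first two components and perturbing $V_2$ surjects onto the first component, the full differential is surjective.

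\textbf{Hypothesis (3): $\sigma$-properness of $\cT_t^{-1}(0)\to\cV$.} One covers $\cD$ by countably many sets on which properness holds. Concretely, write $\Xi_{k,m}=\bigcup_\ell \overline{\Xi^{1/\ell}_{k,m}}$ (the compact exhaustion by the sets defined in the introduction where all mutual distances and distances to $\partial\Sigma$ are $\ge 1/\ell$), and intersect with closed balls $\{\|w\|_{C^{2,\alpha}}\le \ell\}$ and $\cV$-balls; on each such piece $\overline{\cD}_\ell$ the projection to $\cV$ is proper because a sequence $(w_n,\xi_n)$ with $(w_n,\xi_n,V_1^n,V_2^n)\in\cT_t^{-1}(0)$ and $(V_1^n,V_2^n)\to(V_1,V_2)$ in $\cV$ has $\xi_n$ in a compact set, $w_n$ bounded in $C^{2,\alpha}$ hence (by Arzelà–Ascoli on a slightly smaller exponent and then elliptic Schauder bootstrap, as in Lemma~\ref{lem:hypo_H_small_rho2}) convergent along a subsequence in $C^{2,\alpha}_0(\Sigma)$ to a solution for the limiting potentials, which stays in $\overline{\cD}_\ell$. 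Thus $\cT_t^{-1}(0)\to\cV$ is $\sigma$-proper.

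With (1)--(3) verified, Theorem~\ref{thm:trans} gives that $\cV^t_0 := \{(V_1,V_2): 0 \text{ is a critical value of }\cT_t(\cdot,V_1,V_2)\}$ is meager, i.e.\ $\cV^t_{reg} = \cV\setminus\cV^t_0$ is residual in $\cV$; and for $(V_1,V_2)\in\cV^t_{reg}$, $0$ being a regular value means precisely that at every solution $(w,\xi)$ the partial differential $D_{(w,\xi)}\cT_t(w,\xi,V_1,V_2)$ is surjective, which — being a Fredholm operator of index $0$ once the finite-dimensional $\xi$-slots are accounted for — is then an isomorphism, i.e.\ the solution is nondegenerate. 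The main work, as indicated, is Hypothesis~(2): pinning down exactly which closed finite-codimension subspace of $C^\alpha_0(\Sigma)$ is reached by the $w$-derivative and checking that the $V_2$-perturbation fills the complement, while being careful that all the zero-average constraints are compatible.
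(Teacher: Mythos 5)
Your overall strategy coincides with the paper's: apply Theorem~\ref{thm:trans} with $y=(w,\xi)$, verify $\sigma$-properness by a countable exhaustion $\{\mathrm{dist}(\xi,\partial\Xi_{k,m})\ge 1/\ell,\ \|w\|_{C^{2,\alpha}}\le \ell\}$ plus Arzel\`a--Ascoli and Schauder (this matches the paper's Step for condition (3)), verify the Fredholm condition by splitting $D_{w,\xi}\cT_t$ into an index-zero part and a compact part (matches the paper; note that with $y=(w,\xi)$ the index is $0$, so $r=1$ already suffices and your detour through $r=2$ is unnecessary — and would in fact require checking $C^2$-smoothness of $\cT_t$, which you do not do), and fill the last $\R^{m+k}$ factor by $V_1$-bump functions prescribing $\nabla(\dot V_1/V_1)(\xi_j)$ at the distinct points $\xi_j$ (again the same as the paper).

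The genuine gap is in the surjectivity onto the first component $C^{\alpha}_0(\Sigma)$, which you yourself flag as ``the main work'' and leave as a plan. Two problems. First, the intermediate claim that the $V_2$-directions alone ``cover a closed subspace of finite codimension'' cannot be right as stated: the first component of $D_{V_2}\cT_t[h_2]$ is $K(h_2/V_2)$, i.e.\ the compact operator $K$ applied to an arbitrary $C^{2,\alpha}$ function, and ranges of compact operators are in general neither closed nor of finite codimension. Second, the actual mechanism that makes the argument work is never exhibited. The paper's proof is an explicit cancellation: given $h\in C^{\alpha}_0(\Sigma)$, solve the Neumann problem $\Delta_g\phi_0=h$, $\partial_{\nu_g}\phi_0=0$; then the $w$-direction $\phi_0$ contributes $\Delta_g\phi_0+K(\phi_0)$ in the first slot, the choice $\dot V_2=V_2\phi_0$ contributes exactly $K(\phi_0)$ there (so subtracting it removes the compact term and leaves $h$), and a suitable $V_1$-perturbation cancels the entries $-\varrho(\xi_j)(1-t)\nabla\phi_0(\xi_j)$ that the $w$-direction creates in the last $m+k$ slots. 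Alternatively one can argue by duality that no nonzero functional annihilates both the range of the index-zero operator and all $K(h_2/V_2)$ (this is the content of Claim~\ref{calim:2.6} in the paper), but that too requires an actual proof. Without one of these two arguments your verification of hypothesis (2) of Theorem~\ref{thm:trans} — the heart of the genericity statement — is incomplete.
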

	\begin{proof}
		It is easy to check the condition (3) in Theorem~\ref{thm:trans}.
		Let $$M_j:=\left \{(w,\xi)\in C^{2,\alpha}_0(\Sigma)\times \Xi_{k,m}: \text{dist}(\xi, \partial \Xi_{k,m})\geq \frac 1 {2^j} \text{ and } \|w\|_{C^{2,\alpha}(\Sigma)}\leq 2^j\right\},$$
		for $j\in\N$. It follows that 
		\[ C^{2,\alpha}_0(\Sigma)\times \Xi_{k,m}=\bigcup_{j=0}^{+\infty} M_j. \]
		We consider the map $ \cT_t^{-1}(0)\cap (M_j\times \cV) \rightarrow \cV, (t,w,\xi, V_1, V_2)\mapsto (V_1, V_2)$. 
		For any $(t^n,w^n,\xi^n, V^n)\in \cT_t^{-1}(0)\cap (M_j\times \cV)$ such that $V^n:=(V^n_1, V^n_2)\rightarrow V^0:=(V^0_1, V^0_2)$ in $\cV$, by the compactness of $\Xi_{k,m}^{\frac 1{2^j}}$, up to a subsequence,  $\xi^n:=(\xi^n_1,\cdots,\xi^n_m)\rightarrow \xi^0:=(\xi^0_1,\cdots,\xi^0_m)\in \Xi_{k,m}^{\frac 1{2^j}}$.  The Arzelà-Ascoli theorem implies that for any $\alpha'\in (0,\alpha)$, 
		there exists $w^0$ such that 
		\[ w^n\rightarrow w^0 \text{ in } C^{2,\alpha'}_0(\Sigma).\]
		Since $(w^n,\xi^n, V^n)\in\cT_t^{-1}(0)$, we have $(w^n,\xi^n, V^n)\in \cT_t^{-1}(0)$, too. 
		\begin{equation*}
			\left\{\begin{array}{ll}
				- \Delta_g (w^n-w^0)&= 2\rho_2 \frac{V^n_2 e^{w^n-\sum_{j=1}^m \frac {\varrho(\xi^n_j)} 2 G^g(\cdot,\xi^n_j)}} {\int_{\Sigma} V^n_2 e^{w^n-\sum_{j=1}^m \frac {\varrho(\xi^n_j)} 2 G^g(\cdot,\xi^n_j) } \, dv_g}-2\rho_2 \frac{V^0_2 e^{w^0-\sum_{j=1}^m \frac {\varrho(\xi^0_j)} 2 G^g(\cdot,\xi^0_j)}} {\int_{\Sigma} V^0_2 e^{w^0-\sum_{j=1}^m \frac {\varrho(\xi^0_j)} 2 G^g(\cdot,\xi^0_j) } \, dv_g} \text{ in }\intsigma,
				\\
				\partial_{\nu_g} (w^n-w^0)&=0 \text{ on } \partial\Sigma,\\
			\end{array}\right.
		\end{equation*}
		We have $$\|-\Delta_g(w^n-w_0)\|_{C^{\alpha}(\Sigma)}\leq C \|w^n-w^0\|_{C^{\alpha}(\Sigma)}\rightarrow 0,$$
		as $n\rightarrow +\infty.$
		Applying the Schauder estimates  we can derive the convergence
		$ w^n\rightarrow w^0 \text{ in } C^{2,\alpha}_0(\Sigma).$
		Thus, $\cT_t^{-1}(0)\rightarrow \cV, (w,\xi, V)\mapsto V$ is $\sigma$-proper.
		
		To compute the derivative of  $\cT_t$ in the direction of $(w,\xi)$, we proceed as follows. Let  $\phi\in C^{2,\alpha}_0(\Sigma), v=(v_1,\ldots, v_m)\in\R^{m+k}$. The derivative of
		$\cT_t$ with respect to $w$ and $\xi$ is given by 
		\begin{equation}\label{eq:D_w_xi_T_t}
			D_{w,\xi}\cT_t(w, \xi, V_1, V_2)[\phi, v] = 
			\begin{bmatrix}
				T_0(w, \xi, V_1, V_2)[\phi, v] \\
				T_1(w, \xi, V_1, V_2)[\phi, v] \\
				\vdots \\
				T_m(w, \xi, V_1, V_2)[\phi, v]
			\end{bmatrix},
		\end{equation}	
		where	\begin{eqnarray*}
			&&	T_0(w, \xi, V_1, V_2)[\phi, v] 		 =\\
			&&	\left[  \Delta_g \phi + 2\rho_2 \left( \frac{\tilde{V}_2 e^w \phi}{\int_\Sigma \tilde{V}_2 e^w \, dv_g} - \frac{\tilde{V}_2 e^w \int_\Sigma \tilde{V}_2 e^w \phi\,  d v_g}{\left(\int_\Sigma \tilde{V}_2 e^w\,  dv_g\right)^2} \right)
			-\rho_2 \sum_{j=1}^{m} \varrho(\xi_j)\left( \frac{\tilde{V}_2 e^w \nabla_{x_j} G^g(\cdot, x_j)|_{x_j=\xi_j} \cdot v_j }{\int_\Sigma \tilde{V}_2 e^w\, dv_g } \right) \right. \\
			&&\left. + \rho_2 \sum_{j=1}^{m} \left( \frac{\tilde{V}_2 e^w \int_\Sigma \tilde{V}_2 e^w \nabla_{x_j} G^g(\cdot, x_j)|_{x_j=\xi_j} \cdot v_j\, dv_g  }{\left( \int_\Sigma\tilde{V}_2 e^w \, dv_g  \right)^2} \right), \partial_{\nu_g}\phi\right]^{\top},
		\end{eqnarray*}
		and 
		\[\begin{aligned}	  			
			T_i(w, \xi, V_1, V_2)[\phi, v] =& \left. 
			\nabla_{x_i}^2 \left( \sum_{j=1}^{m}\varrho^2(x_j) R^g(\xi_j)
			+ \sum_{\substack{i,j=1 \\ i \neq j}}^{m}\varrho(x_i)\varrho(x_j) G(x_i, x_j)\right)\right|_{x=\xi}\cdot v_i\\
			&
			+ 2 \varrho^2(x_i)\nabla^2_{x_i
			}\log V_1(x_i
			)|_{x_j=\xi_i}\cdot v_i - \varrho(\xi_i) (1-t) \nabla_{x_i}^2w(x_i)\cdot v_i 
			\\
			&-\varrho(\xi_i)(1-t)\nabla_{\xi_i}\phi(\xi_i),  
		\end{aligned}
		\]
		for $i = 1, \ldots, m,$ 
		where
		$
		\tilde{V}_2= V_2 e^{- \sum_{j=1}^{m}\frac {\varrho(\xi_j)} 2 G^g(\cdot,\xi_j)}. $
		By the  formula of $C^1$-derivatives,	it is easy to know that $\cT_t$ is $C^1$-differentiable for $(w,\xi)$ uniformly for any $V_1,V_2$ bounded in $C^{2,\alpha}(\Sigma, \R_+)$ and $t\in [0,1]$. 
		We decompose $ D_{w,\xi}\cT_t$ into the following two linear operatorss: 
		\begin{equation}\label{eq:T_11}
			\cT^t_{11}(w, \xi, V_1, V_2)[\phi, v] = 	\begin{bmatrix}
				\Delta_g \phi + 2\rho_2 \left( \frac{\tilde{V}_2 e^w \phi}{\int_\Sigma \tilde{V}_2 e^w \, dv_g} - \frac{\tilde{V}_2 e^w \int_\Sigma \tilde{V}_2 e^w \phi\,  d v_g}{\left(\int_\Sigma \tilde{V}_2 e^w\,  dv_g\right)^2} \right) \\
				\partial_{\nu_g}\phi\\ 
				0\\
				\vdots \\
				0
			\end{bmatrix},
		\end{equation}
		and 
		\begin{eqnarray}\label{eq:T_12}
			&& \cT^t_{12}(w, \xi, V_1, V_2)[\phi, v] = \begin{bmatrix}
				T_{10} (w, \xi, V_1, V_2)[\phi, v] \\
				T_{11}(w, \xi, V_1, V_2)[\phi, v] \\
				\vdots \\
				T_{1m}(w, \xi, V_1, V_2)[\phi, v]
			\end{bmatrix},
		\end{eqnarray}
		where 
		$$
		\begin{aligned}
			&T_{10}(w, \xi, V_1, V_2)[\phi, v]\\
			& =\left[-\rho_2 \sum_{j=1}^{m} \varrho(\xi_j)\left( \frac{\tilde{V}_2 e^w}{\int_\Sigma \tilde{V}_2 e^w\, dv_g } \nabla_{x_j} G^g(\cdot, x_j)|_{x_j=\xi_j} \cdot v_j \right)\right. \\
			& \left. + \rho_2 \sum_{j=1}^{m} \left( \frac{\tilde{V}_2 e^w}{\left( \int_\Sigma\tilde{V}_2 e^w\, dv_g  \right)^2} \int_\Sigma \tilde{V}_2 e^w \nabla_{x_j} G^g(\cdot, x_j)|_{x_j=\xi_j} \cdot v_j\, dv_g  \right), 0\right]^{\top},
		\end{aligned}$$
		and 
		$T_{1j}= T_j$ for any $j=1,\ldots,m.$
		
		Since $(-\Delta_g, \partial_{\nu_g}): C^{2,\alpha}_0(\Sigma)\rightarrow C^{\alpha}_0(\Sigma)\times\{ 0\}$ is an isomorphism and for any fixed $\xi$, applying the compactness of $\Sigma$ and compact embedding theorem of Sobolev's spaces we can deduce that  $\phi\mapsto  2\rho_2 \left( \frac{\tilde{V}_2 e^w \phi}{\int_\Sigma \tilde{V}_2 e^w \, dv_g} - \frac{\tilde{V}_2 e^w \int_\Sigma \tilde{V}_2 e^w \phi\,  d v_g}{\left(\int_\Sigma \tilde{V}_2 e^w\,  dv_g\right)^2} \right)$ is a compact operator. So, we can obtain that 
		\[ \dim(\text{ker}(\cT^t_{11}(w,\xi, V_1, V_2)))= \text{codim}(\text{Im}(\cT^t_{11}(w,\xi, V_1, V_2)))=m+k, 
		\]
		i.e. $\cT^t_{11}(w,\xi,V_1, V_2)$ is a Fredholm operator with index $0$. Similarly, by the compactness of $\Sigma$ and the compact embedding theorem of Sobolev's space, 
		$\cT^t_{12}(w,\xi, V_1, V_2)$ is a compact operator, too. It follows that 
		$D_{w,\xi}\cT_t(w,\xi, V_1, V_2)$ is a Fredholm operator with index $0$. 
		The condition (1) in Theorem~\ref{thm:trans} is satisfied. 
		
		Next, we will show that condition (2) holds true. Let $(w,\xi, V_1, V_2)\in \cT_t^{-1}(0)$. We have the derivative of $\cT_t$ with respect to $V=(V_1, V_2)$ as follows: 
		\begin{eqnarray*}
			D_{V_1}	\cT_t(w, \xi, V_1, V_2)[h_1] = 
			\left[ \begin{array}{c}
				0 \\
				0\\
				2\rho^2(\xi_1)
				\left(\frac{\nabla_{\xi_1}h_1(\xi_1)}{V_1(\xi_1)}- \frac{ h_1(\xi_1)\nabla_{\xi_1} V_1(\xi_1)}{V_1^2(\xi_1)}\right) \\
				\vdots \\
				2\rho^2(\xi_m)
				\left(\frac{\nabla_{\xi_m}h_1(\xi_m)}{V_1(\xi_m)}- \frac{ h_1(\xi_m)\nabla_{\xi_m} V_1(\xi_m)}{V_1^2(\xi_m)}\right) 
			\end{array} \right],
		\end{eqnarray*}
		and 
		\begin{eqnarray*}
			&&D_{V_2}\cT_t(w, \xi, V_1, V_2)[h_2] = 
			\left[ \begin{array}{c}
				2\rho_2 \left( \frac{\tilde{V}_2 e^w }{\int_\Sigma \tilde{V}_2 e^w \, dv_g} \frac{h_2}{V_2}- \frac{\tilde{V}_2 e^w \int_\Sigma \tilde{V}_2 e^w \frac{h_2}{V_2}\,  d v_g}{\left(\int_\Sigma \tilde{V}_2 e^w\,  dv_g\right)^2}  \right) \\
				0\\
				0 \\
				\vdots \\
				0 
			\end{array} \right].\
		\end{eqnarray*}
		For 
		$v=(v_1,\ldots,v_m) = 0\in \R^{m+k},$ and  $h_1\in C^{2,\alpha}(\Sigma)$ satisfying that
		\[ 2\rho^2(\xi_i)
		\left(\frac{\nabla_{\xi_i}h_1(\xi_i)}{V_1(\xi_i)}- \frac{ h_1(\xi_i)\nabla_{\xi_i} V_1(\xi_i)}{V_1^2(\xi_i)}\right) =\rho(\xi_i) \nabla_{\xi_i}\phi(\xi_i), \]
		for $i=1,\ldots,m,$ we have 
		\begin{eqnarray*}
			&&D_{w, \xi}\cT_t(w, \xi, V_1,V_2)[\phi, v_1, \dots, v_m] + D_{V_1}\cT_t(w, \xi, V_1,V_2)[h_1] \\
			&=& 
			\left[ \begin{array}{c}
				\Delta_g \phi + 2\rho_2 \left( \frac{\tilde{V}_2 e^w \phi}{\int_\Sigma \tilde{V}_2 e^w \, dv_g} - \frac{\tilde{V}_2 e^w \int_\Sigma \tilde{V}_2 e^w \phi\,  d v_g}{\left(\int_\Sigma \tilde{V}_2 e^w\,  dv_g\right)^2} \right)\\
				\partial_{\nu_g}\phi\\
				0 \\
				\vdots \\
				0 
			\end{array} \right].
		\end{eqnarray*}
		\begin{claim}\label{calim:2.6}
			Let $L(\phi)= \begin{bmatrix}
				-\Delta_g \phi - 2\rho_2 \left( \frac{\tilde{V}_2 e^w \phi}{\int_\Sigma \tilde{V}_2 e^w \, dv_g} - \frac{\tilde{V}_2 e^w \int_\Sigma \tilde{V}_2 e^w \phi\,  d v_g}{\left(\int_\Sigma \tilde{V}_2 e^w\,  dv_g\right)^2} \right)\\
				\partial_{ \nu_g} \phi
			\end{bmatrix}.$
			If $\phi\in C^{2,\alpha}_0(\Sigma)$ such that 
			\begin{eqnarray*}
				L(\phi)=0,&\\
				\int_{\Sigma} \left( \frac{\tilde{V}_2 e^w }{\int_\Sigma \tilde{V}_2 e^w \, dv_g} \frac{h_2}{V_2}- \frac{\tilde{V}_2 e^w \int_\Sigma \tilde{V}_2 e^w \frac{h_2}{V_2}\,  d v_g}{\left(\int_\Sigma \tilde{V}_2 e^w\,  dv_g\right)^2}  \right)\phi\, dv_g =0,& \label{eq:othogonal_h2_v_2_phi}
			\end{eqnarray*}
			$\text{ for any } h_2\in C^{2,\alpha}(\Sigma),$	then $\phi=0.$
		\end{claim}
		
		We define that 
		\[  H= \text{span}\left\{ \text{Im}(D_{w, \xi}\cT_t(w, \xi, V_1,V_2)[\cdot, v] + D_{V_1}\cT_t(w, \xi, V_1,V_2)[h_1]) \bigcup \text{Im}(D_{V_2}\cT_t(w, \xi, V_1, V_2))\right\}.\]
		We will prove that 
		\begin{eqnarray}\label{eq:H_equiv_1space}
			\label{eq:induction} H=&C^{\alpha}_0(\Sigma)\times\{ 0\} \times\underbrace{\{0\}\times\ldots\times\{0\}}&\\
			& \quad \quad \quad \quad \quad \quad m+k &\nonumber.
		\end{eqnarray}
		For any $h\in C^{\alpha}_0(\Sigma)$, since $(-\Delta_g, \partial_{\nu_g}): C^{2,\alpha}(\Sigma)\rightarrow C^{\alpha}_0(\Sigma)\times\{0\}$ is an isomorphism, there exists $\phi_0\in C^{2,\alpha}_0(\Sigma)$ with $\partial_{ \nu_g}\phi_0=0$ such that 
		\[ \Delta_g \phi_0= h.\]
		We recall that  $K(\phi)= 2\rho_2 \left( \frac{\tilde{V}_2 e^w }{\int_\Sigma \tilde{V}_2 e^w \, dv_g} \phi- \frac{\tilde{V}_2 e^w \int_\Sigma \tilde{V}_2 e^w \phi\,  d v_g}{\left(\int_\Sigma \tilde{V}_2 e^w\,  dv_g\right)^2}  \right).$
		Then 
		\begin{eqnarray*}
			\begin{bmatrix}
				h\\
				0\\
				0\\
				\vdots
				\\
				0
			\end{bmatrix}=\begin{bmatrix}
				\Delta_g \phi_0\\
				0\\
				0\\
				\vdots
				\\
				0
			\end{bmatrix}=\begin{bmatrix}
				\Delta_g\phi_0+K(\phi_0)\\
				\partial_{\nu_g}\phi_0\\
				0\\
				\vdots
				\\
				0
			\end{bmatrix}- D_{V_2}\cT_t(w, \xi, V_1, V_2))[V_2 \phi_0]\in H. 
		\end{eqnarray*}
		Hence,~\eqref{eq:H_equiv_1space} is concluded.
		Next, we will show that 
		\begin{equation}
			\label{eq:H_equiv_2space}
			\text{Im}(D_{V_1}\cT_t(w, \xi, V_1, V_2))
			=\{0\}\times\{0\}\times\R^{m+k}. 
		\end{equation}
		We select functions  $ h_{11}, h_{12}\in C^{2,\alpha}(\Sigma) $, such that
		\begin{equation}
			\label{eq:null_h_1}
			h_{1i}(\xi_j) = 0, \quad\text{ and } \quad \nabla_{\xi_j} h_{1i}(\xi_j) = 0, \quad 2 \leq j \leq m, \quad i = 1, 2. 
		\end{equation}
		In the case where  $\xi_1\in \intsigma$,  it is possible to find functions  $h_{11}, h_{12}$ that
		satisfy~\eqref{eq:null_h_1} and fulfill the following conditions:
		\[
		\frac{\nabla_{\xi_1}h_{11}(\xi_1)}{V_1(\xi_1)}- \frac{ h_{11}(\xi_1)\nabla_{\xi_1} V_1(\xi_1)}{V_1^2(\xi_1)} = (1, 0),
		\]
		and
		\[
		\frac{\nabla_{\xi_1}h_{12}(\xi_1)}{V_1(\xi_1)}- \frac{ h_{12}(\xi_1)\nabla_{\xi_1} V_1(\xi_1)}{V_1^2(\xi_1)} = (0,1).
		\]
		If $\xi_1\in\partial\Sigma$, we also can find $h_{11}, h_{12}$ that satisfy~\eqref{eq:null_h_1} such that 
		\[ \frac{\nabla_{\xi_1}h_{11}(\xi_1)}{V_1(\xi_1)}- \frac{ h_{11}(\xi_1)\nabla_{\xi_1} V_1(\xi_1)}{V_1^2(\xi_1)} = 1.\]
		It follows that
		\[
		[0,0, c_1,0,\ldots, 0]^{\top}\in 	\text{Im}(D_{V_1}\cT_t(w, \xi, V_1, V_2)),
		\]
		for all $ c_1\in \R^2 $ when $\xi_1\in\intsigma$ and $c\in\R$ when $\xi_1\in\partial\Sigma$. Similarly,  appropriate selection of $h_{11}, h_{12}$ allows us to infer that
		\[
		[0, 0,c_{1},\ldots, c_m]^{\top} = \text{Im}(D_{V_1}\cT_t(w, \xi, V_1, V_2)),
		\]
		for all $ (c_1,\ldots,c_m) \in \R^{m+k} $. Consequently, this leads us to conclude equation~\eqref{eq:H_equiv_2space}, demonstrating that $D\cT_t(w, \xi, V_1, V_2)$ is surjective.
		
		Invoking  Theorem~\ref{thm:trans},  we ascertain that
		\begin{eqnarray*}
			\cV_0&=\{V\in \cV: 0 \text{ is a critical value of } \cT_t(\cdot,V): C^{2,\alpha}_0(\Sigma)\times \Xi_{k,m}\rightarrow C^{\alpha}_0(\Sigma)\times\{0\}\times\R^{m+k}\}
		\end{eqnarray*}
		is a meager subset of $\cV$. Consequently, the complement
		$ \cV^t_{reg}=\cV\setminus \cV_{0}$
		is a residual set in $\cV.$
		\par 
		{\it Proof of Claim~\ref{calim:2.6}.}
		If $\phi\in \text{ker}(L)$, for any $h\in L^2(\Sigma)$ we have 
		\begin{eqnarray}\label{eq:orth_phi_h}
			\int_{\Sigma} (-\Delta_g\phi) hdv_g = 2\rho_2 \left( \frac{\int_{\Sigma}\tilde{V}_2 e^w \phi h\, dv_g }{\int_\Sigma \tilde{V}_2 e^w \, dv_g} - \frac{\int_{\Sigma} \tilde{V}_2 e^w h\, dv_g \int_\Sigma \tilde{V}_2 e^w \phi\,  d v_g}{\left(\int_\Sigma \tilde{V}_2 e^w\,  dv_g\right)^2} \right).
		\end{eqnarray}
		Since $C^{2,\alpha}(\Sigma)$ is dense in $L^2(\Sigma)$, \eqref{eq:othogonal_h2_v_2_phi} implies that 
		the right hand side of~\eqref{eq:orth_phi_h} vanishes, i.e. 
		\[ 	\int_{\Sigma} (-\Delta_g \phi) h\, dv_g=0\]
		for any $h\in L^2(\Sigma)$. So, $-\Delta_g\phi=0$ in $\intsigma$ and $\partial_{\nu_g}\phi=0$ on $\partial\Sigma.$
		By the Schauder estimate in~\cite{yang2021125440}, $\phi=0$ in $ C^{2,\alpha}_0(\Sigma)$. 
	\end{proof}
	
	Using the method of continuity, we can conclude the following proposition: 
	\begin{prop}\label{prop:ex_sols_shadow}
		For generic $(V_1, V_2)\in C^{2,\alpha}(\Sigma,\R_+)\times C^{2,\alpha}(\Sigma,\R_+)$, if one of the following condition holds: 
		\begin{itemize}
			\item [a)]$\rho_2\in (0,2\pi)$ 
			\item [b)]$\rho_2\in (2\pi, +\infty)\setminus 2\pi \N_+$ and the Euler characteristic $ \chi(\Sigma)<1$,
		\end{itemize} 
		then, there exists a non-degenerate solution $(w,\xi)$ of the shadow system~\eqref{eq:shadow}. 
	\end{prop}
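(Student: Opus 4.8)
The plan is to prove Proposition~\ref{prop:ex_sols_shadow} by a degree-theoretic continuation argument along the homotopy $(\text{S}_t)_{t\in[0,1]}$ of shadow systems introduced in~\eqref{eq:shadow_t}. At $t=1$ the coupling term $\sum_j (1-t)\varrho(x_j) w(x_j)$ vanishes, so $(\text{S}_1)$ decouples: the equation for $w^1$ is the genuine singular mean-field equation~\eqref{eq:singular_mf} (with $z$ replaced by $w^1$ up to the harmless shift by Green's functions), and the balance condition $\partial_{x_i} f_1|_{x=\xi}=0$ becomes $\nabla_{\xi}\cF_{k,m}(\xi)=0$, i.e. the critical point equation for the Kirchhoff--Routh functional. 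The $t=1$ problem is therefore tractable, and I will compute its Leray--Schauder degree; then I will transport this degree to $t=0$ provided I can rule out solutions escaping to the boundary $\partial\Xi_{k,m}$ or $w$ blowing up, uniformly in $t$. Finally, by Theorem~\ref{thm:residual} the regular values $\cV^t_{\mathrm{reg}}$ are residual for each $t$; taking $(V_1,V_2)$ in $\bigcap_{t\in\Q\cap[0,1]}\cV^t_{\mathrm{reg}}$ (still residual) and using the continuity of $\cT_t$ in $t$, any solution of $(\text{S}_0)$ is nondegenerate, so a nonzero degree at $t=0$ produces the desired nondegenerate solution $(w,\xi)$ of~\eqref{eq:shadow}.

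The computation of the $t=1$ degree splits according to the two cases. In case (a), $\rho_2\in(0,2\pi)$, i.e. $2\rho_2<4\pi$, so by the Moser--Trudinger inequality the functional $I_\xi$ is coercive and the mean-field equation~\eqref{eq:singular_mf} has a unique, nondegenerate solution $z(\cdot,\xi)$ depending $C^1$-smoothly on $\xi$ (Lemma~\ref{lem:C_1_z_x_xi} together with the uniqueness one gets for small, or here all, $\rho_2<2\pi$ via the convexity/variational structure), and by Lemma~\ref{lem:pa_I_xi} the balance condition for $(\text{S}_1)$ reduces to $\nabla_\xi\Lambda_{k,m}(\xi)=0$ after eliminating $z$; since $\cF_{k,m}(\xi)\to+\infty$ as $\xi\to\partial\Xi_{k,m}$, $\Lambda_{k,m}$ attains an interior minimum on $\Xi_{k,m}$, giving a stable critical point and hence a well-defined nonzero degree of $\nabla\Lambda_{k,m}$ on a suitable open set. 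In case (b), $\rho_2\in(2\pi,+\infty)\setminus 2\pi\N_+$ and $\chi(\Sigma)<1$: here one invokes the known computation of the Leray--Schauder degree of the singular mean-field equation~\eqref{eq:singular_mf} on surfaces with boundary/conical points (of the type in the work cited as~\cite{lee_degree_2018} and related degree-counting literature), which for $\rho_2\notin 2\pi\N$ and $\chi(\Sigma)<1$ is nonzero; combining the degree of the $w$-component with the (nonzero, by the same Kirchhoff--Routh argument) degree of the $\xi$-component via the product formula for the block-triangular linearization yields a nonzero total degree for $\cT_1$.

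The main obstacle will be the uniform-in-$t$ a priori estimates needed to keep the degree invariant along the homotopy. Two things can go wrong: the $w$-component could blow up, or the configuration $\xi$ could run to $\partial\Xi_{k,m}$ (two points colliding, or an interior point hitting $\partial\Sigma$). For the $w$-component, since the effective parameter in front of the nonlinearity is always $2\rho_2$ with $2\rho_2\notin 4\pi\N_+$, Brezis--Merle / Li--Shafrir type concentration-compactness for the (singular) Liouville equation, uniform in the smooth family of potentials $V_2 e^{-\sum_j\frac{\varrho(\xi_j)}2 G^g(\cdot,\xi_j)}$ with $\xi$ in a compact part of $\Xi_{k,m}$, prevents blow-up and gives a uniform $C^{2,\alpha}$ bound on $w^t$; this is where $\rho_2\notin 2\pi\N_+$ is essential. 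Ruling out $\xi\to\partial\Xi_{k,m}$ is the more delicate point: one must show the balance condition $\partial_{x_i} f_t|_{x=\xi}=0$ cannot be satisfied near $\partial\Xi_{k,m}$, using that $\cF_{k,m}$, and hence $f_t = \cF_{k,m} - (1-t)\sum_j\varrho(x_j)w(x_j)$ (the subtracted term being uniformly bounded in $C^1$ by the $w$-estimate above), blows up to $+\infty$ with a gradient pointing away from $\partial\Xi_{k,m}$ — i.e. a uniform version of the coercivity of $\Lambda_{k,m}$ that survives the $t$-dependent perturbation. Once these two exclusions are in place on a fixed open set $\cO\subset\subset C^{2,\alpha}_0(\Sigma)\times\Xi_{k,m}$ with no solutions of $(\text{S}_t)$ on $\partial\cO$ for any $t$, homotopy invariance of the Leray--Schauder degree finishes the argument and, combined with the generic nondegeneracy, yields the proposition.
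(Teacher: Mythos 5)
Your route differs from the paper's: you replace the continuation step by a Leray--Schauder degree/homotopy-invariance argument, whereas the paper runs a continuity (open--closed) argument along \eqref{eq:shadow_t} that tracks a single non-degenerate solution, using generic non-degeneracy at rational $t$ (Theorem~\ref{thm:residual}), the uniform a priori bounds of Proposition~\ref{prop:cpt_shadow}, and the blow-up of $|\nabla\cF_{k,m}|$ at $\partial\Xi_{k,m}$. The difference is not merely cosmetic, because the degree version has a genuine gap at its core: to invoke homotopy invariance you must fix \emph{one} open set $\cO$ with no solutions of $(\mathrm{S}_t)$ on $\partial\cO$ for \emph{all} $t\in[0,1]$. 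The only uniform-in-$t$ information available (the $C^{2,\alpha}$ bound on $w^t$ and the confinement of $\xi^t$ to some $\Xi^{\delta}_{k,m}$) forces $\cO$ to contain essentially all of $C^{2,\alpha}_0(\Sigma)\times\Xi^{\delta}_{k,m}$; you cannot take a small neighborhood of the minimizer of $\cF_{k,m}$, since the critical points of $f_t=\cF_{k,m}-(1-t)\sum_j\varrho(x_j)w^t(x_j)$ need not stay near that minimizer as $t$ varies. But on such a large set the $t=1$ degree is the \emph{global} degree of the decoupled system, and its $\xi$-factor is the Brouwer degree of $\nabla\cF_{k,m}$ on $\Xi^{\delta}_{k,m}$, which by Poincar\'e--Hopf equals $\chi(\Xi_{k,m})$. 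Whenever $m>k$ this vanishes (already for $k=0,\ m=1$ one has $\Xi_{0,1}=\partial\Sigma$, a union of circles, so maxima and minima cancel), and then the total degree is $0$ and homotopy invariance yields no solution at $t=0$. The paper's open--closed continuation is immune to this cancellation, which is precisely why it is used.

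Two further points. In case (b) you appeal to a ``known'' nonzero Leray--Schauder degree for the singular mean-field equation with Neumann boundary conditions; no such degree formula is established in the setting of surfaces with boundary and Dirac sources on the boundary (the cited degree results concern closed surfaces), and the paper instead imports a \emph{variational existence} theorem (\cite[Theorem 1.2]{Hu2024}), which is where the hypothesis $\chi(\Sigma)<1$ enters; existence at $t=1$ plus genericity is all the continuity method needs, with no degree computation. In case (a) your claim that $z(\cdot,\xi)$ is unique and non-degenerate for \emph{all} $\rho_2\in(0,2\pi)$ is unjustified --- the paper proves this only for $\rho_2$ small (Lemma~\ref{lem:hypo_H_small_rho2}) and explicitly notes that non-degeneracy may fail in general --- and it is also unnecessary at $t=1$: there the balance condition is $\nabla\cF_{k,m}(\xi)=0$, not $\nabla\Lambda_{k,m}(\xi)=0$, so no elimination of $z$ via Lemma~\ref{lem:pa_I_xi} is involved. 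Your identification of the two a priori ingredients (uniform $C^{2,\alpha}$ control of $w^t$ for $\rho_2\notin 2\pi\N$, and exclusion of $\xi\to\partial\Xi_{k,m}$ via the coercivity/gradient blow-up of $\cF_{k,m}$) matches the paper, but as it stands the degree framework built on them does not deliver the conclusion.
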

	\begin{proof}
			We recall the shadow system for $(w,\xi)$ in $ C_0^{2,\alpha}(\Sigma)\times \Xi_{k,m}$:
		\begin{equation*}
			\left\{
			\begin{array}{ll}
				-\Delta_g w =2\rho_2 \left( \frac{V_2 e^{ w - \sum_{i=1}^m  \frac {\varrho(\xi_i)}2 G^g(x,\xi_i)}} {\int_{\Sigma} V_2 e^{ w - \sum_{i=1}^m  \frac {\varrho(\xi_i)}2 G^g(x,\xi_i)} dv_g  } - 1\right)& \quad \text{ in } \intsigma\\
				\partial_{\nu_g} w=0 & \quad\text{ on } \partial\Sigma\\
				\nabla f_0(\xi)= 0& \quad\text{ in } \Xi_{k,m}
			\end{array}
			\right.,
		\end{equation*}
		where	\[
		\begin{array}{ll}
			f_0(x_1, x_2, \cdots, x_m)=& \cF_{k,m}(x)
			-\sum_{i=1}^m\varrho(x_i) w(x_i). 
		\end{array}
		\]
		To apply the method of continuity, we introduce a parameter $t\in [0,1]$ to deform~\eqref{eq:shadow} to a decoupled system. 	Clearly,  when $t=0$, the shadow system \eqref{eq:shadow_t} equals~\eqref{eq:shadow}, and when  $t=1$, \eqref{eq:shadow_t} is a decoupled system,
		\begin{equation}
			\label{eq:singular_mf_t} \left\{
			\begin{array}{ll}
				-\Delta_g w =2\rho_2 \left( \frac{V_2 e^{ w - \sum_{i=1}^m  \frac {\varrho(\xi_i)}2 G^g(x,\xi_i)}} {\int_{\Sigma} V_2 e^{ w - \sum_{i=1}^m  \frac {\varrho(\xi_i)}2 G^g(x,\xi_i)} dv_g  } - 1\right)& \quad \text{ in } \intsigma\\
				\partial_{\nu_g} w=0 &\text{ on } \partial\Sigma\\
			\end{array}
			\right.,
		\end{equation}
		with $
		\nabla  \cF_{k,m}(\xi) = 0$ for $\xi \text{ in }\Xi_{k,m}$. 
		Observing that 
		$\cF_{k,m}(\xi)\rightarrow +\infty$ as $\xi\rightarrow \partial\Xi_{k,m}$ by~\cite[Lemma 7.3 ]{HBA2024}, we can deduce the existence of a global minimum point of 
		$\cF_{k,m}$, denoted by $\xi^1\in \Xi_{k,m}$.
		On the one hand, 
		for any $\rho_2\in (0,2\pi)$ and $\xi\in \Xi_{k,m}$,  the singular mean field equation~\eqref{eq:singular_mf_t} is solvable via the Moser-Trudinger inequality and standard variational methods. On the other hand, applying  the existence result of singular Toda system in \cite[Theorem 1.2.]{Hu2024}, we deduce \eqref{eq:singular_mf_t} has a solution for $2\rho_2 \in (4\pi, +\infty)\setminus 4\pi \N_+$ with the assumption $\chi(\Sigma)<1.$
		Consequently, a solution $(w^1,\xi^1)$ of \eqref{eq:shadow_t} exists for $t=1$ when conditions $a)$ or $b)$ are met. 
		
		Let $\Q_0:=\{t\in [0,1]: t \text{ is a rational number}\}$. Since $\Q_0$ is countable, we rewrite $\Q_0 $  as $\{ t_n: n\in \N_+\}$. 
		Theorem~\ref{thm:residual} implies that for any $n\in\N_+,$  $\cV^{t_n}_{reg}$ is a residual set in $C^{2,\alpha}(\Sigma,\R_+)\times C^{2,\alpha}(\Sigma,\R_+)$. Then the intersection set $ \cV_{reg}:= \cap_n\cV^{t_n}_{reg}$ remains a residual set in $C^{2,\alpha}(\Sigma,\R_+)\times C^{2,\alpha}(\Sigma,\R_+)$, too.  By fixing arbitrary $(V_1,V_2)\in \cV_{reg} $, it follows that any solution $(w^t,\xi^t)$ of~\eqref{eq:shadow_t} is non-degenerate for any $t\in \Q_0$.

		We define that 
		\[ T=\left\{ t\in [0,1]: \exists (w^t,\xi^t)\in C^{2,\alpha}_0\times \Xi_{k,m} \text{ is a non-degenerate solution of } \eqref{eq:shadow_t}\right\}.\]
		Considering that $1\in \Q_0$, it follows that $1\in T(\neq\emptyset)$ based on the analysis above. By the method of continuity, it is sufficient to show that $T$ is both close and open in $[0,1].$
		
		Suppose that $t_n\in T$ such that $t_n\rightarrow t_0$ as $n\rightarrow+\infty$. Then there exists $(w^n, \xi^n):=(w^{t_n}, \xi^{t_n})$ solves $(\mathrm{S}_{t_n})$ with $\xi^n$ is a critical point of $f_{t_n}$ in $\Xi_{k,m}$.  Proposition~\ref{prop:cpt_shadow}  implies that for same $\alpha\in (0,1)$
		\[ \| w^n\|_{C^{2,\alpha}(\Sigma)}\leq C,\]
		for constant $C>0$. By the Arzelà–Ascoli theorem, for any $\alpha'\in (0,\alpha)$, there exists a $w^0\in C^{2,\alpha'}(\Sigma)$ such that 
		\[ w^n\rightarrow w^0 \text{ as } n\rightarrow +\infty, \]
		strongly in $C^{2,\alpha'}(\Sigma)$ and $\xi^n\rightarrow \xi^0$ with $\xi^0\in \overline{\Xi}_{k,m}$.
		\cite[Lemma 4.1]{BartschHuSubmitted} implies that  $$|\nabla \cF_{k,m}(\xi)|_g\rightarrow +\infty$$ as $\xi\rightarrow \partial\Xi_{k,m}$. Combining with the uniformly bounded of $\|w^n\|_{C^1(\Sigma)}$, there exists $\delta>0$ such that $\dist(\xi^n, \partial\Xi_{k,m})\geq \delta.$ It follows that $\xi^0\in \Xi_{k,m}.$
		
		For any $\varphi\in \oH$, as $n\rightarrow +\infty$
		\begin{eqnarray*}
			\int_{\Sigma} \lan \nabla w^n,\nabla \varphi\ran_g  dv_g \rightarrow \int_{\Sigma} \lan \nabla w^0,\nabla \varphi\ran_g  dv_g,
		\end{eqnarray*} and 
		\begin{eqnarray*}
			&&2\rho_2	\int_{\Sigma} \left( \frac{V_2 e^{ w^n - \sum_{i=1}^m  \frac {\varrho(\xi^n_i)}2 G^g(x,\xi^n_i)}} {\int_{\Sigma} V_2 e^{ w - \sum_{i=1}^m  \frac {\varrho(\xi^n_i)}2 G^g(x,\xi^n_i)} dv_g  } - 1\right) \varphi  dv_g\\
			&
			\longrightarrow & 2\rho_2	\int_{\Sigma} \left( \frac{V_2 e^{ w^0 - \sum_{i=1}^m  \frac {\varrho(\xi^0_i)}2 G^g(x,\xi^0_i)}} {\int_{\Sigma} V_2 e^{ w^0- \sum_{i=1}^m  \frac {\varrho(\xi^0_i)}2 G^g(x,\xi^0_i)} dv_g  } - 1\right) \varphi  dv_g.
		\end{eqnarray*}
		By Schauder estimates and $\|\Delta_g w^0\|_{C^{\alpha}}< +\infty$, we have $w^0\in C^{2,\alpha}_0(\Sigma).$
		
		Observe that $\nabla\cF_{k,m}, \nabla w^0$ is continuous at $\xi^0$ 
		and for some constant $C>0$
		\[ |\nabla f_t(x)-\nabla f_{t_0}(x)|\leq C|t-t_0|. \] 
		
		It is easy to verify that $\xi^0$  is a critical point of $f_{t_0}$. Indeed, 
		\begin{eqnarray*}
			\nabla	f_{t_0}(\tilde{\xi}^0)&=&\nabla ( f_{t_0}(\xi^0)- f_{t_0}(\xi^n))+\nabla (f_{t_0}(\xi^n)- f_{t_n}(\xi^n))+ \nabla f_{t_n}(\xi^n)\\
			&=& \nabla ( f_{t_0}(\xi^0)- f_{t_0}(\xi^n))+\nabla (f_{t_0}(\xi^n)- f_{t_n}(\xi^n)).
		\end{eqnarray*}
		By the arbitrariness of $n$, we have $\nabla f_{t_0}(\xi^0)=0.$
		Hence, $(w^0,\xi^0)$ solves~($\mathrm{S}_{t_0}$).
		It remains to show the non-degeneracy of the solution $(w^0,\xi^0)$.  In the proof of Theorem~\ref{thm:residual}, it is established that $D_{w,\xi}\cT_{t}$ is a Fredholm operator with index $0$. Given $(h, 0, \zeta)\in C^{\alpha}_0(\Sigma)\times \{ 0\}\times \R^{m+k}$, the non-degeneracy yields that there exists 
		$(\phi_n, v_n)\in C^{2,\alpha}_0(\Sigma)\times \R^{m+k}$
		such that 
		\[ D_{w,\xi}\cT_{t_n}(w^n,\xi^n, V_1, V_2)[ \phi_n, v_n]= (h, 0, \zeta). \]
		Considering that $\|w^n\|_{C^{2,\alpha}(\Sigma)}\leq C$, the operator norm of $D_{w,\xi}\cT_{t_n}(w^n,\xi^n, V_1, V_2)$ is uniformly bounded. It follows that $\phi_n$ is uniformly bounded in $C^{2,\alpha}(\Sigma)$. 
		By the Arzelà–Ascoli theorem, up to a subsequence,  $\phi_n\rightarrow \phi_0$ weakly in $C^{2,\alpha}(\Sigma)$ and strongly in $C^2(\Sigma)$ for some $\phi_0\in C_0^{2,\alpha}(\Sigma)$ and 
		$v_n\rightarrow v^*$ for same $v_0\in \R^{m+k}$.
		Passing the limit $n\rightarrow+\infty$, 
		\[  D_{w,\xi}\cT_{t_n}(w^n,\xi^n, V_1, V_2)[ \phi_n, v_n] \rightarrow  D_{w,\xi}\cT_{t_0}(w^0,\xi^0, V_1, V_2)[ \phi_0, v_0],\]
		which implies that $D_{w,\xi}\cT_{t_0}(w^0,\xi^0, V_1, V_2)[ \phi_0, v_0]= (h, 0, \zeta).$ Hence, we prove that $(w^0,\xi^0)$ is a non-degenerate solution of ($\mathrm{S}_{t_0}$).

		It is clear that $(w,\xi)$ solves~\eqref{eq:shadow_t} if and only if $\cT_t(w,\xi,V_1, V_2)=0$ (see~\eqref{eq:T_t}).
		Due to the non-degeneracy of $\cT_{t_0}$  at $(w^0,\xi^0)$, the implicit function theorem yields in a small open neighborhood of $t_0$ in $[0,1]$, denoted by $I_{t_0}$, 
		\[ t\mapsto (w^t,\xi^t)\]
		is continuous satisfying that $(w^t,\xi^t)$ is the unique solution of $\cT_t(\cdot, V_1, V_2)=0$ and $(w^{t_0}, \xi^{t_0})=(w^0,\xi^0)$. 
		Since for any $t\in I_{t_0}\cap\Q_0$, $(w^t,\xi^t)$ is a non-degenerate solution of~$\cT_t(\cdot, V_1, V_2)=0$, 
		it follows $I_{t_0}\cap\Q_0.$ Using the closeness argument we proved before, we can deduce that $I_{t_0}\subset T$.  
		
		To sum up, we obtain that the non-empty set $T$ is both close and open in $[0,1]$. Hence, $T=[0,1]$. Consequently, there exists a non-degenerate solution of the shadow system~\eqref{eq:shadow}.
		
		%%%%%%%%%%%%%%%%%%%%%
	\end{proof}

	\subsection{The approximation solutions} 
	To construct blow-up solutions of \eqref{eq:toda}, it is sufficient to consider the following problem: 
	\begin{equation}
		\label{eq:toda_equi}
		\left\{\begin{array}{lc}
			-\Delta_g u_1= (2\lambda V_1e^{u_1}-\overline{2\lambda V_1e^{u_1}})-\left(\rho_2 \frac{V_2e^{u_2}}{\int_{\Si} V_2e^{u_2} dv_g} -\rho_2\right)& \text{ in } \intsigma \\
			-\Delta_g u_2 = (2\rho_2 \frac{V_2e^{u_2}}{\int_{\Si} V_2e^{u_2} dv_g} -2\rho_2)- \left(\lambda V_1 e^{u_1}-\overline{\lambda V_1 e^{u_1}}\right)& \text{ in }\intsigma\\
			\partial_{ \nu_g } u_1=\partial_{ \nu_g } u_2=0 & \text{ on } \partial\Sigma
		\end{array}\right.,
	\end{equation}
	where $\rho_2\in (0, 2\pi)$ and $\lambda$ is a positive parameter.
	We are going to construct a family blow-up solutions $u_{\lambda}=(u_{1,\lambda}, u_{2,\lambda})$ of ~\eqref{eq:toda_equi} as $\lambda\rightarrow 0$  which blows up exactly at $\{\xi_1,\ldots,\xi_m\}$ where $\xi=(\xi_1,\ldots,\xi_m)\in \intsigma^k\times(\partial\Sigma)^{m-k}$ is a $C^1$-stable critical point of $\Lambda_{k,m}$ with the limit mass 
	\[ \lim_{\lambda\rightarrow 0}  \lambda \int_{\Sigma} V_1 e^{u_{1,\lambda}}= \sum_{j=1}^m \frac  1 2 \varrho(\xi_j)=2\pi(k+m).\]
	Setting $\rho_1= \lambda\int_{\Sigma} V_1 e^{u_1}$, then we construct a family blow-up solutions of~\eqref{eq:toda} as $ \rho_1\rightarrow 2\pi(k+m).$
	We only have the first component $u_1$ which blows up and its profile is related to the solutions of the Liouville problem. 
	
	All  solutions of the Liouville equation \begin{equation}\label{eq:anatz}
		\left\{ \begin{array}{lc}
			-\Delta w= e^{w}& \text{ in }\R^2\\
			\int_{\R^2}  e^{w}<\infty&
		\end{array}
		\right.,
	\end{equation}
	can be expressed as follows:
	\[
	u_{\eta,\delta}(y) := \log \frac{ 8 \delta^{2}}{(\delta^{2} + |y-\eta|^{2})^2} \quad y \in \mathbb{R}^2, \delta > 0, \eta\in\R^2. 
	\]
	Moreover, we have
	$
	\int_{\mathbb{R}^2}  e^{u_{0,\delta}(y)} \, dy = 8\pi.
	$
	Applying  the isothermal coordinate $(y_{\xi}, U(\xi))$, we can pull-back $u_{\delta,0}$ to the Riemann surface around $\xi$,
	\[U_{\xi,\delta}:=\log \frac{8  \delta^{2}}{(\delta^2 + |y_{\xi}(x)|^{2})^2} \text{ in } U(\xi).  \]
	Then we project the local bubbles into the functional space $\oH$ by following equations:
	\begin{equation}
		\label{eq:proj}
		\left\{\begin{array}{lc}
			-\Delta_gPU_{\xi,\delta} = \chi_{\xi} e^{-\varphi_{\xi}}  e^{U_{\xi,\delta}}-\overline{\chi_{\xi} e^{-\varphi_{\xi}}  e^{U_{\xi,\delta}}} &\text{ in } \intsigma\\
			\partial_{\nu_g} PU_{\xi,\delta}=0 &\text{ on } \partial\Sigma\\
			\int_{\Sigma} PU_{\xi,\delta} \,dv_g=0&
		\end{array}\right..
	\end{equation}
	
	For any fixed $\varepsilon>0$, for any $\xi\in \Xi^{\varepsilon}_{k,m}$, by the compactness of $\Sigma$,  we can choose a uniformly $r_0>0$ which only depends on $\varepsilon$ such that $r_{\xi_i}\geq 4 r_0$, there exists an isothermal chart $(y_{\xi_j}, U_{4r_0}(\xi_i))$ around $\xi_i$ such that $U_{4r_0}(\xi_i)\cap U_{4r_0}(\xi_j)=\emptyset$ for any $i\neq j$ and $U_{4r_0}(\xi_i)\cap \partial\Sigma=\emptyset$ for $i=1,\ldots,k.$
	We take the concentration parameter 
	\begin{equation}
		\label{eq:def_delta_i} \delta_j= d_j\lambda^{\frac 1 2 },
	\end{equation}
	for some $d_j>0$ which will be chosen later. 
	For simplicity of the notations,
	we denote that $U_j= U_{\xi_j,\delta_j}, PU_j=PU_{\xi_j,\delta_j}, \chi_j=\chi(y_{\xi_j}/r_0)$ and $\varphi_j=\hat{\varphi}_{\xi_j}(y_{\xi_j})$.
	Assuming  \hyperref[item:H]{(C1)}, the approximation solution $\bW_{\lambda}=(W_{1,\lambda}, W_{2,\lambda})$ is defined by 
	\begin{equation*}
		W_{1,\lambda}=\sum_{j=1}^m PU_j -\frac 1 2 z(\cdot,\xi),\text{ and } W_{2,\lambda}= z(\cdot,\xi) -\frac 1 2 \sum_{j=1}^m PU_j,
	\end{equation*}
	where $z(\cdot,\xi)$ is the unique solution of~\eqref{eq:singular_mf}. 
	Next, we are going to construct the solutions with the form
	\[ \bu_{\lambda}=\bW_{\lambda}+\bphi_{\lambda},\]
	where $\bphi_{\lambda}=(\phi_{1,\lambda}, \phi_{2,\lambda})$ is the error term. 
	By Lemma~\ref{lem:extension_PU}, we have as $\lambda\rightarrow 0$
	\begin{equation}
		\label{eq:expansion_W1} W_{1,\lambda}=\sum_{j=1}^m\chi_j\log\frac{1}{(\delta_j^2+|y_{\xi_j}|^2)^2}+\sum_{j=1}^m \varrho(\xi_j) H^g(\cdot,\xi_j)-\frac 1 2 z(\cdot,\xi)+ \mathcal{O}(\lambda|\log \lambda|)
	\end{equation}
	and 
	\begin{eqnarray}
		\label{eq:expansion_W2} W_{2,\lambda}&=&z(\cdot,\xi)-\frac 1 2\sum_{j=1}^m\chi_j\log\frac{1}{(\delta_j^2+|y_{\xi_j}|^2)^2} -\frac 12 \sum_{j=1}^m \varrho(\xi_j) H^g(\cdot,\xi_j) +  \mathcal{O}(\lambda|\log \lambda|)\\
		&=& \log \tilde{V}_2(\cdot,\xi)-\log V_2 +z(\cdot,\xi)+\mathcal{O}(\lambda|\log\lambda|).\nonumber
	\end{eqnarray}
	For $j=1,\ldots,m$, we define  
	\begin{equation}
		\label{eq:def_theta}
		\begin{array}{lcl}
			\Theta_{j}(y)= W_{1,\lambda} \circ y_{\xi_j}^{-1}(\delta_j y) + \hat{\varphi}_{\xi_j}(\delta_j y) -\chi(\delta_j|y|/r_0) U_j \circ y_{\xi_j}^{-1}(\delta_j y) +\log V_1\circ y_{\xi_j}^{-1}(\delta_j y)+\log(2\lambda),  
		\end{array}
	\end{equation}
	for  $y\in \Omega_j:=\frac 1 {\delta_j} B^{\xi_j}_{2r_0}$. 
	Lemma~\ref{lem:extension_PU} and \eqref{eq:def_delta_i} imply that 
	\begin{eqnarray}
		\label{eq:est_theta}  \Theta_{j}(y)&=&-2\log d_j -2\log 2 +\varrho(\xi_j)H^g(\xi_j,\xi_j)+\sum_{l\neq j}\varrho(\xi_l)G^g(\xi_j,\xi_l)+ \log V_1(\xi_j)\\
		&&-\frac 1 2 z(\xi_j,\xi)+ \mathcal{O}(\lambda|\log\lambda|+ \lambda^{\frac 1 2}|y|).\nonumber
	\end{eqnarray}
	To ensure that $\Theta_{j}$ is sufficiently small, for $j=1,\ldots,m$, we choose
	\begin{equation}\label{eq:def_d_ij}
		\begin{aligned}
			d_j = \sqrt{\frac 1 8 e^{\varrho(\xi_j)H^g(\xi_j,\xi_j)+\sum_{l\neq j}\varrho(\xi_l)G^g(\xi_j,\xi_l) +\log V_1(\xi_j)-\frac 12 z(\xi_j,\xi)}}.
		\end{aligned}
	\end{equation}

	%%%%%%%%%%%
	
	\section{Finite dimensional reduction}
	\subsection{The linearized problem}
	Utilizing the Moser-Trudinger type inequality on compact Riemann surfaces, as in~\cite{yang2006extremal}, we have 
	\[\sup _{\int_{\Sigma}|\nabla_g u|^2 d v_g=1, \int_{\Sigma} u d v_g=0} \int e^{2\pi u^2} dv_g <+\infty.
	\]
	It follows that 	
	\begin{eqnarray*}
		\log \int_{\Sigma} e^u dv_g &\leq& \log \int_{\Sigma}  e^{ 2\pi\frac{ u^2}{\|u\|^2} + \frac 1 {8\pi}\|u\|^2  } dv_g \quad \text{(by Young's Inequality)}\\
		&=& 
		\frac 1 {8\pi} \int_{\Sigma} |\nabla_g u|^2 dv_g  +C, \text{ for any } u\in \overline{\mathrm{H}}^1, 
	\end{eqnarray*}
	where $C>0$ is a constant. 
	Consequently, $\overline{\mathrm{H}}^1\rightarrow L^p(\Sigma), u\mapsto e^u$ is continuous.  
	For any $p>1$, let $i^*_p: L^p(\Sigma)\rightarrow \overline{\mathrm{H}}^1$  be the adjoint operator corresponding to the immersion $i: \overline{\mathrm{H}}^1 \rightarrow L^{\frac{p}{p-1}}$ and $i^* : \cup_{p>1} L^p(\Sigma)\rightarrow \overline{\mathrm{H}}^1$. For any $f\in L^p(\Sigma)$, we define that 
	$i^*(f):=i^*( f-\bar{f})$, i.e. for any $h\in \overline{\mathrm{H}}^1 $, 
	$\langle i^*(f), h\rangle =\int_{\Sigma}(f-\bar{f}) h dv_g.$
	Let 
	\begin{equation}
		\label{eq:def_f12} f_1(u_1)= 2\lambda V_1 e^{u_1}-\overline{2\lambda V_1 e^{u_1}} \text{ and } f_2(u_2)=2\rho_2 \left(\frac{V_2 e^{u_2}}{\int_{\Sigma}V_2 e^{u_2}\, dv_g} -1\right)
	\end{equation}
	and 
	\begin{equation}
		\label{eq:def_F}
		F(\bu)=\left(\begin{array}{c}
			f_1(u_1)-\frac 1 2 f_2(u_2)\\
			f_2(u_2)-\frac 1 2 f_1(u_1)
		\end{array}\right). 
	\end{equation}
	The problem~\eqref{eq:toda_equi} has the following equivalent form,
	\begin{equation} \label{eq:toda_equi2}
		\left\{\begin{array}{ll}
			\bu =(u_1,u_2)=i^*(F(\bu)), \\
			u\in \cH
		\end{array}\right.. 
	\end{equation}
	
	We consider the following linear operator associated with the problem~\eqref{eq:toda_equi}: 
	\begin{equation}
		\label{eq:linear_op}  \cL_{\xi,\lambda}(\bphi):=( L^1_{\xi,\lambda}(\bphi),L^2_{\xi,\lambda}(\bphi) ),
	\end{equation}
	where
	\begin{eqnarray*}
		L^1_{\xi,\lambda}(\bphi):&=& -\Delta_g\phi_1 -\sum_{j=1}^m\left( \chi_j e^{-\varphi_j} e^{U_j} \phi_1-\overline{\chi_j e^{-\varphi_j} e^{U_j} \phi_i}\right)\\
		&& +\rho_2 \left( \frac{V_2 e^{W_{2,\lambda}} \phi_2 } {\int_{\Sigma} V_2e^{W_{2,\lambda}} \, dv_g}  - 
		\frac{V_2 e^{W_{2,\lambda}} \int_{\Sigma} V_2 e^{W_{2,\lambda}}  \phi_2\, dv_g }   {\left(\int_{\Sigma} V_2e^{W_{2,\lambda}} \, dv_g\right)^2 }\right) 
	\end{eqnarray*}
	and 
	\begin{eqnarray*}
		L^2_{\xi,\lambda}(\bphi):&=& -\Delta_g\phi_2 -2\rho_2 \left( \frac{V_2 e^{W_{2,\lambda}} \phi_2 } {\int_{\Sigma} V_2e^{W_{2,\lambda}} \, dv_g}  - 
		\frac{V_2 e^{W_{2,\lambda}} \int_{\Sigma} V_2 e^{W_{2,\lambda}}  \phi_2\, dv_g }   {\left(\int_{\Sigma} V_2e^{W_{2,\lambda}} \, dv_g\right)^2 }\right) \\
		&&+\frac 1 2 \sum_{j=1}^m\left( \chi_j e^{-\varphi_j} e^{U_j} \phi_1-\overline{\chi_j e^{-\varphi_j} e^{U_j} \phi_i}\right).
	\end{eqnarray*}
	Formally, for $i=1,2, j=1,\ldots,m$, we deduce that the limiting operator of 
	$L^i_{\xi,\lambda}$  is given by
	$$-\Delta \phi -\frac{8}{(1+|y|^2)^2}\phi,$$  
	through appropriate scaling around $\xi_j$ in an isothermal chart (for details, see Lemma~\ref{lem:invertible}).
	It is well known that 
	the kernel space is generated by (refers to~\cite{DelPino2012Nondegeneracy,baraket_construction_1997}, for instance) 
	\[z^0(y):=\frac{1-|y|^{2}}{1+|y|^{2}},  z^i(y):=\frac{4y_i}{1+|y|^2}, i=1,\ldots, \i(\xi_j). \] 
	We define that 
	\[ Z^0_j=	\left\{\begin{array}{ll}2 \frac{ \delta_j^2-|y_{\xi_j}|^2}{\delta_j^2+|y_{\xi_j}|^2} & \text{ in } U_{4r_0}(\xi_j)\\
		0& \text{ in } \Sigma\setminus U_{4r_0}(\xi_j)
	\end{array}\right., Z^i_j=	\left\{\begin{array}{ll} \frac{ 4(y_{\xi_j})_i}{\delta_j^2+|y_{\xi_j}|^2} & \text{ in } U_{4r_0}(\xi_j)\\
		0& \text{ in } \Sigma\setminus U_{4r_0}(\xi_j)
	\end{array}\right.,\]
	for any $j=1,\ldots, m$ and $i=1,\ldots, \i(\xi_j)$ and then projects $Z^i_j$ into the Hilbert space $\oH$ by following equations: 
	\begin{equation}
		\left\{ \begin{array}{ll}
			-\Delta_g PZ^i_j= \chi_j e^{-\varphi_j}e^{U_j} Z^i_j -\overline{\chi_j e^{-\varphi_j}e^{U_j} Z^i_j} & 
			\text{ in }\intsigma\\
			\partial_{\nu_g}  PZ^i_j =0& \text{ on } \partial\Sigma\\
			\int_{\Sigma} PZ^i_j =0 &
		\end{array}\right.. 
	\end{equation}
	Define the subspace \[ K_{\xi}= span \{ PZ^i_j: j=1,\ldots,m, i=1,\ldots, \i(\xi_j)\}\times \{ 0\}. \]	
	To ensure the invertibility of the linear operator 
	$\cL_{\xi,\lambda}$, we confine the error term $\bphi$ to the orthogonal complement of $\in K_{\xi}$, denoted by $K_{\xi}^{\perp}$, where 
	\[  K_{\xi}^{\perp}:=\left\{ \bphi\in\cH: \int_{\Sigma}\lan \phi_i, h_i\ran_g\, dv_g=0 \text{ for any } \bh=(h_1,h_2)\in K_{\xi}\right\}. \]
	Furthermore, we introduce the orthogonal projections 
	$\Pi_{\xi}: \cH\rightarrow K_{\xi}$ and $\Pi_{\xi}^{\perp}:\cH\rightarrow K_{\xi}.$
	
	\begin{lem}
		\label{lem:invertible}
		Let $\Dc$ be a compact subset of $\Xi_{k,m}$. 
		For any $p>1$,  
		there exist $\lambda_0 > 0$ and $C > 0$ such that for any $\lambda \in (0, \lambda_0)$,any $\xi\in \Dc$, any $\bh=(h_1,h_2) \in (L^p(\Sigma))^2$ and   $\bphi=(\phi_1, \phi_2) \in (W^{2,p}(\Sigma))^2 \cap K_{\xi}^{\perp}$ is the unique solution of 
		\begin{equation}
			\label{eq:linear_key} \left\{ \begin{array}{ll}
				\cL_{\xi,\lambda}(\bphi)=\bh & \text{ in }\intsigma\\
				\partial_{\nu_g} \bphi =0 & \text{ on  }\partial\Sigma\\
				\int_{\Sigma}\bphi\, dv_g=0 
			\end{array}\right.,
		\end{equation}
		the following estimate holds 
		\[
		\|\bphi\| \leq C |\log \lambda| \|\bh\|_p,
		\]
		where $\|\bh\|_p=\|h_1\|_{L^p(\Sigma)} + \|h_2\|_{L^p(\Sigma)}$.
	\end{lem}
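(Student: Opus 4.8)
The plan is to reduce the assertion to an a priori estimate and then establish that estimate by a blow-up/contradiction argument. For the reduction, rewrite the linear problem \eqref{eq:linear_key} via the operator $i^*$ as $\bphi = i^*\big(\cK_{\xi,\lambda}\bphi + \bh\big)$ modulo elements of $K_\xi$, where $\cK_{\xi,\lambda}$ collects the zeroth-order (potential) terms of $\cL_{\xi,\lambda}$; since $\cK_{\xi,\lambda}$ induces a compact operator on $\cH$ (by the Moser--Trudinger inequality and the Rellich theorem), the map $\Pi_\xi^\perp\big(\mathrm{Id} - i^*\cK_{\xi,\lambda}\big)\big|_{K_\xi^\perp}$ is Fredholm of index zero, uniformly for $\xi$ in the compact set $\Dc$ and $\lambda$ small. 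It therefore suffices to show that there are $\lambda_0,C>0$ such that every solution of \eqref{eq:linear_key} with $\lambda\in(0,\lambda_0)$ and $\xi\in\Dc$ obeys $\|\bphi\|\le C|\log\lambda|\,\|\bh\|_p$: injectivity on $K_\xi^\perp$ then upgrades to invertibility, and the estimate passes to the inverse.

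I would prove the estimate by contradiction. Suppose there are $\lambda_n\downarrow 0$, $\xi_n\in\Dc$, $\bh_n\in(L^p)^2$, and solutions $\bphi_n\in(W^{2,p})^2\cap K_{\xi_n}^\perp$ of \eqref{eq:linear_key} with $\|\bphi_n\|=1$ and $|\log\lambda_n|\,\|\bh_n\|_p\to 0$; after a subsequence $\xi_n\to\xi_*\in\Dc$. The heart of the matter is the analysis near each concentration point $\xi_j$. Working in the isothermal chart around $\xi_{n,j}$ and rescaling the variable by $\delta_{n,j}$, one checks that $\chi_j e^{-\varphi_j}e^{U_j}$ becomes $\tfrac{8}{(1+|y|^2)^2}+o(1)$, that the terms coupling $\phi_{1,n}$ to $\phi_{2,n}$ — whose coefficients $V_2 e^{W_{2,\lambda}}$ stay bounded since the second component does not concentrate — are $O(\delta_{n,j}^2)$ after rescaling, and that the rescaled right-hand side tends to $0$ in $L^p_{loc}$. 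Once a local uniform bound on the rescaled functions is in hand, elliptic estimates give convergence in $C^1_{loc}$ of $\phi_{1,n}$, read in the rescaled chart, to a bounded solution of $-\Delta\phi=\tfrac{8}{(1+|y|^2)^2}\phi$ on $\R^2$, respectively on the half-plane with homogeneous Neumann condition when $\xi_j\in\partial\Sigma$. By the classification recalled above this limit is a linear combination of $z^0,z^1,\dots,z^{\i(\xi_j)}$; passing the orthogonality constraints encoded in $\bphi_n\in K_{\xi_n}^\perp$ to the limit forces every coefficient to vanish, so the rescaled first component tends to $0$ near each bubble.

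Next I would treat the second component and the global picture. After subtracting the concentrating part, $\phi_{2,n}$ solves the linearization \eqref{eq:linear_u_2} at $z(\cdot,\xi_n)$ up to an $L^p$-small error; combining the non-degeneracy of $z(\cdot,\xi)$ (uniform over $\Dc$, by hypothesis~\hyperref[item:H]{(C)}) with the compactness of the relevant superposition operators yields $\phi_{2,n}\to 0$ in $\oH$. Feeding this back, the weak limit $\phi_{1,*}$ of $\phi_{1,n}$ satisfies $-\Delta_g\phi_{1,*}=0$ with zero mean — the bubble terms concentrate and vanish weakly by the previous step, the coupling term vanishes with $\phi_{2,n}$ — hence $\phi_{1,*}=0$. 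Finally, one upgrades all these convergences to strong convergence in $\cH$: writing $\bphi_n=i^*\big(\cK_{\xi_n,\lambda_n}\bphi_n+\bh_n\big)$ modulo $K_{\xi_n}$ and estimating the $L^q$-norm of the right-hand side for $q>1$ close to $1$ — using Hölder's inequality, the explicit form of $e^{U_j}$, and the $C^1_{loc}$ and decay information above — one obtains $\|\bphi_n\|\to 0$, contradicting $\|\bphi_n\|=1$.

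I expect the main obstacle to be the uniform control of the rescaled error near the bubbles: one must rule out that a nontrivial portion of the mass of $\bphi_n$ escapes into the neck regions $\delta_{n,j}\ll|y_{\xi_{n,j}}|\ll r_0$, and obtain the pointwise/weighted bounds (a logarithmic growth of $\phi_{1,n}$ in the rescaled variable) needed both to identify the $C^1_{loc}$ limit and, afterwards, to turn local convergence into genuine $L^q$-decay of the right-hand side. It is precisely the logarithmic size of these neck contributions that is responsible for the $|\log\lambda|$ loss in the estimate, and keeping every constant uniform over $\xi\in\Dc$ throughout this analysis is the delicate part of the bookkeeping.
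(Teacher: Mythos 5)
The decisive step in your argument --- ``passing the orthogonality constraints encoded in $\bphi_n\in K_{\xi_n}^{\perp}$ to the limit forces every coefficient to vanish'' --- does not work, because $K_{\xi}$ is spanned only by the $PZ^i_j$ with $i=1,\ldots,\i(\xi_j)$; the radial projection $PZ^0_j$ is \emph{not} included in $K_\xi$. Hence orthogonality to $K_{\xi_n}$ only removes the components $z^1,\dots,z^{\i(\xi_j)}$ of the rescaled limit, and you are left with $\tilde{\phi}_{1j}\rightharpoonup a_{1j}\,\frac{1-|y|^2}{1+|y|^2}$ with $a_{1j}$ a priori nonzero. This is exactly where the paper's proof has to work hardest: after identifying the limit (its Step~1), two further steps are needed to kill $a_{1j}$ --- first testing the equation with $PZ^0_j$ to obtain $\int_{\Omega_j}\frac{8}{(1+|y|^2)^2}\tilde{\phi}_{1j}\,dy=o(|\log\lambda|^{-1})$, and then testing with $PU_j$, whose $L^\infty$-size is of order $|\log\lambda|$, which in the limit yields $\varrho(\xi_j)a_{1j}=0$. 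It is precisely there that the factor $|\log\lambda|$ in the estimate and the strengthened smallness assumption $|\log\lambda_n|\,\|\bh_n\|_p\to 0$ are genuinely used; your attribution of the logarithmic loss to ``neck contributions'' is not the actual mechanism. Without eliminating the $z^0$-mode your concluding step also collapses: the term $\chi_j e^{-\varphi_j}e^{U_j}\phi_{1,n}$ appearing in $i^*\bigl(\cK_{\xi_n,\lambda_n}\bphi_n+\bh_n\bigr)$ does not become small in $L^q$, so you cannot conclude $\|\bphi_n\|\to 0$ and no contradiction with $\|\bphi_n\|=1$ is reached.

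Apart from this gap, your skeleton (reduction to an a priori bound via a Fredholm/compactness argument, blow-up by contradiction, use of hypothesis (C) to force $\phi_{2,n}\to 0$, classification of bounded kernel elements of the limit problem) matches the paper's strategy. Two smaller discrepancies: the paper only needs, and only proves, $\phi_{2,n}\to 0$ weakly in $\oH$ and strongly in $L^q(\Sigma)$, not the strong $\oH$-convergence you assert; and the final contradiction is obtained by testing $\cL_{\xi,\lambda}(\bphi_n)=\bh_n$ with $\bphi_n$ itself (giving $1=\|\bphi_n\|^2\to 0$ once $a_{1j}=0$), rather than by bootstrapping the fixed-point identity. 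To repair your proof you must add the analogue of the paper's Steps~2--3, i.e.\ an argument controlling the coefficient of the radial mode $z^0$, since it is not constrained by $K_\xi^{\perp}$.
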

	\begin{proof}
		We will prove it by contradiction. Suppose Lemma~\ref{lem:invertible} fails, i.e., there exist $p>1$ and a sequence of $\lambda_n\rightarrow 0, \xi^n\rightarrow \xi^*\in \Xi_{k,m}$ and $\bh_n:=(h_{1,n}, h_{2,n})\in (L^p(\Sigma))^2$ and $\bphi_n:=(\phi_{1,n},\phi_{2,n})\in  (W^{2,p}(\Sigma))^2\cap K_{\xi}^{\perp}$ solving  ~\eqref{eq:linear_key} for $\bh_n$ such that 
		\begin{equation}
			\label{eq:asumme_linear} \|\bphi_n\|=1\text{ and } |\log \lambda_n|\|\bh_n\|_p:=|\log\lambda_n|\sum_{i=1}^2 \|h_i\|_{L^p(\Sigma)}\rightarrow 0 \quad(n\rightarrow +\infty).
		\end{equation}  
		For simplicity, we still use the notations $\phi_i, h_i,\xi,\lambda$ instead of $\phi_{i,n},h_{i,n},\xi^n, \lambda_n$ for $i=1,2$. 
		Let $\cR_{\xi}=	\left\{\begin{array}{ll}
			\R^2 &\text{ if }\xi\in \intsigma\\
			\R^2_+&\text{ if }\xi\in \partial\Sigma
		\end{array}\right..$
		We define that for $i=1,2, j=1,\ldots,m$
		\[ \tilde{\phi}_{1j}(y)= 	\left\{\begin{array}{ll}
			\chi(\delta_j |y|/ r_0)\phi_i\circ y_{\xi_j}^{-1}(\delta_j y),&  y \in \Omega_j:= \frac{1}{\delta_j} B^{\xi_j}_{2r_0}\\
			0&  y\in\cR_{\xi_j}\setminus\Omega_j
		\end{array}\right.. \]
		Then we consider the following spaces for  $\xi\in\Sigma$
		\[ \rL_{\xi}:=\left\{ u: \left\| \frac{ u}{1+|y|^{2}} \right\|_{L^2(\cR_{\xi})} <+\infty\right\}\]
		and 
		\[ \rH_{\xi}:=\left\{u: \|\nabla u\|_{L^2(\cR_{\xi})}+\left\| \frac{u}{1+|y|^{2}}  \right\|_{L^2(\cR_{\xi})}<+\infty \right\}.\]
		\begin{itemize}
			\item[Step 1.]\label{item:step1} {\it As $\lambda\rightarrow 0$, for any $j=1,\ldots,m,$ $$\tilde{\phi}_{1j}\rightarrow a_{1j} \frac{1-|y|^{2}}{1+|y|^{2}}, $$
				for some $a_{1j}\in\R$ weakly in $\rH_{\xi_j}$ and strongly in $\rL_{\xi_j}$ and  $$\tilde{\phi}_2\rightarrow 0$$ weakly in $\oH$ and strongly in $L^q(\Sigma)$ for any $q\geq 2$.}
		\end{itemize}
		We first estimate the second component $\tilde{\phi}_{2j}$. 
		Let $\psi_2\in C^{\infty}_c(\Sigma\setminus \{\xi_1^*,\ldots,\xi^*_m\})$ with $\int_{\Sigma}\psi_2 \, dv_g=0$. We use $\psi_2$ a test function for the equation $L^2_{\xi,\lambda}(\bphi)=h_2$. Then it follows that 
		\begin{eqnarray}\label{eq:inner_psi_2}
			&&	\int_{\Sigma} \lan\nabla \phi_2,\nabla\psi_2\ran_g\,dv_g -2\rho_2 \int_{\Sigma} \left( \frac{V_2 e^{W_{2,\lambda}} \phi_2 } {\int_{\Sigma} V_2e^{W_{2,\lambda}} \, dv_g}  - 
			\frac{V_2 e^{W_{2,\lambda}} \int_{\Sigma} V_2 e^{W_{2,\lambda}}  \phi_2\, dv_g }   {\left(\int_{\Sigma} V_2e^{W_{2,\lambda}} \, dv_g\right)^2 }\right) \psi_2\, dv_g \\
			&&+\frac 1 2 \sum_{j=1}^m\int_{\Sigma} \chi_j e^{-\varphi_j} e^{U_j} \phi_1\psi_2\, dv_g = \int_{\Sigma} h_2\psi_2\, dv_g.\nonumber
		\end{eqnarray}
		The asymptotic expansion for~\eqref{eq:expansion_W2} implies that 
		\begin{eqnarray}
			\label{eq:expansion_eW2}
			V_2	e^{W_{2,\lambda}}&=& V_2 e^{z(\cdot,\xi)-\frac 1 2\sum_{j=1}^m\chi_j\log\frac{1}{(\delta_j^2+|y_{\xi_j}|^2)^2} -\frac 12 \sum_{j=1}^m \varrho(\xi_j) H^g(\cdot,\xi_j) +  \mathcal{O}(\lambda|\log \lambda|)}\\
			&=&\tilde{V}_2(\cdot,\xi) e^{z(\cdot,\xi)}+ \mathcal{O}(\lambda|\log\lambda|),\nonumber
		\end{eqnarray}
		in view of $G^g(\cdot,\xi_j)= H^g(\cdot,\xi_j)+\frac{4}{\varrho(\xi_j)}\chi_j\log \frac 1 {|y_{\xi_j}|}$. 
		By the assumption  \eqref{eq:asumme_linear}, we have  $\phi_1 \rightharpoonup \phi_1^*$ and $\phi_2 \rightharpoonup \phi_2^*$ weakly in $\oH$ and strongly in $L^q(\Sigma)$ for any $q \geq 2$.
		The Moser-Trudinger inequality yields that 
		\[ \left| \int_{\Sigma}h_2\phi_2\right|\leq \|\bh\|_p \|\phi_2\|_{L^{p'}(\Sigma)}\leq \|\bh\|_p \|\phi_2\|=o(|\log\lambda|)\rightarrow 0,\]
		where $p, p'>1$ with $\frac 1 p+\frac 1 {p'}=1. $ 
		Passing the limit of \eqref{eq:inner_psi_2}, \eqref{eq:expansion_W2}, \eqref{eq:expansion_eW2} and~\eqref{eq:asumme_linear} yield that  
		\begin{eqnarray*}
			&&	\int_{\Sigma} \lan \nabla\phi_2^*, \nabla\psi_2\ran_g \,dv_g \\
			& =&2\rho_2 \left( \int_{\Sigma}\frac{\tilde{V}_2(\cdot,\xi) e^{z(\cdot,\xi)}\psi_2}{\int_{\Sigma} \tilde{V}_2(\cdot,\xi) e^{z(\cdot,\xi)}\, dv_g }\, dv_g - \frac{\int_{\Sigma} \tilde{V}_2(\cdot,\xi) e^{z(\cdot,\xi)}\psi_2 \, dv_g \int_{\Sigma}\int_{\Sigma} \tilde{V}_2(\cdot,\xi) e^{z(\cdot,\xi)}\phi_2^*\, dv_g  }{(\int_{\Sigma} \tilde{V}_2(\cdot,\xi) e^{z(\cdot,\xi)}\, dv_g )^2}\right)    
		\end{eqnarray*}
		By assumption, $\|\phi_2^*\|\leq 1$. Then,  $\phi_2^* \in \oH$ solves the problem~\eqref{eq:linear_u_2}. Through the hypothesis \hyperref[item:H]{(C1)}, we obtain that $\phi^*_2=0.$\par 
		%%%%%%%%%%%%%%%%
		Applying $\phi_1$ as a test function for $L^1_{\xi,\lambda}(\bphi)- L^2_{\xi,\lambda}(\bphi)= h_1-h_2$, since $\int_{\Sigma}\phi_1\, dv_g=0,$ \begin{eqnarray}\label{eq:diff_h}
			\int_{\Sigma} (h_1-h_2)\phi_1 \, dv_g 	&=&	\int_{\Sigma} \lan \nabla (\phi_1-\phi_2), \nabla\phi_1\ran_g \, dv_g -\frac 3 2\sum_{j=1}^m\int_{\Sigma} \chi_je^{-\varphi_j}  e^{U_j} \phi_1^2 \, dv_g \\
			&&+3 \rho_2 \int_{\Sigma} \left( \frac{V_2 e^{W_{2,\lambda}} \phi_2 } {\int_{\Sigma} V_2e^{W_{2,\lambda}} \, dv_g}  - 
			\frac{V_2 e^{W_{2,\lambda}} \int_{\Sigma} V_2 e^{W_{2,\lambda}}  \phi_2\, dv_g }   {\left(\int_{\Sigma} V_2e^{W_{2,\lambda}} \, dv_g\right)^2 }\right) \phi_1\, dv_g  .\nonumber
		\end{eqnarray}
		The Moser-Trudinger inequality yields that 
		\[ \left| \int_{\Sigma}(h_1-h_2)\phi_1\right|\leq \|\bh\|_p \|\phi_1\|_{L^{p'}(\Sigma)}\leq \|\bh\|_p \|\phi_1\|=o(|\log\lambda|)\rightarrow 0,\]
		where $p, p'>1$ with $\frac 1 p+\frac 1 {p'}=1. $  $\left|\int_{\Sigma}\lan\nabla \phi_1,\nabla\phi_2\ran_g\, dv_g\right|\leq \|\phi_1\|\|\phi_2\|\leq 1. $
		Hence, we derive that
		\begin{equation}\label{eq:1}
			\sum_{j=1}^m \int_{\Sigma} \chi_j e^{-\varphi_j}  e^{U_j} \phi_1^2 \, dv_g = 2\rho_2 \int_{\Sigma} \left( \frac{V_2 e^{W_{2,\lambda}} \phi_2 } {\int_{\Sigma} V_2e^{W_{2,\lambda}} \, dv_g}  - 
			\frac{V_2 e^{W_{2,\lambda}} \int_{\Sigma} V_2 e^{W_{2,\lambda}}  \phi_2\, dv_g }   {\left(\int_{\Sigma} V_2e^{W_{2,\lambda}} \, dv_g\right)^2 }\right) \phi_1\, dv_g  +\mathcal{O}(1). 
		\end{equation}
		Applying 
		$\phi_1$ as a test function for $ L^1_{\xi,\lambda}(\bphi)=h_1$, we can deduce that 
		\begin{eqnarray}\label{eq:test_phi1}
			\int_{\Sigma} h_1\phi_1 \, dv_g 	&=&	\int_{\Sigma} \lan \nabla\phi_1,  \nabla\phi_1\ran_g \, dv_g -\sum_{j=1}^m\int_{\Sigma} \chi_je^{-\varphi_j}  e^{U_j} \phi_1^2 \, dv_g \\
			&&+\rho_2 \int_{\Sigma} \left( \frac{V_2 e^{W_{2,\lambda}} \phi_2 } {\int_{\Sigma} V_2e^{W_{2,\lambda}} \, dv_g}  - 
			\frac{V_2 e^{W_{2,\lambda}} \int_{\Sigma} V_2 e^{W_{2,\lambda}}  \phi_2\, dv_g }   {\left(\int_{\Sigma} V_2e^{W_{2,\lambda}} \, dv_g\right)^2 }\right) \phi_1\, dv_g, \nonumber
		\end{eqnarray}
		in view of $\int_{\Sigma}\phi_1=0$. 
		Similarly, we have 
		\begin{eqnarray*}
			\sum_{j=1}^m \int_{\Sigma} \chi_je^{-\varphi_j} e^{U_j} \phi_1^2 \, dv_g& =&\rho_2 \int_{\Sigma} \left( \frac{V_2 e^{W_{2,\lambda}} \phi_2 } {\int_{\Sigma} V_2e^{W_{2,\lambda}} \, dv_g}  - 
			\frac{V_2 e^{W_{2,\lambda}} \int_{\Sigma} V_2 e^{W_{2,\lambda}}  \phi_2\, dv_g }   {\left(\int_{\Sigma} V_2e^{W_{2,\lambda}} \, dv_g\right)^2 }\right) \phi_1\, dv_g\\
			&& +\mathcal{O}(1)\\
			&\stackrel{\eqref{eq:1}}{=}&\frac 1 2 \sum_{j=1}^m \int_{\Sigma} \chi_j e^{-\varphi_j}  e^{U_j} \phi_1^2+\mathcal{O}(1). 
		\end{eqnarray*}
		Consequently, 
		\begin{equation}
			\label{eq:bouned_exp_phi}
			\sum_{j=1}^m \int_{\Sigma} \chi_j e^{-\varphi_j}  e^{U_j} \phi_1^2 \, dv_g=\mathcal{O}(1).
		\end{equation}
		By a straightforward calculation, 
		\[\sum_{j=1}^m \int_{\cR_{\xi_j}}\frac{1}{(1+|y|^2)^2}(\tilde{\phi}_{1j}(y))^2\,dy  =	\sum_{j=1}^m \int_{\Sigma} \chi_j e^{-\varphi_j}  e^{U_j} \phi_1^2 \, dv_g=\mathcal{O}(1).\]
		By the assumption $\|\bphi\|=1$, we immediately  have 
		\[ \int_{\cR_{\xi_j}} |\nabla \tilde{\phi}_{1j}|^2 \leq \int_{\Sigma} |\nabla\phi_1|_g^2 \, dv_g\leq 1.  \]
		It follows that $\tilde{\phi}_{1j}$ is uniformly bounded in $\rH_{\xi_j}$. Applying Proposition~A.1 in~\cite{musso_new_2016},  $\rL_{\xi_j}\hookrightarrow \rH_{\xi_j}$ is a compact embedding, then  up to a subsequent that
		\[ \tilde{\phi}_{1j}  \rightharpoonup \tilde{\phi}_{1j}^*, \]
		which is weakly convergent in $\rH_{\xi_j}$ and strongly in $\rL_{\xi_j}$. 
		For any $q>1$, 
		\begin{equation}
			\label{eq:L_p_exp}
			\int_{\Sigma} \left|\chi_j e^{-\varphi_j}  e^{U_j}  \right|^q \,dv_g = \int_{U_{2r_0}(\xi_j)}\frac{ \chi^q(|y|/r_0) \delta_j^{2-2q} }{(1+|y|^2)^{2q}} dy =\mathcal{O}(\delta_j^{2(1-q)}). 
		\end{equation}
		For any $\varphi\in C_c^{\infty}(\cR_{\xi_j})$, assume that  $$\text{ supp }\varphi\subset B_{R_0}(0).$$
		If $\delta_j <\frac {r_0}{R_0}$, then 
		$\text{ supp }\nabla \chi\left(\frac{|y|}{r_0}\right) \cap \text{{ supp }} \varphi \left(\frac{1}{\delta_j} y\right) =\emptyset. $
		For any $\Phi\in \oH$, 
		\begin{eqnarray}
			~\label{eqp1}
			0&=&\int_{B_{2r_0}^{\xi_i}} \Phi\circ y_{\xi_j}^{-1}(y)\nabla \chi\left(\frac{|y|}{r_0}\right) \cdot \nabla  \varphi\left(\frac{1}{\delta_j} y\right) dy\\
			&=&
			\int_{B_{2r_0}^{\xi_i}} h\left(\frac{1}{\delta_j} y\right)\nabla( \Phi\circ y_{\xi_j}^{-1}(y))\cdot \nabla \chi\left(\frac{|y|}{r_0}\right)   dy. \nonumber
		\end{eqnarray}
		We observe that for any $q>1$ 
		\begin{eqnarray}~\label{eqp2}
			\int_{\Sigma} \chi_j\varphi^q\left(\frac{1}{\delta_j} y_{\xi_j}(x) \right) \,dv_g(x)= O(\delta_j^q) \quad(\delta_j\rightarrow 0)
		\end{eqnarray}
		{and }
		\begin{eqnarray}\label{eqp3}
			\int_{\Sigma} \chi_j \left |\nabla \varphi\left(\frac{1}{\delta_j} y_{\xi_j}(x) \right) \right|_g^2 \,  dv_g(x)= \mathcal{O}(1)\quad  (\delta_j\rightarrow 0). 
		\end{eqnarray}
		Applying~\eqref{eq:expansion_eW2}, we have 
		\begin{eqnarray}
			\label{eq:linear_phi_2}
			&&	2\rho_2\left( \frac{V_2 e^{W_{2,\lambda}} \phi_2 } {\int_{\Sigma} V_2e^{W_{2,\lambda}}  \, dv_g}  - 
			\frac{V_2 e^{W_{2,\lambda}} \int_{\Sigma} V_2 e^{W_{2,\lambda}}  \phi_2\, dv_g }   {\left(\int_{\Sigma} V_2e^{W_{2,\lambda}} \, dv_g\right)^2 }\right) \\
			&=& 2\rho_2\left( \frac{\tilde{V}_2(\cdot,\xi) e^{z(\cdot,\xi)}\phi_2 }{\int_{\Sigma} \tilde{V}_2(\cdot,\xi) e^{z(\cdot,\xi)}\, dv_g }- \frac{ \tilde{V}_2(\cdot,\xi) e^{z(\cdot,\xi)} \int_{\Sigma} \tilde{V}_2(\cdot,\xi) e^{z(\cdot,\xi)}\phi_2\, dv_g  }{(\int_{\Sigma} \tilde{V}_2(\cdot,\xi) e^{z(\cdot,\xi)}\, dv_g )^2}\right) +\mathcal{O}(\lambda|\log\lambda|)\nonumber.
		\end{eqnarray}
		It follows that 
		\begin{eqnarray}
			\label{eqp4}
			&&	2\rho_2\int_{\Sigma}\left( \frac{V_2 e^{W_{2,\lambda}} \phi_2 } {\int_{\Sigma} V_2e^{W_{2,\lambda}}  \, dv_g}  - 
			\frac{V_2 e^{W_{2,\lambda}} \int_{\Sigma} V_2 e^{W_{2,\lambda}}  \phi_2\, dv_g }   {\left(\int_{\Sigma} V_2e^{W_{2,\lambda}} \, dv_g\right)^2 }\right) \chi_j \varphi\left(\frac 1 {\delta_j} y_{\xi_j}\right)\, dv_g
			\\
			&	=& \mathcal{O}(\delta_j^{2(1-\frac 1 {q})} \|\phi_2\|_{L^q(\Sigma)}+ \lambda|\log\lambda|)=o(1)\nonumber,
		\end{eqnarray}
		in which the last inequality, we applied that 
		$\phi_2\rightarrow 0$ in $L^q(\Sigma)$ for any $q\geq 2. $
		Assume that $0<\lambda< \frac{r^2_0}{d_j^2R^2_0}$, as $\lambda\rightarrow 0$, \eqref{eq:L_p_exp}-\eqref{eqp4} imply that 
		\begin{eqnarray*}
			&&\int_{\cR_{\xi_j}} \nabla\tilde{\phi}_{1j} \nabla \varphi\,  dy =
			\int_{B_{2r_0}^{\xi_j}} \nabla\left( \chi\left(\frac{|y|}{r_0}\right) \phi_1\circ y_{\xi_j}^{-1}(y)\right)\cdot\nabla \varphi\left(\frac{1}{\delta_j} y\right) \,dy\\
			&=&  \int_{B_{2r_0}^{\xi_j}} \nabla \phi_1\circ y_{\xi_j}^{-1}(y)\cdot\nabla\left( \chi\left(\frac{|y|}{r_0}\right)  \varphi\left(\frac{1}{\delta_j} y\right) \right)\, dy \\
			&=& \int_{\Sigma} \left \lan\nabla \phi_1, \nabla \left( \chi_j \varphi\left(\frac{1}{\delta_j} y_{\xi_j}\right)\right) \right\ran_g dv_g \\
			&=&  \int_{\Sigma} \chi_j e^{-\varphi_j} e^{U_j} \phi_1 \varphi \left(\frac{1}{\delta_j} y_{\xi_j} \right)dv_g- \int_{\Sigma} \chi_j e^{-\varphi_j} e^{U_j} \phi_1 dv_g \int_{\Sigma}\chi_j \varphi\left(\frac{1}{\delta_j} y_{\xi_j} \right)dv_g \\
			&&- \rho_2 \int_{\Sigma} \left( \frac{V_2 e^{W_{2,\lambda}} \phi_2 } {\int_{\Sigma} V_2e^{W_{2,\lambda}}  \, dv_g}  - 
			\frac{V_2 e^{W_{2,\lambda}} \int_{\Sigma} V_2 e^{W_{2,\lambda}}  \phi_2\, dv_g }   {\left(\int_{\Sigma} V_2e^{W_{2,\lambda}} \, dv_g\right)^2 }\right) \chi_j \varphi\left(\frac{1}{\delta_j} y_{\xi_j} \right) \, dv_g \\
			&&+ \int_{\Sigma}  h_1 \left( \chi_j \varphi\left(\frac{1}{\delta_j} y_{\xi_j}\right)\right)  \, dv_g \\
			&=&\int_{\cR_{\xi_j}} \frac 8{(1+|y|^2)^2}\tilde{\phi}_{1j}(y)\varphi(y)\, dy+ \mathcal{O}(\delta_j^{1+2(1-q)/q})+o(1) +o(|\log \lambda|)\\
			&& \text{\small (by the H\"{o}lder inequality)}\\
			&=& \int_{\cR_{\xi_j}} \frac 8{(1+|y|^2)^2}\tilde{\phi}_{1j}(y)\varphi(y)\, dy+o(1), 
		\end{eqnarray*}
		for $q>1$ sufficiently close to $1$ such that  $1+ 2(1-q)/q>0$. 
		Thus, 
		$\tilde{\phi}_{1j}$ converges to the solution of 
		\begin{equation}
			\label{eq:limit_linear}
			\left\{\begin{array}{ll}
				-\Delta \phi= \frac{8}{\left(1+|y|^2\right)^2} \phi & \text { in }\cR_{\xi_j}\\
				\partial_{y_2} \phi=0 &\text{ on } \partial\cR_{\xi_j}\\
				\int_{\cR_{\xi_j}}|\nabla \phi(y)|^2 d y<+\infty &
			\end{array}\right.,
		\end{equation}
		in the distribution sense. 
		According to the regularity theory,
		$\phi^*_{1j}$ is a smooth solution on the whole space $\cR_{\xi_j}$, for any $j=1,\ldots,m.$
		The result \cite{DelPino2012Nondegeneracy} implies that the solutions space of \eqref{eq:limit_linear} is generated by the following functions 
		\[ z^0(y):=\frac{1-|y|^2}{1+|y|^2}, \quad z^i(y):=\frac{4y_i}{1+|y|^2} \text{ for }i=1,\ldots, \i(\xi_j).\]
		Since $\bphi\in K_{\xi}^{\perp}$, 
		for any $j=1,\ldots,m$ and $j=1,\ldots, \i(\xi_j)$, 
		\begin{eqnarray*}
			&&\frac{32}{\delta_j}	\int_{\cR_{\xi_j}} \frac{y_j}{  (|y|^2+1)^3} \tilde{\phi}^*_{1j}dy = \lim_{\lambda\rightarrow 0} \frac{32}{\delta_j}
			\int_{\Omega_{j}} \frac{y_j}{  (|y|^2+1)^3}  \tilde{\phi}_{1j}\chi(\delta_j|y|/r_0) dy \\
			&=&\lim_{\lambda\rightarrow 0}32  \int_{B_{2r_0}^{\xi_j}} \frac{y_j}{(|y|^2 +\delta_j^2)^3} \phi_i\circ  y^{-1}_{\xi_j}(y)\chi\left(\frac{|y|}{r_0}\right) dy  \\
			&=& \lim_{\lambda\rightarrow 0}\int_{U_{2r_0}(\xi_i)} \chi_j e^{-\varphi_j} e^{U_j}  Z^i_j \phi_i dv_g =\lim_{\lambda\rightarrow 0} \lan PZ^i_j,\phi_i\ran=0. 
		\end{eqnarray*}
		Consequently, 
		$
		\int_{\cR_{\xi_j}} \frac{|y|^2-1}{(|y|^2+1)^{3}} \tilde{\phi}_{1j}^* dy  = \int_{\cR_{\xi_j} } \frac{y_j}{(|y|^2+1)^{3}} \tilde{\phi}_{1j}^* dy =0.
		$
		It follows that 
		\[ \tilde{\phi}^*_{1j}=a_{1j} \frac{1-|y|^2}{1+|y|^2}.\]
		\begin{itemize}
			\item [Step 2.]\label{item:step2} {\it For $j=1,\ldots
				,m,i=1,\ldots, \i(\xi_j),$ $$\int_{\Omega_j} \frac{ 8}{(1+|y|^2)^2}\tilde{\phi}_{1j}(y) \,dy =o(|\log\lambda|^{-1}).$$} 
		\end{itemize}
		Applying that $PZ^0_{j}$ as a test function of $L^1_{\xi,\lambda}(\bphi)=h_1$, we have
		\begin{eqnarray*}
			&&\int_{\Sigma} 8 \chi_je^{-\varphi_j}  \frac{\delta_j^2}{\left(\delta_j^2+|y_{\xi_j}|^2\right)^2} \phi_1 Z^0_{j}\, dv_g=\int_{\Sigma} 8 \chi_je^{-\varphi_j}  \frac{\delta_j^2}{\left(\delta_j^2+|y_{\xi_j}|^2\right)^2} \phi_i PZ^0_j\, dv_g \\
			&&-\rho_2 \int_{\Sigma} \left( \frac{V_2 e^{W_{2,\lambda}} \phi_2 } {\int_{\Sigma} V_2e^{W_{2,\lambda}}  \, dv_g}  - 
			\frac{V_2 e^{W_{2,\lambda}} \int_{\Sigma} V_2 e^{W_{2,\lambda}}  \phi_2\, dv_g }   {\left(\int_{\Sigma} V_2e^{W_{2,\lambda}} \, dv_g\right)^2 }\right)  PZ^0_j\, dv_g +\int_{\Sigma}h_1 PZ^0_j\,dv_g. 
		\end{eqnarray*}
		It follows that 
		\begin{eqnarray*}
			0&=&\int_{\Sigma} 8 \chi_je^{-\varphi_j}  \frac{\delta_j^2}{\left(\delta_j^2+|y_{\xi_j}|^2\right)^2} \phi_i\left( PZ^0_j-Z^0_j\right) \, dv_g \\
			&&-\rho_2 \int_{\Sigma} \left( \frac{V_2 e^{W_{2,\lambda}} \phi_2 } {\int_{\Sigma} V_2e^{W_{2,\lambda}}  \, dv_g}  - 
			\frac{V_2 e^{W_{2,\lambda}} \int_{\Sigma} V_2 e^{W_{2,\lambda}}  \phi_2\, dv_g }   {\left(\int_{\Sigma} V_2e^{W_{2,\lambda}} \, dv_g\right)^2 }\right)PZ^0_j\, dv_g +\int_{\Sigma}h_1 PZ^0_j\,dv_g.  
		\end{eqnarray*}
		By Lemma~\ref{lem:extension_PZ},  we have $\|PZ^0_j\|_{L^{p'}(\Sigma)}=\mathcal{O}(1)$, for $\frac 1 p+\frac 1 {p'}=1$. Then by the assumption $\|\bh\|_p=o(|\log \lambda|^{-1}),$
		\[ \left|\int_{\Sigma}h_1 PZ^0_j\,dv_g\right|\leq\|PZ_i\|_{L^{p'}(\Sigma)}\|h_1\|_{L^p(\Sigma)}=o(|\log \lambda|^{-1}). \]
		Lemma~\eqref{lem:extension_PZ} with~\eqref{eq:linear_phi_2} implies that 
		\begin{eqnarray*}
			&&\rho_2 \int_{\Sigma} \left( \frac{V_2 e^{W_{2,\lambda}} \phi_2 } {\int_{\Sigma} V_2e^{W_{2,\lambda}}  \, dv_g}  - 
			\frac{V_2 e^{W_{2,\lambda}} \int_{\Sigma} V_2 e^{W_{2,\lambda}}  \phi_2\, dv_g }   {\left(\int_{\Sigma} V_2e^{W_{2,\lambda}} \, dv_g\right)^2 }\right)PZ^0_j\, dv_g\\
			&=&  \mathcal{O}\left(\delta_j^{2(1-\frac 1 q)}|\log\delta_j| \|\phi_2\|_{L^q(\Sigma)} +\lambda|\log\lambda|\right)=o(\lambda^{\frac 1 2}|\log\lambda|)\nonumber,
		\end{eqnarray*}
		for $q=2.$
		Applying Lemma~\ref{lem:extension_PZ} again, we derive that 
		\begin{eqnarray*}
			&&	(\log \lambda)\int_{\Sigma} 8 \chi_je^{-\varphi_j}  \frac{\delta_j^2}{\left(\delta_j^2+|y_{\xi_j}|^2\right)^2} \phi_i\left( PZ^0_j-Z^0_j\right) \, dv_g\\ &=& (\log \lambda)
			\int_{\Sigma} 8 \chi_je^{-\varphi_j}  \frac{\delta_j^2}{\left(\delta_j^2+|y_{\xi_j}|^2\right)^2} \phi_i\left(1 +\mathcal{O}(\delta_j^2|\log \delta_j|) \right) \, dv_g\\
			&=&(1+\mathcal{O}(\delta_j^2|\log \delta_j|))(\log\lambda)\int_{\Omega_j} \frac 8{\left(1+|y|^2\right)^2} \tilde{\phi}_{1j}(y) \, dy+\mathcal{O}(|\log \lambda|\delta_j^2)\\
			&=&   (\log\lambda)\int_{\Omega_j}  \frac{8}{\left(1+|y|^2\right)^2} \tilde{\phi}_{1j}(y) \, dy+o(1), \quad \lambda\rightarrow 0.
		\end{eqnarray*}
		Thus, we have  $(\log\lambda) \int_{\Omega_j} \frac 8{\left(1+|y|^{\alpha_{i}}\right)^2} \tilde{\phi}_{1j}(y)\, dy=o(1)$ as $\lambda\rightarrow 0.$\\
		\begin{itemize}
			\item[Step 3.]\label{item:step3} {\it  Construct the contradiction.}
		\end{itemize}	
		Using $PU_j$ as a test function for~\eqref{eq:linear_key}, we derive that 
		\begin{eqnarray}\label{eq:test_PU_j}
			&&\int_{\Sigma} 8 \chi_je^{-\varphi_j}  \frac{\delta_j^2}{\left(\delta_j^2+|y_{\xi_j}|^2\right)^2} \phi_i\, dv_g=\int_{\Sigma} 8 \chi_je^{-\varphi_j}  \frac{\delta_j^2}{\left(\delta_j^2+|y_{\xi_j}|^2\right)^2} \phi_i PU_j\, dv_g \\
			&&-\rho_2 \int_{\Sigma} \left( \frac{V_2 e^{W_{2,\lambda}} \phi_2 } {\int_{\Sigma} V_2e^{W_{2,\lambda}}  \, dv_g}  - 
			\frac{V_2 e^{W_{2,\lambda}} \int_{\Sigma} V_2 e^{W_{2,\lambda}}  \phi_2\, dv_g }   {\left(\int_{\Sigma} V_2e^{W_{2,\lambda}} \, dv_g\right)^2 }\right)PU_j\, dv_g+\int_{\Sigma} h_i PU_j\, dv_g. \nonumber
		\end{eqnarray}
		The L.H.S. of~\eqref{eq:test_PU_j} 
		\begin{eqnarray*}
			\int_{\Sigma} 8 \chi_je^{-\varphi_j}  \frac{\delta_j^2}{\left(\delta_j^2+|y_{\xi_j}|^2\right)^2} \phi_i\, dv_g&=&\int_{\Omega_j} 8 \frac{ 1}{(1+|y|^2)^2}\tilde{\phi}_{1j}(y) \,dy+\mathcal{O}(\delta_j^2) \\
			&=& o(1),
		\end{eqnarray*}
		by the result of \hyperref[item:step2]{Step 2}, 
		Lemma~\ref{lem:extension_PU} yields that  $\|PU_j\|_{L^{\infty}(\Sigma)}= \mathcal{O}(|\log\lambda|)$. We drive that 
		\begin{eqnarray*}
			\left|\int_{\Sigma} h_i PU_j\, dv_g \right| \leq \|h_i\|_{L^p(\Sigma)} \|PU_j\|_{L^{p'}(\Sigma)}
			=o(1),
		\end{eqnarray*}
		where $\frac 1 p+\frac{1}{p'}=1.$
		By~\eqref{eq:linear_phi_2} and Lemma~\ref{lem:extension_PU},
		\begin{eqnarray*}
			&&	\rho_2 \int_{\Sigma} \left( \frac{V_2 e^{W_{2,\lambda}} \phi_2 } {\int_{\Sigma} V_2e^{W_{2,\lambda}}  \, dv_g}  - 
			\frac{V_2 e^{W_{2,\lambda}} \int_{\Sigma} V_2 e^{W_{2,\lambda}}  \phi_2\, dv_g }   {\left(\int_{\Sigma} V_2e^{W_{2,\lambda}} \, dv_g\right)^2 }\right)PU_j\, dv_g \\
			&=&\mathcal{O}\left(\delta_j^{\frac 1 2 }\|\phi_2\|_{L^q(\Sigma)}+\lambda|\log\lambda|\right)=o(1),
		\end{eqnarray*}
		for $q=2.$
		By a straightforward calculation, we have
		\begin{eqnarray*}
			\delta_j\int_{\Omega_j} \frac 8{\left(1+|y|^2 \right)^2}|\tilde{\phi}_{1j}(y)| |y| &\leq & 8 
			\delta_j \delta_j^{\frac{2(1-q)}{q}}\left\|\phi_i\right\|_{L^{q'}(\Sigma)}\left(\int_{\mathbb{R}^2}\left(\frac{1}{\left(1+|y|^2\right)^2}\right)^q d y\right)^{1 / q} \\
			&=&O\left(\delta_j^{\frac{2-q}{q}}\right)=o(1).
		\end{eqnarray*}
		where $q\in(1,2)$ such that $\frac 1 q +\frac 1 {q'}=1$. 
		Applying Lemma~\ref{lem:extension_PU}, \hyperref[item:step1]{Step 1} and \hyperref[item:step2]{Step 2},  we deduce that 
		\begin{eqnarray*}
			&&\int_{\Sigma} 8 \chi_je^{-\varphi_j}  \frac{\delta_j^2}{\left(\delta_j^2+|y_{\xi_j}|^2\right)^2} \phi_1 PU_j\, dv_g\\ &=&\int_{\Sigma} 8 \chi_je^{-\varphi_j}  \frac{\delta_j^2}{\left(\delta_j^2+|y_{\xi_j}|^2\right)^2} \phi_1 \left(-2\chi_j\log\left( \delta_j^2+ |y_{\xi_j}|^2 \right) +\varrho(\xi_j) H^g(\cdot,\xi_j)\right. \\
			&&\left. + \mathcal{O}(\delta_j^2|\log \delta_j|)\right) \, dv_g \\
			&=&
			\int_{\Omega_j} \frac 8{\left(1+|y|^2\right)^2} \tilde{\phi}_{1j}(y)\left(-4 \log \delta_j-2 \log \left(1+|y|^2\right)+\varrho(\xi_j)H^g(\xi_j,\xi_j)\right) d y+ \\
			&&+O\left(\int_{\Omega_j} \frac 8{\left(1+|y|^2 \right)^2}\left|\tilde{\phi}_{1j}(y)\right|\left(\delta_j|y|+\delta_j^2|\log\delta_j|\right) d y\right)+\mathcal{O}(\delta_j^2)\\
			&\rightarrow & -2 a_{ij} \int_{\cR_{\xi_j}} \frac 8{\left(1+|y|^2\right)^2}\frac{1- |y|^2}{1+|y|^2}\log(1+|y|^2)\, dy= \varrho(\xi_j)a_{1j}, \quad (\lambda\rightarrow 0). 
		\end{eqnarray*}
		in which the last equality used the fact that  $\int_{\R^2} \frac 8{\left(1+|y|^2\right)^2}\frac{1- |y|^2}{1+|y|^2}\log(1+|y|^2)\, dy=-4\pi.$
		
		Consequently,  $a_{ij}=0$ for any  $j=1,\ldots,m, i=1\ldots,\i(\xi_j).$
		We $\phi_i$ as test functions for $L^i_{\xi,\lambda}(\bphi)=h_i$.
		For $q\geq 2,$ since $\phi_2\rightarrow 0$ strongly in $L^q(\Sigma)$ and $\tilde{\phi}_{1j}\rightarrow 0$ strongly in $\rL_{\xi_j}$, 
		\begin{eqnarray*}
			&&1=\sum_{i=1}^2 \|\phi_i\|^2=\sum_{j=1}^m\int_{\Sigma} 8 \chi_je^{-\varphi_j}  \frac{\delta_j^2}{\left(\delta_j^{2}+|y_{\xi_j}|^2\right)^2} \phi_1 ^2\, dv_g \\
			&&-	\rho_2 \int_{\Sigma} \left( \frac{V_2 e^{W_{2,\lambda}} \phi_2 } {\int_{\Sigma} V_2e^{W_{2,\lambda}}  \, dv_g}  - 
			\frac{V_2 e^{W_{2,\lambda}} \int_{\Sigma} V_2 e^{W_{2,\lambda}}  \phi_2\, dv_g }   {\left(\int_{\Sigma} V_2e^{W_{2,\lambda}} \, dv_g\right)^2 }\right)\phi_1\, dv_g \\
			&&+ 2\rho_2 \left( \int_{\Sigma}\frac{V_2 e^{W_{2,\lambda}} \phi^2_2\, dv_g } {\int_{\Sigma} V_2e^{W_{2,\lambda}}  \, dv_g}  - 
			\frac{\left( \int_{\Sigma} V_2 e^{W_{2,\lambda}}  \phi_2\, dv_g \right)^2}   {\left(\int_{\Sigma} V_2e^{W_{2,\lambda}} \, dv_g\right)^2 }\right) 
			\\&& - \frac 1 2 \sum_{j=1}^m\int_{\Sigma} 8 \chi_je^{-\varphi_j}  \frac{\delta_j^2}{\left(\delta_j^{2}+|y_{\xi_j}|^2\right)^2} \phi_1 \phi_2\, dv_g+ \sum_{i=1}^2\int_{\Sigma}h_i \phi_i \,dv_g\\
			&\leq&  \sum_{j=1}^m \|\tilde{\phi}_{1j}\|^2_{\rL^2_{\xi}} +\mathcal{O}\left(\|\phi_2\|_{L^q(\Sigma)}\right)+o\left(\frac 1 {|\log \lambda|}\right)\rightarrow 0\quad(\lambda\rightarrow 0). 
		\end{eqnarray*}
	Therefore, we arrive at a contradiction.
	\end{proof}
	\subsection{Nonlinear problem}
	The expected solution $\bW_{\lambda}+\bphi_{\lambda}$ solves~\eqref{eq:toda_equi} if and only if 
	$\bphi_{\lambda}$ solves the following problem in $\cH_{\xi,r_0}$:
	\begin{equation}
		\label{eq:toda_nonlinear}
		\cL_{\xi,\lambda}(\bphi) =\cS_{\xi,\lambda}(\bphi)+ \cN_{\xi,\lambda}(\phi)+\cR_{\xi,\lambda}.
	\end{equation}
	Here, $\cL_{\xi,\lambda}$  is the linear operator defined by~\eqref{eq:linear_op},
	the higher order linear operator $ \mathcal{S}_{\xi,\lambda}(\bphi):=\left(S_{\xi,\lambda}^1(\bphi),  S_{\xi,\lambda}^2(\bphi)\right),$ where for 
	$$
	\begin{aligned}
		& S_{\xi,\lambda}^1(\bphi):= \left(-\sum_{j=1}^m\chi_j e^{-\varphi_j} e^{U_j}+2 \lambda V_1e^{ W_{1,\lambda}}\right) \phi_1-\overline{\left(-\sum_{j=1}^m\chi_j e^{-\varphi_j} e^{U_j}+2 \lambda V_1e^{ W_{1,\lambda}}\right) \phi_1} \quad\text{ and }
	\end{aligned}
	$$
	\[S_{\xi,\lambda}^2(\bphi)=-\frac 1 2 \left(-\sum_{j=1}^m\chi_j e^{-\varphi_j} e^{U_j}+2 \lambda V_1e^{ W_{1,\lambda}}\right) \phi_1+\frac 1 2\overline{\left(-\sum_{j=1}^m\chi_j e^{-\varphi_j} e^{U_j}+2 \lambda V_1e^{ W_{1,\lambda}}\right) \phi_1}, \]
	the nonlinear term $\mathcal{N}_{\xi,\lambda}(\bphi):=\left(N_{\xi,\lambda}^1(\bphi), N_{\xi,\lambda}^2(\bphi)\right)$, where 
	\begin{eqnarray*}
		N_{\xi,\lambda}^1(\bphi):&=&2 \lambda V_1 e^{W_{1,\lambda}}\left(e^{\phi_1}-1-\phi_1\right)-\overline{2 \lambda V_1  e^{W_{1,\lambda}}\left(e^{\phi_1}-1-\phi_1\right)}\\
		&&- \rho_2\left(  \frac{V_2e^{W_{2,\lambda}+\phi_2}}{\int_{\Sigma} V_2e^{W_{2,\lambda}+\phi_2} dv_g } -\frac{V_2e^{W_{2,\lambda}}\phi_2}{\int_{\Sigma} V_2e^{W_{2,\lambda}} dv_g } + \frac{V_2e^{W_{2,\lambda}} \int_{\Sigma}V_2e^{W_{2,\lambda}}\phi_2 dv_g}{(\int_{\Sigma}V_2e^{W_{2,\lambda}} dv_g )^2}-\frac{V_2e^{W_{2,\lambda}}}{\int_{\Sigma}V_2e^{W_{2,\lambda}} dv_g }\right),\\
		N_{\xi,\lambda}^2(\bphi):&=&2 
		\rho_2\left(  \frac{V_2e^{W_{2,\lambda}+\phi_2}}{\int_{\Sigma} V_2e^{W_{2,\lambda}+\phi_2} dv_g } -\frac{V_2e^{W_{2,\lambda}}\phi_2}{\int_{\Sigma} V_2e^{W_{2,\lambda}} dv_g } + \frac{V_2e^{W_{2,\lambda}} \int_{\Sigma}V_2e^{W_{2,\lambda}}\phi_2 dv_g}{(\int_{\Sigma}V_2e^{W_{2,\lambda}} dv_g )^2}-\frac{V_2e^{W_{2,\lambda}}}{\int_{\Sigma}V_2e^{W_{2,\lambda}} dv_g }\right)\\
		&&- \lambda V_1 e^{W_{1,\lambda}}\left(e^{\phi_1}-1-\phi_1\right)-\overline{2 \lambda V_1  e^{W_{1,\lambda}}\left(e^{\phi_1}-1-\phi_1\right)},
	\end{eqnarray*}
	and the error term $\cR_{\xi,\lambda}=(R^1_{\xi,\lambda}, R^2_{\xi,\lambda} )$, where 
	\[\begin{aligned}
		R^{1}_{\xi,\lambda}&=\Delta_g W_{1,\lambda}+2\lambda V_1 e^{W_{1,\lambda}}-\overline{2\lambda V_1 e^{W_{1,\lambda}} }- \rho_2 \left(\frac{V_2 e^{W_{2,\lambda}}}{\int_{\Sigma}V_2W_{2,\lambda}\, dv_g}-1\right),\\
		R^{2}_{\xi,\lambda}&=\Delta_g W_{2,\lambda}+2 \rho_2 \left(\frac{V_2 e^{W_{2,\lambda}}}{\int_{\Sigma}V_2W_{2,\lambda}\, dv_g}-1\right)-\lambda V_1 e^{W_{1,\lambda}}+ \overline{\lambda V_1 e^{W_{1,\lambda}} } .
	\end{aligned}\]
	Firstly, we will show that the error term goes to zero along $\lambda\rightarrow 0$.  
	\begin{lem}\label{lem:est_error}
		There exist $p_0>1$ and $\lambda_0>0$ such that for any $p \in\left(1, p_0\right)$  we have as $\lambda\rightarrow 0$
		\begin{equation}
			\label{eq:est_error}	\left\|\mathcal{R}_{\xi,\lambda}\right\|_p:=\sum_{i=1}^2 \|R^i_{\xi,\lambda}\|_{L^P(\Sigma)}=O\left(\lambda^{\frac{2-p}{2p}}\right).
		\end{equation}
	\end{lem}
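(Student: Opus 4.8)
The plan is to cancel the singular part of the error by invoking the equations that define the building blocks. Since $W_{1,\lambda}=\sum_{j=1}^m PU_j-\frac{1}{2}z(\cdot,\xi)$ and $W_{2,\lambda}=z(\cdot,\xi)-\frac{1}{2}\sum_{j=1}^m PU_j$, substituting \eqref{eq:proj} and \eqref{eq:singular_mf} into $\Delta_g W_{i,\lambda}$ yields
\[
R^{1}_{\xi,\lambda}=E_\lambda-\overline{E_\lambda}+\rho_2\!\left(\frac{\tilde{V}_2(\cdot,\xi)e^{z(\cdot,\xi)}}{\int_\Sigma \tilde{V}_2(\cdot,\xi)e^{z(\cdot,\xi)}\,dv_g}-\frac{V_2 e^{W_{2,\lambda}}}{\int_\Sigma V_2 e^{W_{2,\lambda}}\,dv_g}\right),
\]
while $R^{2}_{\xi,\lambda}$ is a fixed linear combination of $E_\lambda-\overline{E_\lambda}$ and of the same quotient difference, where
\[
E_\lambda:=2\lambda V_1 e^{W_{1,\lambda}}-\sum_{j=1}^m\chi_j e^{-\varphi_j}e^{U_j}.
\]
By \eqref{eq:expansion_W2} one has $V_2 e^{W_{2,\lambda}}=\tilde{V}_2(\cdot,\xi)e^{z(\cdot,\xi)}\bigl(1+\mathcal{O}(\lambda|\log\lambda|)\bigr)$ uniformly on $\Sigma$, so the quotient difference is $\mathcal{O}(\lambda|\log\lambda|)$ in $L^\infty(\Sigma)$ and therefore negligible for \eqref{eq:est_error}. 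It then suffices to prove $\|E_\lambda\|_{L^p(\Sigma)}=\mathcal{O}(\lambda^{(2-p)/(2p)})$ and $|\overline{E_\lambda}|=\mathcal{O}(\lambda^{1/2})$.

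I would estimate $E_\lambda$ region by region, splitting $\Sigma$ into the balls $U_{r_0}(\xi_j)$, the transition annuli $U_{2r_0}(\xi_j)\setminus U_{r_0}(\xi_j)$, and the complement $\Sigma\setminus\bigcup_{j}U_{2r_0}(\xi_j)$. On the latter two pieces $|y_{\xi_j}|\ge r_0$, so $e^{U_j}=\mathcal{O}(\delta_j^2)=\mathcal{O}(\lambda)$, while \eqref{eq:expansion_W1} gives $W_{1,\lambda}=\mathcal{O}(1)$ there; hence $E_\lambda=\mathcal{O}(\lambda)$ pointwise on a set of bounded measure, contributing $\mathcal{O}(\lambda)$ to $\|E_\lambda\|_{L^p}$. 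On $U_{r_0}(\xi_j)$, where $\chi_l\equiv 0$ for $l\ne j$ and all remaining cut-offs equal $1$, the quantity $\Theta_j$ of \eqref{eq:def_theta} coincides by construction with $\log\bigl(2\lambda V_1 e^{W_{1,\lambda}}/(e^{-\varphi_j}e^{U_j})\bigr)$ in the isothermal chart, so that there
\[
E_\lambda=e^{-\varphi_j}e^{U_j}\bigl(e^{\Theta_j(y_{\xi_j}/\delta_j)}-1\bigr).
\]

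The core estimate is thus on $U_{r_0}(\xi_j)$. Here I use the expansion \eqref{eq:est_theta} together with the calibration \eqref{eq:def_d_ij}, which is chosen precisely so that the $\lambda$-independent part of $\Theta_j$ cancels, leaving $\Theta_j(y)=\mathcal{O}(\lambda|\log\lambda|+\lambda^{1/2}|y|)$ for $y\in\Omega_j$. Since $|y|\le r_0/\delta_j=\mathcal{O}(\lambda^{-1/2})$ on $\Omega_j$ and $d_j$ is bounded away from $0$ on the compact set $\Dc$, the function $\Theta_j$ is uniformly bounded on $\Omega_j$, whence $|e^{\Theta_j}-1|\le C|\Theta_j|\le C(\lambda|\log\lambda|+\lambda^{1/2}|y|)$. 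Rescaling $y_{\xi_j}=\delta_j y$ (so that $dv_g=e^{\varphi_j}\delta_j^2\,dy$ and $e^{U_j}=8\delta_j^{-2}(1+|y|^2)^{-2}$) and using $\delta_j^{\,2-2p}=\mathcal{O}(\lambda^{1-p})$ gives
\[
\|E_\lambda\|_{L^p(U_{r_0}(\xi_j))}^{p}\le C\,\delta_j^{\,2-2p}\int_{\mathbb{R}^2}\frac{\bigl(\lambda|\log\lambda|+\lambda^{1/2}|y|\bigr)^{p}}{(1+|y|^2)^{2p}}\,dy\le C\bigl(\lambda|\log\lambda|^{p}+\lambda^{\,1-p/2}\bigr)=\mathcal{O}(\lambda^{\,1-p/2}),
\]
because $\int_{\mathbb{R}^2}(1+|y|^2)^{-2p}\,dy$ and $\int_{\mathbb{R}^2}|y|^{p}(1+|y|^2)^{-2p}\,dy$ are finite for every $p>1$. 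Taking $p$-th roots gives $\|E_\lambda\|_{L^p(U_{r_0}(\xi_j))}=\mathcal{O}(\lambda^{(2-p)/(2p)})$, and the same computation with $p=1$ yields $|\overline{E_\lambda}|\le\|E_\lambda\|_{L^1(\Sigma)}=\mathcal{O}(\lambda^{1/2})$. Summing over $j$ and combining with the other two regions proves \eqref{eq:est_error} for every $p\in(1,2)$, in particular for some $p_0>1$; the restriction $p<2$ is exactly what makes $(2-p)/(2p)>0$.

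The delicate point is this balancing. For $p>1$ the Jacobian factor $\delta_j^{\,2-2p}=\mathcal{O}(\lambda^{1-p})$ \emph{blows up} as $\lambda\to0$, and is absorbed only thanks to the $\lambda^{p/2}$ decay produced by the linear term $\lambda^{1/2}|y|$ in \eqref{eq:est_theta}; this term, in turn, is a consequence of the precise choice of the concentration parameters $d_j$ in \eqref{eq:def_d_ij}. Keeping all constants uniform in $\xi$ over a compact subset $\Dc\subset\Xi_{k,m}$ rests on the uniform two-sided bounds for the $d_j$ and on the uniform $C^0$-bounds for $H^g(\cdot,\xi_j)$, $z(\cdot,\xi)$, and $\varphi_j$ on $\Dc$.
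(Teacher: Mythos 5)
Your proposal is correct and follows essentially the same route as the paper: cancel the singular parts via \eqref{eq:proj} and \eqref{eq:singular_mf}, absorb the quotient difference using \eqref{eq:expansion_eW2}, and reduce everything to the estimate $\|2\lambda V_1e^{W_{1,\lambda}}-\sum_j\chi_j e^{-\varphi_j}e^{U_j}\|_{L^p}=\mathcal{O}(\lambda^{(2-p)/(2p)})$. The only difference is that you re-derive this last estimate from \eqref{eq:est_theta} and \eqref{eq:def_d_ij} inline, whereas the paper quotes it as Lemma~\ref{lem:diif_e^W_sum_e^u}, whose proof is the same rescaled computation you carry out.
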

	\begin{proof}
		By~\eqref{eq:expansion_eW2},
		\begin{eqnarray*}
			R^1_{\xi,\lambda}&=& \sum_{j=1}^m\Delta_gPU_j -\frac 1 2\Delta_g z(\cdot,\xi) +2\lambda V_1 e^{W_{1,\lambda}}-\overline{2\lambda V_1 e^{W_{1,\lambda}} }- \rho_2 \left(\frac{\tilde{V}_2(\cdot,\xi) e^{z(\cdot,\xi)}}{\int_{\Sigma}\tilde{V}_2(\cdot,\xi) e^{z(\cdot,\xi)\, dv_g}}-1\right)\\
			&&+ \mathcal{O}(\lambda|\log\lambda|)\\
			&=& 2\lambda V_1 e^{W_{1,\lambda}}-\sum_{j=1}^m \chi_j e^{-\varphi_j}e^{U_j}-\overline{ 2\lambda V_1 e^{W_{1,\lambda}}-\sum_{j=1}^m \chi_j e^{-\varphi_j}e^{U_j}}+\mathcal{O}(\lambda|\log\lambda|).
		\end{eqnarray*}
		Similarly, 
		\[R^2_{\xi,\lambda}= -\lambda V_1 e^{W_{1,\lambda}}+\frac 1 2\sum_{j=1}^m \chi_j e^{-\varphi_j}e^{U_j}+\overline{ \lambda V_1 e^{W_{1,\lambda}}-\frac 12 \sum_{j=1}^m \chi_j e^{-\varphi_j}e^{U_j}}+\mathcal{O}(\lambda|\log\lambda|).\]
		For any $p\in (1,2)$, the H\"{o}lder inequality yields that 
		\[  \left|\int_{\Sigma}\sum_{j=1}^m  \chi_j e^{-\varphi_j}  e^{U_j}-2\lambda  V_1 e^{W_{1,\lambda}}\, dv_g \right|\leq \left\|\sum_{j=1}^m  \chi_j e^{-\varphi_j}  e^{U_j}-2\lambda  V_1 e^{W_{1,\lambda}}\right\|_{L^p(\Sigma)}|\Sigma|_g^{1-\frac 1 p}. \]
		Then Lemma~\ref{lem:diif_e^W_sum_e^u} implies that for any $p\in(0,1)$
		\[ \|R^i_{\xi,\lambda}\|_{L^p(\Sigma)}=\mathcal{O}(\lambda^{\frac{2-p}{2p}}), \quad i=1,2.\]
		We set $p_0=2$, and the proof is complete. 
	\end{proof}
	The following lemma shows that the higher order linear operator $\cS_{\xi,\lambda}$ is bounded on $\cH$ and the operator norm vanishes as $\lambda\rightarrow 0$. 
	\begin{lem}\label{lem:esti_S}
		There exist $s_0> 1$  such that for any $p,r \in (1, 2)$ with  $pr\in (1,s_0) $, as $\lambda\rightarrow 0$
		\[
		\| \cS_{\xi,\lambda}(\bphi) \|_p = O\left( \lambda^{\frac{2- pr}{2 pr}} \| \bphi \|  \right), \text{ for }\bphi\in \cH.
		\]
	\end{lem}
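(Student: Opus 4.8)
The plan is to reduce the estimate to the cancellation bound already recorded in Lemma~\ref{lem:diif_e^W_sum_e^u}, by a single application of Hölder's inequality. The first step is to rewrite $\cS_{\xi,\lambda}$ in a cleaner form. Setting
\[
g_\lambda := 2\lambda V_1 e^{W_{1,\lambda}} - \sum_{j=1}^m \chi_j e^{-\varphi_j} e^{U_j},
\]
one reads off from the definitions of $S^1_{\xi,\lambda}$ and $S^2_{\xi,\lambda}$ that $S^1_{\xi,\lambda}(\bphi) = g_\lambda\phi_1 - \overline{g_\lambda\phi_1}$ and $S^2_{\xi,\lambda}(\bphi) = -\tfrac12\, S^1_{\xi,\lambda}(\bphi)$. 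Since $|\Sigma|_g = 1$, we have $|\overline{g_\lambda\phi_1}| \le \|g_\lambda\phi_1\|_{L^1(\Sigma)} \le \|g_\lambda\phi_1\|_{L^p(\Sigma)}$, whence
\[
\|\cS_{\xi,\lambda}(\bphi)\|_p = \tfrac32\,\|S^1_{\xi,\lambda}(\bphi)\|_{L^p(\Sigma)} \le 3\,\|g_\lambda\phi_1\|_{L^p(\Sigma)},
\]
so everything reduces to bounding $\|g_\lambda\phi_1\|_{L^p(\Sigma)}$.

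Next I would handle the product $g_\lambda\phi_1$. One cannot pull $\phi_1$ out in $L^\infty$, since $\phi_1 \in \oH$ is only in every $L^q$; instead I would split the integrability by Hölder's inequality with the auxiliary exponent $r$ and its conjugate $r' = r/(r-1)$:
\[
\|g_\lambda\phi_1\|_{L^p(\Sigma)} \le \|g_\lambda\|_{L^{pr}(\Sigma)}\,\|\phi_1\|_{L^{pr'}(\Sigma)}.
\]
For the second factor, the Moser--Trudinger inequality recalled at the beginning of this section gives the continuous embedding $\oH \hookrightarrow L^{pr'}(\Sigma)$, hence $\|\phi_1\|_{L^{pr'}(\Sigma)} \le C\|\phi_1\| \le C\|\bphi\|$ with $C = C(p,r)$ independent of $\lambda$ and of $\xi\in\Dc$. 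For the first factor, observe that $g_\lambda$ is exactly the quantity controlled by Lemma~\ref{lem:diif_e^W_sum_e^u}: the concentration parameters $d_j$ in~\eqref{eq:def_d_ij} were calibrated so that the functions $\Theta_j$ in~\eqref{eq:est_theta} are $o(1)$, which makes the bubble densities $\chi_j e^{-\varphi_j}e^{U_j}$ nearly cancel $2\lambda V_1 e^{W_{1,\lambda}}$, and Lemma~\ref{lem:diif_e^W_sum_e^u} then yields $\|g_\lambda\|_{L^{pr}(\Sigma)} = \mathcal{O}\big(\lambda^{\frac{2-pr}{2pr}}\big)$, uniformly for $\xi\in\Dc$, provided $pr$ stays in the admissible range of that lemma (one may take $s_0 = 2$, the value $p_0$ used in Lemma~\ref{lem:est_error}). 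This is precisely where the constraint $pr\in(1,s_0)$ in the statement comes from. Combining the two bounds gives
\[
\|\cS_{\xi,\lambda}(\bphi)\|_p \le 3\,\|g_\lambda\|_{L^{pr}(\Sigma)}\,\|\phi_1\|_{L^{pr'}(\Sigma)} = \mathcal{O}\big(\lambda^{\frac{2-pr}{2pr}}\|\bphi\|\big),
\]
which is the claim.

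The argument is genuinely short once Lemma~\ref{lem:diif_e^W_sum_e^u} is available, so there is no real obstacle at this stage: the delicate point — the near-cancellation encoded in the smallness of $\|g_\lambda\|_{L^{pr}(\Sigma)}$ — has been isolated into that earlier lemma, whose proof rests on the expansion~\eqref{eq:est_theta} of $\Theta_j$ and the calibration~\eqref{eq:def_d_ij} of the $d_j$. The only things to be careful about are that all constants are uniform for $\xi$ in the compact set $\Dc$ (which holds, since both the Sobolev embedding constant and the implicit constant in Lemma~\ref{lem:diif_e^W_sum_e^u} are uniform there), and that the exponents $r, r'$ are fixed throughout — so $pr'$ is a fixed finite number and the embedding $\oH\hookrightarrow L^{pr'}(\Sigma)$ is legitimate — the sole purpose of introducing $r$ being to trade integrability so as to keep $pr$ below $s_0$.
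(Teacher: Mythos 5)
Your proposal is correct and follows essentially the same route as the paper: both reduce $\cS_{\xi,\lambda}(\bphi)$ to the single product $\bigl(2\lambda V_1e^{W_{1,\lambda}}-\sum_j\chi_je^{-\varphi_j}e^{U_j}\bigr)\phi_1$, split it by H\"older with exponents $pr$ and $pr/(r-1)$, control the first factor by Lemma~\ref{lem:diif_e^W_sum_e^u} and the second by the Moser--Trudinger embedding of $\oH$ into $L^q(\Sigma)$. The only difference is that you spell out the bookkeeping (the identity $S^2_{\xi,\lambda}=-\tfrac12 S^1_{\xi,\lambda}$ and the bound on the average term) that the paper absorbs into its $O(\cdot)$ notation.
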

	\begin{proof}
		By Lemma~\ref{lem:diif_e^W_sum_e^u} and the H\"{o}lder inequality, we  derive that 
		\begin{align*}
			\| \cS_{\xi,\lambda}(\bphi) \|_p &= \sum_{i=1}^{2} \| S^{i}_{\xi,\lambda}(\bphi) \|_{L^p(\Sigma)}\\
			& =O\left( \left\| \left(-\sum_{j=1}^m\chi_j e^{-\varphi_j} e^{U_j}+2 \lambda V_1e^{ W_{1,\lambda}}\right) \phi_1\right\|_{L^p(\Sigma)}\right)\\
			&= O\left( \left( \left\|2 \lambda V_1e^{W_{1,\lambda}}-\sum_{j=1}^m\chi_j e^{-\varphi_j} e^{U_1} \right\|_{L^{pr}(\Sigma)} \right) \left| \phi_1\right|_{\frac{pr}{r-1}} \right)  \\
			&= O\left(\lambda^{\frac{2-pr}{2pr}}  \left\| \phi \right\| \right) = O\left(\lambda^{\frac{2-pr}{2pr}} \|\bphi\|\right)\quad(\lambda\rightarrow 0),
		\end{align*}
		for any $p,r\in(1,2)$ with $pr<2.$
	\end{proof}

	To study the asymptotic behavior of  the non-linear part $\cN_{\xi,\lambda}$ we have the following lemma: 
	\begin{lem}\label{lem:non_linear_term}
		There exist $c_2,\varepsilon_0,\lambda_0>0$ and  $s_0 > 1$such that for any $p > 1$, $r > 1$ with $pr \in (1, s_0), \lambda\in(0, \lambda_0)$
		\[
		\|\mathcal{N}_{\xi,\lambda}(\bphi)\|_p = O\left( \lambda^{\frac{1-pr}{pr}} e^{c_2\|\bphi\|^2 } \|\bphi\|^{2} \right) 
		\]
		and
		\[
		\|\mathcal{N}_{\xi,\lambda}(\bphi^1) - \mathcal{N}_{\xi,\lambda}(\bphi^0)\|_p =\mathcal{O}\left(\lambda^{\frac{1-pr}{pr}} e^{c_2\sum_{h=0}^1\|\bphi^h\|^2 } \|\bphi^1 - \bphi^0\|(\|\bphi^1\| + \|\bphi^0\|) \right) 
		\]
		hold true for any   $\bphi, \bphi^1, \bphi^0 \in \{ \bphi=(\phi_1,\phi_2) \in \cH : \|\phi_i\| \leq \varepsilon_0, i=1,2\}$. 
	\end{lem}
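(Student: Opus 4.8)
The plan is to split $\cN_{\xi,\lambda}=(N^1_{\xi,\lambda},N^2_{\xi,\lambda})$ into the \emph{Liouville part}, built from $\lambda V_1 e^{W_{1,\lambda}}(e^{\phi_1}-1-\phi_1)$, and the \emph{quotient part}, built from the bracket in $\phi_2$, and to estimate each in $L^p(\Sigma)$ by combining the pointwise inequality $|e^t-1-t|\le\frac12 t^2 e^{|t|}$, H\"older's inequality, and the Moser--Trudinger consequence $\log\int_\Sigma e^u\,dv_g\le\frac1{8\pi}\|u\|^2+C$ for $u\in\oH$ established in Section~\ref{sec:pre}. The two structural facts that make this work are: the coefficient $\lambda V_1 e^{W_{1,\lambda}}$ concentrates like a Liouville bubble, so that $\|\lambda V_1 e^{W_{1,\lambda}}\|_{L^q(\Sigma)}=O(\lambda^{(1-q)/q})$ uniformly for $\xi$ in a compact subset of $\Xi_{k,m}$ (this is exactly the content of~\eqref{eq:L_p_exp} together with $\delta_j\sim\lambda^{1/2}$, and is how Lemma~\ref{lem:diif_e^W_sum_e^u} is obtained), whereas the coefficient $V_2 e^{W_{2,\lambda}}$ of the $\phi_2$-nonlinearity stays uniformly bounded in $L^\infty(\Sigma)$ by~\eqref{eq:expansion_eW2}.

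For the Liouville part one has pointwise $\lambda V_1 e^{W_{1,\lambda}}|e^{\phi_1}-1-\phi_1|\le\frac12\lambda V_1 e^{W_{1,\lambda}}\phi_1^2 e^{|\phi_1|}$, and the average terms are controlled by their $L^1$-norm, hence by the $L^p$-norm. Applying H\"older with exponents so that one factor is $\|\lambda V_1 e^{W_{1,\lambda}}\|_{L^{pr}(\Sigma)}=O(\lambda^{(1-pr)/(pr)})$ and the other is $\|\phi_1^2 e^{|\phi_1|}\|_{L^{pr/(pr-1)}(\Sigma)}$, I would split the latter once more as $\|\phi_1^2 e^{|\phi_1|}\|_{L^s}\le\|\phi_1\|_{L^{q_1}}^2\|e^{|\phi_1|}\|_{L^{q_2}}$ with $2/q_1+1/q_2=1/s$. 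The continuous embedding $\oH\hookrightarrow L^{q_1}(\Sigma)$ gives $\|\phi_1\|_{L^{q_1}}\le C\|\phi_1\|$; writing $q_2|\phi_1|=q_2(|\phi_1|-\overline{|\phi_1|})+q_2\overline{|\phi_1|}$ and using $\big\||\phi_1|-\overline{|\phi_1|}\big\|\le\|\phi_1\|$, $\overline{|\phi_1|}\le C\|\phi_1\|$, the Moser--Trudinger bound and Young's inequality yields $\|e^{|\phi_1|}\|_{L^{q_2}}\le Ce^{c_2\|\phi_1\|^2}$ once $\|\phi_1\|\le\varepsilon_0$. Hence this part of $\cN_{\xi,\lambda}$ is $O\big(\lambda^{(1-pr)/(pr)}e^{c_2\|\phi_1\|^2}\|\phi_1\|^2\big)$.

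For the quotient part, introduce $g(s):=V_2 e^{W_{2,\lambda}+s}\big/\int_\Sigma V_2 e^{W_{2,\lambda}+s}\,dv_g$; then the bracket in $N^i_{\xi,\lambda}$ is exactly the second-order Taylor remainder $g(\phi_2)-g(0)-Dg(0)[\phi_2]=\int_0^1(1-t)D^2g(t\phi_2)[\phi_2,\phi_2]\,dt$. By~\eqref{eq:expansion_eW2} the numerator $V_2 e^{W_{2,\lambda}}$ is bounded in $L^\infty$, and applying Jensen's inequality to the probability measure $V_2 e^{W_{2,\lambda}}/\int_\Sigma V_2 e^{W_{2,\lambda}}\,dv_g$ the denominators $\int_\Sigma V_2 e^{W_{2,\lambda}+t\phi_2}\,dv_g$ are bounded below by $ce^{-C\|\phi_2\|}\ge c'>0$; so each $D^2g(t\phi_2)[\phi_2,\phi_2]$ is a finite combination of products of $V_2 e^{W_{2,\lambda}}e^{t\phi_2}$ and its integrals, and the same H\"older/Moser--Trudinger argument gives $\|g(\phi_2)-g(0)-Dg(0)[\phi_2]\|_{L^p(\Sigma)}=O(e^{c_2\|\phi_2\|^2}\|\phi_2\|^2)$, which is absorbed since $\lambda^{(1-pr)/(pr)}\to+\infty$ as $\lambda\to0$. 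Adding the two parts proves the first estimate. For the difference estimate I would write $\cN_{\xi,\lambda}(\bphi^1)-\cN_{\xi,\lambda}(\bphi^0)=\int_0^1 D\cN_{\xi,\lambda}\big(\bphi^0+t(\bphi^1-\bphi^0)\big)[\bphi^1-\bphi^0]\,dt$ and exploit that $\cN_{\xi,\lambda}$ vanishes to second order, so $D\cN_{\xi,\lambda}(\bphi)[\cdot]$ carries a factor $O(\|\bphi\|)$; equivalently, use directly $|e^{\phi^1_1}-e^{\phi^0_1}-(\phi^1_1-\phi^0_1)|\le|\phi^1_1-\phi^0_1|\,(|\phi^0_1|+|\phi^1_1|)\,e^{|\phi^0_1|+|\phi^1_1|}$ and the analogous bound on $g(\phi^1_2)-g(\phi^0_2)-Dg(0)[\phi^1_2-\phi^0_2]$, then repeat the estimates above to get the factor $\|\bphi^1-\bphi^0\|(\|\bphi^1\|+\|\bphi^0\|)$ and $e^{c_2\sum_h\|\bphi^h\|^2}$.

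The main obstacle is the bookkeeping of exponents in the Liouville part: one must choose the auxiliary exponent $r$ and the H\"older exponents $(q_1,q_2)$ so that, simultaneously, $pr$ stays in the required neighborhood $(1,s_0)$ of $1$ (so that $\lambda V_1 e^{W_{1,\lambda}}$ is uniformly $L^{pr}$-bounded with the stated power of $\lambda$), $q_1<\infty$ (so that $\oH\hookrightarrow L^{q_1}$ is available), and the Moser--Trudinger exponential contributes only $Ce^{c_2\|\phi_1\|^2}$ after the zero-average correction --- all uniformly in $\xi$ on compact subsets of $\Xi_{k,m}$ and in $\lambda$ small. Everything else is routine.
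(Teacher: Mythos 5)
Your proposal is correct and follows essentially the same route as the paper: split $\cN_{\xi,\lambda}$ into the $\lambda V_1e^{W_{1,\lambda}}$-part and the $\phi_2$-quotient part, control the first by H\"older with $\|\lambda V_1e^{W_{1,\lambda}}\|_{L^{pr}}=O(\lambda^{\frac{1-pr}{pr}})$ plus Moser--Trudinger, control the second by the uniform lower bound on $\int_\Sigma V_2e^{W_{2,\lambda}+u}\,dv_g$ and H\"older/Moser--Trudinger, and absorb the latter ($\lambda$-free) contribution using that $\lambda^{\frac{1-pr}{pr}}\to+\infty$. The only cosmetic differences are that the paper proves the difference estimate first (via two applications of the mean value theorem to $f_i$) and obtains the first estimate by setting $\bphi^0=0$, whereas you use pointwise Taylor bounds and the integral form of the second-order remainder; these are equivalent.
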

	\begin{proof}
		It is sufficient to prove the second estimate since we can take $\bphi^0=0$ to deduce the first one. 
		Let $f_1, f_2$ be defined in~\eqref{eq:def_f12}.
		We observe that 
		\begin{eqnarray*}
			&&	f_1(W_{1,\lambda}+\phi^1_1)-f_1(W_{1,\lambda}+\phi^0_1)- f'_1( W_{1,\lambda})(\phi^1_1-\phi^0_1) \\
			&
			= & 2\lambda V_1e^{W_{1,\lambda}+\phi^1_1} -2\lambda V_1e^{W_{1,\lambda}+\phi^0_1}-2\lambda  V_1e^{W_{1,\lambda}}(\phi^1_1-\phi^0_1) ,\\	&&f_2(W_{2,\lambda}+\phi^1_2)-f_2(W_{2,\lambda}+\phi^0_2)- f'_2( W_{2,\lambda})(\phi^1_2-\phi^0_2) \\
			&=& \frac{\rho_2 V_2 e^{W_{2,\lambda}+\phi^1_2} }{\int_{\Sigma} V_2 e^{W_{2,\lambda}+\phi^1_2} d v_g}-\frac{\rho_2 V_2 e^{W_{2,\lambda}+\phi^0_2} }{\int_{\Sigma} V_2 e^{W_{2,\lambda}+\phi^0_2} d v_g}
			- \frac{\rho_2 V_2 W_{2,\lambda}(\phi_2^1-\phi^0_2)}{\int_{\Sigma} V_2W_{2,\lambda} d v_g}\\
			&&+ 
			\frac{\rho_2 V_2 W_{2,\lambda} \int_{\Sigma} V_2 e^{W_{2,\lambda}} (\phi^1_2-\phi^0_2) d v_g}{\left(\int_{\Sigma} V_2 e^{W_{2,\lambda}} d v_g\right)^2}.
		\end{eqnarray*}
		For $i=1,2,$ $\theta,\gamma\in (0,1)$, the mean value theorem yields that for any $p>1$
		\begin{eqnarray*}
			&&\|f_i(W_{i,\lambda}+\phi^1_i)-f_i(W_{i,\lambda}+\phi^0_i)- f'_i( W_{i,\lambda})(\phi^1_i-\phi^0_i)\|_{L^p(\Sigma)}\\
			&=& \|(f'_i(W_{i,\lambda}+ \theta\phi^1_i+(1-\theta)\phi^0_i) -f'_i(W_{i,\lambda}) (\phi^1_i-\phi^0_i)\|_{L^p(\Sigma)}\\
			&=&\|f^{\prime\prime}_i(W_{i,\lambda}+ \gamma\theta\phi^1_i+\gamma(1-\theta)\phi^0_i) (\theta\phi^1_i+(1-\theta)\phi^0_i) (\phi^1_i-\phi^0_i)\|_{L^p(\Sigma)}.
		\end{eqnarray*}
		For $i=1$, by the H\"{o}lder inequality and the  Moser-Trudinger inequality, we derive that 
		\begin{eqnarray*}
			&&	\|f^{\prime\prime}_1(W_{1,\lambda}+ \gamma\theta\phi^1_1+\gamma(1-\theta)\phi^0_1) (\theta\phi^1_1+(1-\theta)\phi^0_1) (\phi^1_1-\phi^0_1)\|_{L^p(\Sigma)}  \\
			&\leq& C \sum_{h=0}^1
			\left(  \int_{\Sigma}
			\lambda^p V_1^p	e^{pW_{1,\lambda} }(  e^{|\phi^0_1|+|\phi^0_1|}  |\phi^1_1-\phi^0_1||\phi^h_1| ) ^p\, dv_g \right)^{1/p}  \\
			&\leq &  C \sum_{h=0}^1 \left(\int_{\Sigma} \lambda^{pr}V_1^{pr}e^{pr W_{1,\lambda} }\, dv_g\right)^{\frac{1}{pr}} \left( \int_{\Sigma}   e^{ps (|\phi^0_1|+|\phi^1_1|)} \,dv_g\right)^{\frac{1}{ps}}\\
			&& \left(\int_{\Sigma} |\phi^1_1-\phi^0_1|^{pt} |\phi^h_1|^{pt} dv_g\right)^{\frac{1}{pt}}  \\
			&\leq & C \sum_{h=0}^1 \| \lambda V_1e^{ W_{1,\lambda}} \|_{L^{pr}(\Sigma)} e^{\frac{ps}{8\pi}(\|\phi^1_1\|^2+\|\phi^0_1\|^2)} \|\phi^1_1-\phi^0_1\| \|\phi^h_1\|,
		\end{eqnarray*}
		where $ r,s,t \in (1, +\infty), {  \frac{1}{r}+\frac{1}{s}+\frac{1}{t}=1}$.
		Applying Lemma~\ref{lem:diif_e^W_sum_e^u}, we deduce that 
		\begin{eqnarray*}
			\|2\lambda V_1 e^{W_{1,\lambda}}\|_{L^{pr}(\Sigma)}&\leq& \left\|2 \lambda V_1e^{W_{1,\lambda}}-\sum_{j=1}^m\chi_j e^{-\varphi_j} e^{U_j} \right\|_{L^{pr}(\Sigma)}+ \sum_{j=1}^m\left\|\chi_j e^{-\varphi_j} e^{U_j} \right\|_{L^{pr}(\Sigma)}\\
			&\leq& \sum_{j=1}^m\left\|\chi_j e^{-\varphi_j} e^{U_j} \right\|_{L^{pr}(\Sigma)}+\mathcal{O}(\lambda^{\frac{2-pr}{2}}).
		\end{eqnarray*}
		Lemma~\ref{lem:extension_PU} implies that 
		\begin{eqnarray*}
			\int_{\Sigma} \chi^{pr}_j e^{-pr\varphi_j}e^{pr U_j}dv_g&=& \mathcal{O}\left(\delta_j^{2-2pr}\int_{\Omega_j} \left(\frac{1}{(1+|y|^2)^2} \right)^{pr}\, dy \right)+\mathcal{O}(\delta_j^2)\\
			&=& \mathcal{O}(\delta_j^{2-2pr})= \mathcal{O}( \lambda^{1-pr}).
		\end{eqnarray*}
		Hence, 
		\begin{eqnarray}
			\label{eq:est_1st_component_nonlin_term}
			&&	\left\|   2\lambda V_1e^{W_{1,\lambda}+\phi^1_1} -2\lambda V_1e^{W_{1,\lambda}+\phi^0_1}-2\lambda  V_1e^{W_{1,\lambda}}(\phi^1_1-\phi^0_1)\right\|_{L^p(\Sigma)}\\
			&\leq& C \sum_{h=0}^1 \lambda^{\frac{1-pr}{pr}} e^{\frac{ps}{8\pi}(\|\phi^1_1\|^2+\|\phi^0_1\|^2)} \|\phi^1_1-\phi^0_1\| \|\phi^h_1\|. \nonumber
		\end{eqnarray} 
		
		For $u,v,w\in\oH$, 
		\begin{eqnarray*}
			&&	(2\rho_2)^{-1}	f_2^{\prime\prime}(W_{2,\lambda} + u)(v)(w)\\
			&=& \frac{V_2e^{W_{2,\lambda}+u}vw}{\int_{\Sigma}V_2 e^{W_{2,\lambda}+u}\, dv_g} - \frac{V_2e^{W_{2,\lambda}+u}v}{\left(\int_{\Sigma} V_2e^{W_{2,\lambda}+u}\, dv_g\right)^2} \int_{\Sigma}V_2 e^{W_{2,\lambda}+u}w \, dv_g \\
			&& - \frac{V_2e^{W_{2,\lambda}+u}}{\left(\int_{\Sigma}V_2 e^{W_{2,\lambda}+u}\, dv_g \right)^2}w \int_{\Sigma}V_2 e^{W_{2,\lambda}+u}v \, dv_g\\
			&&- \frac{V_2e^{W_{2,\lambda}+u}}{\left(\int_{\Sigma}V_2 e^{W_{2,\lambda}+u}\, dv_g\right)^3} \left(\int_{\Sigma}V_2 e^{W_{2,\lambda}+u}v\, dv_g \right) \left(\int_{\Sigma}V_2 e^{W_{2,\lambda}+u}w\, dv_g\right).
		\end{eqnarray*}
		
		Given that $\|u\|,\lambda
		$ are sufficiently small, 
		\begin{eqnarray}
			\label{eq:low_bound_veW2}
			\int_{\Sigma} V_2 e^{W_{2,\lambda}+u}\, dv_g &\geq & \int_{\Sigma}V_2e^{W_{2,\lambda}}\, dv_g- \int_{\Sigma} V_2e^{W_{2,\lambda}}|e^u -1|\, dv_g\\
			&\geq&\int_{\Sigma}V_2e^{W_{2,\lambda}}\, dv_g- C\int_{\Sigma} V_2e^{W_{2,\lambda}}|u|\, dv_g\nonumber\\
			&\geq  &\int_{\Sigma}\tilde{V}_2(\cdot,\xi)e^{z(\cdot,\xi)}\, dv_g- C\|\tilde{V}_2(\cdot,\xi)e^{z(\cdot,\xi)}\|_{L^p(\Sigma)}\|u\| +o(1)\nonumber\\
			&\geq& C,\nonumber
		\end{eqnarray}
		where $C>0$ is a constant. 
		The H\"older's inequality  and~\eqref{eq:low_bound_veW2} imply that  for $a,b,c,d, r,s,t,e,f\in (1,+\infty)$ with $\frac 1 a +\frac 1 b+\frac 1 c+\frac 1 d =1,\frac 1 r +\frac 1 s +\frac 1 t =1, \frac 1 e+\frac 1 f =1, $
		\begin{eqnarray*}
			& &\|f^{\prime\prime}_2(W_{2,\lambda}+ \gamma\theta\phi^1_2+\gamma(1-\theta)\phi^0_2) (\theta\phi^1_2+(1-\theta)\phi^0_2) (\phi^1_2-\phi^0_2)\|_{L^p(\Sigma)}\\
			&&\leq   C\sum_{h=0}^1\|V_2e^{W_{2,\lambda}}\|_{L^{pa}(\Sigma)} \|e^{ |\phi^1_2|+ |\phi^0_2|}\|_{L^{pb}(\Sigma)} \| |\phi^h_2|\|_{L^{pc}(\Sigma)} \|\phi^1_2-\phi^0_2\|_{L^{pd}(\Sigma)} \\
			&&+  C\sum_{h=0}^1\|V_2e^{W_{2,\lambda}}\|^2_{L^{pr}(\Sigma)} \|e^{ |\phi^1_2|+ |\phi^0_2|}\|^2_{L^{ps}(\Sigma)} \| |\phi^h_2|\|_{L^{pt}(\Sigma)} \|\phi^1_2-\phi^0_2\|_{L^{pt}(\Sigma)}\\
			&&+C\sum_{h=0}^1\|V_2e^{W_{2,\lambda}}\|_{L^{pe}(\Sigma)} \|e^{ |\phi^1_2|+ |\phi^0_2|}\|_{L^{pf}(\Sigma)}\|V_2e^{W_{2,\lambda}}\|_{L^{pa}(\Sigma)} \|e^{ |\phi^1_2|+ |\phi^0_2|}\|_{L^{pb}(\Sigma)} \| |\phi^h_2|\|_{L^{pc}(\Sigma)} \|\phi^1_2-\phi^0_2\|_{L^{pd}(\Sigma)}\\
			&&+C\sum_{h=0}^1\|V_2e^{W_{2,\lambda}}\|_{L^{pe}(\Sigma)} \|e^{ |\phi^1_2|+ |\phi^0_2|}\|_{L^{pf}(\Sigma)} \|V_2e^{W_{2,\lambda}}\|^2_{L^{pr}(\Sigma)} \|e^{ |\phi^1_2|+ |\phi^0_2|}\|^2_{L^{ps}(\Sigma)} \| |\phi^h_2|\|_{L^{pt}(\Sigma)} \|\phi^1_2-\phi^0_2\|_{L^{pt}(\Sigma)},
		\end{eqnarray*}
		for $\|\bphi^i\|$ sufficiently small $i=0,1.$
		Moser-Trudinger inequality yields that 
		\[ \| e^{ |\phi^1_2|+ |\phi^0_2|} \|^2_{L^{q}(\Sigma)}\leq C e^{\frac{q}{8\pi} (\|\phi^1_2\|+ \|\phi^0_2\|)}, q>1.
		\]
		
		By~\eqref{eq:expansion_eW2}, 
		\[ \|V_2e^{W_{2,\lambda}}\|^2_{L^{pr}(\Sigma)} \leq C\|\tilde{V}_2(\cdot,\xi)e^{z(\cdot,\xi)}\|_{L^{q}(\Sigma)}\leq C .\]
		Combining the estimates above, we deduce that  
		\begin{eqnarray}
			\label{eq:non_linear_2rd}
			&&\left\|f_2(W_{2,\lambda}+\phi^1_2)-f_2(W_{2,\lambda}+\phi^0_2)- f'_2( W_{2,\lambda})(\phi^1_2-\phi^0_2)\right\|_{L^p(\Sigma)}\\
			&&\leq C e^{c_2\sum_{h=0}^1 \|\phi^h_2\|}(\|\phi^1_2\|+\|\phi^0_2\|)(\|\phi^1_2-\phi^0_2\|). \nonumber
		\end{eqnarray}
		
		Therefore, we proved that  there exists constants $c_2,\varepsilon_0,\lambda_0>0$ and $s_0>1$ for any $p,r>0$ satisfying $pr\in (1, s_0)$ and $\|\bphi^i\|\leq \varepsilon_0$ for $i=1,2$
		\begin{equation}~\label{est_N_s}
			\|\mathcal{N}_{\xi,\lambda}(\bphi^1) - \mathcal{N}_{\xi,\lambda}(\bphi^0)\|_p =\mathcal{O}\left(\lambda^{\frac{1-pr}{pr}} e^{c_2\sum_{h=0}^1\|\bphi^h\|^2 }(\|\bphi^1\|+\|\bphi^0\|)\|\bphi^1-\bphi^0\|\right).
		\end{equation}
	\end{proof}
	Next, for the fixed approximation solution $\bW_{\lambda}$, we will find $\bphi_{\lambda}$ to solve the problem~\eqref{eq:toda_equi2} in $K^{\perp}_{\xi}$, i.e. 
	\begin{equation}
		\label{eq:toda_inf_dim} 
		\bphi_{\lambda}=\Pi_{\xi}^{\perp}\circ \cL_{\xi,\lambda}^{-1}(\cS_{\xi,\lambda}(\bphi_{\lambda})+\cN_{\xi,\lambda}+\cR_{\xi,\lambda})
	\end{equation}
	for $\bphi_{\lambda}\in K_{\xi}^{\perp}$.

	\begin{thm}~\label{thm2} Let $\Dc$ be a compact subset of $\Xi_{k,m}$,
		and $\xi=(\xi_1,...,\xi_m)\in \Dc$. There exist $p_0>1, \lambda_0>0$ and $R>0$ (uniformly in $\xi$) such that for any $p\in (1, p_0)$ and  any $\lambda\in (0,\lambda_0)$ there is a unique $\bphi_{\xi,\lambda}\in K^{\perp}_{\xi}$ solves~\eqref{eq:toda_inf_dim}
		satisfying that 
		\[ \|\bphi_{\xi,\lambda}\|\leq R \lambda^{ \frac{ 2-p}{2p}}|\log \lambda|. \]
		Furthermore, 
		the map $\xi\mapsto \bphi_{\xi,\lambda}$ is $C^1$ map with respect to $\xi$.
	\end{thm}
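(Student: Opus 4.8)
The plan is to recast \eqref{eq:toda_inf_dim} as a fixed-point equation for a map that is a contraction on a small ball of $K_\xi^\perp$, apply the Banach fixed point theorem to produce $\bphi_{\xi,\lambda}$ with the stated bound, and then deduce the $C^1$ dependence on $\xi$ from the implicit function theorem once each constituent of the equation is seen to depend in a $C^1$ way on $\xi$.

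First I would fix an auxiliary exponent $r>1$ and set $p_0:=\min\{2,s_0\}$ with $s_0$ as in Lemmas~\ref{lem:esti_S} and~\ref{lem:non_linear_term}, choosing $r$ and shrinking $p_0$ so close to $1$ that $pr<s_0$ and $\varepsilon_p:=\tfrac{1-pr}{pr}+\tfrac{2-p}{2p}>0$ for all $p\in(1,p_0)$; this is possible since $\varepsilon_p\to\tfrac12$ as $p,r\to1^+$. For $\xi\in\Dc$ and $\lambda$ small, define
\[
\mathcal{A}_{\xi,\lambda}(\bphi):=\Pi_\xi^\perp\circ\cL_{\xi,\lambda}^{-1}\bigl(\cS_{\xi,\lambda}(\bphi)+\cN_{\xi,\lambda}(\bphi)+\cR_{\xi,\lambda}\bigr),
\]
where $\cL_{\xi,\lambda}^{-1}$ is the inverse supplied by Lemma~\ref{lem:invertible}, so that \eqref{eq:toda_inf_dim} reads exactly $\bphi=\mathcal{A}_{\xi,\lambda}(\bphi)$. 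By Lemma~\ref{lem:invertible}, $\|\cL_{\xi,\lambda}^{-1}\bh\|\le C|\log\lambda|\,\|\bh\|_p$ with $C$ uniform in $\xi\in\Dc$, and the constants in Lemmas~\ref{lem:est_error}, \ref{lem:esti_S}, \ref{lem:non_linear_term} are likewise uniform on the compact set $\Dc$, which will give the uniformity of $p_0,\lambda_0,R$ in $\xi$.

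On $B_R:=\{\bphi\in K_\xi^\perp:\ \|\bphi\|\le R\lambda^{(2-p)/(2p)}|\log\lambda|\}$ one combines $\|\cR_{\xi,\lambda}\|_p=O(\lambda^{(2-p)/(2p)})$ (Lemma~\ref{lem:est_error}), $\|\cS_{\xi,\lambda}(\bphi)\|_p=O(\lambda^{(2-pr)/(2pr)})\|\bphi\|=o(1)\|\bphi\|$ (Lemma~\ref{lem:esti_S}), and $\|\cN_{\xi,\lambda}(\bphi)\|_p=O(\lambda^{(1-pr)/(pr)}\|\bphi\|^2)$ (Lemma~\ref{lem:non_linear_term}, using $e^{c_2\|\bphi\|^2}=1+o(1)$ on $B_R$) to obtain
\[
\|\mathcal{A}_{\xi,\lambda}(\bphi)\|\le C|\log\lambda|\Bigl(C_1\lambda^{(2-p)/(2p)}+o(1)\|\bphi\|+C_2\lambda^{(1-pr)/(pr)}\|\bphi\|^2\Bigr).
\]
On $B_R$ the last term is of order $\lambda^{\varepsilon_p}\cdot\lambda^{(2-p)/(2p)}|\log\lambda|^2=o(\lambda^{(2-p)/(2p)})$, so choosing $R$ larger than $2CC_1$ and then $\lambda_0$ small makes $\mathcal{A}_{\xi,\lambda}$ map $B_R$ into itself. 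For the contraction estimate, $\cS_{\xi,\lambda}$ is linear with operator norm $o(1)$, and for $\bphi^0,\bphi^1\in B_R$ Lemma~\ref{lem:non_linear_term} gives $\|\cN_{\xi,\lambda}(\bphi^1)-\cN_{\xi,\lambda}(\bphi^0)\|_p=O(\lambda^{(1-pr)/(pr)}(\|\bphi^0\|+\|\bphi^1\|))\|\bphi^1-\bphi^0\|=o(|\log\lambda|^{-1})\|\bphi^1-\bphi^0\|$; hence $\|\mathcal{A}_{\xi,\lambda}(\bphi^1)-\mathcal{A}_{\xi,\lambda}(\bphi^0)\|\le\tfrac12\|\bphi^1-\bphi^0\|$ for $\lambda$ small. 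The Banach fixed point theorem then yields the unique $\bphi_{\xi,\lambda}\in B_R$, which solves \eqref{eq:toda_inf_dim} and obeys $\|\bphi_{\xi,\lambda}\|\le R\lambda^{(2-p)/(2p)}|\log\lambda|$.

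For the $C^1$ dependence on $\xi$ I would apply the implicit function theorem to $\mathcal{G}(\xi,\bphi):=\bphi-\mathcal{A}_{\xi,\lambda}(\bphi)$. Every constituent of $\mathcal{A}_{\xi,\lambda}$ is $C^1$ in $\xi$: the isothermal charts $y_\xi,\hat{\varphi}_\xi$ and the functions $H^g(\cdot,\xi_j),G^g(\cdot,\xi_j)$ depend smoothly on $\xi$, hence so do $d_j,\delta_j,PU_j,PZ^i_j$ and the projector $\Pi_\xi^\perp$; $z(\cdot,\xi)$ is $C^1$ by Lemma~\ref{lem:C_1_z_x_xi}, so $\bW_\lambda$, $\cR_{\xi,\lambda}$ and the coefficient operators entering $\cS_{\xi,\lambda},\cN_{\xi,\lambda},\cL_{\xi,\lambda}$ are $C^1$ in $\xi$; and $\cL_{\xi,\lambda}^{-1}$ inherits $C^1$ dependence from $\cL_{\xi,\lambda}$ by the uniform invertibility of Lemma~\ref{lem:invertible}. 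Since $D_\bphi\mathcal{G}(\xi,\bphi_{\xi,\lambda})=\id-D_\bphi\mathcal{A}_{\xi,\lambda}(\bphi_{\xi,\lambda})$ and $\|D_\bphi\mathcal{A}_{\xi,\lambda}\|=o(1)$ on $B_R$ (differentiate $\cS_{\xi,\lambda}+\cN_{\xi,\lambda}$ and compose with $\cL_{\xi,\lambda}^{-1}$), this operator is invertible, and the implicit function theorem gives $\xi\mapsto\bphi_{\xi,\lambda}$ of class $C^1$; the constraint is automatically respected because $\mathcal{G}(\xi,\bphi)=0$ forces $\bphi\in\Range(\Pi_\xi^\perp)=K_\xi^\perp$. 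The difficulty here is essentially bookkeeping: one must track the $\lambda$-powers so that the $|\log\lambda|$ loss from inverting $\cL_{\xi,\lambda}$ is absorbed and $\mathcal{A}_{\xi,\lambda}$ is a genuine self-map and contraction of a ball of radius $\sim\lambda^{(2-p)/(2p)}|\log\lambda|$; and when applying the implicit function theorem one must handle the $\xi$-dependence of the target space $K_\xi^\perp$, most cleanly by working in the fixed Hilbert space $\cH$ with the $C^1$ family of projectors $\Pi_\xi^\perp$ (or by a $\xi$-dependent change of variables trivialising $\{K_\xi^\perp\}_\xi$). Controlling the differentiability of $\cN_{\xi,\lambda}$, which involves exponentials of the blowing-up function $W_{1,\lambda}$, also relies on the uniform Moser--Trudinger bounds underlying Lemma~\ref{lem:non_linear_term}.
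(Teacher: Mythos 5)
Your proposal is correct and follows essentially the same route as the paper: the same fixed-point reformulation of \eqref{eq:toda_inf_dim}, a contraction argument on the ball of radius $R\lambda^{\frac{2-p}{2p}}|\log\lambda|$ in $K_\xi^{\perp}$ using exactly Lemmas~\ref{lem:invertible}, \ref{lem:est_error}, \ref{lem:esti_S}, \ref{lem:non_linear_term} with the same bookkeeping of $\lambda$-powers against the $|\log\lambda|$ loss, and then the implicit function theorem for the $C^1$-dependence on $\xi$. The only divergence is in the map to which the implicit function theorem is applied: you differentiate $\mathcal{G}(\xi,\bphi)=\bphi-\Pi_\xi^{\perp}\circ\cL_{\xi,\lambda}^{-1}(\cS_{\xi,\lambda}+\cN_{\xi,\lambda}+\cR_{\xi,\lambda})$, which forces you to justify $C^1$-dependence of $\cL_{\xi,\lambda}^{-1}$ on $\xi$ (with its $\xi$-dependent domain $K_\xi^{\perp}$), a point you only sketch; the paper instead works with $\Phi(\xi,\bphi)=\bphi+\Pi_{\xi}^{\perp}\bigl(\bW_{\lambda}-i^*\circ F(\bW_{\lambda}+\Pi_{\xi}^{\perp}\bphi)\bigr)$ on the fixed space $\cH$, which involves only the fixed operator $i^*$, the Nemytskii map $F$ and the $C^1$ family $\Pi_\xi^{\perp}$, and proves invertibility of $\partial_\bphi\Phi$ quantitatively (inverse of norm $O(|\log\lambda|)$) by splitting $F'(\bW_\lambda+\bphi_{\xi,\lambda})$ into $\cL_{\xi,\lambda}$, $\cS_{\xi,\lambda}$ and a remainder; this sidesteps the technical wrinkle your version carries, though your version can also be completed along the lines you indicate.
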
 
	\begin{proof}
		Given that $\xi\in \Dc$, we define the linear operator 
		\[ \cT_{\xi,\lambda}(\bphi):=\Pi_{\xi}^{\perp}\circ \cL_{\xi,\lambda}^{-1}(\cS_{\xi,\lambda}(\bphi_{\lambda})+\cN_{\xi,\lambda}+\cR_{\xi,\lambda}) \]
		on $K_{\xi}^{\perp}$. 
		For any $\bphi\in K_{\xi}^{\perp}$, 
		by Lemma~\ref{lem:invertible}-\ref{lem:non_linear_term}, there exist constants $s_0>1$, $C_0, C, c_2>0$ such that 
		\begin{eqnarray*}
			\|	\cT_{\xi,\lambda}(\bphi)\|&\leq& C |\log \lambda|\| \cS_{\xi,\lambda}(\bphi)+\cN_{\xi,\lambda}(\bphi)+\cR_{\xi,\lambda}\|_p\\
			&\leq & C_0 |\log\lambda| \left( \lambda^{\frac{2-pr}{2pr}}\|\bphi\|+ \lambda^{\frac{1-pr}{pr}} e^{c_2\|\bphi\|^2 } \|\bphi\|^{2} + \lambda^{\frac{2-p}{2p}}\right),
		\end{eqnarray*}
		for any $p,r\in(1,2)$ with $pr\in (1,s_0)$.
		We take $r=\frac 5 4, R=3C_0$. Then for arbitrary fixed $p\in (1,  \frac 3 2 )$, there exists $\lambda_0>0$ such that for any $\lambda\in (0,\lambda_1)$ we have 
		\[ \max\{3C_0 (\sqrt{c_2}+1)\lambda
		^{\frac{2-p}{2p}}|\log\lambda|, 3C_0e^{2}\lambda^{ \frac {1-pr}{pr}+ \frac{2-p}{2p}}|\log\lambda|\}\leq 1. \]
		Thus, for any $\bphi\in \{ \bphi\in K_{\xi}^{\perp}: \|\bphi\|\leq R\lambda^{\frac{2-p}{2p}}|\log\lambda|\}$,
		\[\|\cT_{\xi,\lambda}(\bphi)\|\leq R\lambda^{\frac{2-p}{2p}}|\log\lambda|. \]
		For any $\bphi^0,\bphi^1\in \{ \bphi\in K_{\xi}^{\perp}: \|\bphi\|\leq R\lambda^{\frac{2-p}{2p}}|\log\lambda|\}$, Lemma~\ref{lem:esti_S} and Lemma~\ref{lem:non_linear_term} yield that 
		\begin{eqnarray*}
			\|\cT_{\xi,\lambda}(\bphi^1)-\cT_{\xi,\lambda}(\bphi^0)\|&\leq& C |\log \lambda|\| \cS_{\xi,\lambda}(\bphi^1-\bphi^0)+\cN_{\xi,\lambda}(\bphi^1)-\cN_{\xi,\lambda}(\bphi^0)\|_p\\
			&\leq&C_1|\log\lambda|\left( \lambda^{\frac{2-pr}{2pr}}\|\bphi^1-\bphi^0\|  + R\lambda^{\frac{1-pr}{pr}+\frac{2-pr}{2pr}}|\log\lambda|\|\bphi^1 - \bphi^0\|\right) 
		\end{eqnarray*}
		Since $ \frac{1-pr}{pr}+\frac{2-pr}{2pr}>0$, there exists $\lambda_2<\lambda_1$ such that for any $\lambda\in(0,\lambda_2)$, 
		\[ \max\{ C_1 |\log\lambda| \lambda^{\frac{2-pr}{2pr}}, C_1R\lambda^{\frac{1-pr}{pr}+\frac{2-pr}{2pr}}|\log\lambda| \}\leq \frac 1 4. \]
		We choose $\lambda_0=\lambda_2$. Consequently, we obtain that $\cT_{\xi,\lambda}$ is a contract mapping on $ \{ \bphi\in K_{\xi}^{\perp}: \|\bphi\|\leq R\lambda^{\frac{2-p}{2p}}|\log\lambda|\}$ satisfying that 
		\[\|\cT_{\xi,\lambda}(\bphi^1)-\cT_{\xi,\lambda}(\bphi^0)\|\leq \frac 1 2\|\bphi^1-\bphi^0\|. \]
		The Banach fixed-point theorem deduces that there exists a unique $\bphi_{\xi,\lambda}\in\{ \bphi\in K_{\xi}^{\perp}: \|\bphi\|\leq R\lambda^{\frac{2-p}{2p}}|\log\lambda|\} $ that solves the problem~\eqref{eq:toda_inf_dim}. 
		Let $F(\bu)$ be defined by~\eqref{eq:def_F}.
		We define a function $\Phi: \Dc\times \cH\rightarrow \cH,$ $$(\xi,\bphi)\mapsto \bphi+\Pi_{\xi}^{\perp}\left(W_{\lambda} -i^*\circ F(W_{\lambda}+\Pi_{\xi}^{\perp}(\bphi) \right).$$ 
		We observe that 
		$\Phi(\xi,\bphi_{\xi,\lambda})=0$ and  for any $\psi\in\cH$
		\[ \frac{\partial \Phi}{\partial\bphi}(\xi,\bphi_{\xi,\lambda})(\psi)
		=\psi- \Pi_{\xi}^{\perp}\circ i^*(F^{\prime}(\bW_{\lambda}+\bphi_{\xi,\lambda})(\Pi_{\xi}^{\perp}\psi)).\]
		\begin{claim}
			$ \frac{\partial F}{\partial\bphi}(\xi,\bphi_{\xi,\lambda})$ is non-degenerate.
		\end{claim}
		{\it Indeed, }
		\begin{eqnarray*}
			\frac{\partial \Phi}{\partial\bphi}(\xi,\bphi_{\xi,\lambda})(\psi)&=&\Pi_{\xi}\psi-\Pi_{\xi}^{\perp}\circ i^*\circ (-\cL_{\xi,\lambda}+\cS_{\xi,\lambda})(\Pi_{\xi}^{\perp}\psi)\\
			&&- \Pi_{\xi}^{\perp}\circ i^*\left( (F^{\prime}(\bW_{\lambda}+\bphi_{\xi,\lambda})-F^{\prime}(\bW_{\lambda})(\Pi_{\xi}^{\perp}\psi)\right).
		\end{eqnarray*}
		By the mean value theorem, there exists $\theta\in (0,1)$ such that 
		\begin{eqnarray*}
			\|F^{\prime}(\bW_{\lambda}+\bphi_{\xi,\lambda})-F^{\prime}(\bW_{\lambda})\Pi_{\xi}^{\perp}\psi \|_{p}&=& \|F^{\prime}(\bW_{\lambda}+\theta\bphi_{\xi,\lambda})\Pi_{\xi}^{\perp}\psi\|_p \\
			&\leq &C \lambda^{\frac{1-pr}{pr}}\|\bphi_{\xi,\lambda}\| \|\Pi_{\xi}^{\perp}\psi\|
		\end{eqnarray*}
		Then Lemma~\ref{lem:invertible} and Lemma~\ref{lem:esti_S} imply that for some constant $c>0$
		\begin{eqnarray*}
			&&\left\|	\frac{\partial \Phi}{\partial\bphi}(\xi,\bphi_{\xi,\lambda})(\psi)\right\|\\
			&\geq& \|\Pi_{\xi}^{\perp}\psi\|+c\|\cL_{\xi,\lambda}\|\|\Pi_{\xi}^{\perp}\psi\| - \|\cS_{\xi,\lambda}\|\|\Pi_{\xi}^{\perp}\psi\|- \|(F^{\prime}(\bW_{\lambda}+\bphi_{\xi,\lambda})-F^{\prime}(\bW_{\lambda}))\Pi_{\xi}^{\perp}\psi\|_p\\
			&\geq&\|\Pi_{\xi}^{\perp}\psi\|+\frac{c}{|\log\lambda|}\|\Pi_{\xi}^{\perp}\psi\| - \mathcal{O}(\lambda^{\frac{2-pr}{2pr}}\|\Pi_{\xi}^{\perp}\psi\|)+\mathcal{O}(\lambda^{\frac{1-pr}{pr}} \|\bphi_{\xi,\lambda}\|\|\Pi_{\xi}^{\perp}\psi\|)\\
			&\geq& \frac{c}{|\log\lambda|}\|\psi\|,
		\end{eqnarray*}
		for $p, r>1$ sufficiently close to $1$. 
		So, we obtain that $\frac{\partial \Phi}{\partial\bphi}(\xi,\bphi_{\xi,\lambda})$ is invertible with $$\left\| \left(\frac{\partial \Phi}{\partial\bphi}(\xi,\bphi_{\xi,\lambda})\right)^{-1}\right\|\leq \frac 1 c |\log \lambda|.$$
		By the implicit function theorem, we have 
		$\xi\mapsto \bphi_{\xi,\lambda}$ is $C^1$-differentiable. 
	\end{proof}
	\section{ The reduced functional and its expansion}
	We calculate the energy functional for approximation solutions $\bW_{\lambda}$. 
	We define the energy functional corresponding to~\eqref{eq:toda_equi} as follows 
	\[ E_{\lambda}(\bu)=\int_{\Sigma} Q(u,u)\, dv_g - \lambda\int_{\Sigma} V_1 e^{u_1}\, dv_g- \rho_2\log\left( \int_{\Sigma} V_2e^{u_2 }\, dv_g \right),\quad \bu=(u_1,u_2)\in\cH.\]
	\begin{lem}\label{lem:key_energy_app}
		Given $m\geq k\geq 0$, we assume that  that~\eqref{eq:def_delta_i} and \eqref{eq:def_d_ij} are valid, there exists $\lambda_0>0$ such that  we have the expansion holds for $\lambda\in(0,\lambda_0)$
		\begin{equation}\label{expansion_JW}
			\begin{array}{lcl}
				E_{\lambda}(\bW_{\lambda})= \Lambda_{k,m}(\xi)
				-6\pi(k+m)+ 2\pi(k+m)\log 8-2\pi(k+m)\log\lambda +o(1)
			\end{array}
		\end{equation}
		and 
		\begin{equation}\label{expansion_pa_JW}
			\begin{array}{lcl}
				\partial_{\xi}	E_{\lambda}(\bW_{\lambda})
				&=&\partial_{\xi}\Lambda_{k,m}(\xi)+o(1),
			\end{array}
		\end{equation}
		which are convergent  in  $C(\Xi_{k,m})$ uniformly for any $\xi$ in a compact subset of $\Xi_{k,m}$. 
	\end{lem}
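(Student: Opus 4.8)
The plan is to split $E_\lambda(\bW_\lambda)$ into its quadratic part $\int_\Sigma Q(\bW_\lambda,\bW_\lambda)\,dv_g$, its exponential part $-\lambda\int_\Sigma V_1e^{W_{1,\lambda}}\,dv_g$ and its logarithmic part $-\rho_2\log\int_\Sigma V_2e^{W_{2,\lambda}}\,dv_g$, and to evaluate each asymptotically. Writing $S:=\sum_{j=1}^m PU_j$, $z:=z(\cdot,\xi)$ and $\mu_j:=\chi_j e^{-\varphi_j}e^{U_j}$, a direct expansion of $Q$ (using that with $W_{1,\lambda}=S-\tfrac12 z$, $W_{2,\lambda}=z-\tfrac12 S$ the Cartan-type algebra collapses) gives the identity $\int_\Sigma Q(\bW_\lambda,\bW_\lambda)\,dv_g=\tfrac14\int_\Sigma\bigl(|\nabla S|_g^2+|\nabla z|_g^2-\langle\nabla S,\nabla z\rangle_g\bigr)\,dv_g$. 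I would then integrate by parts — no boundary terms occur since $\partial_{\nu_g}PU_j=\partial_{\nu_g}z=0$ — and substitute $-\Delta_g PU_j=\mu_j-\overline{\mu_j}$ from \eqref{eq:proj} and $-\Delta_g z=2\rho_2\bigl(\tilde V_2(\cdot,\xi)e^{z}/\!\int_\Sigma\tilde V_2(\cdot,\xi)e^{z}-1\bigr)$ from \eqref{eq:singular_mf}, together with $\int_\Sigma PU_j\,dv_g=\int_\Sigma z\,dv_g=0$, to reduce the quadratic part to $\tfrac14\bigl(\sum_{j,l}\int_\Sigma PU_l\,\mu_j\,dv_g-\sum_j\int_\Sigma z\,\mu_j\,dv_g+2\rho_2\!\int_\Sigma z\,\tilde V_2(\cdot,\xi)e^{z}\,dv_g\big/\!\int_\Sigma\tilde V_2(\cdot,\xi)e^{z}\,dv_g\bigr)$, the last term being $\tfrac12 I_\xi(z(\cdot,\xi))+\rho_2\log\!\int_\Sigma\tilde V_2(\cdot,\xi)e^{z}\,dv_g$ because $\int_\Sigma|\nabla z|_g^2\,dv_g=2\rho_2\!\int_\Sigma z\tilde V_2(\cdot,\xi)e^{z}/\!\int_\Sigma\tilde V_2(\cdot,\xi)e^{z}$.

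The main work is evaluating the remaining integrals with Lemma~\ref{lem:extension_PU}. Since $\mu_j$ concentrates at $\xi_j$ with $\int_\Sigma\mu_j\,dv_g=\varrho(\xi_j)+o(1)$, for $l\neq j$ one gets $\int_\Sigma PU_l\mu_j\,dv_g=\varrho(\xi_j)\varrho(\xi_l)G^g(\xi_j,\xi_l)+o(1)$ (using $H^g(\cdot,\xi_l)=G^g(\cdot,\xi_l)$ near $\xi_j$), while for $l=j$ the near-diagonal expansion $PU_j=\chi_j\log(\delta_j^2+|y_{\xi_j}|^2)^{-2}+\varrho(\xi_j)H^g(\cdot,\xi_j)+\text{small}$, the rescaling $y=\delta_j y'$ and the elementary Liouville integrals $\int_{\cR_{\xi_j}}8(1+|y'|^2)^{-2}\,dy'=\varrho(\xi_j)$ and $\int_{\cR_{\xi_j}}8\log(1+|y'|^2)(1+|y'|^2)^{-2}\,dy'=\varrho(\xi_j)$ give $\int_\Sigma PU_j\mu_j\,dv_g=-4\varrho(\xi_j)\log\delta_j-2\varrho(\xi_j)+\varrho^2(\xi_j)R^g(\xi_j)+o(1)$; similarly $\int_\Sigma z\,\mu_j\,dv_g=\varrho(\xi_j)z(\xi_j,\xi)+o(1)$ by continuity. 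Next, using \eqref{eq:def_theta}--\eqref{eq:est_theta} and the choice \eqref{eq:def_d_ij} of $d_j$ I would show $\lambda\int_\Sigma V_1e^{W_{1,\lambda}}\,dv_g=\sum_j\tfrac12\varrho(\xi_j)+o(1)=2\pi(k+m)+o(1)$, and from \eqref{eq:expansion_W2}--\eqref{eq:expansion_eW2} that $V_2e^{W_{2,\lambda}}=\tilde V_2(\cdot,\xi)e^{z}+\mathcal O(\lambda|\log\lambda|)$, hence $\rho_2\log\int_\Sigma V_2e^{W_{2,\lambda}}\,dv_g=\rho_2\log\int_\Sigma\tilde V_2(\cdot,\xi)e^{z}\,dv_g+o(1)$. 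Collecting the three pieces, the $\rho_2\log\int_\Sigma\tilde V_2(\cdot,\xi)e^{z}$ terms cancel and leave $\tfrac12 I_\xi(z(\cdot,\xi))$; the terms $-4\varrho(\xi_j)\log\delta_j=-4\varrho(\xi_j)\log d_j-2\varrho(\xi_j)\log\lambda$ produce the $-2\pi(k+m)\log\lambda$ coefficient (since $\sum_j\varrho(\xi_j)=4\pi(k+m)$); and inserting the explicit value of $\log d_j$ from \eqref{eq:def_d_ij} converts the $-\sum_j\varrho(\xi_j)\log d_j$ contribution, together with $\tfrac14\sum_j\varrho^2(\xi_j)R^g(\xi_j)$ and $\tfrac14\sum_{j\neq l}\varrho(\xi_j)\varrho(\xi_l)G^g(\xi_j,\xi_l)$, into $-\tfrac14\cF_{k,m}(\xi)$, while the $z(\xi_j,\xi)$-contributions cancel against each other and the purely numerical remainder is $-6\pi(k+m)+2\pi(k+m)\log8$. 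This last assembly is the delicate point: every constant — the powers of $8$ and $2$ from the Liouville profile, the factor in front of $\log\lambda$, the $-2\log2$ in \eqref{eq:est_theta} — must be tracked exactly, and the identity closes precisely because $d_j$ is chosen to annihilate $\Theta_j$; keeping the error terms $\mathcal O(\lambda|\log\lambda|)$, $\mathcal O(\delta_j^2|\log\delta_j|)$ and $\mathcal O(\lambda^{1/2}|\log\lambda|)$ uniform for $\xi$ in a compact subset of $\Xi_{k,m}$ is routine but must be done with care.

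For the gradient expansion \eqref{expansion_pa_JW} I would differentiate the identities above in $\xi$ before passing to the limit $\lambda\to0$. All factors in the reduced expressions depend on $\xi$ through smooth objects — $H^g(\cdot,\xi_j)$, $G^g(\cdot,\xi_l)$, $V_1$, the isothermal charts, the cut-offs $\chi_j$ — and through the $C^1$ maps $\xi\mapsto z(\cdot,\xi)\in C^{2,\alpha}_0(\Sigma)$ (Lemma~\ref{lem:C_1_z_x_xi}) and $\xi\mapsto d_j$ (from \eqref{eq:def_d_ij}), so $\partial_\xi$ applied to any such factor only introduces an $O(1)$ error that remains $o(1)$ after the same concentration estimates. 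The only genuinely dangerous contributions are those in which $\partial_\xi$ falls on a bubble $U_j$ or on the center $\xi_j$ inside $y_{\xi_j}$, which naively produces a factor $\delta_j^{-1}=O(\lambda^{-1/2})$; I would control these exactly as in the proofs of Lemma~\ref{lem:invertible} and Theorem~\ref{thm2}, namely by integrating by parts so that the $\xi$-derivative lands on a smooth factor and then using the parity of the kernel functions $z^0,z^i$ — equivalently the orthogonality built into the projected functions $PZ^i_j$ — to see that the surviving contraction against $8(1+|y'|^2)^{-2}$ vanishes to leading order. Finally $\partial_\xi\bigl(\tfrac12 I_\xi(z(\cdot,\xi))\bigr)$ is identified with $\tfrac14\varrho(\xi_j)\,\partial_{x_i}z(x,\xi)|_{x=\xi_j}$ through Lemma~\ref{lem:pa_I_xi}, and $\partial_\xi\cF_{k,m}$ with the $\xi$-derivative of \eqref{eq:F_km}, so that $\partial_\xi E_\lambda(\bW_\lambda)=\partial_\xi\Lambda_{k,m}(\xi)+o(1)$ in $C(\Xi_{k,m})$, uniformly on compacta. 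The two places I expect real bookkeeping effort are the exact matching of the logarithmic and numerical constants in \eqref{expansion_JW} and the uniform control of the differentiated concentration integrals in \eqref{expansion_pa_JW}.
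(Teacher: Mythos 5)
Your plan for \eqref{expansion_JW} is essentially the paper's own argument: the same reduction of $\int_\Sigma Q(\bW_\lambda,\bW_\lambda)\,dv_g$ to $\tfrac14\int_\Sigma\bigl(|\nabla S|_g^2+|\nabla z|_g^2-\langle\nabla S,\nabla z\rangle_g\bigr)dv_g$, the same evaluation of diagonal, off-diagonal and cross terms by testing \eqref{eq:proj} against $PU_{j'}$ and $z$ (your integration by parts is the same step), the same use of Lemma~\ref{lem:extension_PU}, of the concentration of $\chi_je^{-\varphi_j}e^{U_j}$, and of the choice \eqref{eq:def_d_ij} so that the $z(\xi_j,\xi)$-contributions cancel and the remainder reorganizes into $\tfrac12 I_\xi(z(\cdot,\xi))-\tfrac14\cF_{k,m}(\xi)$.

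Two caveats. First, on the constants: the ingredients you list (the $-2\varrho(\xi_j)$ from the diagonal bubble integral, $\lambda\int_\Sigma V_1e^{W_{1,\lambda}}dv_g=2\pi(k+m)+o(1)$, and $d_j^2=\tfrac18e^{\tau_j(\xi_j)}$) assemble to the numerical remainder $-4\pi(k+m)+2\pi(k+m)\log 8$, not the $-6\pi(k+m)+2\pi(k+m)\log 8$ you assert; this $-4\pi(k+m)$ is in fact what the paper's own proof displays in its last line, so the statement's $-6\pi(k+m)$ looks like a slip there, and the additive constant is immaterial later, but you should not claim the assembly ``closes precisely'' on the stated constant without redoing the bookkeeping. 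Second, for \eqref{expansion_pa_JW} your plan is noticeably sketchier than the paper's proof: the paper does not differentiate the expansion but computes $\partial_{(\xi_j)_i}E_\lambda(\bW_\lambda)$ directly, writes $\partial_{(\xi_j)_i}W_{1,\lambda}=PZ^i_j+\mathcal O(1)$ and $\partial_{(\xi_j)_i}W_{2,\lambda}=-\tfrac12PZ^i_j+\mathcal O(1)$ (Lemma~\ref{lem:diff_PZ_partial_PU}), and the whole content is the concentrated integral $\lambda\int_\Sigma V_1e^{W_{1,\lambda}}\chi_jZ^i_j\,dv_g=\tfrac14\partial_{(\xi_j)_i}\cF_{k,m}(\xi)-\tfrac14\varrho(\xi_j)\partial_{(x_j)_i}z(x,\xi)|_{x=\xi_j}+o(1)$: parity of $y_i$ kills only the zeroth-order term, and it is the first-order Taylor term of $\tau_j$ (made explicit by the choice of $d_j$) that produces the Kirchhoff--Routh derivative, which then combines with the identity of Lemma~\ref{lem:pa_I_xi} to yield $\partial_\xi\Lambda_{k,m}$. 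Your sketch invokes the right mechanisms (parity, Lemma~\ref{lem:pa_I_xi}, $C^1$-dependence of $z(\cdot,\xi)$ and $d_j$), but it omits this computation, which is where the gradient expansion is actually won; as written, that part is a plan rather than a proof.
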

	\begin{proof}
		Let $\Dc$ be a compact subset of $\Xi_{k,m}$. Then there exists $\varepsilon_0>0$ such that $\Dc\subset\Xi_{k,m}^{\varepsilon_0}$. We consider $\xi\in \Xi_{k,m}^{\varepsilon_0}$. 
		\begin{eqnarray*}
			Q(\bW_{\lambda},\bW_{\lambda})= \frac 1 4 \int_{\Sigma}\left( |\nabla z(\cdot,\xi)|_g^2+\left|\nabla \left(\sum_{j=1}^m PU_j\right)\right|_g^2-\sum_{j=1}^m \lan \nabla PU_j,\nabla z(\cdot,\xi)\ran_g \right)\, dv_g 
		\end{eqnarray*}
		The estimate	\eqref{eq:expansion_eW2} yields that 
		\begin{eqnarray*}
			&&\frac 1 4 \int_{\Sigma}|\nabla z(\cdot,\xi)|^2_g - \rho_2\log\int_{\Sigma} V_2 e^{W_{\lambda, 2}}  \, dv_g \\
			&=&\frac 1 2\underbrace{\left(\frac 1 2 \int_{\Sigma}|\nabla z(\cdot,\xi)|^2_g -2\rho_2 \log\int_{\Sigma}\tilde{V}_2(\cdot,\xi) e^{z(\cdot,\xi)}\, dv_g\right)}_{=I_{\xi}(z(\cdot,\xi))}+ \mathcal{O}(\lambda)\\
			&=& \frac 1 2 I_{\xi}(z(\cdot,\xi))+\mathcal{O}(\lambda),
		\end{eqnarray*}
		as $\lambda\rightarrow 0.$
		By~Lemma~\ref{lem:extension_PU} and Lemma~\ref{lem:diif_e^W_sum_e^u}, we deduce that for $p\in (1,2)$
		\begin{eqnarray}\label{eq:mass_1st_component}
			&&\lambda \int_{\Sigma} V_1 e^{W_{1,\lambda}}\, dv_g\\
			&= & \mathcal{O}\left( \left| \int_{\Sigma} 2V_1 e^{W_{1,\lambda}}-\sum_{j=1}^m \chi_j e^{-\varphi_j} e^{U_j}\, dv_g\right|\right)   + \frac 1 2 \sum_{j=1}^m\int_{\Sigma} \chi_j e^{-\varphi_j} e^{U_j}\, dv_g \nonumber\\
			&=& \mathcal{O}\left(\|2V_1 e^{W_{1,\lambda}}-\sum_{j=1}^m \chi_j e^{-\varphi_j} e^{U_j}\|_{L^p(\Sigma)}|\Sigma|^{1-\frac 1p}\right)+ \frac 1 2 \sum_{j=1}^m \varrho(\xi_j) + \mathcal{O}(\lambda)\nonumber\\
			&=&\frac 1 2  \sum_{j=1}^m \varrho(\xi_j)+\mathcal{O}\left(\lambda^{\frac{2-p}{2p}}+ \lambda\right),\nonumber
		\end{eqnarray}
		where  we applied that $\int_{|y|<r}\frac{ 8}{(1+|y|^2)^2}\, dy= 8\pi - \frac{8\pi}{1+r^2}$. 
		Using $z(\cdot,\xi)$ as a test function for the equation~\eqref{eq:proj},
		\begin{eqnarray*}
			\frac 1 4 \int_{\Sigma} \lan \nabla PU_j, \nabla z(\cdot,\xi)\ran_g \, dv_g &=& \frac 1 4 \int_{\Sigma} \chi_j e^{-\varphi_j} e^{U_j} z(\cdot,\xi)\, dv_g \\
			&=& \frac 1 4 \int_{\Omega_j} \chi(\delta_j|y|/r_0) \frac{ 8}{(1+|y|^2)^2} z(y_{\xi_j}^{-1}(\delta_j y),\xi)\, dy \\
			&=&\frac 1 4 \varrho(\xi_j)z(\xi_j,\xi)+ \mathcal{O}(\lambda^{\frac 1 2}).
		\end{eqnarray*}
		For any $j,j'=1,\ldots,m,$,  using $PU_{j'}$ as a test function for the equation~\eqref{eq:proj}, Lemma~\ref{lem:extension_PU} with~\eqref{eq:def_d_ij} implies that 
		\begin{eqnarray*}
			&&\frac 1 4 	\int_{\Sigma}\lan PU_{j'}, PU_j\ran_g\, dv_g =\frac1 4\int_{\Sigma} \chi_je^{-\varphi_j} e^{U_j} PU_{j'}\, dv_g \\
			&=&\frac 1 4  \int_{\Sigma}\chi_je^{-\varphi_j} e^{U_j}  \left(  \chi_{j'} ( U_{j'}-\log( 8 \delta_{j'}^2))+\varrho(\xi_{j'}) H^g(\cdot,\xi_{j'})+ \mathcal{O}(\delta_{j'}^2|\log \delta_{j'}|)\right)\\
			&=&	\left\{\begin{array}{ll}
				-\frac 1 2 \int_{\Omega_j} \chi(\delta_j |y|/r_0)\frac{ 8(2\log\delta_j+\log(1+|y|^2) )}{(1+|y|^2)^2}\, dy+\frac 1 4\varrho^2(\xi_j)H^g(\xi_j,\xi_j)+\mathcal{O}(\lambda^{\frac 1 2})& j'=j\\
				\frac 1 4\varrho(\xi_j) \varrho(\xi_{j'})G^g(\xi_j,\xi_{j'}) +\mathcal{O}(\lambda^{\frac 1 2})	& j'\neq j 
			\end{array}\right.\\
			&=& 	\left\{\begin{array}{ll}
				-\frac 1 2\varrho(\xi_j)\log \delta_j^2-\frac 1 2\varrho(\xi_j)+\frac{1}{4} \varrho^2(\xi_j)H^g(\xi_j,\xi_j)+ \mathcal{O}(\lambda^{\frac 1 2}) & j'=j\\
				\frac 1  4 \varrho(\xi_j)\varrho(\xi_{j'})G^g(\xi_j,\xi_{j'}) & j'\neq j
			\end{array}\right.,
		\end{eqnarray*}
		where we applied that $\int_{|y|\leq r} \frac{8\log(1+|y|^2)}{(1+|y|^2)^2}= 8\pi+8\pi\frac{\log(1+r^2)+1}{1+r^2} $ and $ \int_{|y|<r}\frac{ 8}{(1+|y|^2)^2}\, dy= 8\pi - \frac{8\pi}{1+r^2}.$
		Combining all the estimates above, we conclude that 
		\begin{eqnarray*}
			&&E_{\lambda}(\bW_{\lambda})\\
			&=& \frac 1 2 I_{\xi}(z(\cdot,\xi))-\frac 1 4 \mathcal{F}_{k,m}(\xi)
			-4\pi(k+m)+ 2\pi(k+m)\log 8-2\pi(k+m)\log\lambda +o(1),
		\end{eqnarray*}
		as $\lambda\rightarrow 0.$																					
		For any $j=1,\ldots,m$ and $i=1,\ldots,\i(\xi_j)$, 
		\begin{eqnarray*}
			\partial_{(\xi_j)_i} E_{\lambda}(\bW_{\lambda})
			&=& \int_{\Sigma}\left(-\frac 2 3 \Delta_g W_{1,\lambda}-\frac 1 3 \Delta_g W_{2,\lambda}-\frac 1 2 f_1(W_{1,\lambda})\right) \partial_{(\xi_j)_i} W_{1,\lambda}\, dv_g \\
			&+& \int_{\Sigma}\left(-\frac 2 3 \Delta_g W_{2,\lambda}-\frac 1 3 \Delta_g W_{1,\lambda}-\frac 1 2 f_2(W_{2,\lambda})\right) \partial_{(\xi_j)_i} W_{2,\lambda}\, dv_g\\
			&=& \frac 1 2 \int_{\Sigma} \left( \sum_{l=1}^{m}\chi_l e^{-\varphi_l} e^{U_l
			} - f_1(W_{1,\lambda
			})\right) \partial_{(\xi_j)_i} W_{1,\lambda}\, dv_g \\
			&&+ \frac 1 2 \int_{\Sigma} \left( \frac{2\rho_2 \tilde{V}_2(\cdot,\xi) e^{z(\cdot,\xi)}}{\int_{\Sigma} \tilde{V}_2(\cdot,\xi) e^{z(\cdot,\xi)}\, dv_g }- f_2(W_{2,\lambda})\right)\partial_{(\xi_j)_i} W_{2,\lambda}\, dv_g,
		\end{eqnarray*}
		in view of $\int_{\Sigma} W_{i,\lambda}\, dv_g=0$ for $i=1,2.$
		By Lemma~\ref{lem4} and Lemma~\ref{lem:diff_PZ_partial_PU}, we have 
		$\|\partial_{(\xi_j)_i} W_{2,\lambda}\|=\mathcal{O}(\lambda^{-\frac 1 2})$. 
		Using Lemma~\ref{eq:expansion_eW2}, we  deduce that 
		\begin{eqnarray*}
			\left|\int_{\Sigma} \left( \frac{2\rho_2 \tilde{V}_2(\cdot,\xi) e^{z(\cdot,\xi)}}{\int_{\Sigma} \tilde{V}_2(\cdot,\xi) e^{z(\cdot,\xi)}\, dv_g }- f_2(W_{2,\lambda})\right)\partial_{(\xi_j)_i} W_{2,\lambda}\, dv_g\right|&\leq&\mathcal{O}\left(\lambda \|\partial_{(\xi_j)_i} W_{2,\lambda}\|_{L^1(\Sigma)}\right) \\
			&\leq& \mathcal{O}\left( \lambda^{\frac 1 2}\right). 
		\end{eqnarray*}
		Lemma~\ref{lem:extension_PZ}-\ref{lem:diff_PZ_partial_PU} and Lemma~\ref{lem:diif_e^W_sum_e^u} yield that for any $i=1,\ldots,\i(\xi_j)$, and for any $p\in (1,2)$
		\begin{eqnarray*}
			&&\frac 1 2 \int_{\Sigma} \left( \sum_{l=1}^{m}\chi_l e^{-\varphi_l} e^{U_l
			} - f_1(W_{1,\lambda
			})\right) \partial_{(\xi_j)_i} W_{1,\lambda}\, dv_g\\
			&=& \frac 1 2\sum_{l=1}^m \int_{\Sigma} \chi_le^{-\varphi_l} e^{U_l} Z^i_j \, dv_g -\lambda\int_{\Sigma} V_1 e^{W_{1,\lambda}} Z^i_j\, dv_g \\
			&&+  
			\mathcal{O}\left(\left\| \sum_{l=1}^{m}\chi_l e^{-\varphi_l} e^{U_l
			} - f_1(W_{1,\lambda
			})\right\| _{L^p(\Sigma)} |\Sigma|^{1-\frac 1 p}\right)\\
			&=& \frac 1 2\sum_{l=1}^m \int_{\Sigma} \chi_le^{-\varphi_l} e^{U_l} \chi_j Z^i_j \, dv_g -\lambda\int_{\Sigma} V_1 e^{W_{1,\lambda}}\chi_j Z^i_j\, dv_g\\
			&&+o(1),
		\end{eqnarray*}
		as $\lambda\rightarrow 0$. 
		By a direct calculation, we have for $i=1,\ldots,\i(\xi_j)$
		\begin{eqnarray*}
			\int_{\Sigma} \chi_l e^{-\varphi_l} e^{U_l} \chi_j Z^i_j \, dv_g 
			&=& 	\left\{\begin{array}{ll}
				\frac 1 {\delta_j}	\int_{\Omega_j}\chi^2\left(\frac{|y|}{r_0}\right)\frac{32 y_i }{(1+|y|^2)^3} \, dy  &\text{ for } l=j  \\
				0& \text{ for } l\neq j 
			\end{array}\right.=0,
		\end{eqnarray*}
		where the last equality applied the symmetric property of $ \Omega_j$. 
		It is sufficient to calculate the integral $\lambda\int_{\Sigma} V_1 e^{W_{1,\lambda}}\chi_j Z^i_j\, dv_g. $
		Let $\tau_j(x)= \varrho(\xi_j)H^g(x,\xi_j) + \sum_{l\neq j}\varrho(\xi_l)G^g(x,\xi_l)
		-\frac 1 2 z(x,\xi) +\log V_1(x).$
		Recall that $d_j^2 = \frac 1 4 e^{\tau_j(\xi_j)}$. 
		Using Lemma~\ref{lem:extension_PU} with~\eqref{eq:def_d_ij}, we can derive that $\lambda\rightarrow 0$
		\begin{eqnarray*}
			&&\lambda\int_{\Sigma} V_1 e^{W_{1,\lambda}}\chi_j Z^i_j\, dv_g=2\lambda \int_{\Sigma} \chi_j e^{\sum_{l=1}^m PU_l -\frac 1 2 z(\cdot,\xi) +\log V_1  }\frac{4(y_{\xi_j})_i}{\delta_j^2+|y_{\xi_j}|^2}\, dv_g \\
			&=&\lambda \int_{\Sigma} \chi_j e^{
				\chi_j(U_j-\log(8\delta_j^2))+ \varrho(\xi_j)H^g(\cdot,\xi_j) + \sum_{l\neq j}\varrho(\xi_l)G^g(\cdot,\xi_l)
				-\frac 1 2 z(\cdot,\xi) +\log V_1 +\mathcal{O}(\delta_j^2|\log\delta_j|) }\frac{4(y_{\xi_j})_i}{\delta_j^2+|y_{\xi_j}|^2}\, dv_g\\
			&=& \frac {\lambda} {\delta_j^3}\int_{\Omega_j}\chi\left(\frac{\delta_j|y|}{r_0}\right)\frac{ 4y_i}{(1+|y|^2)^3} e^{ \tau_j(\xi_j)
			}\left(1 + \sum_{s=1}^2 \delta_j\partial_{y_s} \tau_j\circ y_{\xi_j}^{-1}(0) y_s+\mathcal{O}( \delta^2_j|y|^2+\delta_j^2|\log\delta_j|)\right)\, dy\\
			&=& \frac{ e^{\tau_j(\xi_j)}\partial_{y_i}\tau_j\circ
				y_{\xi_j}^{-1}(0) \varrho(\xi_j)}{8d_j^2}+o(1)=\frac 1 4 \partial_{(\xi_j)_i} \mathcal{F}_{k,m}(\xi) -\frac 14\varrho(\xi_j) \partial_{(x_j)_i} z(x,\xi)|_{x=\xi_j}+o(1),
		\end{eqnarray*}
		where we applied the symmetric property of $\Omega_j$ and $\int_{\R^2} \frac{4y_i^2}{(1+|y|^2)^3}\, dy=\pi. $
		Since $z(\cdot,\xi)$ solves~\eqref{eq:singular_mf}, we have $I'_{\xi}(z(\cdot,\xi))=0.$
		By the representation's formula, we deduce that 
		\begin{eqnarray*}
			&&	\partial_{(\xi_j)_i} I_{\xi}(z(\cdot,\xi))\\
			&=& I'_{\xi}(z(\cdot,\xi)) \partial_{(\xi_j)_i}z(\cdot,\xi) -2\rho_2 \int_{\Sigma} \frac{ \tilde{V}_2(\cdot,\xi)e^{z(\cdot,\xi)}}{\int_{\Sigma} \tilde{V}_2(\cdot,\xi)e^{z(\cdot,\xi)}\, dv_g}
			\left(- \frac{\varrho(\xi_j)}{2} \partial_{(\xi_j)_i} G^g(\cdot,\xi_j)\right)\,dv_g \\
			&=&\rho_2 \varrho(\xi_j) \int_{\Sigma} \frac{ \tilde{V}_2(\cdot,\xi)e^{z(\cdot,\xi)}}{\int_{\Sigma} \tilde{V}_2(\cdot,\xi)e^{z(\cdot,\xi)}\, dv_g}
			\partial_{(\xi_j)_i} G^g(\cdot,\xi_j)\,dv_g\\
			&=&\frac 1 2 \varrho(\xi_j) \partial_{(x_j)_i} z(x,\xi)|_{x=\xi_j}.
		\end{eqnarray*}
		Hence, we obtain that as $\lambda\rightarrow 0$
		\[ \partial_{(\xi_j)_i}E_{\lambda}(\bW_{\lambda}) =\frac 12\partial_{(\xi_j)_i} I_{\xi}(z(\cdot,\xi))-\frac 1 4 \partial_{(\xi_j)_i} \mathcal{F}_{k,m}(\xi)+o(1)=\partial_{(\xi_j)_i}\Lambda_{k,m}(\xi)+o(1).  \]
	\end{proof}
	Next, we consider the reduced functional 
	$\tilde{E}_{\lambda}(\xi):= E_{\lambda}(\bW_{\lambda}+\bphi)$  for any $\xi\in \Xi_{k,m}, $  where $\bphi$ is a solution of  Theorem~\ref{thm2}. 
	\begin{thm}
		\label{thm:expansion_E_reduced}
		Given $m\geq k\geq 0$, we assume that  that~\eqref{eq:def_delta_i} and \eqref{eq:def_d_ij} are valid, there exists $\lambda_0>0$ such that  we have the expansion holds for $\lambda\in(0,\lambda_0)$
		\begin{equation}\label{expansion_E_re}
			\begin{array}{lcl}
				\tilde{E}_{\lambda}(\xi)= \Lambda_{k,m}(\xi)
				-6\pi(k+m)+ 2\pi(k+m)\log 8-2\pi(k+m)\log\lambda +o(1)
			\end{array}
		\end{equation}
		and 
		\begin{equation}\label{expansion_pa_E_re}
			\begin{array}{lcl}
				\partial_{\xi}\tilde{E}_{\lambda}(\xi)
				&=&\partial_{\xi}\Lambda_{k,m}(\xi)+o(1),
			\end{array}
		\end{equation}
		which are convergent  in  $C(\Xi_{k,m})$ uniformly for any $\xi$ in a compact subset of $\Xi_{k,m}$. 
	\end{thm}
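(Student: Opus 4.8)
The plan is to deduce Theorem~\ref{thm:expansion_E_reduced} from the expansion of the energy on the approximate solutions (Lemma~\ref{lem:key_energy_app}) together with the fine control of the error term $\bphi_{\xi,\lambda}$ obtained in Theorem~\ref{thm2}. The key point is that $\bphi_{\xi,\lambda}$ is small, namely $\|\bphi_{\xi,\lambda}\|\le R\lambda^{(2-p)/(2p)}|\log\lambda|=o(1)$ for $p$ close to $1$, so the difference $E_\lambda(\bW_\lambda+\bphi_{\xi,\lambda})-E_\lambda(\bW_\lambda)$ is controlled by a Taylor expansion of $E_\lambda$ at $\bW_\lambda$ in the direction $\bphi_{\xi,\lambda}$. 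Since $E_\lambda$ is smooth, this difference is bounded, up to a remainder, by $E'_\lambda(\bW_\lambda)[\bphi_{\xi,\lambda}]$ plus a quadratic term; both will be shown to be $o(1)$.

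The steps I would carry out are as follows. \textbf{Step 1.} Write $\tilde E_\lambda(\xi)=E_\lambda(\bW_\lambda)+\big(E_\lambda(\bW_\lambda+\bphi_{\xi,\lambda})-E_\lambda(\bW_\lambda)\big)$ and use Lemma~\ref{lem:key_energy_app} for the first term, which already produces the claimed leading expansion $\Lambda_{k,m}(\xi)-6\pi(k+m)+2\pi(k+m)\log 8-2\pi(k+m)\log\lambda+o(1)$. \textbf{Step 2.} By the mean value theorem there is $t\in(0,1)$ with $E_\lambda(\bW_\lambda+\bphi_{\xi,\lambda})-E_\lambda(\bW_\lambda)=E'_\lambda(\bW_\lambda+t\bphi_{\xi,\lambda})[\bphi_{\xi,\lambda}]$. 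Decompose this as $E'_\lambda(\bW_\lambda)[\bphi_{\xi,\lambda}]+\big(E'_\lambda(\bW_\lambda+t\bphi_{\xi,\lambda})-E'_\lambda(\bW_\lambda)\big)[\bphi_{\xi,\lambda}]$. For the first summand, note that $E'_\lambda(\bW_\lambda)[\bphi]=\langle i^*(F(\bW_\lambda))-\bW_\lambda,\bphi\rangle$ up to the quadratic-form pairing, so that $|E'_\lambda(\bW_\lambda)[\bphi_{\xi,\lambda}]|\le C\|\cR_{\xi,\lambda}\|_p\,\|\bphi_{\xi,\lambda}\|=O(\lambda^{(2-p)/(2p)})\cdot O(\lambda^{(2-p)/(2p)}|\log\lambda|)=o(1)$, using Lemma~\ref{lem:est_error}. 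For the second summand, the second derivative $E''_\lambda$ evaluated near $\bW_\lambda$ is bounded on $\cH$ by a constant times a negative power of $\lambda$ coming from the exponential nonlinearity (as in the estimates of Lemmas~\ref{lem:esti_S}–\ref{lem:non_linear_term}), hence this term is $O(\lambda^{\theta}\|\bphi_{\xi,\lambda}\|^2)$ for some exponent making it $o(1)$ once $p,r$ are taken close enough to $1$. Collecting the pieces gives \eqref{expansion_E_re}.

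\textbf{Step 3 (the $C^1$-expansion).} Differentiating the identity $\tilde E_\lambda(\xi)=E_\lambda(\bW_\lambda+\bphi_{\xi,\lambda})$ in $\xi$ and using that $\bphi_{\xi,\lambda}\in K_\xi^\perp$ solves \eqref{eq:toda_inf_dim}, one obtains $\partial_\xi\tilde E_\lambda(\xi)=\partial_\xi E_\lambda(\bW_\lambda)+\langle E'_\lambda(\bW_\lambda+\bphi_{\xi,\lambda}),\partial_\xi\bW_\lambda\rangle-\langle E'_\lambda(\bW_\lambda),\partial_\xi\bW_\lambda\rangle+\langle E'_\lambda(\bW_\lambda+\bphi_{\xi,\lambda}),\partial_\xi\bphi_{\xi,\lambda}\rangle$. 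The first term is $\partial_\xi\Lambda_{k,m}(\xi)+o(1)$ by \eqref{expansion_pa_JW}. The middle difference is estimated by $E''_\lambda$ times $\|\bphi_{\xi,\lambda}\|\cdot\|\partial_\xi\bW_\lambda\|=O(\lambda^{-1/2})\cdot o(1)$-type bounds, which must be shown to still be $o(1)$; this uses $\|\partial_{(\xi_j)_i}\bW_\lambda\|=O(\lambda^{-1/2})$ and the Hölder-type estimates already present. For the last term, since $\bphi_{\xi,\lambda}$ is an ``almost'' critical point (it only solves the problem modulo $K_\xi$), $E'_\lambda(\bW_\lambda+\bphi_{\xi,\lambda})$ restricted to $K_\xi^\perp$-directions vanishes, so $\langle E'_\lambda(\bW_\lambda+\bphi_{\xi,\lambda}),\partial_\xi\bphi_{\xi,\lambda}\rangle$ reduces to a pairing against $K_\xi$; this in turn is controlled using that the components of $\bphi_{\xi,\lambda}$ against $PZ^i_j$ are small, and that $\|\partial_\xi\bphi_{\xi,\lambda}\|$ is bounded by the $C^1$-dependence from Theorem~\ref{thm2}.

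The main obstacle I anticipate is Step 3: controlling the $\xi$-derivatives, since $\partial_\xi\bW_\lambda$ and $\partial_\xi\bphi_{\xi,\lambda}$ both blow up like $\lambda^{-1/2}$, so one must extract genuine cancellation rather than just multiply norm bounds. The resolution is the standard reduction-method observation that the ``bad'' $\lambda^{-1/2}$ directions are precisely the kernel directions $PZ^i_j$, against which the Lagrange-multiplier structure of \eqref{eq:toda_inf_dim} forces the relevant pairings to be $o(1)$; the remaining contributions involve only the error term $\cR_{\xi,\lambda}$ (small by Lemma~\ref{lem:est_error}) and the genuinely second-order terms (small by Lemmas~\ref{lem:esti_S}–\ref{lem:non_linear_term}). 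Everything else is a routine, if lengthy, bookkeeping of Hölder exponents, which I would not spell out in full here.
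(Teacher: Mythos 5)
Your Steps 1--2 are sound and essentially reproduce the paper's argument for \eqref{expansion_E_re}: the paper expands $E_\lambda(\bW_\lambda+\bphi)$ exactly (quadratic form plus exponential terms) instead of invoking the mean value theorem, but the estimates you list (residual paired with $\bphi$ via Lemma~\ref{lem:est_error}, second-order terms via the Moser--Trudinger/H\"older bounds of Lemmas~\ref{lem:esti_S}--\ref{lem:non_linear_term}) are exactly what is needed, so the $C^0$ part is fine.

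The gap is in Step 3, in the term $E'_\lambda(\bW_\lambda+\bphi_{\xi,\lambda})[\partial_\xi\bphi_{\xi,\lambda}]$. You correctly reduce it, via \eqref{eq:solution_inf_dim}, to $\sum_{i,j}c^{\lambda}_{ij}\langle PZ^i_j,\partial_\xi(\bphi_{\xi,\lambda})_1\rangle$, but the two controls you then invoke do not close the argument. First, ``the components of $\bphi_{\xi,\lambda}$ against $PZ^i_j$ are small'' (they are in fact exactly zero) says nothing about the multipliers $c^{\lambda}_{ij}$; bounding them is a separate, nontrivial step: one must project \eqref{eq:solution_inf_dim} onto each $PZ^i_j$, use the Gram-matrix asymptotics $\langle PZ^i_j,PZ^i_j\rangle\sim \lambda^{-1}$ from Lemma~\ref{lem4} and $O(1)$ bounds on the projections of the residual and nonlinear terms, to get $\sum_{i,j}|c^{\lambda}_{ij}|=O(\lambda)$. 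Second, Theorem~\ref{thm2} gives only $C^1$-dependence of $\xi\mapsto\bphi_{\xi,\lambda}$, not a bound on $\|\partial_\xi\bphi_{\xi,\lambda}\|$; the natural implicit-function estimate yields at best $\|\partial_\xi\bphi_{\xi,\lambda}\|=O(\lambda^{-1/2}|\log\lambda|)$, and then the naive product $|c^{\lambda}_{ij}|\,\|PZ^i_j\|\,\|\partial_\xi\bphi_{\xi,\lambda}\|=O(\lambda)\,O(\lambda^{-1/2})\,O(\lambda^{-1/2}|\log\lambda|)$ diverges. The device that actually works (and is what the paper uses) is to differentiate the orthogonality constraint: $\langle PZ^{i'}_{j'},\partial_\xi(\bphi_{\xi,\lambda})_1\rangle=-\langle\partial_\xi PZ^{i'}_{j'},(\bphi_{\xi,\lambda})_1\rangle=o(\lambda^{-1})$, which avoids any estimate on $\partial_\xi\bphi_{\xi,\lambda}$ altogether and combines with $c^{\lambda}_{ij}=O(\lambda)$ to give $o(1)$. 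Relatedly, in your ``middle difference'' the $\lambda^{-1/2}$-directions do not disappear because they are kernel directions in any abstract sense: for the $\phi_1$-pairing it is the exact orthogonality $\langle\phi_1,PZ^i_j\rangle=0$ (after swapping $2\lambda V_1e^{W_{1,\lambda}}$ for $\sum_j\chi_je^{-\varphi_j}e^{U_j}$ via Lemma~\ref{lem:diif_e^W_sum_e^u}), and for the $\phi_2$-pairing it is the structural identity $\tfrac23\partial_\xi W_{2,\lambda}+\tfrac13\partial_\xi W_{1,\lambda}=\tfrac12\partial_\xi z(\cdot,\xi)$, which is bounded because the $PU_j$-parts cancel; these specific cancellations, together with the multiplier bound and the differentiated-constraint identity, are the actual content of the proof and are missing from your outline.
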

	\begin{proof}
		\begin{eqnarray*}
			&&\tilde{E}_{\lambda}(\xi)= E_{\lambda}(\bW_{\lambda})+  \frac{1}{2} \left( \int_{\Sigma
			} \left( \sum_{l=1}^{m} \chi_l e^{-\varphi_l} e^{U_l} -2 \lambda V_1 e^{W_{1,\lambda}} \right) \phi_1 \, dv_g - \int_{\Sigma}\phi_2 \Delta_g z(\cdot, \xi) \, dv_g \right)\\
			&&	- \lambda \int_{\Sigma} V_1  e^{W_{1,\lambda}} \left( e^{\phi_1} -1 -  \phi_1 \right)\, dv_g - \rho_2 \log \left( \int_{\Sigma} V_2 e^{W_{2,\lambda}+\phi_2} \,dv_g  \right)+ \rho_2\log \left( \int_{\Sigma} V_2 e^{W_{2,\lambda}} \,dv_g  \right) .
		\end{eqnarray*}
		Recall that $\bphi=(\phi_1,\phi_2)\in K_{\xi}^{\perp}$ satisfies that for $p$ sufficiently close to $1$
		\[ \|\bphi\|\leq R \lambda^{ \frac{ 2-p}{2p}}|\log \lambda|. \]
		For  $p,r>1$ sufficiently close to $1$, the H\"{o}lder inequality implies that 
		\begin{eqnarray*}
			\left| \int_{\Sigma
			} \left( \sum_{l=1}^{m} \chi_l e^{-\varphi_l} e^{U_l} -2 \lambda V_1 e^{W_{1,\lambda}} \right) \phi_1 \, dv_g\right| &\leq& \left\|\sum_{l=1}^{m} \chi_l e^{-\varphi_l} e^{U_l} -2 \lambda V_1 e^{W_{1,\lambda}} \right\|_{L^p(\Sigma)}\|\phi_1\|_{L^{\frac{p}{p-1}}(\Sigma)}\\
			&\leq& \mathcal{O}(\lambda^{\frac{2-p}{2p}}\|\phi_1\|)=o(1),\quad\text{\small (by Lemma~\ref{lem:diif_e^W_sum_e^u})}\\
			\left| \int_{\Sigma} \Delta_g z(\cdot, \xi) \phi_2\, dv_g\right|&\leq& \|z(\cdot,\xi)\|\|\phi_2\| =o(1),\\
			\left|  \lambda \int_{\Sigma} V_1  e^{W_{1,\lambda}} \left( e^{\phi_1} -1 -  \phi_1 \right)\, dv_g\right| &\leq & \mathcal{O}\left( \lambda^{\frac{1-pr}{pr}} e^{\frac{ps}{8\pi}\|\phi_1\|^2} \|\phi_1\|^2 \right) =o(1).
			\quad\text{\small (by \eqref{eq:est_1st_component_nonlin_term})}
		\end{eqnarray*}
		By mean value theorem, for some $\theta\in(0,1)$,
		\begin{eqnarray*}
			- \rho_2 \log \left( \int_{\Sigma} V_2 e^{W_{2,\lambda}+\phi_2} \,dv_g  \right)+ \rho_2\log \left( \int_{\Sigma} V_2 e^{W_{2,\lambda}} \,dv_g  \right) &=& \frac{\int_{\Sigma}  V_2 e^{W_{2,\lambda}+\theta \phi_2} \phi_2\,dv_g }{\int_{\Sigma}  V_2 e^{W_{2,\lambda}+\theta \phi_2} \,dv_g}\\
			&=&\mathcal{O}(\|\phi_2\|)=o(1).
		\end{eqnarray*}
		So,  we obtain that
		\[ \tilde{E}_{\lambda}(\xi)=E_{\lambda}(W_{\lambda})+o(1). \]
 Lemma~\ref{lem:key_energy_app} yields \eqref{expansion_E_re} holds in $C(\Sigma)$ and uniformly for any $\xi$ in any compact subset of $\Xi_{k,m}$. 
		Applying Theorem~\ref{thm2}, there exists $\{c^{\lambda}_{ij}\in\R: j=1,\ldots,m, i=1,\ldots,\i(\xi_j) \}$ such that
		\begin{equation}\label{eq:solution_inf_dim}
			\bW_{\lambda}+\bphi -i^*(F(\bW_{\lambda}+\bphi ))=\left(\begin{array}{c}
				\sum_{i, j} c^{\lambda}_{ij} PZ^i_j
				\\
				0
			\end{array}\right).
		\end{equation}

		\begin{eqnarray*}
			\partial_{(\xi_j)_i}\tilde{E}_{\lambda}(\xi) &=& \partial_{(\xi_j)_i}E_{\lambda}(\bW_{\lambda}) \\
			&&+ \int_{\Sigma} \left(- \frac{2}{3} \Delta_g \phi_1 - \frac{1}{3} \Delta_g \phi_2 - \frac 1 2 (f_1(W_{1,\lambda} + \phi_1) - f_1(W_{1,\lambda}))\right) \partial_{(\xi_j)_i} W_{1,\lambda}\, dv_g \\
			&&	+ \int_{\Sigma} \left(- \frac{2}{3} \Delta_g \phi_2 - \frac{1}{3} \Delta_g \phi_1 - \frac 1 2 (f_2(W_{2,\lambda} + \phi_2) - f_2(W_{2,\lambda}))\right) \partial_{(\xi_j)_i}W_{2,\lambda}\, dv_g\\
			&&	+ \int_{\Sigma} \left(- \frac{2}{3} \Delta_g  (W_{1,\lambda}+\phi_1) - \frac{1}{3} \Delta_g(W_{2,\lambda}+\phi_2) - \frac 1 2f_1(W_{1,\lambda}+\phi_1)\right) \partial_{(\xi_j)_i}\phi_1 \, dv_g\\
			&&+ \int_{\Sigma} \left(- \frac{2}{3} \Delta_g(W_{2,\lambda}+\phi_2) - \frac{1}{3} \Delta_g(W_{1,\lambda}+\phi_1)  -\frac 1 2  f_2(W_{2,\lambda}+\phi_2)\right) \partial_{(\xi_j)_i}\phi_2\, dv_g .		
		\end{eqnarray*}
		Lemma~\ref{lem:diff_PZ_partial_PU} implies that
		\[ \partial_{(\xi_j)_i} W_{1,\lambda}=PZ^i_j+\mathcal{O}(1) \text{ and } \partial_{(\xi_j)_i} W_{2,\lambda}=-\frac 1 2 PZ^i_j+\mathcal{O}(1).\]
		Since $\lan \phi_1, PZ^i_j\ran =0, $
		\begin{eqnarray*}
			\int_{\Sigma} (-\Delta_g\phi_1)\partial_{(\xi_j)_i} W_{1,\lambda}\, dv_g &=&\int_{\Sigma}\lan\nabla\phi_l ,(\nabla PZ^i_j+\mathcal{O}(1))\ran_g \, dv_g \\
			&=&\lan \phi_1, PZ^i_j\ran+ \mathcal{O}\left(\|\bphi\| \right)=o(1).
		\end{eqnarray*}
		Similarly, we can deduce that 
		$  \int_{\Sigma} (-\Delta_g\phi_1)\partial_{(\xi_j)_i} W_{2,\lambda}\, dv_g=o(1)$.
		\begin{eqnarray*}
			\int_{\Sigma} (-\Delta_g \phi_2) \left(\partial_{(\xi_j)_i} W_{1,\lambda}+2\partial_{(\xi_j)_i} W_{2,\lambda} \right)\, dv_g &=& \frac  3 2 \int_{\Sigma}(-\Delta_g\phi_2) \partial_{(\xi_j)_i} z(\cdot,\xi)\, dv_g\\
			&=&\mathcal{O}(\|\phi_2\|)=o(1). 
		\end{eqnarray*}
		The estimate~\eqref{eq:non_linear_2rd} yields that  for $q>1$
		\begin{eqnarray*}
			&&\int_{\Sigma}	(f_2(W_{2,\lambda} + \phi_2) - f_2(W_{2,\lambda})) \partial_{(\xi_j)_i}W_{2,\lambda}\, dv_g\\
			&=& \mathcal{O}\left( (\|\phi_2\|^2+ \|f'_2(W_{2,\lambda})\phi_2\|_{L^q(\Sigma)}) \|PZ^i_j\|_{L^{\frac q {q-1}}(\Sigma)}\right)\\
			& =&\mathcal{O}(\|\phi_2\| )=o(1). 
		\end{eqnarray*}
		The estimate~\eqref{eq:est_1st_component_nonlin_term} implies that for $q, r>1$ sufficiently close to $1$
		\begin{eqnarray*}
			&&	\int_{\Sigma}	(f_1(W_{1,\lambda} + \phi_1) - f_1(W_{1,\lambda})) \partial_{(\xi_j)_i}W_{1,\lambda}\, dv_g\\
			&=&2\lambda\int_{\Sigma} V_1 e^{W_{1,\lambda}} \phi_1 \partial_{(\xi_j)_i}W_{1,\lambda}\, dv_g \\
			&&+\mathcal{O}\left( \int_{\Sigma} |f_1(W_{1,\lambda} + \phi_1) - f_1(W_{1,\lambda})-f'_1(W_{1,\lambda})\phi_1 \partial_{(\xi_j)_i}W_{1,\lambda}| \, dv_g\right)\\
			& =&  2\lambda\int_{\Sigma} V_1 e^{W_{1,\lambda}} \phi_1 (\chi_jZ^i_j+\mathcal{O}(1))\, dv_g+ \mathcal{O}\left(  \lambda^{\frac{ 1 -qr}{qr}}\|\phi_1\|^2 \|PZ^i_j\|_{L^{\frac q {q-1}}(\Sigma)} \right)\\
			&=& 2\lambda\int_{\Sigma} V_1 e^{W_{1,\lambda}} \phi_1 \chi_jZ^i_j\, dv_g    +o(1).
		\end{eqnarray*}
		By~Lemma~\ref{lem:diif_e^W_sum_e^u}, for any $q\in (1,2)$
		\begin{eqnarray*}
			&&2\lambda\int_{\Sigma} V_1 e^{W_{1,\lambda}} \phi_1 \chi_jZ^i_j\, dv_g\\
			&=& \int_{\Sigma}\chi_j e^{-\varphi_j} e^{U_j}Z^i_j \phi_1\, dv_g + \mathcal{O}\left(\left\| 2\lambda V_1 e^{W_{1,\lambda}}-\sum_{l=1}\chi_l e^{-\varphi_l} e^{U_l} \right\|_{L^q(\Sigma)}\|\phi_1\|\right)\\
			&=&\lan PZ^i_j, \phi_1\ran + \mathcal{O}(\lambda^{\frac{2-q}{2q}})=o(1). 
		\end{eqnarray*}
		We notice that  $\int_{\Sigma} |PZ^i_j|^q\, dv_g =	\left\{\begin{array}{ll}
			\mathcal{O}(|\log\lambda|), & q=2\\
			\mathcal{O}\left(1\right), & q\in (1,+\infty)\setminus\{2\}
		\end{array}\right.$. 
		Applying \eqref{eq:solution_inf_dim}, we derive that for $j=1,\ldots,m, i=1,\ldots,\i(\xi_{j})$
		\begin{eqnarray*}
			\sum_{i',j'} c^{\lambda}_{i',j'}\lan PZ^i_j,PZ^{i'}_{j'}\ran &=&	\left\lan 	W_{1, \lambda}+\phi_1 -i^*(f_1(	W_{1, \lambda}+\phi_1)- \frac 1 2 f_2(W_{2,\lambda}+\phi_2)), PZ^i_j\right\ran \\
			&=& \left\lan 	W_{1, \lambda} -i^*(f_1(	W_{1, \lambda})-\frac 1 2 f_2(W_{2,\lambda})),PZ^i_j\right\ran+ \mathcal{O}\left( |\lan\phi_1, PZ^i_j\ran|  \right) \\
			&&-\int_{\Sigma} (f_1(W_{1,\lambda} + \phi_l) - f_1(W_{1,\lambda}))PZ^i_j\, dv_g\\
			&& +\frac{1}{2}\int_{\Sigma} (f_2(W_{2,\lambda} + \phi_2) - f_2(W_{2,\lambda}))PZ^i_j\, dv_g . \\
		\end{eqnarray*}
		According to the proof of Lemma~\ref{lem:key_energy_app},  
		\begin{eqnarray*}
			\lan 	W_{1, \lambda} -i^*(f_1(W_{1, \lambda})), PZ^i_j\ran&=& \int_{\Sigma}\left(\sum_{l=1}^m \chi_l e^{-\varphi_l}e^{U_l} - 2\lambda V_1 e^{W_{1, \lambda}}\right)PZ^i_j\, dv_g \\
			&=&2 \partial_{(\xi_j)_i}\Lambda_{k,m}(\xi)+o(1). 
		\end{eqnarray*}
		By \eqref{eq:expansion_eW2}, we have for $q>2$
		\begin{eqnarray*}
			-\frac  1 2\int_{\Sigma} f_2(W_{2,\lambda}) PZ^i_j\, dv_g &=&-\rho_2\int_{\Sigma} \frac{ \tilde{V}_2(\cdot,\xi) e^{z(\cdot,\xi)}} {\int_{\Sigma}\tilde{V}_2(\cdot,\xi) e^{z(\cdot,\xi)\, dv_g}}PZ^i_j \, dv_g +\mathcal{O}\left( \lambda \int_{\Sigma}|PZ^i_j|\, dv_g\right)\\
			&=& \mathcal{O}\left((1+\lambda)\left\|PZ^i_j \right\|_{L^q(\Sigma)}\right)= \mathcal{O}\left(1\right). 
		\end{eqnarray*}
		Applying~\eqref{eq:est_1st_component_nonlin_term} and~\eqref{eq:non_linear_2rd},  we derive that for $q,r>1$ sufficiently close to $1$, 
		\begin{eqnarray*}
			\int_{\Sigma} (f_2(W_{2,\lambda} + \phi_2) - f_2(W_{2,\lambda}))PZ^i_j\, dv_g&=& \mathcal{O}\left( (\|\phi_2\|^2+ \|f'_2(W_{2,\lambda})\phi_2\|_{L^q(\Sigma)}) \|PZ^i_j\|_{L^{\frac q {q-1}}(\Sigma)}\right)\\
			&=& o(1),\\
			\int_{\Sigma} (f_1(W_{1,\lambda} + \phi_l) - f_1(W_{1,\lambda}))PZ^i_j\, dv_g&=&  2\lambda\int_{\Sigma} V_1 e^{W_{1,\lambda}} \phi_1 (\chi_jZ^i_j+\mathcal{O}(1))\, dv_g\\
			&&+ \mathcal{O}\left(  \lambda^{\frac{ 1 -qr}{qr}}\|\phi_1\|^2 \|PZ^i_j\|_{L^{\frac q {q-1}}(\Sigma)} \right)\\
			&=& 2\lambda\int_{\Sigma} V_1 e^{W_{1,\lambda}} \phi_1 \chi_jZ^i_j\, dv_g    +o(1)\\
			&=&\lan PZ^i_j,\phi_1\ran +o(1)=o(1) .
		\end{eqnarray*}
		Applying Lemma~\ref{lem4}, 
		we deduce that 
		\[ c^{\lambda}_{ij} \frac{8\varrho(\xi_j)D_i}{\delta_j^2\pi }+\mathcal{O}\left(\lambda^{-\frac  1 2} \sum_{i,j}|c^{\lambda}_{ij}|\right)=\mathcal{O}(1),\]
		as $\lambda\rightarrow 0$. 
		Considering $\delta_j^2=\lambda d_j^2$, $\sum_{i,j}|c^{\lambda}_{i,j}|=\mathcal{O}(\lambda). $
		By~\eqref{eq:solution_inf_dim}, we have 
		\[  -\frac 2 3 \Delta_g(W_{2,\lambda}+\phi_2) -\frac 1 3 \Delta_g(W_{1,\lambda}+\phi_1) -\frac 1 2 f_2((W_{2,\lambda}+\phi_2))= -\frac 1 3 \sum_{i,j} c^{\lambda}_{ij}\Delta_g PZ^i_j\]
		and 
		\[ - \frac{2}{3} \Delta_g  (W_{1,\lambda}+\phi_1) - \frac{1}{3} \Delta_g(W_{2,\lambda}+\phi_2) - \frac 1 2f_1(W_{1,\lambda}+\phi_1) = -\frac 2 3 \sum_{i,j} c^{\lambda}_{ij}\Delta_g PZ^i_j. \]
		Then, 
		\begin{eqnarray*}
			&&\int_{\Sigma} \left(- \frac{2}{3} \Delta_g  (W_{1,\lambda}+\phi_1) - \frac{1}{3} \Delta_g(W_{2,\lambda}+\phi_2) - \frac 1 2f_1(W_{1,\lambda}+\phi_1)\right) \partial_{(\xi_j)_i}\phi_1\, dv_g\\
			&=& \frac 2 3 \sum_{i,j} c^{\lambda}_{i'j'}\lan  PZ^{i'}_{j'}, \partial_{(\xi_j)_i}\phi_1\ran. \\
		\end{eqnarray*}
		By straightforward calculation,  for $q>1$ sufficiently close to $1$ such that $\frac{2-2q}{2q}+ \frac{2-p}{2p}>0$
		\begin{eqnarray}\label{eq:pz_pa_phi_l}
			\lan PZ^{i'}_{j'}, \partial_{\left(\xi_{j}\right)_{i}} \phi_1 \ran&=&\partial_{(\xi_j)_i}\lan PZ^{i'}_{j'}, \phi_1 \ran- \lan \partial_{\left(\xi_{j}\right)_{i}} PZ^{i'}_{j'}, \phi_1 \ran\\
			&=&\int_{\Sigma} \partial_{\left(\xi_{j}\right)_{i}}( -\chi_{j'}e^{-\varphi_{j'}} e^{U_{j'}} Z^{i'}_{j'} )\phi_1\, dv_g\nonumber \\
			&=&\mathcal{O}\left( \|\partial_{\left(\xi_{j}\right)_{i}}( -\chi_{j'}e^{-\varphi_{j'}} e^{U_{j'}} Z^{i'}_{j'} )\|_{L^q(\Sigma)}\|\phi_1\|\right)\nonumber\\
			& =&\mathcal{O}(\lambda^{\frac{2-4q}{2q}+ \frac{2-p}{2p}} |\log\lambda|)=o(\lambda^{-1}),\nonumber
		\end{eqnarray}
		where we applied the condition $\lan PZ^i_j,\phi\ran= 0.$
		Hence, we obtain that 
		\[ 	\int_{\Sigma} \left(- \frac{2}{3} \Delta_g  (W_{1,\lambda}+\phi_1) - \frac{1}{3} \Delta_g(W_{2,\lambda}+\phi_2) - \frac 1 2f_1(W_{1,\lambda}+\phi_1)\right) \partial_{(\xi_j)_i}\phi_1\, dv_g =o(1).\]
		Similarly, by the same approach, we can deduce that 
		\begin{eqnarray*}
			\int_{\Sigma} \left(- \frac{2}{3} \Delta_g(W_{2,\lambda}+\phi_2) - \frac{1}{3} \Delta_g(W_{1,\lambda}+\phi_1)  -\frac 1 2  f_2(W_{2,\lambda}+\phi_2)\right) \partial_{(\xi_j)_i}\phi_2\, dv_g=o(1). 
		\end{eqnarray*}
		Combining all the estimates above, we prove that 
		\[ \partial_{(\xi_j)} \tilde{E}_{\lambda}(\xi)=\partial_{(\xi_j)_i} E(W_{\lambda})+o(1)= \partial_{(\xi_j)_i}\Lambda_{k,m}(\xi)+o(1). \]
	\end{proof}
	\section{Proof of the main results}
	The	next Lemma shows that the critical point $\xi$ of $\tilde{E}_{\lambda}$  if and only if $\bW_{\lambda}+\bphi_{\xi,\lambda}$ solves~\eqref{eq:toda_equi}. 
	\begin{lem}\label{lem:equi_critical}
		There exists
		$\xi \in\Xi_{k,m}$ be a critical point of $\xi \mapsto \tilde{E}_{\lambda}(\xi)$ if and only if there exists $\varepsilon_0>0$ such that  $\xi\in \Xi_{k,m}^{\varepsilon_0}$ and   $\bu = \bW_{\lambda} +\bphi_{\xi,\lambda}$ constructed by Theorem~\ref{thm2} solves~\eqref{eq:toda_equi}.
	\end{lem}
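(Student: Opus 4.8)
The plan is to run the standard Lyapunov--Schmidt endgame: the projected equation produced by Theorem~\ref{thm2} leaves behind a finite family of Lagrange multipliers, and I will show these vanish precisely when $\xi$ is a critical point of $\tilde E_\lambda$. Recall that for $\xi$ in a fixed compact $\Dc\subset\Xi_{k,m}$ and $\lambda$ small, Theorem~\ref{thm2} provides $\bphi_{\xi,\lambda}\in K_\xi^\perp$ solving \eqref{eq:toda_inf_dim}, so that $\bu:=\bW_\lambda+\bphi_{\xi,\lambda}$ solves \eqref{eq:toda_equi2} modulo $K_\xi$: there are unique coefficients $c_{ij}^\lambda=c_{ij}^\lambda(\xi)$, $1\le j\le m$, $1\le i\le\i(\xi_j)$, with
\[
\bW_\lambda+\bphi_{\xi,\lambda}-i^*\bigl(F(\bW_\lambda+\bphi_{\xi,\lambda})\bigr)=\bigl(\,{\textstyle\sum_{i,j}}\,c_{ij}^\lambda\,PZ^i_j\,,\,0\,\bigr),
\]
cf.\ \eqref{eq:solution_inf_dim}. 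By construction $\bu$ solves \eqref{eq:toda_equi} if and only if all $c_{ij}^\lambda=0$, and the maps $\xi\mapsto\bphi_{\xi,\lambda}$, $\xi\mapsto c_{ij}^\lambda(\xi)$ are $C^1$.

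Next I differentiate $\tilde E_\lambda(\xi)=E_\lambda(\bW_\lambda+\bphi_{\xi,\lambda})$. A direct computation of $DE_\lambda$ — using that the quadratic form $Q$ is governed by the inverse $SU(3)$ Cartan matrix, so that critical points of $E_\lambda$ are exactly the solutions of $\bu=i^*(F(\bu))$ — shows that $DE_\lambda(\bu)[v]=\tfrac23\langle w_1,v_1\rangle+\tfrac13\langle w_1,v_2\rangle$ for every $v=(v_1,v_2)\in\cH$, where $w_1$ denotes the first component of $\bu-i^*(F(\bu))$ and $\langle\cdot,\cdot\rangle$ is the Dirichlet pairing on $\oH$. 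With $w_1=\sum_{i',j'}c_{i'j'}^\lambda PZ^{i'}_{j'}$ the chain rule then gives
\[
\partial_{(\xi_j)_i}\tilde E_\lambda(\xi)=\sum_{i',j'}c_{i'j'}^\lambda\,A_{(ij),(i'j')},
\]
where $A=A(\xi,\lambda)$ is the square matrix with entries $A_{(ij),(i'j')}=\tfrac23\langle PZ^{i'}_{j'},\partial_{(\xi_j)_i}(W_{1,\lambda}+\phi_{1,\lambda})\rangle+\tfrac13\langle PZ^{i'}_{j'},\partial_{(\xi_j)_i}(W_{2,\lambda}+\phi_{2,\lambda})\rangle$. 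The crux is that $A$ is invertible for $\lambda$ small, uniformly in $\xi\in\Dc$. Indeed, Lemma~\ref{lem:diff_PZ_partial_PU} gives $\partial_{(\xi_j)_i}W_{1,\lambda}=PZ^i_j+\mathcal O(1)$, $\partial_{(\xi_j)_i}W_{2,\lambda}=-\tfrac12PZ^i_j+\mathcal O(1)$ in $\oH$ (while $\|PZ^{i'}_{j'}\|\sim\delta_{j'}^{-1}$ by Lemma~\ref{lem4}); $\langle PZ^{i'}_{j'},\partial_{(\xi_j)_i}\phi_{l,\lambda}\rangle=o(\lambda^{-1})$ for $l=1,2$ (estimate \eqref{eq:pz_pa_phi_l} and its analogue for $\phi_2$); and Lemma~\ref{lem4} gives $\langle PZ^i_j,PZ^{i'}_{j'}\rangle=\tfrac{8\varrho(\xi_j)D_i}{\pi\delta_j^2}\delta_{jj'}\delta_{ii'}+o(\lambda^{-1})$ with $D_i>0$. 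Since $\delta_j^2=d_j^2\lambda$, substituting yields $A_{(ij),(i'j')}=\tfrac12\langle PZ^i_j,PZ^{i'}_{j'}\rangle+o(\lambda^{-1})=\tfrac{4\varrho(\xi_j)D_i}{\pi d_j^2\,\lambda}\,\delta_{jj'}\delta_{ii'}+o(\lambda^{-1})$, so after scaling the $(i,j)$-row by $\lambda$ the matrix $A$ becomes a fixed positive diagonal matrix plus $o(1)$, hence invertible.

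The equivalence is now immediate. If $\bu$ solves \eqref{eq:toda_equi}, then all $c_{ij}^\lambda=0$, so $\nabla\tilde E_\lambda(\xi)=0$. Conversely, if $\nabla\tilde E_\lambda(\xi)=0$, then $\sum_{i',j'}c_{i'j'}^\lambda A_{(ij),(i'j')}=0$ for all $(i,j)$, and invertibility of $A$ forces $c_{i'j'}^\lambda=0$, so $\bu=\bW_\lambda+\bphi_{\xi,\lambda}$ solves \eqref{eq:toda_equi}. Finally, a critical point $\xi$ of $\tilde E_\lambda$ must stay away from $\partial\Xi_{k,m}$: by Theorem~\ref{thm:expansion_E_reduced} we have $\partial_\xi\tilde E_\lambda=\partial_\xi\Lambda_{k,m}+o(1)$ uniformly on compacts, while $|\nabla\cF_{k,m}(\xi)|_g\to+\infty$ as $\xi\to\partial\Xi_{k,m}$ (the contribution of $\tfrac12 I_\xi(z(\cdot,\xi))$ to $\nabla\Lambda_{k,m}$ being controlled through Lemma~\ref{lem:pa_I_xi}); hence $|\nabla\Lambda_{k,m}|_g$ diverges near the boundary, so $\nabla\tilde E_\lambda$ cannot vanish in a boundary collar, which provides the asserted $\varepsilon_0$ with $\xi\in\Xi_{k,m}^{\varepsilon_0}$.

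The main obstacle is the invertibility of $A$: one must assemble the precise leading-order asymptotics of $\langle PZ^i_j,PZ^{i'}_{j'}\rangle$, of $\langle PZ^{i'}_{j'},\partial_{(\xi_j)_i}W_{l,\lambda}\rangle$ and of $\langle PZ^{i'}_{j'},\partial_{(\xi_j)_i}\phi_{l,\lambda}\rangle$ and verify that, after the $\lambda$-rescaling of each row, they form a diagonally dominant matrix. All the ingredients are already available (Lemmas~\ref{lem:diff_PZ_partial_PU} and \ref{lem4}, together with estimate \eqref{eq:pz_pa_phi_l}), so this step is careful bookkeeping rather than new analysis; the one genuinely delicate point is that the term coming from $\partial_{(\xi_j)_i}\phi_{l,\lambda}$ is $o(\lambda^{-1})$ and hence negligible against the $\delta_j^{-2}\sim\lambda^{-1}$ diagonal — a gain bought precisely by the orthogonality $\bphi_{\xi,\lambda}\in K_\xi^\perp$ through \eqref{eq:pz_pa_phi_l}.
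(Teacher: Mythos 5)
Your proposal is correct and follows essentially the same route as the paper: both express $\partial_{(\xi_j)_i}\tilde E_\lambda$ through the pairing of $\bu-i^*(F(\bu))=\bigl(\sum_{i',j'}c^{\lambda}_{i'j'}PZ^{i'}_{j'},0\bigr)$ with $\partial_{(\xi_j)_i}(\bW_\lambda+\bphi_{\xi,\lambda})$, and then use Lemma~\ref{lem4}, Lemma~\ref{lem:diff_PZ_partial_PU} and \eqref{eq:pz_pa_phi_l} to see that the resulting coefficient matrix is dominated by the diagonal terms $\langle PZ^i_j,PZ^i_j\rangle\sim\delta_j^{-2}$, forcing $c^{\lambda}_{ij}=0$ exactly when $\nabla\tilde E_\lambda(\xi)=0$, with the trivial converse when $\bu$ solves \eqref{eq:toda_equi}. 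Your explicit matrix-invertibility bookkeeping and the boundary-collar remark for $\varepsilon_0$ are just slightly more systematic presentations of what the paper does implicitly.
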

	\begin{proof}
		Suppose that $\xi$ is a critical point of~$\tilde{E}_{\lambda}$. Then for $j=1,\ldots,m, i=1,\ldots, \i(\xi_j)$, Theorem~\ref{thm2} implies that 
		\begin{eqnarray*}
			0= \partial_{(\xi_j)_i} \tilde{E}_{\lambda}&=& \lan
			\bu -i^*(E(\bu)) , \partial_{(\xi_{j})_i} (\bW_{\lambda}+\bphi_{\xi,\lambda})\ran\\
			&\stackrel{\eqref{eq:solution_inf_dim}}{=}&
			\sum_{i',j'} c^{\lambda}_{i'j'} \lan PZ^{i'}_{j'}, \partial_{(\xi_{j})_i} (W_{1, \lambda}+(\bphi_{\xi,\lambda})_1)\ran  \\
			&=& \sum_{i',j'} c^{\lambda}_{i'j'} \lan PZ^{i'}_{j'},PZ^i_j\ran +\mathcal{O}\left(\sum_{i',j'} c^{\lambda}_{i',j'} (\lambda |\log\lambda|+|\lan PZ^{i'}_{j'}, \partial_{(\xi_j)_i} (\bphi_{\xi,\lambda})_1 \ran|\right)
			\\
			&&    \text{\small (by Lemma~\ref{lem4} and~\eqref{eq:pz_pa_phi_l})} \\
			&=&  c^{\lambda}_{ij}\frac{8D_i\varrho(\xi_j)}{\pi\delta_j^2}+\mathcal{O}\left(\lambda^{-\frac 1 2 }\sum_{i',j'}|c^{\lambda}_{i'j'}|\right) +o\left(\lambda^{-1}\sum_{i',j'}|c^{\lambda}_{i'j'}| \right).
		\end{eqnarray*}
		By the arbitrariness of $i,j$, it follows 
		\[ \sum_{i,j}c^{\lambda}_{ij}\frac{8D_i\varrho(\xi_j)}{\pi\delta_j^2}\left(1+\mathcal{O}(\lambda^{\frac 1 2})+o(1) \right)=0.\] 
		There exists $\lambda_0>0$ sufficiently small such that 
		for $\lambda\in (0,\lambda_0)$, 
		\[ c^{\lambda}_{ij}=0 \quad (j=1,\ldots,m, i=1,\ldots,\i(\xi_j)). \]
		Considering~\eqref{eq:solution_inf_dim}, $\bu$ solves \eqref{eq:toda_equi}. 
		Conversely, we assume that for some $\xi\in\Xi_{k,m}$, $\bu=\bW_{\lambda}+\bphi_{\xi,\lambda}$ constructed by Theorem~\ref{thm2} is a solution of~\eqref{eq:toda_equi}.
		It is easy to see that $\bu-i^*(F(\bu))=0$.
		Consequently, for $j=1,\ldots,m, i=1,\ldots,\i(\xi_j)$
		\[ \partial_{(\xi_i)_j}\tilde{E}_{\lambda}(\xi)= \lan
		\bu -i^*(E(\bu)) , \partial_{(\xi_{j})_i} (\bW_{\lambda}+\bphi_{\xi,\lambda})\ran=0, \]
		which means $\xi$ is a critical point of $\tilde{E}_{\lambda}$\end{proof}
	
	\begin{altproof}{Theorem~\ref{thm:main}}
		It is clear that $\cC:=\{\xi\}$ is a $C^1$-stable critical point set of $\Lambda_{k,m}$. For any $n\in \mathbb{N}_+$, there exist a sequence $\lambda_n\rightarrow 0$ and  $\xi_{\lambda_n}=\left(\xi_{1, \lambda_n}, \ldots, \xi_{m, \lambda_n}\right) \in \Xi^{\varepsilon_0}_{k,m}$ such that $d_g(\xi_{\lambda_n},\cC)<\frac{1}{n}$ and $\xi_{\lambda_n}$ is a critical point of $\tilde{E}_{\lambda_n}: \Xi_{k,m} \rightarrow \mathbb{R}$. Without loss of generality, assume that 
		\[ \xi_{\lambda_n}=\left(\xi_{1, \lambda_n}, \ldots, \xi_{m, \lambda_n}\right)\rightarrow \xi=(\xi_1,...,\xi_m)\in \cC, \]
		as $n\rightarrow +\infty.$
		Then define 
		$\bu^n=\bW_{\lambda_n}+\bphi_{\xi^n,\lambda_n}$ through Theorem~\ref{thm2}.
		Lemma~\ref{lem:equi_critical} yields that 
		$u_{\lambda_n}$ solves \eqref{eq:toda_equi} as $\lambda_n\rightarrow 0$. 
		Let $\rho^n_1= \lambda_n \int_{\Sigma} V_1 e^{u^n_1}\, dv_g.$
		By \eqref{eq:mass_1st_component} and  $\left|\mathrm{e}^{s}-1\right| \leqslant \mathrm{e}^{|s|}|s|$ for any $s \in \mathbb{R}$, for $q>1$ sufficiently close to $1$
		\begin{eqnarray*}
			\rho^n_1&=& \lambda_n \int_{\Sigma} V_1 e^{W_{1,\lambda_n}}\, dv_g+ \mathcal{O}\left(  \lambda_n \int_{\Sigma} V_1 e^{W_{1,\lambda_n}} |(\bphi_{\xi^n,\lambda_n})_1|\, dv_g\right)\\
			&=&\frac  1 2 \sum_{j=1}^m \varrho(\xi_j)+\mathcal{O}\left(\|\lambda_n V_1 e^{W_{1,\lambda_n}}\|_{L^q(\Sigma)}\|\bphi_{\xi^n,\lambda_n}\|\right)\\
			&=& \frac  1 2 \sum_{j=1}^m \varrho(\xi_j)+o(1).
		\end{eqnarray*}
		For any $\Psi \in C(\Sigma),$
		using Lemma~\ref{lem:diif_e^W_sum_e^u}, we have 
		\begin{eqnarray*}
			2\rho^n_1\int_{\Sigma} \frac{V_1 \mathrm{e}^{u_1^{n}} } { \int_{\Sigma}V_1 \mathrm{e}^{u_1^{n}}\, dv_g} \Psi dv_g& =&2 \lambda_n \int_{\Sigma} V_1 \mathrm{e}^{u_1^{n}}\Psi dv_g\\
			& =&
			\sum_{ j=1}^m \int_{\Sigma} \chi\circ y_{\xi^n_j}^{-1} e^{-\varphi_{\xi^n_j}}e^{U_{\delta_j(\xi^n), \xi^n_j}}\Psi dv_g +o(1) \\
			&=& \sum_{j=1}^m \varrho(\xi_j) \Psi\left(\xi_j\right)+o(1) \quad (n\rightarrow 0). 
		\end{eqnarray*} 
		By~\eqref{eq:expansion_W2}, we can obtain the estimate~\eqref{eq:expansion_u2}.  The proof is concluded.
	\end{altproof}
	\begin{altproof}{Corollary~\ref{cor:1}}
		For any $z\in \oH$,  the Moser-Trudinger inequality implies that 
		\begin{eqnarray*}
			I_{\xi}(z)&=& \frac 1 2 \int_{\Sigma}|\nabla z|^2_g\, dv_g -2\rho_2\log\left( \int_{\Sigma} \tilde{V}_2(\cdot,\xi) e^{z}\right)\geq C,
		\end{eqnarray*} 
		for some constant $C$ (uniformly for $\xi\in \Xi_{k,m}$). 
		For any $\rho_2 \in (0,2\pi)$, $z(\cdot,\xi)$ exists but may be degenerate for $\xi\in \Xi_{k,m}$. 
		We fix an arbitrary $z=z(\cdot,\xi)$ solves~\eqref{eq:singular_mf}. 
		Since $z(\cdot,\xi)$ is a minimizer of $I_{\xi}$, 
		\begin{eqnarray*}
			I_{\xi}(z(\cdot,\xi))\leq I_{\xi}(0)&=& -2\rho_2 \log\left( \int_{\Sigma} V_2 e^{\sum_{j=1}^m \frac 1 2 \varrho(\xi_j) G^g(\cdot,\xi_j)}\right)\\
			&\leq& \left| 2\rho_2 \left(\sum_{j=1}^m \frac 1 2\varrho(\xi_j) \int_{\Sigma} G^g(\cdot,\xi_j)\, dv_g  +\sum_{j=1}^m\int_{\Sigma} \log V_2 \, dv_g \right)\right| \\
			&&\text{\small ( by Jensen's inequality and $ \int_{\Sigma} G^g(\cdot,\xi_j)\, dv_g =0$) }\\
			&=& 2m \rho_2 \max_{\Sigma}|\log V_2|. 
		\end{eqnarray*}
		Let $\fJ_{\xi,\rho_2}:=\{ z\in \oH: z \text{ solves \eqref{eq:singular_mf} for $\xi$ and $\rho_2$}\}.$
		So we obtain that for some constant $C_0>0$
		\[ \sup_{\xi\in \Xi_{k,m}, \rho_2\in (0,2\pi
			)}\sup_{z\in\fJ_{\xi,\rho_2}}|I_{\xi}(z)|\leq C_0.\]
		By~\cite[Lemma 7.3]{HBA2024}, we know that $\mathcal{F}_{k,m}(\xi)\rightarrow +\infty$ as $\xi\rightarrow \partial\Xi_{k,m}.$
		We recall that 
		\[ \Lambda_{k,m}(\xi)= \frac 1 2 I_{\xi}(z(\cdot,\xi)) -\frac 1 4 \cF_{k,m}(\xi). \]
		Fix an arbitrary point $\xi^0\in \Xi_{k,m}$. 
		There exists $\varepsilon_0>0$ sufficiently small  such that $\xi^0\in \Xi_{k,m}^{\varepsilon_0}$ and for any $\xi\in \Xi_{k,m}\setminus\Xi_{k,m}^{\varepsilon_0}$ 
		\[ \mathcal{F}_{k,m}(\xi)> 4 C_0 + \mathcal{F}_{k,m}(\xi^0). \]
		We take $\Dc$ to be the interior of $\Xi^{\frac 1 2 \varepsilon_0}_{k,m}$.
		Let $$\cC:=\{\xi\in \Xi_{k,m}: \cF_{k,m}(\xi)=\inf_{\Xi_{k,m}}\Lambda_{k,m}\}\subset\Xi^{\varepsilon_0}_{k,m}\subset\Dc.$$  It is easy to see that $\xi $  is a non-degenerate critical points set of $\Lambda_{k,m}$. 
		By Lemma~\ref{lem:hypo_H_small_rho2}, there exists $\rho_0>0$ such that for any $\rho_2\in (0,\rho_0)$,  the hypothesis~\hyperref[item:H]{(C1)} holds for $\Dc$. Applying~Theorem~\ref{thm:main}, we complete the proof. 
	\end{altproof}
	\begin{altproof}
		{Corollary~\ref{cor:2}}

		As in the proof of  Theorem~\ref{thm:residual},  we have 
		$$\cV_{reg}=\{ (V_1, V_2)\in C^{2,\alpha}(\Sigma, \R_+)\times C^{2,\alpha}(\Sigma, \R_+):\eqref{eq:shadow_t}\text{ is non-degenerate for } t\in [0,1]\cap \Q \}$$
		is a residual set in $C^{2,\alpha}(\Sigma, \R_+)\times C^{2,\alpha}(\Sigma, \R_+)$. We fix an arbitrary $(V_1, V_2)\in \cV_{reg}$. Using Proposition~\ref{prop:ex_sols_shadow}, 
		there exists a non-degenerate  solution $(w,\xi)$ of~\eqref{eq:shadow}. The hypothesis  \hyperref[item:G]{(C1)} holds if one of the conditions $a)$ or $b)$ is satisfied.  
		Using Theorem~\ref{thm:main}, we can conclude Corollary~\ref{cor:2}.  
	\end{altproof}
	%%%%%%%%%%%%%%%%%%%%%%%%%%%%%%%%%%%%%
	\section{Appendix}
	\subsection{Some important estimates}
	The following lemma is the asymptotic expansion of $PU_j$. 
	\begin{lem}\label{lem:extension_PU}
		$PU_j= \chi_j ( U_j-\log( 8 \delta_j^2))+\varrho(\xi_j) H^g(\cdot,\xi_j)+ \mathcal{O}(\delta_j^2|\log \delta_j|)$ as $\delta_j\rightarrow 0.$
		For any $x\in \Sigma\setminus\{ \xi_j\}$, 
		\[ PU_j= \varrho(\xi_j) G^g(\cdot,\xi_j)+ \mathcal{O}(\delta_j^2|\log \delta_j|),\]
		as $\delta_j\rightarrow 0.$
	\end{lem}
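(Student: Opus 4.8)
The plan is to control the difference between $PU_j$ and its expected profile by a single elliptic estimate. Set $\tilde U_j := \chi_j\bigl(U_j-\log(8\delta_j^2)\bigr)$ and
\[
  w_j := PU_j - \tilde U_j - \varrho(\xi_j)\,H^g_{\xi_j},
\]
so that both asserted expansions follow once we show $\|w_j\|_{L^\infty(\Sigma)}=\mathcal{O}(\delta_j^2|\log\delta_j|)$, uniformly for $\xi$ in a fixed compact subset of $\Xi_{k,m}$ (in fact the argument gives the better bound $\mathcal{O}(\delta_j^2)$).

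First I would compute $-\Delta_g w_j$ region by region in the chart $(y_{\xi_j},U_{2r_0}(\xi_j))$, using $\Delta_g=e^{-\varphi_j}\Delta$ and $-\Delta\bigl(\log\frac{8\delta^2}{(\delta^2+|y|^2)^2}\bigr)=\frac{8\delta^2}{(\delta^2+|y|^2)^2}$. On the core $\{|y_{\xi_j}|\le r_0\}$, where $\chi_j\equiv1$, one has $-\Delta_g\tilde U_j=\chi_j e^{-\varphi_j}e^{U_j}$ and $-\Delta_g H^g_{\xi_j}=-1/|\Sigma|_g$; since the change of variables $y=\delta_j z$ reduces $\int_\Sigma\chi_j e^{-\varphi_j}e^{U_j}\,dv_g$ to $\int_{\mathbb R^2}\frac{8}{(1+|z|^2)^2}\,dz=8\pi$ for interior $\xi_j$ (resp.\ $4\pi$ for boundary $\xi_j$, by the reflection symmetry of the integrand) up to a tail of size $\mathcal{O}(\delta_j^2)$, one gets $\overline{\chi_j e^{-\varphi_j}e^{U_j}}=\varrho(\xi_j)/|\Sigma|_g+\mathcal{O}(\delta_j^2)$, hence comparison with \eqref{eq:proj} yields $-\Delta_g w_j=\mathcal{O}(\delta_j^2)$ there; outside $\supp\chi_{\xi_j}$ the same comparison is immediate. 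The delicate region is the annulus $\{r_0\le|y_{\xi_j}|\le2r_0\}$, where
\[
  -\Delta_g\tilde U_j=\chi_j e^{-\varphi_j}e^{U_j}-2\langle\nabla\chi_j,\nabla U_j\rangle_g-(\Delta_g\chi_j)\bigl(U_j-\log 8\delta_j^2\bigr),
\]
and the key point is that on this annulus $U_j-\log(8\delta_j^2)=-2\log(\delta_j^2+|y_{\xi_j}|^2)=4\log\frac1{|y_{\xi_j}|}+\mathcal{O}(\delta_j^2)$ and $\nabla U_j=4\nabla\log\frac1{|y_{\xi_j}|}+\mathcal{O}(\delta_j^2)$; substituting these and adding $\varrho(\xi_j)(-\Delta_g H^g_{\xi_j})$, the $\nabla\chi_j$– and $\Delta_g\chi_j$–terms cancel \emph{exactly} against the right-hand side of \eqref{eqR} (this is precisely why that inhomogeneity carries the coefficients $4/\varrho(\xi)$ and $8/\varrho(\xi)$), leaving once more $-\Delta_g w_j=\mathcal{O}(\delta_j^2)$.

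Next I would record the boundary and normalization data. One has $\partial_{\nu_g}w_j=0$ on $\partial\Sigma$: for interior $\xi_j$ this is clear since $\tilde U_j$ and the $\chi_{\xi_j}$–dependent part of $H^g_{\xi_j}$ are supported away from $\partial\Sigma$ and $\partial_{\nu_g}PU_j=\partial_{\nu_g}H^g_{\xi_j}=0$; for boundary $\xi_j$, in the half-disk chart $\partial_{\nu_g}=-e^{-\varphi_j/2}\partial_{y_2}$ by \eqref{eq:out_normal_derivatives}, and $U_j$, $\chi_j$, $\log\frac1{|y_{\xi_j}|}$ are radial in $y$, so their $\partial_{y_2}$ vanishes on $\{y_2=0\}$, making the Neumann conditions of $PU_j$, $\tilde U_j$, $H^g_{\xi_j}$ compatible. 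Also $\int_\Sigma w_j\,dv_g=\mathcal{O}(\delta_j^2)$: $\int_\Sigma PU_j=0$, $\varrho(\xi_j)\int_\Sigma H^g_{\xi_j}=-4\int_\Sigma\chi_j\log\frac1{|y_{\xi_j}|}\,dv_g$ by \eqref{eqR}, and $\int_\Sigma\tilde U_j=4\int_\Sigma\chi_j\log\frac1{|y_{\xi_j}|}\,dv_g-2\int_\Sigma\chi_j\log\bigl(1+\delta_j^2/|y_{\xi_j}|^2\bigr)\,dv_g$ with the last term $\mathcal{O}(\delta_j^2)$ after scaling $y=\delta_j z$. Then, since $(-\Delta_g,\partial_{\nu_g})$ is a fixed operator, the Calderón–Zygmund/Schauder estimate plus Poincaré give $\|w_j\|_{W^{2,p}(\Sigma)}\le C\bigl(\|\Delta_g w_j\|_{L^p(\Sigma)}+|\!\int_\Sigma w_j|\bigr)=\mathcal{O}(\delta_j^2)$ for any $p>1$ with $C$ independent of $\xi$, and $W^{2,p}\hookrightarrow C^0$ yields $\|w_j\|_{L^\infty(\Sigma)}=\mathcal{O}(\delta_j^2)$, which implies the first expansion. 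For the second, fix $x\in\Sigma\setminus\{\xi_j\}$: if $x\notin\supp\chi_{\xi_j}$ then $\tilde U_j(x)=0=\varrho(\xi_j)\Gamma^g_{\xi_j}(x)$, while if $x\in U_{2r_0}(\xi_j)$ then $\tilde U_j(x)=-2\chi_j(x)\log(\delta_j^2+|y_{\xi_j}(x)|^2)=4\chi_j(x)\log\frac1{|y_{\xi_j}(x)|}+\mathcal{O}(\delta_j^2)=\varrho(\xi_j)\Gamma^g_{\xi_j}(x)+\mathcal{O}(\delta_j^2)$, locally uniformly away from $\xi_j$; combining with $G^g=\Gamma^g+H^g$ and the bound on $w_j$ gives $PU_j=\varrho(\xi_j)G^g(\cdot,\xi_j)+\mathcal{O}(\delta_j^2|\log\delta_j|)$.

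I expect the main obstacle to be the bookkeeping on the transition annulus: one must track how derivatives of the cutoff acting on the (large, $\log$-sized) function $U_j-\log(8\delta_j^2)$ interact with the inhomogeneity of \eqref{eqR} and verify the cancellation is exact up to $\mathcal{O}(\delta_j^2)$. A secondary point is to keep every error estimate and the elliptic constant uniform in $\xi$ over compact subsets of $\Xi_{k,m}$, and to handle the boundary case via the half-disk symmetry so that the Neumann conditions of the three pieces are compatible.
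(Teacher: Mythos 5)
Your proposal is correct and follows essentially the same route as the paper: the same decomposition $w_j=PU_j-\chi_j(U_j-\log 8\delta_j^2)-\varrho(\xi_j)H^g_{\xi_j}$, the same exact cancellation on the transition annulus between the cutoff terms and the inhomogeneity in \eqref{eqR}, the same mass computation $\overline{\chi_je^{-\varphi_j}e^{U_j}}=\varrho(\xi_j)/|\Sigma|_g+\mathcal{O}(\delta_j^2)$, and the same conclusion via the Neumann elliptic estimates with the average as normalization. One quantitative slip: the mean-value term $2\int_\Sigma\chi_j\log\bigl(1+\delta_j^2/|y_{\xi_j}|^2\bigr)\,dv_g$ is \emph{not} $\mathcal{O}(\delta_j^2)$ but genuinely of order $\delta_j^2|\log\delta_j|$ (the integrand behaves like $\delta_j^2/|y|^2$ on $\delta_j\le|y|\le r_0$), so your claimed improvement of the overall bound to $\mathcal{O}(\delta_j^2)$ is unfounded — this is exactly where the $|\log\delta_j|$ in the lemma comes from — but since the statement only asserts $\mathcal{O}(\delta_j^2|\log\delta_j|)$, your argument still proves it.
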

	\begin{proof}
		Let $\eta_{j}= PU_j-\chi_j\cdot( U_j-\log( 8 \delta_j^2)) -\varrho(\xi_j) H^g(\cdot,\xi_j) $.
		
		If $\xi_j\in \intsigma$,  $\partial _{\nu_g}\eta_{j} \equiv 0$ on $\partial \Sigma$.  
		We observe that for any $x\in \partial\Sigma\cap U(\xi_j)$
		\[\partial_{\nu_g} |y_{\xi_j}(x)|^2=- e^{-\frac 1 2 \hat{\varphi}_{\xi_j}(y)} \left. \frac{\partial}{\partial y_2} |y|^2\right|_{y=y_{\xi_j}(x)}= 0.\]
		If $\xi_j\in \partial \Sigma$,  
		for any $x\in \partial \Sigma$, we have  as $\delta_j\rightarrow 0$
		\begin{eqnarray*}
			\partial_{\nu_g}\eta_{j}(x)
			&=& 2\partial_{\nu_g}\left( \chi_j \log\left( 1+\frac{\delta_j^2}{|y_{\xi_j}(x)|^2}\right)\right)\\
			&=&2(\partial_{ \nu_g}\chi_{j}) \frac{\delta_j^2}{|y_{\xi_j}(x)|^2} -2\chi_{j} \partial_{ \nu_g}\log\left( 1+\frac{\delta_j^2}{|y_{\xi_j}(x)|^2}\right)+\mathcal{O}(\delta_j^{4})\\
			&=&\mathcal{O}(\delta_j^2). 
		\end{eqnarray*}
		Thus, for any $i=1,2,j=1,\ldots,m$	$
		\partial_{ \nu_g} \eta_{j} =
		\mathcal{O}(\delta_j^2) \text{ as } \delta_j\rightarrow 0.$
		\begin{eqnarray*}
			&&\int_{\Sigma} \eta_{j}\, dv_g =  \int_{\Sigma} 2\chi_j  \log\left( 1+\frac{\delta_j^2}{|y_{\xi_j}(x)|^2}\right)\, dv_g(x) \\
			&=& 2\int_{B^{\xi_j}_{r_0}} e^{\hat{\varphi}_{\xi_j}(y)}  \log\left( 1+ \frac{\delta_j^2 }{ |y|^2}\right)  dy +2 \int_{B^{\xi_j}_{2r_0}\setminus B_{r_0}(0)} \chi(|y|/r_0) e^{\hat{\varphi}_{\xi_j}(y)}
			\left( \frac{\delta_j^2 }{ |y|^2}+\mathcal{O}(\delta_j^{4})\right) dy\\
			&=&  2\delta_j^2  \int_{\frac 1 {\delta_j}(B^{\xi_j}_{r_0}\cap B_{r_0}(0))} \log \left( 1+ \frac 1 {|y|^2}\right) e^{-\hat{\varphi}(\delta_j y)} dy +\mathcal{ O}(\delta_j^2)\\
			&=& 2\delta_j^2 (1+\mathcal{O}(\delta_j)) \int_{B_{r_0/\delta_j}(0) }\log \left( 1+\frac 1 {|y|^2}\right) dy+\mathcal{O}(\delta_j^2) \\
			&=&{\mathcal{O}}(\delta_j^2|\log \delta_j|), 
		\end{eqnarray*}
		where we applied the fact that 
		\begin{eqnarray*}
			0&\leq& \int_{|y|<\frac {r_0}{\delta_j}}     \log \left( 1+\frac 1 {|y|^2}\right) dy 
			=2\pi \int_0^{r_0/\delta_j} \log   \left( 1+\frac 1 {r^{2}}\right)r  dr\\
			&\leq & \pi \int_0^{r^2_0/(\tau\rho)^2} \log   \left( 1+\frac 1 {t}\right) dt \leq 2\pi\int_1^{r_0/\delta_j} r^{-1} dr +\mathcal{O}(1)\leq \mathcal{O}(|\log \delta_j|).
		\end{eqnarray*}
		For any $x\in U_{2r_0}(\xi)$, $-\Delta_g U_j  =e^{-\varphi_{j}} e^{U_j}$.  It follows that 
		\begin{eqnarray*}
			-\Delta_{g} \eta_{j}&=&
			2(\Delta_g\chi_j )\log\frac{|y_{\xi_j}|^2}{\delta_j^2+|y_{\xi_j}|^2}+4\left\lan \nabla\chi_j,\nabla\log\frac{|y_{\xi_j}|^2}{\delta_j^2+|y_{\xi_j}|^2}\right\ran_g\\
			&&+\frac 1 {|\Sigma|_g} \left( \varrho(\xi_j)- \int_{\Sigma} \chi_j e^{-\varphi_j}  e^{U_j} dv_g \right). 
		\end{eqnarray*}
		We observe that $\Delta_{g} \chi_j\equiv 0$ and $\nabla \chi_j\equiv 0$ in $U_{2r_0}(\xi_j)\setminus U_{r_0}(\xi_j)$. 
		For any $x\in U_{2r_0}(\xi_j)\setminus U_{r_0}(\xi_j)$, we have as $\delta_j\rightarrow 0$
		\[ -2\log\left( 1+ \frac{\delta_j^2}{|y_{\xi_j}(x)|^2}\right)
		= -2\delta_j^2  |y_{\xi_j}(x)|^{-2} +\mathcal{O}(\delta_j^{4}) \] 
		and 
		\[-2\nabla \log\left( 1+ \frac{\delta_j^2}{|y_{\xi_j}(x)|^2}\right)
		= -2\delta_j^2 \nabla |y_{\xi_j}(x)|^{-\alpha_i} +\mathcal{O}(\delta_j^{4}).  \] 
		Moreover, a straightforward calculation of the integral implies that 
		\begin{eqnarray*}
			\int_{\Sigma} \chi_j e^{-\varphi_j} e^{U_j} dv_g 
			&=&\int_{B_{2r_0}^{\xi}} 8 \chi(|y|/r_0) \frac{\delta_j^2}{(\delta_j^2 +|y|^2)^2} dy \\
			&=&  \int_{B_{r_0}^{\xi}} 8 \frac{\delta_j^2}{(\delta_j^2 +|y|^2)^2} dy +\mathcal{ O}(\delta_j^2 )\\
			&=& \varrho(\xi_j)+{\mathcal{O}}(\delta_j^2),
		\end{eqnarray*}
		where we applied the fact that $\int _{|y|<r}8 \frac{\delta_j^2}{(\delta_j^2 +|y|^2)^2}  \, dy = 8\pi \left(1- \frac{\delta_j^2}{\delta_j^2+r^{2}}\right)$ for any $r\geq 0.$
		Hence,  as $\delta_j\rightarrow 0$
		\[ -\Delta_g\eta_{j}= \mathcal{O}(\delta_j^2). \]
		By the Schuader theory in~\cite{Nardi2014}, we derive that as $\delta_j\rightarrow 0$
		\begin{equation*}
			\eta_{j}= \mathcal{O}(\delta_j^2|\log \delta_j|). 
		\end{equation*}
		
	\end{proof} 
	\begin{lem}\label{lem:extension_PZ} For $j=1,\ldots,m$ and $i=1,\ldots, \i(\xi_j)$
		{\it
			\[PZ^0_j(x)=\chi_j (x) \left(Z^0_i(x) +1\right) +{\mathcal{O}}(\delta_j^2|\log \delta_j |) = 4\chi_j(x)\frac{\delta_j^2}{ \delta_j^2 +|y_{\xi_j}(x)|^2}+{ \mathcal{O}} (\delta_j^2|\log \delta_j |), \] 
			in $C^1(\Sigma)$  as $\delta_j\rightarrow 0$. 
			And 
			\[ PZ^0_j(x)=  (\delta_j^2|\log \delta_j |), \]
			in $C^1_{loc}(\Sigma \setminus\{\xi_j\})$ as $\rho\rightarrow 0$.
			
			\[PZ_{j}^i(x)
			= \chi_j(x)Z_{j}^i (x)+ {\mathcal{O}}(1),
			\] 
			in $C^1(\Sigma)$ as $\delta_j\rightarrow 0$, and 
			\[PZ_{j}^i (x)=\mathcal{O}(1), \]
			in $C^1_{loc}(\Sigma\setminus\{\xi_j\})$ as $\delta_j\rightarrow 0$. 
			In addition, the convergences above are 
			uniform for $\xi_j$ in any compact subset of $\intsigma$ or  $\xi_j\in \partial \Sigma$. }
	\end{lem}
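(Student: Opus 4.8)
The plan is to prove both expansions by the scheme already used for Lemma~\ref{lem:extension_PU}: for each projected kernel function I would subtract an explicit local model, show that the difference $\eta$ solves a Neumann problem on $\Sigma$ with zero average over $\Sigma$ whose data are small in $C^{\alpha}(\Sigma)$, and then invoke the Schauder estimates of~\cite{Nardi2014} to conclude that $\eta$ is correspondingly small in $C^{2,\alpha}(\Sigma)$, hence in $C^{1}(\Sigma)$. The $C^{1}_{\mathrm{loc}}(\Sigma\setminus\{\xi_j\})$ statements then follow immediately, since away from $\xi_j$ the local models are themselves of the claimed order.

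For the translation modes $i=1,\dots,\i(\xi_j)$ the starting point is that $z^i$ lies in the kernel of $-\Delta-\tfrac{8}{(1+|y|^2)^2}$; pulling this back through $y_{\xi_j}$ and rescaling by $\delta_j$ gives $-\Delta_g Z^i_j=\chi_j e^{-\varphi_j}e^{U_j}Z^i_j$, an identity exact on the support of $\chi_j$. Setting $\eta^i_j:=PZ^i_j-\chi_j Z^i_j$ and using the equation defining $PZ^i_j$, a direct computation gives
\[
-\Delta_g\eta^i_j=-\,\overline{\chi_j e^{-\varphi_j}e^{U_j}Z^i_j}+2\langle\nabla\chi_j,\nabla Z^i_j\rangle_g+(\Delta_g\chi_j)Z^i_j .
\]
The averaged term vanishes because in the isothermal chart the integrand $\chi(|y|/r_0)\,e^{U_j}Z^i_j$ is odd in $y_i$ and is integrated over a domain symmetric in $y_i$ (for $\xi_j\in\partial\Sigma$ only tangential $i$ occur, so the half-disk is symmetric in $y_i$). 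The remaining two terms are supported in the annulus $\{r_0\le|y_{\xi_j}|\le 2r_0\}$, where $Z^i_j$ and its derivatives are $O(1)$ uniformly in $\delta_j$; together with $\partial_{\nu_g}\eta^i_j=0$ (the normal derivatives of $\chi_j$ and of $Z^i_j$ vanish on $\{y_2=0\}$ for tangential $i$) and $\int_\Sigma\eta^i_j=-\int_\Sigma\chi_j Z^i_j\,dv_g=O(1)$, Schauder yields $\|\eta^i_j\|_{C^{2,\alpha}(\Sigma)}=O(1)$, i.e. $PZ^i_j=\chi_j Z^i_j+O(1)$ in $C^{1}(\Sigma)$; and since $\chi_j Z^i_j=O(1)$ on $\{|y_{\xi_j}|\ge c\}$, also $PZ^i_j=O(1)$ in $C^{1}_{\mathrm{loc}}(\Sigma\setminus\{\xi_j\})$.

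For the dilation mode the correct model is $4\chi_j\delta_j^2/(\delta_j^2+|y_{\xi_j}|^2)=\chi_j(Z^0_j+2)$; this is forced by the identity $PZ^0_j=-\delta_j\,\partial_{\delta_j}PU_j$, obtained by differentiating in $\delta_j$ the problem defining $PU_j$ and invoking uniqueness (the Neumann condition and the zero-average normalization are preserved), together with $-\delta_j\,\partial_{\delta_j}\big(\chi_j(U_j-\log(8\delta_j^2))\big)=\chi_j(Z^0_j+2)$. Setting $\eta^0_j:=PZ^0_j-\chi_j(Z^0_j+2)$ and using $-\Delta_g(Z^0_j+2)=-\Delta_g Z^0_j=\chi_j e^{-\varphi_j}e^{U_j}Z^0_j$ on the support of $\chi_j$, one obtains
\[
-\Delta_g\eta^0_j=-\,\overline{\chi_j e^{-\varphi_j}e^{U_j}Z^0_j}+2\langle\nabla\chi_j,\nabla(Z^0_j+2)\rangle_g+(\Delta_g\chi_j)(Z^0_j+2).
\]
Here $\int_\Sigma\chi_j e^{-\varphi_j}e^{U_j}Z^0_j\,dv_g=16\int_{\R^2}\tfrac{1-|z|^2}{(1+|z|^2)^3}\,dz+O(\delta_j^2)=O(\delta_j^2)$ since the leading integral vanishes, and on the annulus $Z^0_j+2$ and its gradient are $O(\delta_j^2)$; thus $\|{-\Delta_g\eta^0_j}\|_{C^{\alpha}(\Sigma)}=O(\delta_j^2)$. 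Moreover $\partial_{\nu_g}\eta^0_j=0$ (the model is radial in $y_{\xi_j}$) and $\int_\Sigma\eta^0_j=-\int_\Sigma\chi_j(Z^0_j+2)\,dv_g=O(\delta_j^2|\log\delta_j|)$ by the rescaling $y=\delta_j z$. Schauder then gives $\|\eta^0_j\|_{C^{2,\alpha}(\Sigma)}=O(\delta_j^2|\log\delta_j|)$, which is the first expansion in $C^{1}(\Sigma)$; away from $\xi_j$ one has $\chi_j(Z^0_j+2)=O(\delta_j^2)$, giving $PZ^0_j=O(\delta_j^2|\log\delta_j|)$ in $C^{1}_{\mathrm{loc}}(\Sigma\setminus\{\xi_j\})$.

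The uniformity over compact subsets of $\intsigma$, respectively over $\xi_j\in\partial\Sigma$, is inherited from the smooth, hence locally uniform, dependence of $(y_{\xi_j},\hat\varphi_{\xi_j},r_{\xi_j})$ on $\xi_j$ and from the uniformity of the Schauder constants. The hard part I anticipate is the error bookkeeping: one must pin down the vanishing of $\int_{\R^2}(1-|z|^2)(1+|z|^2)^{-3}\,dz$ together with a careful treatment of the truncation tails so that the $PZ^0_j$ remainder comes out exactly $O(\delta_j^2|\log\delta_j|)$ and not one logarithmic factor off, and one must run the Schauder step at the $C^{\alpha}\to C^{2,\alpha}$ level — so that $C^{1}$-convergence rather than mere $\overline{\mathrm H}^1$-type convergence is obtained — which forces the right-hand sides above to be controlled in $C^{\alpha}(\Sigma)$, not merely in $L^{p}(\Sigma)$.
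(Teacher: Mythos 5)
Your proposal is correct and takes exactly the route the paper intends: the paper omits this proof, declaring it ``almost identical'' to Lemma~\ref{lem:extension_PU}, i.e.\ subtract an explicit local model, observe that the difference solves a Neumann problem with small data and controlled average, and conclude via the Schauder estimates of~\cite{Nardi2014}. Your model $\chi_j(Z^0_j+2)=4\chi_j\delta_j^2/(\delta_j^2+|y_{\xi_j}|^2)$ is the correct one for the paper's normalization of $Z^0_j$ (the ``$+1$'' in the statement is a typo), and your bookkeeping — vanishing of $\int_{\R^2}\frac{1-|z|^2}{(1+|z|^2)^3}\,dz$, $O(\delta_j^2)$ interior data, $O(\delta_j^2|\log\delta_j|)$ average — reproduces the claimed rates.
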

	\begin{proof}
			Since the reasoning is almost identical to that in Lemma \ref{lem:extension_PU}, we omit the details here for the sake of brevity. 
	\end{proof}
	\begin{lem}\label{lem:diff_PZ_partial_PU}
		As $\lambda\rightarrow 0$, for $j,j'=1,\ldots,m$ and $i=1,\ldots, \i(\xi_j)$, 
		\[ \partial_{(\xi_j)_i}PU_{j'}= \delta_{j'j}PZ^i_j -PZ^0_{j'} \partial_{(\xi_j)_i}\log  d_{j'}+\mathcal{O}(1),\]
		in $C(\Sigma)$ uniformly for  $\xi$ in any compact subset of $\Xi_{k,m}. $
	\end{lem}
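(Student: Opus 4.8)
The plan is to differentiate the defining projection equation \eqref{eq:proj} for $PU_{j'}$ with respect to $\xi_j$ and to split the derivative into a ``translation'' contribution (through the centre of the bubble, present only when $j=j'$) and a ``dilation'' contribution (through the scale $\delta_{j'}=d_{j'}(\xi)\lambda^{1/2}$, present for every $j$, since $d_{j'}$ depends on the whole configuration $\xi$ via $H^g$, $G^g$, $V_1$ and $z(\cdot,\xi)$). First I record that $(\xi,\delta)\mapsto PU_{\xi,\delta}$, and likewise the projected kernel functions, are $C^1$ (in fact smooth) into $C^{2,\alpha}(\Sigma)$: each solves a linear elliptic boundary value problem with data depending smoothly on $(\xi,\delta)$, and $(-\Delta_g,\partial_{\nu_g})$ is an isomorphism on the zero--average subspace. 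Since $\xi\mapsto d_{j'}(\xi)$ is $C^1$ by Lemma~\ref{lem:C_1_z_x_xi} and the smoothness of $H^g,G^g,V_1$, the map $\xi\mapsto PU_{j'}$ is $C^1$, and the chain rule gives
\[
\partial_{(\xi_j)_i}PU_{j'}=\delta_{j'j}\,\big(\text{centre-derivative of }PU_{\xi_{j'},\delta}\big)\Big|_{\delta=\delta_{j'}}+\big(\partial_\delta PU_{\xi_{j'},\delta}\big)\Big|_{\delta=\delta_{j'}}\,\partial_{(\xi_j)_i}\delta_{j'}.
\]

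For the dilation part I use the exact identity $\partial_\delta PU_{\xi,\delta}=-\delta^{-1}PZ^0_{\xi,\delta}$, where $PZ^0_{\xi,\delta}$ is the $(\xi,\delta)$-family extending $PZ^0_j$. Indeed a direct computation gives $\partial_\delta U_{\xi,\delta}=-\delta^{-1}\cdot 2\frac{\delta^2-|y_\xi|^2}{\delta^2+|y_\xi|^2}$, which on $\mathrm{supp}\,\chi_\xi$ equals $-\delta^{-1}Z^0_{\xi,\delta}$; differentiating \eqref{eq:proj} in $\delta$ then yields $-\Delta_g\partial_\delta PU_{\xi,\delta}=\delta^{-1}\Delta_g PZ^0_{\xi,\delta}$, and since $\partial_\delta PU_{\xi,\delta}+\delta^{-1}PZ^0_{\xi,\delta}$ is harmonic with vanishing Neumann data and zero average, it is identically zero. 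As $\partial_{(\xi_j)_i}\delta_{j'}=\delta_{j'}\,\partial_{(\xi_j)_i}\log d_{j'}$, the dilation part equals exactly $-PZ^0_{j'}\,\partial_{(\xi_j)_i}\log d_{j'}$, which is the corresponding term in the statement; moreover $\partial_{(\xi_j)_i}\log d_{j'}=\mathcal{O}(1)$ uniformly on compact subsets of $\Xi_{k,m}$, by the smoothness of the ingredients of $d_{j'}$ and the $C^1$-dependence of $z(\cdot,\xi)$ on $\xi$.

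It remains to show that, with $\delta=\delta_j$ frozen, the centre-derivative of $PU_{\xi_j,\delta}$ equals $PZ^i_j+\mathcal{O}(1)$ in $C(\Sigma)$, uniformly for $\xi$ in a compact subset. I would obtain this by differentiating the asymptotic expansions of Lemmas~\ref{lem:extension_PU} and~\ref{lem:extension_PZ} in the centre variable. Using the standard expansion $\partial_{(\xi_j)_i}y_{\xi_j}(x)=-e_i+\mathcal{O}(|y_{\xi_j}(x)|)$ for the smoothly varying isothermal charts and $U_{\xi_j,\delta}-\log(8\delta^2)=-2\log(\delta^2+|y_{\xi_j}|^2)$, the centre-derivative of $\chi_j\big(U_{\xi_j,\delta}-\log 8\delta^2\big)$ equals $\chi_j Z^i_j+\mathcal{O}(1)$ (the derivative of the cut-off is supported in the annulus $r_0\le|y_{\xi_j}|\le 2r_0$, where every factor is $\mathcal{O}(1)$). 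Since $\varrho(\xi_j)$ is locally constant, the centre-derivative of $\varrho(\xi_j)H^g(\cdot,\xi_j)$ is $\varrho(\xi_j)\,\partial_{(\xi_j)_i}H^g(\cdot,\xi_j)$, which is bounded in $C(\Sigma)$ because the diagonal singularity of $\partial_{(\xi_j)_i}G^g(\cdot,\xi_j)$ is cancelled by that of $\tfrac{2}{\varrho(\xi_j)}\partial_{(\xi_j)_i}\log|y_{\xi_j}(\cdot)|^{-1}$ coming from $\Gamma^g_{\xi_j}$ in \eqref{eqR}; finally the $\mathcal{O}(\delta_j^2|\log\delta_j|)$ remainder in Lemma~\ref{lem:extension_PU} is $C^1$ in the centre with centre-derivative again $\mathcal{O}(\delta_j^2|\log\delta_j|)$, since its defining data (annulus-supported terms and the mass defect $\varrho(\xi_j)-\int_\Sigma\chi_j e^{-\varphi_j}e^{U_j}\,dv_g$) together with their centre-derivatives are $\mathcal{O}(\delta_j^2)$. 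Comparing with $PZ^i_j=\chi_j Z^i_j+\mathcal{O}(1)$ from Lemma~\ref{lem:extension_PZ} gives the claim. An equivalent route is to differentiate \eqref{eq:proj} directly: the remainder $R:=(\text{centre-derivative of }PU_{\xi_j,\delta})-PZ^i_j$ solves $-\Delta_g R=E_0-\overline{E_0}$ with $\partial_{\nu_g}R=0$, $\int_\Sigma R\,dv_g=0$, where $\|E_0\|_{L^1(\Sigma)}=\mathcal{O}(1)$, $E_0$ concentrates at scale $\delta_j$ with an $L^1$-integrable rescaled profile, and $\int_\Sigma E_0\,dv_g=\mathcal{O}(\delta_j^2)$; the last estimate uses that $\varrho(\xi_j)$ is locally constant and the parity identity $\int_\Sigma\chi_j e^{-\varphi_j}e^{U_j}Z^i_j\,dv_g=0$, so that the Green representation of $R$ carries no $\log\delta_j$ term and is $\mathcal{O}(1)$ in $C(\Sigma)$.

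The delicate point — and the only genuine obstacle — is precisely this centre-derivative estimate: the crude bound on $\|E_0\|_{L^p(\Sigma)}$ for $p>1$ blows up like $\delta_j^{2/p-2}$, so one cannot simply invoke Schauder or $L^p$ elliptic theory; the $\mathcal{O}(1)$ conclusion really needs the cancellation $\int_\Sigma E_0\,dv_g=\mathcal{O}(\delta_j^2)$ (which is automatically encoded if, as above, one instead differentiates the already-proven expansions). All estimates are uniform for $\xi$ in a fixed compact subset of $\Xi_{k,m}$ because there the charts, cut-offs, $H^g$, $G^g$, $V_1$, $z(\cdot,\xi)$ and their first derivatives are uniformly controlled and the points $\xi_1,\dots,\xi_m$ stay uniformly separated from each other and from $\partial\Sigma$ (for the interior ones).
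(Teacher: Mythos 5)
Your decomposition of $\partial_{(\xi_j)_i}PU_{j'}$ into a translation part (only for $j'=j$) and a dilation part through $\delta_{j'}=d_{j'}(\xi)\lambda^{1/2}$, and your treatment of the dilation part via the identity $\partial_\delta PU_{\xi,\delta}=-\delta^{-1}PZ^0_{\xi,\delta}$ (obtained by differentiating \eqref{eq:proj} in $\delta$ and using uniqueness for the Neumann problem with zero average), is exactly the paper's argument — in fact written more carefully than in the paper, which at this point has sign typos but reaches the same term $-PZ^0_{j'}\partial_{(\xi_j)_i}\log d_{j'}$.

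The divergence is in the centre-derivative estimate, and there your assessment of the difficulty is off. The paper does precisely what you declare impossible: it uses the refined chart expansion $\partial_{(\xi_j)_i}|y_{\xi_j}(x)|^2=-2(y_{\xi_j})_i+\mathcal{O}(|y_{\xi_j}|^3)$ — not merely $\partial_{(\xi_j)_i}y_{\xi_j}=-e_i+\mathcal{O}(|y_{\xi_j}|)$, which only yields an $\mathcal{O}(|y_{\xi_j}|^2)$ error — so that the right-hand side of the equation for $R=\partial_{(\xi_j)_i}PU_{\xi_j,\delta}|_{\delta=\delta_j}-PZ^i_j$ is $\mathcal{O}\bigl(\delta_j^2|y_{\xi_j}|^3(\delta_j^2+|y_{\xi_j}|^2)^{-3}\bigr)+\mathcal{O}(1)$. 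Its $L^p$ norm for $p\in(1,2)$ is $\mathcal{O}(\delta_j^{2/p-1})$, hence bounded; likewise the term coming from $\partial_{(\xi_j)_i}e^{-\varphi_j}$ gains a factor $|y_{\xi_j}|$ because $\hat{\varphi}_\xi(0)=0$ and $\nabla\hat{\varphi}_\xi(0)=0$ for every $\xi$, so the dangerous contribution of size $e^{U_j}\cdot\mathcal{O}(1)$ — the one responsible for the $\delta_j^{2/p-2}$ blow-up you cite — never appears. Consequently plain $L^p$ elliptic theory for the Neumann problem plus the two-dimensional embedding $W^{2,p}\hookrightarrow C^0$ finishes the proof, with no cancellation trick. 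Your fallback (an $L^1$ bound on $E_0$, the vanishing of $\int_\Sigma E_0\,dv_g$ by locally constant $\varrho$ and parity, and a Green-representation argument) is a legitimate alternative that would even tolerate the weaker chart estimate, but as written it is a sketch: to get $\mathcal{O}(1)$ in $C(\Sigma)$ from a concentrating $L^1$ right-hand side you must control $\sup_x\int_\Sigma|G^g(x,z)|\,|E_0(z)|\,dv_g(z)$, i.e. verify a logarithmic-moment bound for the rescaled profile uniformly in $\xi$, which you assert rather than prove; similarly, your first route silently differentiates the $\mathcal{O}(\delta_j^2|\log\delta_j|)$ remainder of Lemma~\ref{lem:extension_PU} in the centre variable, which is not automatic and would itself require rerunning the elliptic estimates on the differentiated problem, i.e. essentially the paper's computation.
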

	\begin{proof}
		We observe that 
		$\partial_{(\xi_j)_i}PU_j=\delta_{j'j}\partial_{(\xi_j)_i} PU_{\xi_j,\rho}|_{\rho=\delta_j}+ \partial_{\rho}PU_{\xi_{j'},\rho}|_{\rho=\delta_{j'}}\lambda^{\frac 1 2} \partial_{(\xi_j)_i} d_{j'}.$
		Since $$\partial_{(\xi_j)_i}|y_{\xi_j}(x)|^2=-2(y_{\xi_j})_i+\mathcal{O}(|y_{\xi_j}|^3),$$
		for any $x\in U(\xi_j)$, we have 
		\begin{eqnarray*}
			\partial_{(\xi_j)_i} U_{\xi_j,\rho}|_{\rho=\delta_j}= \frac{4(y_{\xi_j})_i}{\delta_j^2+|y_{\xi_j}|^2}
			+\mathcal{O}\left(\frac{|y_{\xi_j}|^2}{\delta_j^2+|y_{\xi_j}|^3}\right)= Z^i_j	+\mathcal{O}\left(\frac{|y_{\xi_j}|^2}{\delta_j^2+|y_{\xi_j}|^3}\right) . 
		\end{eqnarray*}
		By~\eqref{eq:proj}, we derive that 
		\begin{eqnarray*}
			-\Delta_g(\partial_{(\xi_j)_i} PU_{\xi_j,\rho}|_{\rho=\delta_j} -PZ^i_j )
			&=& \chi_j e^{-\varphi_j}  e^{U_{\xi_j,\delta_j}}(\partial_{(\xi_j)_i}U_j-Z^i_j) -\overline{\chi_j e^{-\varphi_j}  e^{U_{\xi_j,\delta_j}}(\partial_{(\xi_j)_i}U_j-Z^i_j)}\\
			&&+\mathcal{O}(1)\\
			&=& \mathcal{O}\left( \frac{\delta_j^2 |y_{\xi_j}(x)|^3}{(\delta_j^2+|y_{\xi_j}|^2)^3}\right)+\mathcal{O}(1). 
		\end{eqnarray*}
		For any $p\in (1,2)$, 
		$\|-\Delta_g (\partial_{(\xi_j)_i} PU_{\xi_j,\rho}|_{\rho=\delta_j} -PZ^i_j )\|=\mathcal{O}(1)$.
		$
		\int_{\Sigma} (\partial_{(\xi_j)_i} PU_{\xi_j,\rho}|_{\rho=\delta_j} -PZ^i_j )\, dv_g = 0.
		$
		For any $x\in \partial\Sigma$, 
		$
		\partial_{\nu_g} (\partial_{(\xi_j)_i} PU_{\xi_j,\rho}|_{\rho=\delta_j} -PZ^i_j )= 0.
		$
		Hence, applying the $L^p$-theory,
		\[ \| \partial_{(\xi_j)_i} PU_{\xi_j,\rho}|_{\rho=\delta_j} -PZ^i_j\|_{W^{2,p}(\Sigma)} =\mathcal{O}(1). \]
		By the Sobolev's embedding, we have 
		$ \partial_{(\xi_j)_i} PU_{\xi_j,\rho}|_{\rho=\delta_j}=PZ^i_j+\mathcal{O}(1),$
		in $C^1(\Sigma)$. 
		$\delta_j\partial_{\rho}U_{\xi_j,\rho}|_{\rho=\delta_j} = \frac{2(|y_{\xi_j}|^2-\delta_j^2)}{|y_{\xi_j}|^2+\delta_j^2} = -Z^0_j.$
		It follows that
		$\delta_j\partial_{\rho}U_{\xi_j,\rho}|_{\rho=\delta_j}=PZ^0_j. $
		Therefore, 
		\[\partial_{(\xi_j)_i}PU_j= PZ^i_j -PZ^0_j \partial_{(\xi_j)_i} \log d_j  \]
		and for $j'\neq j$
		\[\partial_{(\xi_j)_i}PU_{j'}=  -PZ^0_{j'} \partial_{(\xi_{j})_i} \log d_{j'}.  \]
	\end{proof}
	The asymptotic ``orthogonality" properties of $PZ_{ij}$ as $\rho\rightarrow 0^+$. 
	\begin{lem}~\label{lem4}
		As $\rho\rightarrow 0$ for $j,j'=1,\ldots,m$, $i=0,\ldots,\i(\xi_j)$ and $i'=0,\ldots, \i(\xi_{j'}$
		\[ \langle PZ^i_j, PZ^{i'}_{j'}\rangle=	\left\{\begin{array}{ll}
			\delta_{i'i}\delta_{j'j} \frac{ 8\varrho(\xi_j)D_i}{\pi}+ {\mathcal{O}}(\lambda\log |\lambda|) & \text{ when }  i \text{ or } i'=0\\
			\delta_{i'i}\delta_{j'j} \frac{ 8\varrho(\xi_j)D_i}{\pi\delta_j^2} + {\mathcal{O}}(\lambda^{-\frac 1 2})& \text{ otherwise } 
		\end{array}\right.,
		\] 
		where the $\delta_{ij}$ is the Kronecker symbol, and  $D_0=\int_{\mathbb{R}^2}  \frac{1-|y|^2}{ (1+|y|^2)^4 }dy$, $D_1= D_2=\int_{\mathbb{R}^2} \frac{ |y|^2 }{ (1+|y|^2)^4}dy . $
	\end{lem}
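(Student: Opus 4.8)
\emph{Proof strategy.}
The idea is to convert the $\oH$-inner product into a single integral localized at the bubble and then rescale. Integrating by parts with the equation defining $PZ^i_j$ and using $\int_\Sigma PZ^{i'}_{j'}\,dv_g=0$ (which kills the average correction $\overline{\chi_j e^{-\varphi_j}e^{U_j}Z^i_j}$) gives
\[
\langle PZ^i_j,PZ^{i'}_{j'}\rangle=\int_\Sigma\langle\nabla PZ^i_j,\nabla PZ^{i'}_{j'}\rangle_g\,dv_g=\int_{\Sigma}\chi_j e^{-\varphi_j}e^{U_j}Z^i_j\,PZ^{i'}_{j'}\,dv_g ,
\]
an integral supported in $U_{2r_0}(\xi_j)$; when one of $i,i'$ equals $0$ I would instead put the equation on the index-$0$ factor, so that the radial bubble derivative $Z^0_{j}$ (which has nearly vanishing integral) multiplies a smooth object --- this choice is what makes the error estimates come out. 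I would first dispose of $j'\neq j$: there $U_{2r_0}(\xi_j)$ stays away from $\xi_{j'}$, so Lemma~\ref{lem:extension_PZ} gives that $PZ^{i'}_{j'}$ is bounded in $C^1$ near $\xi_j$ (and is $\mathcal{O}(\delta_{j'}^2|\log\delta_{j'}|)$ when $i'=0$); Taylor-expanding it at $\xi_j$ and using that the odd moments $\int_\Sigma\chi_j e^{-\varphi_j}e^{U_j}Z^i_j\,(y_{\xi_j})^\beta\,dv_g$ of the concentrating bubble vanish to leading order by parity, one checks these entries lie within the claimed error $\mathcal{O}(\lambda|\log\lambda|)$ (resp.\ $\mathcal{O}(\lambda^{-1/2})$).

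The main computation is the diagonal case $j'=j$. After the change of variables $y_{\xi_j}=\delta_j\tilde y$ (so $\delta_j^2=\lambda d_j^2$, the measure $\chi_j e^{-\varphi_j}e^{U_j}\,dv_g$ converges weakly to $\tfrac{8}{(1+|\tilde y|^2)^2}\,d\tilde y$, $Z^0_j\to 2z^0$, $Z^i_j\to\delta_j^{-1}z^i$, and by Lemma~\ref{lem:extension_PZ} $PZ^0_j\to\tfrac{4}{1+|\tilde y|^2}=2z^0+2$, $PZ^i_j\to\delta_j^{-1}z^i$ for $i\ge1$, using $\hat\varphi_j(0)=\nabla\hat\varphi_j(0)=0$ so that $e^{-\varphi_j}\to1$), the structural fact I would exploit is the parity dichotomy of the Liouville kernel: $z^0(\tilde y)=\tfrac{1-|\tilde y|^2}{1+|\tilde y|^2}$ is radial, whereas $z^i(\tilde y)=\tfrac{4\tilde y_i}{1+|\tilde y|^2}$ is odd in $\tilde y_i$. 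Hence for $i\neq i'$ the limiting integrand is odd and integrates to zero, while for $i=i'$ dominated convergence yields, in the interior case $\xi_j\in\intsigma$,
\[
\langle PZ^0_j,PZ^0_j\rangle=\int_{\R^2}\frac{8}{(1+|\tilde y|^2)^2}\,2z^0\,(2z^0+2)\,d\tilde y+o(1)=64\int_{\R^2}\frac{1-|\tilde y|^2}{(1+|\tilde y|^2)^4}\,d\tilde y+o(1)
\]
and, for $i\ge1$,
\[
\langle PZ^i_j,PZ^i_j\rangle=\frac{1}{\delta_j^2}\int_{\R^2}\frac{8}{(1+|\tilde y|^2)^2}\,(z^i)^2\,d\tilde y+\mathcal{O}(1)=\frac{128}{\delta_j^2}\int_{\R^2}\frac{\tilde y_i^2}{(1+|\tilde y|^2)^4}\,d\tilde y+\mathcal{O}(1).
\]
Using $\int_{\R^2}\tilde y_i^2(1+|\tilde y|^2)^{-4}d\tilde y=\tfrac12 D_1$ and $\varrho(\xi_j)=8\pi$, these equal $\tfrac{8\varrho(\xi_j)D_0}{\pi}$ and $\tfrac{8\varrho(\xi_j)D_i}{\pi\delta_j^2}$ respectively. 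On the boundary the Neumann condition is preserved by the isothermal chart, see~\eqref{eq:out_normal_derivatives}, so the rescaled limit lives on the half-space $\R^2_+$, the even integrands are halved, and $\varrho(\xi_j)=4\pi$, which reproduces exactly the same values.

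It remains to collect the errors, which originate from three places: (a) the conformal factor, $e^{-\hat\varphi_j(\delta_j\tilde y)}-1=\mathcal{O}(\delta_j^2|\tilde y|^2)$, whose leading (even) part integrated against the slowly decaying weight $(1+|\tilde y|^2)^{-2}$ up to radius $r_0/\delta_j$ produces the $\mathcal{O}(\delta_j^2|\log\delta_j|)=\mathcal{O}(\lambda|\log\lambda|)$ logarithm, while its odd part carries one more power of $\delta_j$; (b) the $\mathcal{O}(1)$, resp.\ $\mathcal{O}(\delta_j^2|\log\delta_j|)$, remainders supplied by Lemma~\ref{lem:extension_PZ}; and (c) the cutoff annulus $r_0\le|y_{\xi_j}|\le 2r_0$, where $e^{U_j}=\mathcal{O}(\delta_j^2)$ only contributes $\mathcal{O}(\delta_j^2)$. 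In the entries with $i$ or $i'=0$ the parity cancellations (radial $\times$ odd $=$ odd, together with $\nabla\hat\varphi_j(0)=0$) bring the naive $\delta_j^{-1}\times\mathcal{O}(\delta_j^2|\log\delta_j|)$ bound down to $\mathcal{O}(\lambda|\log\lambda|)$; in the other entries the coarser bound $\mathcal{O}(\lambda^{-1/2})$ suffices. I expect the main obstacle to be precisely this bookkeeping --- in particular, justifying that the mixed-index diagonal terms really do decay like $\lambda|\log\lambda|$, which forces one to be systematic about which factor carries the equation, about the radiality of $z^0$ against the oddness of $z^i$, and about the vanishing first-order Taylor coefficient of $\hat\varphi_j$ --- all of it uniform for $\xi_j$ in a compact subset of $\intsigma$ and for $\xi_j\in\partial\Sigma$.
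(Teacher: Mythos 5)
Your proposal is correct and follows essentially the same route as the paper: pairing against the defining equation of $PZ^i_j$ to reduce the inner product to $\int_\Sigma \chi_j e^{-\varphi_j}e^{U_j}Z^i_j\,PZ^{i'}_{j'}\,dv_g$, inserting the expansions of Lemma~\ref{lem:extension_PZ}, rescaling by $\delta_j$, and using parity/oddness to kill the mixed terms, with the same limiting constants $D_0,D_i$ in both the interior and half-space cases. The only cosmetic difference is that your intermediate remainder $\mathcal{O}(1)$ in the $i=i'\geq 1$ display should really be $\mathcal{O}(\delta_j^{-1})=\mathcal{O}(\lambda^{-1/2})$, which is exactly the error you (and the paper) state in the end, so nothing is lost.
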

	\begin{proof}
		\begin{eqnarray*}
			\langle PZ^i_j, PZ^{i'}_{j'}\rangle&=& \int_{\Sigma} \chi_je^{-\varphi_j} e^{U_j} Z^i_j PZ^{i'}_{j'} dv_g\\
			&=& \int_{\Sigma\cap U_{2r_0}(\xi_j)} +\int_{\Sigma\setminus U_{2r_0}(\xi_j)}  \chi_je^{-\varphi_j} e^{U_j} Z^i_j PZ^{i'}_{j'} dv_g. 
		\end{eqnarray*}
		For $i=i'=0$, by Lemma~\ref{lem:extension_PZ}, 
		\begin{eqnarray*}
			&&\int_{\Sigma\cap U_{2r_0}(\xi_j) }\chi_je^{-\varphi_j} e^{U_j} Z^i_j PZ^{i'}_{j'} dv_g\\
			&= &16 
			\int_{B^{\xi_j}_{2r_0}} \chi\left(\frac{|y|}{r_0}\right)  \frac{ |y|^2-\delta_j^2}{(\delta_j^2+|y|^2)^3} \left( \frac{ 4\delta_{j'j}\delta_{j}^2 \chi\left(\frac{|y|}{r_0}\right)}{ \delta_j^2 +|y|^2} +\mathcal{O}(\lambda|\log\lambda|)\right) dy \\
			&= & 64 \delta_{j'j} \int_{\Omega_j}  \frac{ 1-|y|^2}{(1+|y|^2)^4}\, dy 
			+ \mathcal{O}( \lambda|\log\lambda|).
		\end{eqnarray*} 
		As  $\lambda \rightarrow 0$,
		\[ 	\lan PZ^0_j, PZ^0_{j'}\ran =\delta_{j'j}\frac{8\varrho(\xi_j)D_0}{\pi} +{\mathcal{O}}(\lambda|\log\lambda|), \]
		where $D_0=\int_{\mathbb{R}^2}  \frac{1-|y|^2}{ (1+|y|^2)^4 }\, dy.$ \par 
		Similarly, for $i'=0$ and $i=1,2$ for $\xi_j\in\intsigma$ and $i=1$ for $\xi_j\in\partial\Sigma$, we have 
		\begin{eqnarray*}
			\lan PZ^i_j, PZ^{i'}_{j'}\ran &=& 	\int_{\Sigma\cap U_{2r_0}(\xi_j)} \chi_j   e^{-\varphi_j} e^{U_j} Z^i_j PZ^{i'}_{j'}\,  dv_g\\
			&=& 32\int_{ B_{2r_0}^{\xi_j}} \chi\left(\frac{|y|}{r_0}\right) \frac{y_i}{( \delta_j^2 +|y|^2)^3} \left(-\frac{ 4\delta_{j'j}\delta_j^2\chi\left(\frac{|y|}{r_0}\right)  }{ \delta_j^2 +|y|^2} \right.\\
			&&\left. +\mathcal{O}(\lambda|\log\lambda|) \right) dy=\mathcal{O}(\lambda|\log\lambda|),
		\end{eqnarray*}
		where we applied the symmetric property
		$\int_{ B_{2r_0}^{\xi_j}} \chi\left(\frac{|y|}{r_0}\right) \frac{y_i}{( \delta_j^2 +|y|^2)^4}\, dy=0. $
		Applying Lemma~\ref{lem:extension_PZ}, for $i=1,\ldots,\i(\xi_j)$, $i'=1,\ldots,\i(\xi_{j'})$, 
		\begin{eqnarray*}
			&&	\int_{\Sigma\cap U_{2r_0}(\xi)} \chi_j e^{-\varphi_j} e^{U_j} Z^i_j PZ^{i'}_{j'}dv_g(x)\\
			&
			=& 32\int_{ B_{2r_0}^{\xi_j}} \chi\left(\frac{|y|}{r_0}\right) \frac{\delta_j^2 y_i}{( \delta_j^2+|y|^2)^3} \left(\delta_{j'j }\chi\left(\frac{|y|}{r_0}\right) 
			\frac{ 4y_{i'}}{ \delta_j^2 +|y|^2}  +{\mathcal{O}}(1) \right)\, dy\\
			&=& \frac{128}{\delta_j^2} \int_{ \frac 1 {\delta_j}B_{r_0} ^{\xi}}  \frac{\delta_{j'j}y_iy_{i'} }{(1+|y|^2)^4} dy + \mathcal{O}(\lambda^{-\frac 1 2})\\
			&=& \frac{ 8\delta_{i'i}\delta_{j'j}\varrho(\xi_j)D_i}{\pi\delta_j^2} \delta_{ij}+
			\mathcal{O}(\lambda^{-\frac 1 2 })\quad ( \lambda\rightarrow 0),
		\end{eqnarray*}
		where  $D_i= \int_{\mathbb{R}^2} \frac{ |y|^2 }{ (1+|y|^2)^4}  dy . $
	\end{proof}
	\begin{lem}
		\label{lem:diif_e^W_sum_e^u} 
		There exists $p_0>1$ such that for any $p\in (1,p_0)$
		$$\left\|2 \lambda V_1e^{W_{1,\lambda}}-\sum_{j=1}^m\chi_j e^{-\varphi_j} e^{U_j} \right\|_{L^p(\Sigma)}=\mathcal{O}\left(\lambda^{\frac{2-p}{2p}}\right),$$
		as $\lambda \rightarrow 0.$
	\end{lem}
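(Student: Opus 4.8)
The plan is to compare $2\lambda V_1 e^{W_{1,\lambda}}$ to $\sum_{j=1}^m \chi_j e^{-\varphi_j}e^{U_j}$ region by region, splitting $\Sigma$ into the concentration annuli $U_{2r_0}(\xi_j)$ around each $\xi_j$ and the complement where no bubble lives. First I would use the expansion~\eqref{eq:def_theta}–\eqref{eq:est_theta} for $\Theta_j$ together with the choice~\eqref{eq:def_d_ij} of $d_j$. Recall that $\Theta_j(y) = W_{1,\lambda}\circ y_{\xi_j}^{-1}(\delta_j y) + \hat\varphi_{\xi_j}(\delta_j y) - \chi(\delta_j|y|/r_0)\,U_j\circ y_{\xi_j}^{-1}(\delta_j y) + \log V_1\circ y_{\xi_j}^{-1}(\delta_j y) + \log(2\lambda)$, so that on $U_{r_0}(\xi_j)$ (where $\chi_j\equiv1$) one has, in the scaled variable $y\in\Omega_j$,
\[
2\lambda V_1 e^{W_{1,\lambda}} = e^{-\varphi_j}\, e^{U_j}\, e^{\Theta_j(y)}.
\]
By~\eqref{eq:est_theta} and the definition of $d_j$, $\Theta_j(y) = \mathcal{O}(\lambda|\log\lambda| + \lambda^{1/2}|y|)$, so $e^{\Theta_j}-1 = \mathcal{O}(\lambda|\log\lambda| + \lambda^{1/2}|y|)$ on the region $|y|\lesssim r_0/\delta_j$. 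Hence on $U_{r_0}(\xi_j)$,
\[
2\lambda V_1 e^{W_{1,\lambda}} - \chi_j e^{-\varphi_j}e^{U_j} = e^{-\varphi_j}e^{U_j}\bigl(e^{\Theta_j(y)}-1\bigr) = \mathcal{O}\!\left(\frac{\delta_j^2}{(\delta_j^2+|y_{\xi_j}|^2)^2}\bigl(\lambda|\log\lambda| + \lambda^{1/2}|y_{\xi_j}|/\delta_j\bigr)\right).
\]

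Next I would estimate the $L^p$ norm of this difference on $U_{r_0}(\xi_j)$ by passing to the scaled variable: $\int_{U_{r_0}(\xi_j)}|\cdots|^p\,dv_g = \delta_j^{2-2p}\int_{B_{r_0/\delta_j}}\bigl|(1+|y|^2)^{-2}(\lambda|\log\lambda| + \lambda^{1/2}|y|)\bigr|^p\,dy$. The term $\lambda|\log\lambda|$ contributes $\mathcal{O}(\delta_j^{2-2p}(\lambda|\log\lambda|)^p) = \mathcal{O}(\lambda^{1-p}\lambda^p|\log\lambda|^p) = \mathcal{O}(\lambda|\log\lambda|^p)$, which is $o(\lambda^{(2-p)/p})$ for $p$ close to $1$; the term $\lambda^{1/2}|y|$ contributes $\delta_j^{2-2p}\lambda^{p/2}\int_{B_{r_0/\delta_j}}|y|^p(1+|y|^2)^{-2p}\,dy$, and since $\delta_j = d_j\lambda^{1/2}$, this is $\mathcal{O}(\lambda^{1-p}\lambda^{p/2}) = \mathcal{O}(\lambda^{1-p/2})$ provided the $y$-integral converges, i.e. $p + 2 - 4p < 0$, which holds for $p < 2/3$; one then checks $\lambda^{1-p/2} = \lambda^{(2-p)/2}$, which after taking the $p$-th root gives the claimed $\lambda^{(2-p)/(2p)}$. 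On the transition annulus $U_{2r_0}(\xi_j)\setminus U_{r_0}(\xi_j)$, $\chi_j$ varies but both $e^{U_j} = \mathcal{O}(\delta_j^2)$ there and $W_{1,\lambda}$ is bounded (Lemma~\ref{lem:extension_PU} gives $W_{1,\lambda} = \varrho(\xi_j)H^g + \mathcal{O}(1)$ away from $\xi_j$), so the difference is pointwise $\mathcal{O}(\delta_j^2) = \mathcal{O}(\lambda)$ there and contributes $\mathcal{O}(\lambda)$ in $L^p$. Finally, on $\Sigma\setminus\bigcup_j U_{r_0}(\xi_j)$ the sum $\sum_j\chi_j e^{-\varphi_j}e^{U_j}$ vanishes identically (support of $\chi_j$) or is $\mathcal{O}(\delta_j^2)$, and $2\lambda V_1 e^{W_{1,\lambda}} = \mathcal{O}(\lambda)$ by~\eqref{eq:expansion_W1} since $W_{1,\lambda}$ is uniformly bounded there; so this region contributes $\mathcal{O}(\lambda)$ as well.

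Summing the contributions over $j=1,\dots,m$ and over the three types of regions, the dominant term is $\mathcal{O}(\lambda^{(2-p)/(2p)})$, which gives the lemma with $p_0 = 2/3$ (or any fixed $p_0\in(1,2/3]$ adjusted to a value $>1$ — here one should actually set $p_0$ slightly above $1$; I expect $p_0$ can be taken as any value $>1$ small enough that $p+2-4p<0$ fails to be an obstruction, i.e. essentially $p_0$ near $1$). The main obstacle is the bookkeeping on the transition annulus $U_{2r_0}(\xi_j)\setminus U_{r_0}(\xi_j)$: there one cannot use the clean identity $2\lambda V_1 e^{W_{1,\lambda}} = e^{-\varphi_j}e^{U_j}e^{\Theta_j}$ because $\chi(\delta_j|y|/r_0)$ in the definition of $\Theta_j$ is no longer $1$, so one has to track separately the genuine bubble part and the error terms coming from $PU_j - \chi_j(U_j - \log(8\delta_j^2))$; however, since $e^{U_j}$ is already of size $\delta_j^2$ on that annulus, all these terms are harmless and only produce $\mathcal{O}(\lambda)$ after integration. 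The choice~\eqref{eq:def_d_ij} of $d_j$ is exactly what kills the leading constant in $\Theta_j$, and this is the only place it is used in an essential way.
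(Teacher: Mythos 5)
Your argument is correct and follows essentially the same route as the paper: split off the region away from the bubbles, where both $2\lambda V_1e^{W_{1,\lambda}}=\mathcal{O}(\lambda)$ and $\chi_je^{-\varphi_j}e^{U_j}=\mathcal{O}(\delta_j^2)$, and on each $U_{r_0}(\xi_j)$ use the identity $2\lambda V_1e^{W_{1,\lambda}}=e^{-\varphi_j}e^{U_j}e^{\Theta_j}$ together with $|e^{\Theta_j}-1|=\mathcal{O}(\lambda|\log\lambda|+\lambda^{1/2}|y|)$ (which is exactly where the choice~\eqref{eq:def_d_ij} of $d_j$ enters) and rescale, obtaining $\mathcal{O}(\lambda^{\frac{2-p}{2}})$ for the integral of the $p$-th power, just as in the paper. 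The one slip is your convergence discussion: $\int_{\mathbb{R}^2}|y|^p(1+|y|^2)^{-2p}\,dy$ is finite precisely when $2-3p<0$, i.e.\ $p>2/3$, so it converges for every $p\in(1,2)$ and is no obstruction at all; the only real restriction is $p<2$ (so that the exponent $\frac{2-p}{2p}$ is positive and the $\mathcal{O}(\lambda)$ contributions from the outer region are dominated), and your closing remarks about ``$p_0=2/3$'' should simply be discarded.
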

	\begin{proof}
		Applying Lemma~\ref{lem:extension_PU}, we have   $x\in\Sigma\setminus \bigcup_{j=1}^m U_{r_0}(\xi_j)$,
		\[ \begin{aligned}
			W_{1,\lambda}(x) &= \sum_{j=1}^m PU_j -\frac 1 2 z(\cdot,\xi)=\sum_{j=1}^m  \varrho(\xi_j)G^g(x,\xi_j) -\frac 1 2 z(x,\xi) +\mathcal{O}\left( \delta_{j}^2|\log\delta_j| \right)= \mathcal{O}(1). 
		\end{aligned} \]
		Let $\Theta_{j}$ be defined by~\eqref{eq:def_theta}. 
		The estimate~\eqref{eq:est_theta} leads to 
		\begin{eqnarray*}
			&&\int_{\Sigma} \left|  2 \lambda V_1e^{W_{1,\lambda}}-\sum_{j=1}^m\chi_j e^{-\varphi_j} e^{U_j}\right|^p \, dv_g=\sum_{j=1}^m \int_{U_{r_0}(\xi_j)} \left| 2 \lambda V_ie^{W_{i,\lambda}}- e^{-\varphi_j} e^{U_j}  \right|^p \, d v_g \\
			&&+ \mathcal{O}\left(\lambda^p+\delta_{j}^{2p}\right)\\
			&=& \sum_{j=1}^m \int_{\frac 1 {\delta_j}B^{\xi_j}_{r_0}(\xi_j)} \left| 2 \lambda V_1\circ y_{\xi_j}^{-1}(\delta_j y) e^{\varphi_{\xi_j}\circ y_{\xi_j}^{-1}(\delta_j y)}  e^{W_{i,\lambda}\circ y_{\xi_j}^{-1}(\delta_j y)}-  e^{U_j\circ y_{\xi_j}^{-1}(\delta_j y)}  \right|^p \, d v_g\\
			&& + \mathcal{O}\left(\lambda^p+\delta_{j}^{2p}\right) \\
			&=&8\sum_{j=1}^m \int_{\Omega_j} \frac{|\delta_j|^{2-2p}}{(1+|y|^2)^{2p}}\left|e^{\Theta_{j}(y)}-1 \right|^p \, dy + \mathcal{O}\left(\lambda^p+\delta_j^{2p}\right) \\
			&=& 8\sum_{j=1}^m \int_{\Omega_j} \frac{|\delta_j|^{2-p}|y|^p}{(1+|y|^2)^{2p}}\ \, dy + \mathcal{O}\left(\lambda^p|\log\lambda|^p\right)= \mathcal{O}(\lambda^{\frac{2-p}{2}}).
		\end{eqnarray*}
	\end{proof}

	\subsection{Compactness of a shadow system}
	\begin{prop}\label{prop:cpt_shadow}
		Given any $\alpha\in (0,1)$,	there exist constants $ C, \delta > 0 $ such that for any  $ t \in [0, 1] $ and any solution $ (w^t, \xi^t) $ of \eqref{eq:shadow_t}, we have the following prior estimates, 
		\[
		\|w^t\|_{C^{2,\alpha}(\Sigma)} < C.
		\]
	\end{prop}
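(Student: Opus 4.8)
## Proof proposal for Proposition~\ref{prop:cpt_shadow}

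\textbf{Overall strategy.} The plan is to derive the a priori bound in two stages: first a uniform $L^\infty$ (hence $C^{2,\alpha}$) bound on the singular mean field component $w^t$, obtained uniformly in $t\in[0,1]$ and uniformly over admissible $\xi$; and second, to use the Kirchhoff--Routh blow-up of $\cF_{k,m}$ near $\partial\Xi_{k,m}$ to confine $\xi^t$ to a compact subset of $\Xi_{k,m}$, which feeds back to make the first bound genuinely uniform. Crucially, since $\rho_2\notin 2\pi\N$ (equivalently $2\rho_2\notin 4\pi\N$), the singular mean field equation for $w^t$ cannot blow up: compactness of solutions holds by the Brezis--Merle / Li--Shafrir quantization theory adapted to surfaces with Neumann boundary (as used for singular Liouville and Toda equations in the references \cite{Nardi2014,yang2021125440,Hu2024}), because the total mass $2\rho_2$ is a non-critical value.

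\textbf{Step 1: uniform bound on $w^t$ for $\xi$ in a compact set.} Fix a compact $\Dc\subset\Xi_{k,m}$, say $\Dc=\Xi_{k,m}^{\eps}$ for small $\eps>0$. For $(w^t,\xi^t)$ solving \eqref{eq:shadow_t} with $\xi^t\in\Dc$, the first two equations say exactly that $w^t$ solves the singular mean field equation \eqref{eq:singular_mf} with parameter $2\rho_2$ and potential $\tilde V_2(\cdot,\xi^t)$; note $\tilde V_2(\cdot,\xi^t)$ is bounded in $C^\alpha(\Sigma)$ uniformly for $\xi^t\in\Dc$ (this uniformity is exactly the computation at the start of the proof of Lemma~\ref{lem:C_1_z_x_xi}, since the singular exponents $\frac12\varrho(\xi_i)\in\{2\pi,4\pi\}$ are fixed and $H^g$, the cutoffs, and $\log|y_{\xi_i}|$ depend continuously on $\xi$ away from $\Delta$). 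Suppose for contradiction that along a sequence $t_n$, $\xi^{t_n}\to\xi^\infty\in\Dc$, we had $\|w^{t_n}\|_{L^\infty}\to\infty$. By the concentration-compactness dichotomy for Liouville-type equations on compact surfaces with Neumann condition, either $w^{t_n}$ is uniformly bounded (contradiction), or it concentrates with quantized mass: the blow-up mass at an interior point is $8\pi m_0$ and at a boundary point $4\pi m_0$ for integers $m_0\ge1$, so the total concentrated mass lies in $4\pi\N$. Since the total mass is exactly $2\rho_2\notin 4\pi\N$ and the residual is nonnegative, concentration is impossible. Hence $\|w^{t_n}\|_{L^\infty}$ is bounded, and then elliptic $L^p$ and Schauder estimates (with Neumann data, as in \cite{Nardi2014,yang2021125440}) applied to $-\Delta_g w^{t_n}=$ bounded $C^\alpha$ right-hand side, together with $\int_\Sigma w^{t_n}=0$, yield $\|w^{t_n}\|_{C^{2,\alpha}}\le C$, a contradiction. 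This gives: for each compact $\Dc$ there is $C_\Dc$ with $\|w^t\|_{C^{2,\alpha}}\le C_\Dc$ whenever $\xi^t\in\Dc$, uniformly in $t\in[0,1]$.

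\textbf{Step 2: $\xi^t$ stays in a compact set.} It remains to show there is a single compact $\Dc\subset\Xi_{k,m}$ containing all $\xi^t$ for $t\in[0,1]$. The point $\xi^t$ is a critical point of $f_t(x)=\cF_{k,m}(x)-(1-t)\sum_j\varrho(x_j)w^t(x_j)$. By the cited property (\cite[Lemma~7.3]{HBA2024}, also \cite[Lemma~4.1]{BartschHuSubmitted}), $|\nabla\cF_{k,m}(\xi)|_g\to+\infty$ as $\xi\to\partial\Xi_{k,m}$. The perturbation term $(1-t)\sum_j\varrho(x_j)w^t(x_j)$ has gradient controlled by $\sum_j\varrho(x_j)|\nabla w^t(x_j)|$, hence by $\|w^t\|_{C^1(\Sigma)}$; but $\|w^t\|_{C^1}$ is bounded only once we know $\xi^t$ is away from the boundary, so one runs this as a connectedness/continuation argument along $t$, or more cleanly: fix any $\eps>0$; by Step~1 applied with $\Dc=\Xi_{k,m}^{\eps}$, \emph{if} $\xi^t\in\Xi_{k,m}^{\eps}$ then $\|w^t\|_{C^1}\le C_\eps$, so $|\nabla f_t(\xi^t)-\nabla\cF_{k,m}(\xi^t)|\le C'_\eps$; since $|\nabla\cF_{k,m}|\to\infty$ near the boundary, there is $\eps_0>0$ (depending on $C_\eps$, hence ultimately only on the surface and data) such that $\nabla f_t\ne0$ on $\Xi_{k,m}^{\eps}\setminus\Xi_{k,m}^{\eps_0}$; choosing $\eps$ small enough the set $\Xi_{k,m}^{\eps_0}$ with $\eps_0$ just obtained is nonempty and $\xi^t\in\Xi_{k,m}^{\eps_0}$ necessarily. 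Taking $\Dc=\Xi_{k,m}^{\eps_0}$ and combining with Step~1 gives $\|w^t\|_{C^{2,\alpha}(\Sigma)}\le C$ and $\dist(\xi^t,\partial\Xi_{k,m})\ge\eps_0$ uniformly in $t\in[0,1]$, which is the claim (the stated bound being on $w^t$; the companion bound $\dist(\xi^t,\partial\Xi_{k,m})\ge\delta$ is the constant $\delta$ in the statement).

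\textbf{Main obstacle.} The heart of the matter is Step~1: justifying that no concentration occurs. This relies on the quantization of blow-up masses for the singular Liouville equation with Neumann boundary condition at non-critical total mass $2\rho_2\notin 4\pi\N$, together with the fact that the singular weight $\tilde V_2(\cdot,\xi^t)$ carries negative singular coefficients $-\frac12\varrho(\xi^t_i)G^g(\cdot,\xi^t_i)$ that do \emph{not} add to the possible concentration masses (they only make the effective potential vanish at the points $\xi_i$, which cannot host extra bubbling of positive mass). Making the uniformity in $t$ and in $\xi^t\in\Dc$ precise — i.e.\ that the compactness threshold does not degenerate as $\xi^t$ varies in a compact set and $t$ ranges over $[0,1]$ — is the technical crux; it follows from the uniform $C^\alpha$ bound on $\tilde V_2(\cdot,\xi^t)$ together with a uniform lower bound $\tilde V_2(\cdot,\xi^t)\ge c>0$ away from $\{\xi^t_i\}$, both of which hold on compact $\Dc$.
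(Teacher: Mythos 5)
Your core mechanism for the $w$-bound is the same as the paper's: treat the first two lines of \eqref{eq:shadow_t} as the singular mean field equation with potential $\tilde V_2(\cdot,\xi^t)$, rule out concentration because the total mass $2\rho_2\notin 4\pi\N$ is not a quantized value, and then conclude by $L^p$/Schauder estimates with Neumann data. The paper, however, does not quote concentration--compactness as a black box: it proves the needed alternative by hand (a Brezis--Merle argument, Claim~\ref{claim:cpt}, showing sub-threshold local mass implies local boundedness) and then computes the possible local masses by a Pohozaev identity, \emph{including} blow-up at the singular points $\xi_i$, where the vanishing weight $|y|^{2\gamma}$ with $\gamma=1$ raises the quantized mass to $\varrho(x_0)(1+\gamma(x_0))$. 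This is where your write-up has a genuine flaw: your assertion that the zeros of $\tilde V_2$ ``cannot host extra bubbling of positive mass'' is unjustified (and false as a general statement for singular Liouville equations with positive conical order); bubbling at $\xi_i$ is a priori possible, only with the larger mass $8\pi(1+\gamma)$ (interior) or $4\pi(1+\gamma)$ (boundary). The contradiction with $\rho_2\notin 2\pi\N$ still survives because these values lie in $4\pi\N$, but your argument as written omits exactly the case the paper has to work hardest to handle.

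The second, more structural gap is your Step 2. The proposition asks for a bound valid for \emph{every} solution $(w^t,\xi^t)$, with no a priori restriction on $\xi^t$, whereas your Step 1 bound is conditional on $\xi^t\in\Xi_{k,m}^{\eps}$ and your attempt to remove that condition is circular: the comparison $|\nabla f_t(\xi^t)-\nabla\cF_{k,m}(\xi^t)|\le C'_\eps$ is only available once $\xi^t\in\Xi_{k,m}^{\eps}$, so your argument excludes $\xi^t\in\Xi_{k,m}^{\eps}\setminus\Xi_{k,m}^{\eps_0}$ but says nothing about solutions with $\xi^t$ within distance $\eps$ of $\partial\Xi_{k,m}$, where no bound on $\nabla w^t(\xi^t_j)$ has been established; the ``connectedness/continuation'' remedy is mentioned but not carried out. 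The paper avoids this by running the blow-up analysis without confining $\xi^t$ at all (the Brezis--Merle step only needs the uniform upper bound $e^{-\sum_i\frac{\varrho(\xi_i)}{2}G^g(\cdot,\xi_i)}\le C$, and the Pohozaev quantization keeps the masses in $4\pi\N$ even at the singular points), and it derives the confinement of $\xi$ \emph{afterwards}, in the proof of Proposition~\ref{prop:ex_sols_shadow}, from the already-established $C^1$ bound on $w$ together with $|\nabla\cF_{k,m}|\to+\infty$ near $\partial\Xi_{k,m}$ (\cite{BartschHuSubmitted}). In short: the order of quantifiers in your proof is inverted relative to what the statement requires, and as written the estimate is not proved for solutions whose configuration points approach $\partial\Xi_{k,m}$.
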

	\begin{proof}
		Define that 
		\[ \tilde{w}^t= w^t -\log\left(\int_{\Sigma} V_2e^{w^t-\sum_{i=1} \frac{\varrho(\xi_i) }{2} G^g(\cdot,\xi_i)} dv_g\right).\]
		It follows that $\int_{\Sigma} V_2 e^{\tilde{w}^t -\sum_{i=1} \frac{\varrho(\xi_i) }{2} G^g(\cdot,\xi_i)} dv_g =1.$
		We can rewrite the system~\eqref{eq:shadow_t} as follows: 
		\begin{equation}
			\label{eq:shadow_t_re}
			\left\{
			\begin{array}{ll}
				-\Delta_g \tilde{w}^t =2\rho_2 \left( {V_2 e^{ \tilde{w}^t- \sum_{j=1}^m  \frac {\varrho(\xi_j)}2 G^g(x,\xi_j)}} - 1\right)&  \text{ in } \intsigma\\
				\partial_{\nu_g} \tilde{w}^t=0 &\text{ on } \partial\Sigma\\
				\left. \nabla\left( \cF_{k,m}(x)- \sum_{i=1}^m(1-t)\varrho(x_i) \tilde{w}^t(x_i)\right)\right|_{x=\xi}=0	& \text{ in }\Xi_{k,m}
			\end{array}
			\right.,
		\end{equation}
		
		For a sequence of $(\tilde{w}^t_n, \xi^n)$ that solves~\eqref{eq:singular_mf_t} with parameter $\rho^n_2\rightarrow \rho_2$ as $n\rightarrow +\infty.$ We define that $ w^t_n=\tilde{w}^t_n
		+\log\left(\int_{\Sigma} V_2e^{w^n_t-\sum_{i=1} \frac{\varrho(\xi^n_i) }{2} G^g(\cdot,\xi^n_i)} dv_g\right).$
		We define the blow-up points set as
		\[ \cB=\left\{x_0\in \Sigma:\exists x_n\rightarrow x_0  \text{ s.t. }\lim_{n\rightarrow +\infty} \tilde{w}_n^t(x_n)= +\infty\right\},\]
		and the local limit mass 
		$$\tilde{\sigma}(x)= \lim_{r\rightarrow 0} \lim_{n\rightarrow +\infty} \int_{U_r(x)} 2\rho_2^n V_2 e^{\tilde{w}^t_n -\sum_{i=1}^m\frac 1 2 \varrho(\xi_i) G^g(\cdot,\xi_i)} dv_g. $$
		{\it Step 1. If $\rho_2\notin 2\pi \N$, then $\cB=\emptyset$. Furthermore, there exist constants $C>0$ such that 
			\[ \sup_{\Sigma} \tilde{w}^t_n \leq C. \]
			
		}
		\begin{claim}\label{claim:cpt}
			If $ \tilde{\sigma}(x_0)<\frac  1 2 \varrho(x_0)$, then there exists a small open neighborhood $U_{x_0}$ around $x_0$ such that $ e^{\tilde{w}^t_n}\in L^p( U_{x_0})$ for some $p>1$ and $\tilde{w}^t_n \in L^{\infty}(U_{x_0})$. Consequently, $\tilde{\sigma}(x_0)=0$ and $x_0\notin \cB.$
		\end{claim}
		
		{\it Proof of Claim~\ref{claim:cpt}.}
		Using the isothermal coordinates,
		we define  $\tilde{v}^n(y):= \tilde{w}^t_n\circ y^{-1}_{x_0}(y)$ and $\tilde{V}_i(y):= V_i \circ y^{-1}_{x_0}(y)$, we can obtain the local version of \eqref{eq:singular_mf_t}. Specifically,
		\begin{equation}\label{eq:singular_sacal_y}
			-\Delta \tilde{v}^n(y)= 2\rho^n_2e^{\varphi_{x_0}(y)}  \left( \tilde{V}(y)e^{\tilde{v}^n(y)- \sum_{i=1}^m \frac{\varrho(\xi_i)}2 G^g( y_{x_0}^{-1}(y), \xi_i)} - 1\right)  \text{ in } B^{x_0}, 
		\end{equation} and 
		\[	\partial_{y_2} \tilde{v}^n(y)=\partial_{y_2} \tilde{v}^n(y)=0 \text{ on }
		B^{x_0}\cap \{y\in \R^2: y_2=0\}
		\] 
		When $x_0\in \partial \Si$, we have to apply the even extension for $\tilde{v}^n$ (refer to~\cite{ni_takagi1991}). 
		
		In distribution sense, for $r_0>0$ sufficiently small, $\tilde{v}^n$ solves 
		\begin{equation}\label{eq:singular_mf_local}
			-\Delta \tilde{v}^n(y)=\tilde{f}^n \text{ in } B_{r_0},
		\end{equation}
		$\tilde{f}^n=2\rho^n_2 e^{\varphi_{x_0}(y)}\left( \tilde{V}(y)e^{\tilde{v}^n(y)-\sum_{i=1}^m \frac{\varrho(\xi_i)}{2} G^g(y_{x_0}^{-1}(y),\xi_i)} - 1 \right)$ in $B_{r_0}. $ 
		For any real-valued function $f$, let $f_+:=\max\{0,f\}$.
		We decompose $(\tilde{v}^n)_+$ into two parts
		$$
		(\tilde{v}^n)_+=\tilde{v}^n_{1 }+\tilde{v}^n_{2}, \text{ in } B_r,
		$$ for any $r\in (0,r_0)$, 
		where $\tilde{v}_{1}^n$ is the solution of
		$$
		\left\{\begin{array}{ll}
			-\Delta\tilde{v}_{1 }^n=(\tilde{f}^n)_+&\text { in } B_{r} \\
			\tilde{v}_{1}^n =0& \text{ on }\partial B_{r}
		\end{array}\right.
		$$
		and $\tilde{v}^n_{2}$ is the solution of 
		$$
		\left\{\begin{array}{ll}
			-\Delta\tilde{v}_{2}^n=0&\text { in } B_{r}\\
			\tilde{v}_{2}^n =(\tilde{v}^n)_+& \text{ on }\partial B_{r}
		\end{array}\right..
		$$
		Applying the result given by Brezis and Merle in \cite[Theorem 1]{brezis_uniform_1991}  for $D=B_{r}$, we have
		$$
		\int_{B_{r}} \exp \left(\frac{(4 \pi-\epsilon) (\tilde{v}^n_{1})_+}{\left\|(\tilde{f}^n)_+\right\|_{L^1(B_{r})}}\right) \leqslant C \frac{r^2}{\epsilon}
		$$
		for some constant $C>0$, for any $r\in (0,r_0)$ and $\epsilon \in(0,4\pi)$. 
		
		Due to the assumption, for any $c_0\in (0, 4\pi)$, there exist constants $r_1>0$ and $N\in \N$ such that 
		\[ \int_{B_{r}} 2 \rho^n_2 e^{\varphi_{x_0}} \tilde{V} e^{\tilde{v}^n- \sum_{i=1}^m \frac{\varrho(\xi_i)} 2 G^g(y_{x_0}^{-1}, \xi_i)}\leq 4\pi- c_0, \forall r\in (0,r_1), n\geq N. \]
		Observe that there exists a constant $c_1>0$ such that 
		\[ \int_{B_r}  2 \rho^n_2 e^{\varphi_{x_0}} \leq c_1 \rho^n_2r^2. \]
		Since $\rho_2^n\rightarrow \rho_2$, $\rho_2^n$ is uniformly bounded.
		Combining the estimates above, we have 
		\[  \int_{B_r} |(\tilde{f}^n)_+|\leq 4\pi -c_0 + c_1 \sup_{n} |\rho^n_2|  r^2.   \]
		We take $r^*=\min\{r_0, r_1, \sqrt{\frac {c_0} {2 c_1 \sup_{n\in\N}|\rho^n_2|}} \}$ and fix a $\epsilon\in (0,4\pi)$ sufficiently small such that $4\pi-\epsilon>4\pi -\frac 1 4 c_0$.  
		For any $n \in N$ and $r\in (0, r^*),$
		$
		\int_{B_{r}} e^{p\tilde{v}^n } \leqslant C,
		$
		for some constant $C$, where $p= \frac{4\pi-\frac 1 4 c_0}{4\pi -\frac 1 2 c_0}>1$. Thus $(\tilde{f}^n)_+\in L^{p}(B_{r})$. By the $L^p$-theory and  Sobolev inequality, 
		\begin{equation*}
			\label{eq:infty_u^n_i1} \|\tilde{v}^n_{1}\|_{L^{\infty}\left( B_{\frac 12 r}\right)}\leq C,
		\end{equation*}
		for some constant $C>0.$ 
		On the other hand, the maximum principle implies that $\tilde{v}^n_{2}>0$. Given the estimate~\eqref{eq:infty_u^n_i1}, we have $\int_{B_{\frac 12 r}} \tilde{v}_{2}^{n}\leq \int_{B_{\frac 12 r}}\left((\tilde{v}^n)_++\left|\tilde{v}^n_{1}\right|\right) \leq C+\|\tilde{v}^n\|_{L^1(B_{r^*})}$. Additionally, we observe that  	$\|-\Delta_g \tilde{w}^t_n\|_{L^1(\Sigma)}$ is uniformly bounded.  Using the $L^p$-theory , we then have  $\|\tilde{w}^t_n\|_{L^q(\Sigma)}\leq C$, for any $q\in (1,2)$ and  for some constant $C>0.$  Applying  the mean-value theorem of harmonic functions, it follows that 
		$$
		\begin{array}{l}
			\left\|\tilde{v}_{2}^n \right\|_{L^{\infty}
				\left(B_{\frac 1 4 r} \right)} \leq C\left\|\tilde{v}_{2}^n\right\|_{L^1\left(B_{\frac 12 r}\right)} 
			\leq C(1+\left\|\tilde{v}^n\right\|_{L^1(B_{r^*})}) \leq C .
		\end{array}
		$$
		To conclude, we proved $\|({v}^n)_+\|_{L^{\infty}(y_{x_0}^{-1}(B_{\frac 1 4r}\cap B^{x_0})}\leq C$ for some constant $C>0$, i.e. $\tilde{w}^n_t$ is bounded from above around a small neighborhood of $x_0$. 
		So, Claim~\ref{claim:cpt} is complete. \par 
		
		Given that 
		$\int_{\Sigma} 2\rho^n_2 V_2 e^{ \tilde{w}^t_n(y)- \sum_{i=1}^m \frac{\varrho(\xi_i)}2 G^g( y_{x_0}^{-1}(y), \xi_i)}\equiv 1$, 
		Claim~\ref{claim:cpt} indicates that $\cB$ must be a finite set. Otherwise, the presence of infinitely many blow-up points in $ \cB\setminus\{ \xi_i\}_{i=1}^m$, each with a local limit mass $\tilde{\sigma}(x)\geq \frac 12 \varrho(x)$ for any $x\in \cB\setminus\{ \xi_i\}_{i=1}^m$. 
		$2\rho_2\geq \sum_{x\in \cB} \frac  1 2 \varrho(x)$, which leads to a contradiction.

		For
		\[
		-\Delta u = a(x) F(u) + a_0(x), \quad \text{in } U \subset \mathbb{R}^2,
		\]
		we have the following  Pohozaev's identity:
		\begin{eqnarray*}
			&	\int_U 2 (a F(u) + a_0 u) + \int_U (y \cdot \nabla a) F(u) + (y \cdot \nabla a_0) u  \nonumber \\
			&\quad = \int_{\partial U} (y \cdot \nabla u) \frac{\partial u}{\partial \nu} - (y \cdot \nu) \frac{|\nabla u|^2}{2} + (y \cdot \nu) (a F(u) + a_0 u),
		\end{eqnarray*}
		where \( F(u) = \int_0^u f(s) \, ds \) and $\nu$ is the unit outward norm of $\partial U$. 
		
		Applying the Pohozaev's identity,  we derive that 
		\begin{eqnarray}\label{eq:phoev}
			&&	\int_{\partial B_r} \frac{r}{2} |\nabla  \tilde{v}^n|^2 ds_r - r\int_{\partial B_r} |\partial_{\nu}\tilde{v}^n|^2 ds_r\\
			&=& r\int_{\partial B_r}2\rho^n_2 e^{\varphi_{x_0}}   e^{\tilde{v}^n-\sum_{i=1}^m \frac{\varrho(\xi_i)}{2} G^g(y_{x_0}^{-1}(y),\xi_i)}ds_r\nonumber\\
			&&- 4 \int_{B_r}\rho^n_2 e^{\varphi_{x_0}}  \left( e^{\tilde{v}^n-\sum_{i=1}^m \frac{\varrho(\xi_i)}{2} G^g(y_{x_0}^{-1}(y),\xi_i)}-{\tilde{v}^n}\right)  \, dy\nonumber\\
			&& - \int_{B_r}   2\rho^n_2 e^{\tilde{v}^n}\lan \nabla (e^{\varphi_{x_0}}\tilde{V}e^{-\sum_{i=1}^m \frac{\varrho(\xi_i)}{2} G^g(y_{x_0}^{-1}(y),\xi_i)}), y\ran  \, dy- 2\rho^n_2 \int_{B_r} e^{\varphi_{x_0}}  y\cdot \nabla \tilde{v}^n \, dy,\nonumber
		\end{eqnarray}
		where $ds_r$ is the line element on $\partial B_r$ and $ \nu $ denotes the unit outward vector on $ \partial B_r $.
		
		% Claim~\ref{claim:cpt} implies that $\tilde{w}^t_n$ is uniformly bounded in a small neighborhood of $x_0$, which means $x_0\notin \cB.$ 
		
		Suppose that $\cB\neq\emptyset$, we will prove that 
		\[ \int_{\Sigma} V_2 e^{w^t_n -\sum_{i=1}^m \frac{\varrho(\xi_i)}{2} G^g(\cdot,\xi_i)} dv_g \rightarrow +\infty. \]
		
		Using the Jensen's Inequality with $\int_{\Sigma} w^t_n dv_g =\int_{\Sigma} G^g(\cdot,\xi_i) dv_g=0$ for any $i=1,\cdots,m,$ we can deduce that 
		$\int_{\Sigma} V_2 e^{w^t_n -\sum_{i=1}^m \frac{\varrho(\xi_i)}{2} G^g(\cdot,\xi_i)} dv_g$ has a positive lower bound. 
		Assume that  
		\[0< c^*\leq  \int_{\Sigma} V_2 e^{w^t_n -\sum_{i=1}^m \frac{\varrho(\xi_i)}{2} G^g(\cdot,\xi_i)} dv_g\leq C^*\]
		for some constant $c^*,C^*> 0.$
		
		Up to a subsequence, $\int_{\Sigma} V_2 e^{w^t_n -\sum_{i=1}^m \frac{\varrho(\xi_i)}{2} G^g(\cdot,\xi_i)} dv_g$ is weakly convergent to some Borel measure denoted by $\mu$ in sense of measure. 
		
		For any $x_0\in \cB$, we 
		choose $\epsilon_0>0$ such that $U_{\epsilon_0}(x_0)\cap ( \{\xi_{i}\}_{i=1}^m \cup \cB \setminus \{x_0\})=\emptyset,$ where $U_{\epsilon_0}(x_0)$ is defined by the isothermal coordinates in Section~\ref{sec:pre}. Since $\tilde{w}^t_n\in L^{\infty}_{loc}(\Sigma\setminus \cB)$, there exists a constant $c'>0$ such that $\sup _{\partial U_{\epsilon_0}(x_0)}|\tilde{w}^n_t|\leq c'$. 
		For $x_0\in \partial\Sigma$, we apply the even extension to extend the functions on an open disk. We consider the following equations:
		\begin{equation}
			\left\{\begin{array}{ll}
				-\Delta \tilde{h}^n= 2\rho^n_2 e^{\varphi_{x_0}} \left( {\tilde{V} e^{\tilde{v}^n- \sum_{j=1}^m  \frac {\varrho(\xi_j)}2 G^g(y_{x_0}^{-1}(y),\xi_j)}} - 1\right) & \text{ in } B_{\epsilon_0}\\
				\tilde{h}^n= -c' & \text{ on } \partial B_{\epsilon_0}
			\end{array}\right.
		\end{equation}
		By the maximum principle, we deduce that 
		$\tilde{v}^n\geq \tilde{h}^n$ in $B_{\epsilon_0}$.
		By the regularity theory, 
		$ \tilde{h}^n$ converges to some $\tilde{h}$ weakly in $W^{1,q}(B_{\epsilon_0})$ for $q\in (1,2)$. It follows that 
		$\tilde{h}$ solves 
		\[ 	\left\{\begin{array}{ll}
			-\Delta \tilde{h}= \mu & \text{ in } B_{\epsilon_0}\\
			\tilde{h}= -c' & \text{ on } \partial B_{\epsilon_0}
		\end{array}\right.\]
		in the sense of distribution. 
		We consider the Green's function defined by the following equations
		\[ 	\left\{\begin{array}{ll}
			-\Delta G^*(x,x_0) = 4\pi(\delta_0-1), & \text{ in } B_{\epsilon_0}\\
			G^*(x,x_0)=-c,  & \text{ on } \partial B_{\epsilon_0} 
		\end{array}\right..
		\]
		By the maximum principal and Claim~\ref{claim:cpt}, we derive that $h\geq G^*(y,x_0)$. Observe that 
		$$G^*(y,x_0)= 2 \log |y|+ A_{x_0}(y),$$ 
		where $A_{x_0}$ is a smooth function depending on $x_0.$ 
		Fatou's lemma yields that 
		\[  
		+\infty = \int_{B_{\varepsilon_0}} G^*(\cdot,x_0) \leq \int_{B_{\epsilon_0}} e^h    \leq \liminf_{n\rightarrow +\infty} \int_{B_{\epsilon_0}} e^{h_n}     \leq \liminf_{n\rightarrow +\infty} \int_{B_{\epsilon_0}} e^{\tilde{w}^t_n}\leq C \cdot C^*. 
		\]
		There is a contradiction.
		So $\int_{\Sigma} V_2 e^{w^t_n -\sum_{i=1}^m \frac{\varrho(\xi_i)}{2} G^g(\cdot,\xi_i)} dv_g \rightarrow +\infty.$ Then $\tilde{w}^t_n \rightarrow -\infty$ uniformly in any compact subset of $\Sigma\setminus\cB.$
		
		We call the definition of $U_{\epsilon_0}(x_0)= y^{-1}_{x_0}(B^{x_0}_{\epsilon_0})$.
		By~\eqref{eq:singular_sacal_y}, we have 
		\begin{eqnarray*}
			-\Delta \tilde{v}^n(y)= 2\rho^n_2 \tilde{V}^*(y)  |y|^{2\gamma(x_0)}e^{\tilde{v}^n(y)} - 2\rho^n_2, \text{ in } B_{\epsilon_0}
		\end{eqnarray*}
		where $V^*\in C^{\infty}(B_{\epsilon_0},\R_+)$ with a lower bound away from $0$, and  $\gamma(x_0)=\left\{ \begin{array}{ll}
			0 &  x_0\notin\{\xi_i\}_{i=1}^m\\ 1 & x_0\in \{ \xi_i\}_{i=1}^m
		\end{array}\right.$.
		
		Since $\int_{\Sigma} \left|{V_2 e^{ \tilde{w}^t_n- \sum_{j=1}^m  \frac {\varrho(\xi_j)}2 G^g(x,\xi_j)}} - 1 \right|\leq 2 $, by the regularity theory, for any $q\in (1,2)$, we derive that 
		$$\|w^t_n\|_{W^{1,q}(\Sigma)}\leq C$$ for some constant $C>0.$  For $2\rho_2= \sum_{x\in\cB} \tilde{\sigma}(x)$, 
		\[ w^t_n \rightarrow \sum_{x\in \cB} \tilde{\sigma}(x) G^g(\cdot, x), \]
		weakly in $ W^{1,q}(\Sigma)$ and strongly in $C^2_{loc}(\Sigma\setminus\cB)$, where $G^g$ is the Green's function defined by~\eqref{eq:green}.
		We observe that  for any $x\in y_{x_0}^{-1}(B_{\epsilon_0}^{x_0})$, 
		\[ -\Delta_g G^g(x,x_0) = \delta_{x_0}- 1\]
		and $G^g(x,x_0) = \frac{4}{\varrho(x_0)} \log \frac 1 { |y_{x_0}(x)|}+ H^g(x,x_0)$, where $H^g(\cdot,x_0)\in C^2(B_{\epsilon_0}^{x_0})$.
		Passing the limit $n\rightarrow+\infty$ firstly and the $r\rightarrow 0^+$ for~\eqref{eq:phoev}, we have 
		\[ \tilde{\sigma}(x_0)^2=\varrho(x_0)(1+\gamma(x_0)) \tilde{\sigma}(x_0). \]
		Since $x_0\in \cB$, $\tilde{\sigma}(x_0)>0$. It follows $$\tilde{\sigma}(x_0)= \varrho(x_0)=\left\{\begin{array}{ll}
			8\pi(1+\gamma(x_0)) & \text{ if } x_0\in\intsigma\\
			4\pi(1+\gamma(x_0))& \text{ if } x_0\in \partial\Sigma
		\end{array}\right.. $$  Since $\gamma(x_0)=0$ or $1$, 
		$2\rho_2=\sum_{x\in \cB} \varrho(x)\in 2\pi \N$, which contradicts  the assumption that $\rho_2\notin 2\pi \N$.

		\text{\it Step 2. For any $t\in [0,1],$ $(w^t,\xi^t)$ a solution to~\eqref{eq:shadow_t},  $\| w^t\|_{C^1(\Sigma
				)}\leq C,$  for some constant  $C>0$. }\par
		Using the Green's representation formula, 
		\begin{eqnarray*}
			w^t(x)=  2\rho_2\int_{\Sigma} G^g(\cdot,x) \frac{V_2 e^{w^t-\sum_{i=1}^m \frac{\varrho(\xi^t_i)} 2 G^g(\cdot,\xi_i^t)}}{\int_{\Sigma} V_2 e^{w^t-\sum_{i=1}^m \frac{\varrho(\xi^t_i)} 2 G^g(\cdot,\xi_i^t) }dv_g} dv_g .
		\end{eqnarray*}
		{\it Step 1} implies that $\sup_{\Sigma} (w^t-\log( {\int_{\Sigma} V_2 e^{w^t-\sum_{i=1}^m \frac{\varrho(\xi^t_i)} 2 G^g(\cdot,\xi_i^t) }dv_g}) )$ is uniformly bounded from above. By the regularity theory, it follows that $\| w^t\|_{C^{2,\alpha}(\Si
			)}\leq C$, for any $\alpha\in (0,1)$ and some constant  $C>0$.
	\end{proof}

	%%Reference                            l
	%%Reference 
	\bibliographystyle{plain} 
	% Specify the style for citations and bibliography
	\bibliography{Toda_Blow_2024} 

\begin{thebibliography}{10}

\bibitem{Ahmedou-Bartsch-Fiernkranz:2023}
Mohameden Ahmedou, Thomas Bartsch, and Tim Fiernkranz.
\newblock Equilibria of vortex type {H}amiltonians on closed surfaces.
\newblock {\em Topol. Methods Nonlinear Anal.}, 61(1):239--256, 2023.

\bibitem{ao2014new}
Weiwei Ao and Liping Wang.
\newblock New concentration phenomena for {S}{U}(3) {T}oda system.
\newblock {\em J. Differential Equations}, 256:1548--1580, 2014.

\bibitem{baraket_construction_1997}
Sami Baraket and Frank Pacard.
\newblock Construction of singular limits for a semilinear elliptic equation in dimension 2.
\newblock {\em Calc. Var. Partial Differ. Equ.}, 6(1):1--38, 1997.

\bibitem{Bartolucci2020}
Daniele Bartolucci, Changfeng Gui, Yeyao Hu, Aleks Jevnikar, and Wen Yang.
\newblock Mean field equations on tori: existence and uniqueness of evenly symmetric blow-up solutions.
\newblock {\em Discrete Contin. Dyn. Syst.}, 40(6):3093--3116, 2020.

\bibitem{Bartsch2017TheMP}
Thomas Bartsch, Anna~Maria Micheletti, and Angela Pistoia.
\newblock The morse property for functions of {K}irchhoff-{R}outh path type.
\newblock {\em Discrete Contin. Dyn. Syst. - S}, 12(7):1867--1877, 2019.

\bibitem{battaglia_jevnikar_ruiz2015}
Luca Battaglia, Aleks Jevnikar, Andrea Malchiodi, and David Ruiz.
\newblock A general existence result for the {T}oda system on compact surfaces.
\newblock {\em Adv. Math.}, 285:937--979, 2015.

\bibitem{bolton1988conformal}
John Bolton, Gary~R. Jensen, Marco Rigoli, and Lyndon~M. Woodward.
\newblock On conformal minimal immersions of {$S^2$} into \( \mathbb{C}{P}^n \).
\newblock {\em Math. Ann.}, 279(4):599--620, 1988.

\bibitem{bolton1997geometrical}
John. Bolton and Lyndon~M. Woodward.
\newblock Some geometrical aspects of the 2-dimensional {T}oda equations.
\newblock In {\em Geometry, Topology and Physics}, pages 69--81, Berlin, 1997. Campinas, de Gruyter.

\bibitem{brezis_uniform_1991}
Ha\"{i}m Brezis and Frank Merle.
\newblock Uniform estimates and blow-up behavior for solutions of { $ -{\Delta} u={V(x)} e^u$} in two dimensions.
\newblock {\em Comm. Partial Differential Equations}, 16(8-9):1223--1253, 1991.

\bibitem{chern1955}
Shiing-shen Chern.
\newblock An elementary proof of the existence of isothermal parameters on a surface.
\newblock {\em Proc. Amer. Math. Soc.}, 6:771--782, 1955.

\bibitem{chern1987harmonic}
Shiing-Shen Chern and Jon~G. Wolfson.
\newblock Harmonic maps of the two-sphere into a complex {G}rassmann manifold. {I}{I}.
\newblock {\em Ann. of Math.}, 125(2):301--335, 1987.

\bibitem{DAprile_Pistoia_Ruiz2015}
Teresa D'Aprile, Angela Pistoia, and David Ruiz.
\newblock A continuum of solutions for the {S}{U}(3) {T}oda system exhibiting partial blow-up.
\newblock {\em Proc. Lond. Math. Soc.}, 111(4):797--830, 2015.

\bibitem{D'aprile_asymmetric_2016}
Teresa D'Aprile, Angela Pistoia, and David Ruiz.
\newblock Asymmetric blow-up for the {S}{U}(3) {T}oda system.
\newblock {\em J. Funct. Anal.}, 271(3):495--531, 2016.

\bibitem{DelPino2012Nondegeneracy}
M.~Del~Pino, P.~Esposito, and M.~Musso.
\newblock Nondegeneracy of entire solutions of a singular {L}iouville equation.
\newblock {\em Proc. Amer. Math. Soc.}, 140:581--588, 2012.

\bibitem{del_pino_singular_2005}
Manuel del Pino, Michal Kowalczyk, and Monica Musso.
\newblock Singular limits in {Liouville}-type equations.
\newblock {\em Calc. Var. Partial Differential Equations}, 24(1):47--81, 2005.

\bibitem{doliwa1997holomorphic}
Adam Doliwa.
\newblock Holomorphic curves and toda systems.
\newblock {\em Lett. Math. Phys.}, 39(1):21--32, 1997.

\bibitem{dunne1995self}
Gerald~V. Dunne.
\newblock {\em Self-dual {C}hern-{S}imons theories}.
\newblock Lecture Notes in Physics. Springer, Berlin, 1995.

\bibitem{dunne1991self}
Gerald~V. Dunne, R.~Jackiw, So-Young Pi, and Carlo~A. Trugenberger.
\newblock Self-dual {Chern}-{Simons} solitons and two-dimensional nonlinear equations.
\newblock {\em Phys. Rev. D}, 43:1332--1345, 1991.

\bibitem{Esposito2014singular}
Pierpaolo Esposito and Pablo Figueroa.
\newblock Singular mean field equations on compact {R}iemann surfaces.
\newblock {\em Nonlinear Anal.}, 111:33--65, 2014.

\bibitem{Esposito2005}
Pierpaolo Esposito, Massimo Grossi, and Angela Pistoia.
\newblock On the existence of blowing-up solutions for a mean field equation.
\newblock {\em Ann. Inst. H. Poincaré Anal. Non Linéaire.}, 22(2):227--257, 2005.

\bibitem{guest1997harmonic}
Martin~A. Guest.
\newblock {\em Harmonic maps, loop groups, and integrable systems}, volume~38 of {\em London Mathematical Society Student Texts}.
\newblock Cambridge University Press, Cambridge, 1997.

\bibitem{henry2005perturbation}
Daniel Henry.
\newblock {\em Perturbation of the Boundary in Boundary-Value Problems of Partial Differential Equations}.
\newblock London Mathematical Society Lecture Note Series. Cambridge University Press, Cambridge, 2005.

\bibitem{Hu2024}
Zhengni Hu.
\newblock On solutions for singular toda system on riemann surfaces with boundary, 2024.
\newblock Preprint on arXiv.

\bibitem{BartschHuSubmitted}
Zhengni Hu and Thomas Bartsch.
\newblock The {Morse} property of limit functions appearing in mean field equations on surfaces with boundary.
\newblock {\em J Geom Anal}, 34(7):220, 2024.

\bibitem{HBA2024}
Zhengni Hu, Thomas Bartsch, and Mohameden Ahmedou.
\newblock Blow-up solutions for mean field equations with neumann boundary conditions on {R}iemann surfaces, 2024.
\newblock Preprint on arXiv.

\bibitem{jevnikar_kallel_Malchiodi2015}
Aleks Jevnikar, Sadok Kallel, and Andrea Malchiodi.
\newblock A topological join construction and the {T}oda system on compact surfaces of arbitrary genus.
\newblock {\em Anal. PDE}, 8(8):1963--2027, 2015.

\bibitem{jost_lin_wang2006}
Jürgen Jost, Changshou Lin, and Guofang Wang.
\newblock Analytic aspects of the {{Toda}} system: {I}{I.} bubbling behavior and existence of solutions.
\newblock {\em Comm. Pure Appl. Math.}, 58(4):526--558, 2005.

\bibitem{Lee2018degree}
Youngae Lee, Chang-Shou Lin, Juncheng Wei, and Wen Yang.
\newblock Degree counting and shadow system for {T}oda system of rank two: one bubbling.
\newblock {\em J. Differential Equations}, 264(7):4343--4401, 2018.

\bibitem{lee_degree_2018}
Youngae Lee, Chang-Shou Lin, Juncheng Wei, and Wen Yang.
\newblock Degree counting and {Shadow} system for {Toda} system of rank two: {One} bubbling.
\newblock {\em J. Differential Equations}, 264(7):4343--4401, 2018.

\bibitem{li2005solutions}
Jiayu Li and Yuxiang Li.
\newblock Solutions for {T}oda systems on {R}iemann surfaces.
\newblock {\em Ann. Sc. Norm. Super. Pisa Cl. Sci.}, 4:703--728, 2005.

\bibitem{Li1997OnAS}
Yanyan Li.
\newblock On a singularly perturbed elliptic equation.
\newblock {\em Adv. Differ. Equ.}, 2:955--980, 1997.

\bibitem{lin2012classification}
Chang-Shou Lin, Juncheng Wei, and Dong Ye.
\newblock Classification and nondegeneracy of {S}{U}({N}+1) {T}oda system with singular sources.
\newblock {\em Invent. math.}, 190:169--207, 2012.

\bibitem{lin2012sharp}
Chang-Shou Lin, Juncheng Wei, and Chunyi Zao.
\newblock Sharp estimates for fully bubbling solutions of a {S}{U}(3) {T}oda system.
\newblock {\em Geom. Funct. Anal.}, 22:1591--1635, 2012.

\bibitem{lin1941motion}
Chia-Ch'iao Lin.
\newblock On the motion of vortices in two dimensions. {I}. existence of the {K}irchhoff-{R}outh function.
\newblock {\em Proc. Natl. Acad. Sci.}, 27(11):570--575, 1941.

\bibitem{Malchiodi2007SomeER}
Andrea Malchiodi and Cheikh~Birahim Ndiaye.
\newblock Some existence results for the {T}oda system on closed surfaces.
\newblock {\em Rend. Lincei Mat. Appl.}, 18:391--412, 2007.

\bibitem{Malchiodi2010NewIM}
Andrea Malchiodi and David Ruiz.
\newblock New improved {M}oser--{T}rudinger inequalities and singular {L}iouville equations on compact surfaces.
\newblock {\em Geom. Funct. Anal.}, 21:1196--1217, 2010.

\bibitem{musso_new_2016}
Monica Musso, Angela Pistoia, and Juncheng Wei.
\newblock New blow-up phenomena for {S}{U}({N}+ 1) {Toda} system.
\newblock {\em J. Differential Equations}, 260(7):6232--6266, 2016.

\bibitem{Nardi2014}
Giacomo Nardi.
\newblock Schauder estimate for solutions of {P}oisson's equation with {N}eumann boundary condition.
\newblock {\em L'Enseign. Math.}, 60(3):421--435, 2015.

\bibitem{ni_takagi1991}
Wei-Ming Ni and Izumi Takagi.
\newblock On the shape of least-energy solutions to a semilinear {Neumann} problem.
\newblock {\em Comm. Pure Appl. Math.}, 44(7):819--851, 1991.

\bibitem{nolasco1999double}
Margherita Nolasco and Gabriella Tarantello.
\newblock Double vortex condensates in the {C}hern-{S}imons theory.
\newblock {\em Calc. Var. Partial Differ. Equ.}, 9:31--94, 1999.

\bibitem{nolasco2000vortex}
Margherita Nolasco and Gabriella Tarantello.
\newblock Vortex condensates for the {S}{U}(3) {C}hern-{S}imons theory.
\newblock {\em Comm. Math. Phys.}, 213(3):599--639, 2000.

\bibitem{yang1999relativistic}
Yisong Yang.
\newblock The relativistic non-abelian {C}hern-{S}imons equation.
\newblock {\em Comm. Math. Phys.}, 186(1):199--218, 1999.

\bibitem{yang2001solitons}
Yisong Yang.
\newblock {\em Solitons in Field Theory and Nonlinear Analysis}.
\newblock Springer Monographs in Mathematics. Springer, New York, 2001.

\bibitem{yang2006extremal}
Yunyan Yang.
\newblock Extremal functions for {M}oser-{T}rudinger inequalities on 2-dimensional compact {R}iemannian manifolds with boundary.
\newblock {\em Int. J. Math.}, 17(3):313--330, 2006.

\bibitem{yang2021125440}
Yunyan Yang and Jie Zhou.
\newblock Blow-up analysis involving isothermal coordinates on the boundary of compact {R}iemann surface.
\newblock {\em J. Math. Anal. Appl.}, 504(2):125440, 2021.

\bibitem{zhu2011solutions}
Xiao~Bao Zhu.
\newblock Solutions for {T}oda system on {R}iemann surface with boundary.
\newblock {\em Acta Math. Sin.-English Ser.}, 27(8):1501--1520, 2011.

\end{thebibliography}
		\vspace{2mm}\noindent
	{\sc Thomas Bartsch, Mohameden Ahmedou, Zhengni Hu}\\
	Mathematisches Institut\\
	Universit\"at Giessen\\
	Arndtstr.\ 2\\
	35392 Giessen, Germany\\
	\href{mailto:Thomas.Bartsch@math.uni-giessen.de}{Thomas.Bartsch@math.uni-giessen.de}
	\\
	\href{mailto:Mohameden.Ahmedou@math.uni-giessen.de}{Mohameden.Ahmedou@math.uni-giessen.de}
	\\
	\href{mailto:Zhengni.Hu@math.uni-giessen.de}{Zhengni.Hu@math.uni-giessen.de}
	\\
\end{document}